\documentclass[11pt,a4paper]{amsart}
\usepackage[T1]{fontenc}
\usepackage[english]{babel}

%Mathematik
% \usepackage{theorem}
\usepackage{amsmath,mathtools}
\usepackage{amssymb}
\usepackage{amsthm}
\usepackage{mathrsfs}
\usepackage{stmaryrd}

\usepackage{tikz-cd}				% commutative diagrams
\usetikzlibrary {arrows.meta,backgrounds,fit,positioning}
\usepackage{extarrows}

\usepackage{tikz} 				% diagrams
\usetikzlibrary{decorations.pathmorphing}
\tikzstyle{vertex}=[circle, draw, inner sep=0pt, minimum size=6pt]

%Layout & Design
\usepackage{fancyhdr}
\setlength{\headheight}{14pt}
\usepackage{xcolor}
\usepackage{graphicx}
\usepackage{enumitem}
% set global enumeration style to (1)
\setlist[enumerate,1]{label=(\arabic*)}
\usepackage{caption}
\usepackage{bbold}

\usepackage[hidelinks]{hyperref}

\usepackage[capitalise,nosort]{cleveref}

% Seitenränder
\usepackage[margin=.9in]{geometry}

\numberwithin{equation}{subsection}

\newtheorem{Theorem}[equation]{Theorem}
\newtheorem{TheoremA}{Theorem}

\newtheorem{Corollary}[equation]{Corollary}
\newtheorem{Proposition}[equation]{Proposition}
\newtheorem{Lemma}[equation]{Lemma}

\theoremstyle{definition}
\newtheorem{Definition}[equation]{Definition}
\newtheorem{Notation}[equation]{Notation}
\newtheorem{Example}[equation]{Example}

\newtheorem{Construction}[equation]{Construction}

\theoremstyle{plain}

\theoremstyle{remark}
\newtheorem{Remark}[equation]{Remark}

% Cleveref definitions
\crefname{Lemma}{Lemma}{Lemmas}
\crefname{Theorem}{Theorem}{Theorems}
\crefname{TheoremA}{Theorem}{Theorems}
\crefname{Definition}{Definition}{Definitions}
\crefname{Notation}{Notation}{Notations}
\crefname{Proposition}{Proposition}{Propositions}
\crefname{Remark}{Remark}{Remarks}
\crefname{Corollary}{Corollary}{Corollaries}
\crefname{Example}{Example}{Examples}
\crefname{Construction}{Construction}{Constructions}

% \usepackage[backend=bibtex]{biblatex}
% \bibliography{literature}
% \usepackage{imakeidx}
% \makeindex

%===Introduction==================================================
\newcommand{\Fib}{\mathrm{Fib}}             % disc.fib.
\newcommand{\twoFib}{2\mathcal{F}\mathrm{ib}} % 2-fib
\newcommand{\oneDFib}{\mathrm{DFib}}          % 1-cat of disc.dbl.fib
\newcommand{\DblFib}{\mathrm{DblFib}} % dbl.fib.

%===redefinition of symbols=======================================
\renewcommand{\epsilon}{\varepsilon}
\renewcommand{\phi}{\varphi}

%===category theory================================
%-----general--------------------------------------
\DeclareMathOperator{\id}{id}

\DeclareMathOperator{\Prof}{Prof}

%------symbols for categories----------------------
\renewcommand{\AA}{\mathcal{A}}

\newcommand{\CC}{\mathcal{C}}
\newcommand{\DD}{\mathcal{D}}

\newcommand{\EE}{\mathcal{E}}

\newcommand{\Cat}{\mathrm{Cat}}
\newcommand{\Set}{\mathrm{Set}}

%-----symbols for double categories---------------

\newcommand{\dC}{\mathbb{C}}
\newcommand{\dD}{\mathbb{D}}
\newcommand{\dE}{\mathbb{E}}
\newcommand{\dF}{\mathbb{F}}
\newcommand{\dH}{\mathbb{H}}
\newcommand{\dV}{\mathbb{V}}
\newcommand{\HH}{\mathbf{H}}

%---fibrations-------------------------------------

% \newcommand{\Fib}{\mathrm{Fib}}
\newcommand{\fib}{\mathrm{fib}}
\newcommand{\TSdiscFib}{\mathrm{TSFib}} % two-sided discrete fib
\newcommand{\dFib}{\mathrm{D}\mathcal{F}\mathrm{ib}}
\newcommand{\dSet}{\mathbb{S}\mathrm{et}}
\newcommand{\dCat}{\mathbb{C}\mathrm{at}}
\newcommand{\dSpan}{\mathbb{S}\mathrm{pan}}
\newcommand{\DblCat}{\mathrm{DblCat}}
\DeclareMathOperator{\lax}{lax}
\DeclareMathOperator{\nlax}{nlax}

\newcommand{\dHom}[1]{\llbracket #1 \rrbracket}
\newcommand{\dHomLax}[1]{\llbracket #1 \rrbracket^{\lax}}
\newcommand{\dHomnLax}[1]{\llbracket #1 \rrbracket^{\nlax}}
%===2-categories===================================
\DeclareMathOperator{\op}{op}
\DeclareMathOperator{\co}{co}
\DeclareMathOperator{\coop}{coop}
\newcommand{\gro}{\textstyle{\int}}

\renewcommand{\P}[1]{\mathcal{P}_{\mathbb{#1}}} % double presheaves

 % Yoneda embedding
\newcommand{\dgro}{\int\!\!\!\!\int}
\newcommand{\ddel}{\partial\!\!\partial}
\newcommand{\bulletarrow}{{\longrightarrow\!\!\!\!\!\!\!\!\bullet\ \ }}
\newcommand{\Bulletarrow}{{\Longrightarrow\!\!\!\!\!\!\!\!\bullet\ \ }}
\newcommand{\edgesquare}[4]{\left[{#1}^{\ {#2}\ }_{\ {#3} }{#4}\right]}
\newcommand{\nodesquare}[4]{\left[{}^{{#1}}_{{#2}}\ {}^{{#3}}_{{#4}}\right]}

\DeclareMathOperator{\Ver}{Ver}
\DeclareMathOperator{\Hor}{Hor}

\title[Yoneda lemma and representation theorem
for double categories]{Yoneda lemma and representation theorem \\
for double categories}

\author{Benedikt Fr\"ohlich}
\address{Fakultät für Mathematik, Universität Regensburg, Regensburg, Germany}
\email{benedikt.froehlich@stud.uni-regensburg.de}

\author{Lyne Moser}
\address{Fakultät für Mathematik, Universität Regensburg, Regensburg, Germany}
\email{lyne.moser@ur.de}

\begin{document}

\begin{abstract}
    We study (vertically) normal lax double functors valued in the weak double category $\mathbb{C}\mathrm{at}$ of small categories, functors, profunctors and natural transformations, which we refer to as \emph{lax double presheaves}. We show that for the theory of double categories they play a similar role as 2-functors valued in $\mathrm{Cat}$ for 2-categories.
    We first introduce representable lax double presheaves and establish a Yoneda lemma. 
    Then we build a Grothendieck construction which gives a 2-equivalence between lax double presheaves and discrete double fibrations over a fixed double category.
    Finally, we prove a representation theorem showing that a lax double presheaf is represented by an object if and only if its Grothendieck construction has a double terminal object.
\end{abstract}
\maketitle

\setcounter{tocdepth}{1}
\tableofcontents

\section{Introduction}

\subsection*{The classical story}

Universal constructions are the core of the study of category theory. Such constructions are determined by universal properties obtained by requiring that a certain presheaf is representable. A \emph{presheaf} on a category $\CC$ is a functor $X\colon \CC^{\op}\to \Set$ into the category of sets and maps, and it is said to be \emph{represented} by an object $\hat{x}$ in $\CC$ if it is isomorphic to the representable functor $\CC(-,\hat{x})\colon \CC^{\op}\to \Set$ taking as values the hom sets of~$\CC$ with target $\hat{x}$. Presheaves therefore play an important role in category theory and, for this reason, they have been extensively studied. 

One of the most fundamental results in ordinary category theory is the Yoneda lemma. This result says that, given a presheaf $X\colon \CC^{\op}\to \Set$, natural transformations $\CC(-,\hat{x})\Rightarrow X$ are completely determined by a single element of the set $X\hat{x}$, for every object $\hat{x}$ in $\CC$. More explicitly, this says that we have an isomorphism of sets 
\[ [\CC^{\op},\Set](\CC(-,\hat{x}),X)\cong X\hat{x}; \]
see e.g.~\cite[Theorem 2.2.4]{Context}. As a consequence, we obtain a Yoneda embedding $\CC\hookrightarrow \Set^{\CC^{\op}}$, allowing us to infer universal constructions in $\CC$ from an analogous setting in presheaves, where a very explicit computation of such constructions is possible.

Another approach to presheaves is given by studying them from a fibrational perspective through the Grothendieck construction. This construction encodes the structure of a presheaf $X\colon \CC^{\op}\to \Set$ in a different way by compressing all of its data into a single category $\int_\CC X$ coming with a canonical projection onto $\CC$. This projection $\int_\CC X\to \CC$ is the prototypical example of a discrete fibration. A \emph{discrete fibration} is a functor $P\colon \EE\to \CC$ such that every morphism in $\CC$ of the form $f\colon x\to Pe$ admits a unique lift $e'\to e$ in $\EE$. In fact, the Grothendieck construction induces an equivalence 
\[ \textstyle\int_\CC \colon \Set^{\CC^{\op}}\xrightarrow{\simeq} \Fib(\CC)\subseteq \Cat/\CC, \]
where $\Fib(\CC)$ denotes the full subcategory of the slice $\Cat/\CC$ of categories over $\CC$ spanned by the discrete fibrations; see e.g.~\cite[Theorem 2.1.2]{CatFibrations}.

A powerful consequence of this equivalence is that one can derive another criterion to detect representability of a presheaf by testing whether its Grothendieck construction has a terminal object; see e.g.~\cite[Proposition 2.4.8]{Context}. We refer to this result as the \emph{representation theorem}.

\subsection*{A drama in $2$-categories}

The categories that one considers in practice often have more structure. Typically, the category $\Cat$ of categories and functors further comes with a notion of natural transformations between its morphisms. This is an example of a \emph{$2$-category}, which, in addition to objects and morphisms, also have $2$-morphisms between their morphisms. In this context, morphisms between two objects in a $2$-category $\CC$ now form a category rather than a set, and so the representable presheaves of a $2$-category take values in the $2$-category $\Cat$. We refer to $2$-functors $\CC^{\op}\to \Cat$ as \emph{$2$-presheaves}. This allows for a refined version of universal properties by requiring that certain $2$-presheaves $X\colon \CC^{\op}\to \Cat$ are \emph{represented} by an object $\hat{x}$ in $\CC$ in this higher sense, i.e., there is an isomorphism between $X$ and the representable $2$-presheaf $\CC(-,\hat{x})\colon \CC^{\op}\to \Cat$. This gives a better-behaved definition of universal constructions in the $2$-categorical context, as these are now compatible with the higher structure of the $2$-categories involved.

One can then formulate a $2$-categorical version of the Yoneda lemma by upgrading isomorphisms of sets into isomorphisms of categories. It says that, given a $2$-presheaf $X\colon \CC^{\op}\to \Cat$ and an object $\hat{x}$ in $\CC$, there is an isomorphism of categories 
\[ [\CC^{\op},\Cat](\CC(-,\hat{x}),X)\cong X\hat{x}; \]
see e.g.~\cite[Lemma 8.3.16]{2Dim_Categories}. Again, this yields a Yoneda embedding $\CC\hookrightarrow [\CC^{\op},\Cat]$, which shows how the role of $\Set$ has been replaced by $\Cat$ in the $2$-categorical world. 

On the other hand, a first $2$-categorical version of the Grothendieck construction was introduced by Buckley. As shown in \cite[Theorem 2.2.11]{Buckley}, this Grothendieck construction induces a $2$-equivalence of $2$-categories 
\[ \textstyle\int_\CC\colon [\CC^{\op},\Cat]\xrightarrow{\simeq} \twoFib(\CC)\subseteq 2\Cat/\CC, \] 
where $\twoFib(\CC)$ denotes the $2$-subcategory of the slice $2\Cat/\CC$ of $2$-categories onto $\CC$ spanned by certain \emph{$2$-fibrations}; see \cite[Definition 2.1.10]{Buckley}. While this gives a fibrational perspective on $2$-presheaves, there is a priori no obvious generalization of the representation theorem in this setting. Indeed, as clingman and the second author show in \cite{2Limits}, it is not true that a $2$-presheaf~$X$ is represented by an object if and only if its Grothendieck construction $\int_\CC X$ admits a $2$-terminal object. Instead, one would need to generalize results by Gagna--Harpaz--Lanari \cite{Gagna_Harpaz_Lanari} to obtain a more appropriate characterization. 

\subsection*{The savior: double categories}

However, passing to a double categorical context allows for such a characterization. Double categories are another type of $2$-dimensional categorical structure, namely, the internal categories to $\Cat$, which have objects, two different kinds of morphisms between objects---the \emph{horizontal} and \emph{vertical} morphisms---, and $2$-dimensional morphisms called \emph{squares}. In particular, every $2$-category can be seen as a double category with only trivial vertical morphisms. Then, Grandis--Par\'e introduce in \cite[\textsection 1.2]{persistentflexible} a double categorical Grothendieck construction which takes a $2$-presheaf $X\colon \CC^{\op}\to \Cat$ to a double category $\dgro_\CC X$ coming with a canonical projection onto $\CC$ (seen as a double category). Analogously to ordinary categories, this projection $\dgro_\CC X\to \CC$ is a \emph{discrete double fibration}, i.e., the induced functors on the categories of objects and horizontal morphisms, and on the categories of vertical morphisms and squares are discrete fibrations of categories. Then, in \cite[Theorem 5.1]{internal_GC}, the second author with Sarazola--Verdugo show that this double Grothendieck construction also induces a $2$-equivalence of $2$-categories 
\[\textstyle \dgro_\CC\colon [\CC^{\op},\Cat]\xrightarrow{\simeq} \dFib(\CC)\subseteq \DblCat_v/\CC, \]
where $\dFib(\CC)$ denotes the $2$-full $2$-subcategory of $\DblCat_v/\CC$ spanned by the discrete double fibrations. Here, $\DblCat_v$ denotes the $2$-category of double categories, double functors, and vertical transformations; see 
\cref{not: 2-cat DblCat}.

Using this double Grothendieck construction, a representation theorem can now be formulated by saying that a $2$-presheaf is represented by an object if and only if its double Grothendieck construction admits a double terminal object; see \cite[Theorem 6.8]{clingmanmoser2022limits} and \cite[Theorem 6.12]{internal_GC}. Since these results already require the language of double categories, one might wonder whether the whole situation extends to double categories and a suitable notion of double presheaves.

\subsection*{The story continues for double categories}

In the double categorical world, Par\'e \cite{Yoneda_double_cats} and Fiore--Gambino--Kock \cite{fioregambinokock} prove a Yoneda lemma for ``set-valued'' double presheaves. In this context, the target of the double presheaves is given by the (weak) double category $\dSet$ of sets, maps, spans, and maps of spans. Given a double category $\dC$, the representable presheaf at an object $\hat{x}$ in  $\dC$ is then given by associating to each object $x$ in $\dC$ the set of horizontal morphisms $x\to \hat{x}$ in $\dC$. Moreover, to each vertical morphism $u\colon x\bulletarrow x'$ in $\dC$, one can associate a span of sets whose middle object is the set of all squares in $\dC$ of the form
\begin{equation} \label{square intro}
    \begin{tikzcd}
        x \arrow[d,"\bullet" marking,"u"'] \arrow[r, "f"] \arrow[rd, "\alpha", phantom] & \hat{x} \arrow[d,"\bullet" marking, equal] \\
        x' \arrow[r, "f'"']  & \hat{x}  
    \end{tikzcd}
\end{equation}
However, such an assignment fails to be functorial in vertical morphisms, and therefore this forces the correct notion of double presheaves to be lax in the vertical direction. Hence a \emph{lax double presheaf} is a lax double functor $\dC^{\op}\to \dSet$. Using this notion, the Yoneda lemma proven as \cite[Theorem 2.3]{Yoneda_double_cats} and \cite[Proposition 3.10]{fioregambinokock}, respectively, shows that, for every object $\hat{x}$ in $\dC$, there is an isomorphism of sets
\[ [\dC^{\op},\dSet]^{\lax}(\dC(-,\hat{x}),X)\cong X\hat{x}. \]
However, since those presheaves only detect sets of morphisms, when taking $\dC=\CC$ to be a $2$-category, there is no obvious way on how to retrieve the $2$-categorical Yoneda lemma from this statement.  

In this paper, we want to enhance the structure of those double presheaves by considering ``category-valued'' double presheaves instead. For this, we consider the (weak) double category $\dCat$ of categories, functors, profunctors, and natural transformations, in place of $\dSet$. Then, the representable presheaf at an object $\hat{x}$ in $\dC$ will now associate to each object $x$ in $\dC$ the category $\dC(x,\hat{x})$ of horizontal morphisms $x\to \hat{x}$ and globular squares in $\dC$ of the form
\[
    \begin{tikzcd}
        x \arrow[d,"\bullet" marking,equal] \arrow[r, "f"] \arrow[rd, "\alpha", phantom] & \hat{x} \arrow[d,"\bullet" marking, equal] \\
        x \arrow[r, "f'"']  & \hat{x}  
    \end{tikzcd}
\]
Then, similarly to before, to each vertical morphism $u\colon x\bulletarrow x'$ in $\dC$, one can associate a profunctor which assigns to any two horizontal morphisms $f\colon x\to \hat{x}$ and $f'\colon x'\to \hat{x}$, the set of squares of the form \eqref{square intro}. While this assignment is still not compatible with vertical composition (see \cref{rem: repr. not natural in vert. morphisms}), by adding the extra structure of a category to the values of the representable presheaf, it now becomes \emph{normal}, i.e., it preserves vertical identities strictly. Therefore, we define \emph{lax double presheaves} as normal lax double functors $\dC^{\op}\to \dCat$.

We then obtain the following Yoneda lemma, which appears as \cref{thm:Yoneda}. Here we denote by 
$[\dC^{\op},\dCat]^{\nlax}$ the $2$-category of (normal) lax double presheaves, horizontal transformations, and globular modifications; see \cref{not: PC}.

\begin{TheoremA}\label{intro: Yoneda}
    Given a double category $\dC$, an object $\hat{x}$ in $\dC$, and a lax double presheaf $X\colon\dC^{\op}\to\dCat$, there is an isomorphism of categories
    \[
   [\dC^{\op},\dCat]^{\nlax}(\dC(-,\hat{x}),X)\cong X\hat{x},
    \]
    which is $2$-natural in $\hat{x}$ and $X$.
\end{TheoremA}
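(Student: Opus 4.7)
The plan is to adapt the classical proof of the Yoneda lemma by constructing an explicit inverse to the ``evaluation at $\id_{\hat x}$'' functor. On one side, I would define $\Phi$ to send a horizontal transformation $\alpha\colon\dC(-,\hat x)\Rightarrow X$ to the object $\alpha_{\hat x}(\id_{\hat x})\in X\hat x$ and a modification $\mu$ to the morphism $(\mu_{\hat x})_{\id_{\hat x}}$ in $X\hat x$; this is clearly a functor.

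For the backward map $\Psi$, given $a\in X\hat x$, I would reconstruct a horizontal transformation $\alpha^a$ by the usual Yoneda recipe: its functor components are $\alpha^a_x\colon\dC(x,\hat x)\to Xx$, $f\mapsto (Xf)(a)$, with action on a globular square $\beta$ given by the component of $X\beta$ at $a$. Its vertical square-components are forced: for a vertical morphism $u\colon x\bulletarrow x'$ and a square $\alpha$ of the shape \eqref{square intro}, the component $\alpha^a_u$ sends $\alpha$ to $(X\alpha)(a)$, viewed as an element of the profunctor $Xu$ at $\bigl((Xf)(a),(Xf')(a)\bigr)$ via the normality isomorphism $X\id_{\hat x}\cong \id_{X\hat x}$. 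On a morphism $a\to b$ in $X\hat x$ the construction is analogous and yields a globular modification $\alpha^a\to\alpha^b$.

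The bulk of the argument then consists of three verifications. First, that $\alpha^a$ really is a horizontal transformation of normal lax double presheaves: horizontal naturality $\alpha^a_x\circ\dC(g,\hat x)=Xg\circ\alpha^a_y$ will follow from the contravariant functoriality $X(f\circ g)=Xg\circ Xf$, and coherence of the square-components against vertical composition of vertical morphisms in $\dC$ and against horizontal composition of squares should follow from the corresponding coherences of $X$, using that the laxity comparison cells of the representable are trivial when evaluated at $\id_{\hat x}$. Second, that the two round trips are identities: $\Phi\Psi(a)=(X\id_{\hat x})(a)=a$ by normality, and $\Psi\Phi(\alpha)=\alpha$ unfolds to $\alpha_x(f)=(Xf)(\alpha_{\hat x}(\id_{\hat x}))$, which is exactly horizontal naturality of $\alpha$ at $f\colon x\to\hat x$, together with the analogous equation on square-components obtained by applying square-naturality of $\alpha$ to the pasting of the given square with the identity square at $\id_{\hat x}$. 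Third, the $2$-naturality in $X$ is immediate from the definition, while naturality in $\hat x$ uses the functoriality of the representable in its second argument.

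The step I expect to be the main obstacle is the first one: checking that the square-components $\alpha^a_u$ assemble into a coherent horizontal transformation, in particular their compatibility with vertical composition of vertical morphisms of $\dC$. Unlike in the ordinary or $2$-categorical Yoneda lemma, this requires a careful matching of the laxity comparison cells of $X$ with those of the representable, and it is precisely here that the normality of both double presheaves is essential to collapse the right cells to identities. Once this double-categorical bookkeeping is carried out, the remainder of the proof is routine.
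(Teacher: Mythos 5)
Your proposal is correct and follows essentially the same route as the paper: the forward map is evaluation at $1_{\hat{x}}$, the inverse sends $a$ to the transformation $g\mapsto Xg(a)$ with vertical components $\eta\mapsto (X\eta)_{a,a}(1_a)$ (using normality to identify $Xe_{\hat{x}}$ with the hom profunctor so that $1_a$ is available), and the bulk of the work is exactly the coherence check against vertical composition that you single out. The only quibble is that the comparison cells that must be matched there are those of $X$ (via naturality of $\mu_{u,u'}$ and normality of $X$), not those of the representable; otherwise the plan matches the paper's proof.
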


This gives a $2$-categorical enhancement of the Yoneda lemma for double categories shown by Paré and Fiore--Gambino--Kock. Moreover, by taking $\dC=\CC$ to be a $2$-category, we retrieve as a straightforward consequence the Yoneda lemma for $2$-categories mentioned above, as shown in \cref{Yoneda for 2-categories}.

Continuing the story, Par\'e introduces in \cite[\textsection 3.7]{Yoneda_double_cats} a Grothendieck construction for ``set-valued'' lax double presheaves $X\colon\dC^{\op}\to\dSet$ in the form of double categories $\dgro_\dC X$ with a canonical projection onto $\dC$. Lambert then shows in \cite[Lemma 2.10]{discrete_double_fibrations} that this projection $\dgro_\dC X\to \dC$ is a discrete double fibration, and in \cite[Theorem 2.27]{discrete_double_fibrations} that the Grothendieck construction induces an equivalence of categories 
\begin{equation} \label{Lambert eq}
\textstyle
\dgro_\dC\colon [\dC^{\op},\dSet]^{\lax}\xrightarrow{\simeq}\oneDFib(\dC)\subseteq \DblCat/\dC, 
\end{equation}
where $\oneDFib(\dC)$ denotes the full ($1$-)subcategory of $\DblCat/\dC$ spanned by the discrete double fibrations.

In this paper, we extend Paré's Grothendieck construction to our ``category-valued'' lax double presheaves. Given a lax double presheaf $X\colon\dC^{\op}\to\dCat$, we also build in \cref{def: dgro} a double category $\dgro_\dC X$ coming with a canonical projection onto $\dC$. Surprisingly, despite the fact that $X$ is a (normal) lax double functor rather than a strict one, its Grothendieck construction $\dgro_\dC X$ is an actual strict double category; note that this already happens for Par\'e's Grothendieck construction (see \cite[Theorem 3.8]{Yoneda_double_cats}). Even though we now consider presheaves valued in categories rather than in sets, the projection $\dgro_\dC X\to\dC$ is still a discrete double fibration. 

Furthermore, by extending the Grothendieck construction to a $2$-functor, we can then show the following equivalence with discrete double fibrations, which appears as \cref{thm: grothendieck equivalence}.

\begin{TheoremA}\label{intro: Grothendieck equivalence}
    Given a double category $\dC$, the Grothendieck construction induces a $2$-equivalence of $2$-categories
    \[
    \textstyle\dgro_\dC\colon [\dC^{\op}, \dCat]^{\nlax} \xrightarrow{\simeq}\dFib(\dC)\subseteq \DblCat_v/\dC,
    \]
    which is pseudo-natural in $\dC$.
\end{TheoremA}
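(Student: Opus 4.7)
The plan is to exhibit an explicit pseudo-inverse $\Phi_\dC\colon\dFib(\dC)\to[\dC^{\op},\dCat]^{\nlax}$ to $\dgro_\dC$ and then verify that the two composites are 2-naturally isomorphic to the identity, extending Lambert's 1-categorical equivalence \eqref{Lambert eq} to the category-valued, 2-categorical setting. The first step is to check that the construction of \cref{def: dgro} is functorial at the 2-level: a horizontal transformation $\alpha\colon X\Rightarrow Y$ of lax double presheaves assembles fiberwise into a double functor $\dgro_\dC X\to\dgro_\dC Y$ over $\dC$, and a modification $\Theta\colon\alpha\Rrightarrow\beta$ into a vertical transformation between such double functors; the 2-functor axioms are a routine unpacking of definitions.

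To construct $\Phi_\dC$, let $P\colon\dE\to\dC$ be a discrete double fibration. Define $\Phi_\dC P$ on an object $x$ of $\dC$ to be the fiber category $P^{-1}(x)$ whose objects are elements of $\dE$ over $x$ and whose morphisms are horizontal morphisms of $\dE$ over $\id_x$. On a horizontal morphism $f\colon x\to y$ of $\dC$, the unique horizontal lifting property yields a functor $f^*\colon P^{-1}(y)\to P^{-1}(x)$. On a vertical morphism $u\colon x\bulletarrow x'$, the unique square lifting property produces a profunctor $(\Phi_\dC P)(u)$ whose value at a pair of fiber elements is the set of vertical morphisms in $\dE$ over $u$ connecting them, with functorial action given by the unique lifts of globular squares. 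Normality is automatic, and the laxity comparators for vertical composition arise from unique decomposition of square lifts. An analogous procedure promotes $\Phi_\dC$ to a 2-functor, sending double functors over $\dC$ to horizontal transformations and vertical transformations to modifications.

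The bulk of the work is to check that $\Phi_\dC\dgro_\dC\cong\id$ and $\dgro_\dC\Phi_\dC\cong\id$ as 2-natural isomorphisms. The first holds essentially by construction: the fiber of $\dgro_\dC X\to\dC$ over $x$ is $Xx$ itself, and its profunctor structure recovers the laxity data of $X$. The second relies on the fact that the discrete double fibration conditions recover every cell of $\dE$ uniquely from its projection together with the associated fiber datum. I expect the main obstacle to be verifying that these isomorphisms are natural in all four kinds of cells---horizontal transformations, modifications, double functors over $\dC$, and vertical transformations---and that the correspondences respect composition strictly; this is delicate because the source 2-category involves non-identity laxity comparators while the target requires strict commutativity over $\dC$, and one must check that the unique lifts provided by the discreteness conditions interact coherently with the laxity data of the presheaves.

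Finally, for pseudo-naturality in $\dC$, given a double functor $F\colon\dD\to\dC$ one considers the restriction 2-functor $F^*\colon[\dC^{\op},\dCat]^{\nlax}\to[\dD^{\op},\dCat]^{\nlax}$ induced by precomposition with $F^{\op}$ and the pullback 2-functor $F^*\colon\dFib(\dC)\to\dFib(\dD)$. A direct comparison of universal properties---an element of $\dgro_\dD(X\circ F^{\op})$ is a cell of $\dD$ together with a compatible cell of $X$ over its $F$-image, which is exactly an element of the pullback $F^*\dgro_\dC X$---produces an invertible 2-natural transformation $\dgro_\dD\circ F^*\cong F^*\circ\dgro_\dC$, whose coherence conditions reduce to strict functoriality of both pullback and the Grothendieck construction on composable double functors.
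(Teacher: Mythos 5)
Your overall strategy matches the paper's: build an explicit pseudo-inverse by taking fibers, verify that the two composites are $2$-naturally isomorphic to the identity, and obtain pseudo-naturality in $\dC$ by directly comparing $\dgro_\dD(X\circ G^{\op})$ with the pullback $G^*\dgro_\dC X$. However, there is a genuine error in your construction of the pseudo-inverse that would make the argument collapse. You define the fiber category $P^{-1}(x)$ to have as morphisms the horizontal morphisms of $\dE$ lying over $\id_x$. But for a \emph{discrete} double fibration every horizontal morphism of $\dC$ has a unique lift with prescribed target, so the only horizontal morphism over $1_x$ with target $x_-$ is $1_{x_-}$: your fibers are discrete categories, i.e., mere sets. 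With this definition $\Phi_\dC$ lands in set-valued presheaves and cannot invert $\dgro_\dC$; in particular your claim that ``the fiber of $\dgro_\dC X\to\dC$ over $x$ is $Xx$ itself'' fails, since the morphisms of $Xx$ appear in $\dgro_\dC X$ as \emph{vertical} morphisms $(e_x,s_-)$ with $s_-\in Xx(x_-,x'_-)$, not as horizontal morphisms over $1_x$. The correct definition---and the crux of the category-valued enhancement over Lambert's set-valued equivalence---takes the morphisms of $P^{-1}x$ to be the vertical morphisms of $\dE$ lying over the vertical identity $e_x$ (cf.\ \cref{prop: Px category}); the profunctor at a vertical morphism $u$ is then extracted from the two-sided discrete fibration $P^{-1}u\to P^{-1}x\times P^{-1}x'$, whose lifts are given by vertical composition in $\dE$ rather than by horizontal lifting.

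Relatedly, your claim that the laxity comparators ``arise from unique decomposition of square lifts'' is not right: discreteness provides unique lifts only in the horizontal direction, and a vertical morphism of $\dE$ over $u'\bullet u$ need not decompose as a composite of morphisms over $u$ and $u'$---this failure is precisely why the resulting presheaf is lax rather than pseudo (compare \cref{ex: no factorization}). The comparison $\mu_{u,u'}$ is instead induced by vertical composition in $\dE$, viewed as a morphism of two-sided discrete fibrations, and is in general non-invertible. Once the fibers and profunctors are set up as above, the remainder of your outline (the two $2$-natural isomorphisms and the pseudo-naturality square) does proceed as in the paper.
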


Again, by taking $\dC=\CC$ to be a $2$-category, we retrieve as a straightforward consequence the $2$-equivalence between $2$-presheaves and discrete double fibrations from \cite{internal_GC}, as shown in \cref{GC for 2-categories}.

There is now an intriguing relation between Lambert's equivalence from \eqref{Lambert eq} and our $2$-equivalence from \cref{intro: Grothendieck equivalence}. At its heart is the fact that the fibers of a discrete double fibration are categories---seen as double categories with trivial horizontal morphisms and squares---; see \cref{prop: Px category}. Therefore, to obtain an equivalence with a certain type of double presheaves, one needs to encode both the objects and the morphisms of those fibers into the presheaf. While the inverse of the equivalence \eqref{Lambert eq} only detects the underlying sets of objects of the fibers as the values of the presheaf, the morphisms are encoded in the lax unitality condition. On the other hand, the inverse of the equivalence from \cref{intro: Grothendieck equivalence} directly chooses the values of the presheaf to be the categories given by the fibers, and the lax unitality condition now becomes strict. In fact, the relation between the $2$-categories $[\dC^{\op},\dCat]^{\nlax}$ and $[\dC^{\op},\dSet]^{\lax}$ can be understood from the universal property of the adjunction developed by Cruttwell--Shulman in \cite[Proposition 5.14]{cruttwellshulman}, using that $\mathbb{M}\mathrm{od}(\dSet)=\dCat$.

The idea of enhancing ``set-valued'' lax double presheaves to ``category-valued'' ones is already studied in a paper by Cruttwell--Lambert--Pronk--Szyld, where they consider lax double \emph{pseudo} functors $\dC^{\op}\to\dSpan(\Cat)$ into the (weak) double category $\dSpan(\Cat)$ of categories, functors, spans of categories, and morphisms of spans. For such presheaves, they show in \cite[Theorem 3.45]{Double_Fibrations} that there is an equivalence of categories
\[
[\dC^{\op},\dSpan(\Cat)]^{\lax}\xrightarrow{\simeq}\DblFib(\dC)\subseteq \DblCat/\dC,
\]
where $\DblFib(\dC)$ denotes the subcategory of $\DblCat/\dC$ spanned by the \emph{double fibrations}; see \cite[Definition 3.38]{Double_Fibrations}. These double fibrations generalize discrete double fibrations in the same vein that Grothendieck fibrations generalize discrete fibrations for
ordinary categories, meaning that their fibers are double categories instead of categories. By restricting the above equivalence to discrete double fibrations, they retrieve Lambert's equivalence \eqref{Lambert eq}. Hence, this suggests that the approach taken in this paper is different from theirs.

Finally, we use the Grothendieck equivalence from \cref{intro: Grothendieck equivalence} to prove a representation theorem for double categories. Namely, we want to find a criterion for a lax double presheaf $X\colon\dC^{\op}\to\dCat$ to be \emph{represented} by an object $\hat{x}$ of $\dC$, i.e., isomorphic to the representable lax double presheaf $\dC(-,\hat{x})\colon \dC^{\op}\to \dCat$. As for the case of $2$-categories, these will be determined by double terminal objects in the Grothendieck construction. The following result appears as \cref{thm: representability theorem}.

\begin{TheoremA}\label{intro: representation theorem}
    A lax double presheaf $X\colon\dC^{\op}\to\dCat$ is represented by an object if and only if its Grothendieck construction $\dgro_\dC X$ has a double terminal object.
\end{TheoremA}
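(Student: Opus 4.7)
The plan is to combine the Yoneda lemma (\cref{intro: Yoneda}) and the Grothendieck equivalence (\cref{intro: Grothendieck equivalence}) to reduce the statement to a fibrational question, following the strategy of \cite[Theorem 6.8]{clingmanmoser2022limits} in the $2$-categorical setting. By Yoneda, an element $\hat{a}\in X\hat{x}$ corresponds uniquely to a horizontal transformation $\alpha_{\hat{a}}\colon\dC(-,\hat{x})\to X$, and $X$ is represented by $\hat{x}$ precisely when some such $\alpha_{\hat{a}}$ is invertible. By the Grothendieck equivalence, $\alpha_{\hat{a}}$ is invertible if and only if $\dgro_\dC\alpha_{\hat{a}}\colon\dgro_\dC\dC(-,\hat{x})\to\dgro_\dC X$ is an isomorphism in $\dFib(\dC)$. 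It therefore suffices to show that $\dgro_\dC X$ admits a double terminal object if and only if there exists $\hat{x}\in\dC$ and $\hat{a}\in X\hat{x}$ for which $\dgro_\dC\alpha_{\hat{a}}$ is an isomorphism over $\dC$.

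The key preliminary step is to unpack $\dgro_\dC\dC(-,\hat{x})$ explicitly and verify that it carries a canonical double terminal object, given by the pair $(\hat{x},\id_{\hat{x}})$; this construction should be identifiable with a double-categorical slice of $\dC$ over $\hat{x}$. Granting this, the forward implication is immediate: any isomorphism of double categories transports double terminal objects to double terminal objects, so if $\dgro_\dC\alpha_{\hat{a}}$ is invertible then $\dgro_\dC X$ admits a double terminal object, concretely realized as $(\hat{x},\hat{a})$. For the converse, given a double terminal object of $\dgro_\dC X$, I would project it to $\dC$ to obtain a candidate representing object $\hat{x}$ and take $\hat{a}$ as its object component in the fiber $X\hat{x}$, then consider the horizontal transformation $\alpha_{\hat{a}}$ produced via Yoneda. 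The induced map $\dgro_\dC\alpha_{\hat{a}}$ is a map of discrete double fibrations over $\dC$, and showing it is an isomorphism reduces to verifying it induces bijections on the collections of objects, horizontal morphisms, vertical morphisms, and squares lying over each object, horizontal morphism, vertical morphism, and square of $\dC$.

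These bijections should follow by confronting the universal property of a double terminal object of $\dgro_\dC X$ with the defining data of $\dgro_\dC\dC(-,\hat{x})$. Concretely, the objects of the fiber of $\dgro_\dC\dC(-,\hat{x})$ over $x\in\dC$ are horizontal morphisms $x\to\hat{x}$, whereas objects of the fiber of $\dgro_\dC X$ over $x$ can be identified, via terminality combined with the discrete fibration property, with horizontal morphisms into $(\hat{x},\hat{a})$ and hence with horizontal morphisms $x\to\hat{x}$; analogous identifications should handle the higher cells. The main obstacle will be to make precise the exact universal property of a double terminal object in this setting and to check that it provides existence and uniqueness statements matching every type of cell of $\dgro_\dC X$, i.e. objects, both kinds of morphisms, and squares. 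Once the double slice $\dgro_\dC\dC(-,\hat{x})$ is correctly identified and its universal property is aligned with that of a double terminal object of $\dgro_\dC X$, the remaining verification should reduce to a direct cell-by-cell comparison.
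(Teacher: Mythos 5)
Your proposal is correct and follows the same overall architecture as the paper: identify elements of $X\hat{x}$ with horizontal transformations out of $\dC(-,\hat{x})$ via the Yoneda lemma, transport invertibility across the Grothendieck $2$-equivalence, identify $\dgro_\dC\dC(-,\hat{x})$ with the double slice $\dC/\hat{x}$ and its canonical double terminal object $(\hat{x},1_{\hat{x}})$, and then compare $\dC/\hat{x}\to\dgro_\dC X$ with double terminality of $(\hat{x},\hat{x}_-)$. The two arguments diverge only at the final comparison. You propose a direct cell-by-cell verification: unpack the universal property of a double terminal object (unique horizontal morphisms into it from every object, unique squares into its vertical identity from every vertical morphism) and match it against the fibers of $\dgro_\dC\dC(-,\hat{x})$ and $\dgro_\dC X$ over each object and vertical morphism of $\dC$ --- note that by the paper's criterion for isomorphisms of discrete double fibrations it suffices to check these two kinds of cells, the horizontal morphisms and squares being forced by unique lifting. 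This works, and the identifications you sketch (objects of the fiber of $\dgro_\dC X$ over $x$ correspond to horizontal morphisms $x\to\hat{x}$ via $g\mapsto Xg(\hat{x}_-)$, and similarly for vertical morphisms and squares) are exactly the content of terminality. The paper instead packages this into a structural argument: for any discrete double fibration $P\colon\dE\to\dC$ and any $\hat{x}_-$ over $\hat{x}$, the induced double functor $P_{/\hat{x}_-}\colon\dE/\hat{x}_-\to\dC/\hat{x}$ is always an isomorphism, and the comparison map $\dC/\hat{x}\to\dE$ fits into a commuting triangle with the projection $\dE/\hat{x}_-\to\dE$; by two-out-of-three, the comparison is an isomorphism exactly when $\hat{x}_-$ is double terminal, which is the definition. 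The paper's route avoids the explicit enumeration of cells at the cost of one extra lemma about slices; your route is more elementary but, as you anticipate, requires carefully spelling out the universal property of a double terminal object for each kind of cell. Both are valid.
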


Again, by choosing $\dC=\CC$ to be a $2$-category, we retrieve the representation theorem for $2$-categories from \cite{clingmanmoser2022limits,internal_GC}, as shown in \cref{Representation for 2-categories}.

\subsection*{Notations}

Throughout the paper, we will assume basic knowledge about $2$-category theory. We refer the reader to \cite{2Dim_Categories} for a complete account of the theory of $2$-categories. We use the following notations:
\begin{itemize}[leftmargin=1cm]
    \item we write $\Set$ for the category of sets and maps, 
    \item we write $\Cat$ for the ($2$-)category of categories, functors, (and natural transformations),
    \item we write $2\Cat$ for the $2$-category of $2$-categories, $2$-functors, and $2$-natural transformations. 
\end{itemize}

\subsection*{Acknowledgment}

We would like to thank Denis-Charles Cisinski, Johannes Glossner, and Nima Rasekh for helpful discussions related to the subject of this paper. We are also grateful to Nathanael Akror and David Kern for insightful comments on a first draft of this paper, and in particular for pointing to the result of Cruttwell--Shulman \cite{cruttwellshulman} shedding light on the relation between normal lax double functors into $\dCat$ and lax double functors into $\dSet$. We also thank the anonymous referee for their careful reading, their insightful comments, and for catching many typos.

During the realization of this work, the second author was a member of the Collaborative Research Centre ``SFB 1085: Higher
Invariants'' funded by the Deutsche Forschungsgemeinschaft (DFG).

\section{Background on double categories}
In this section, we give a concise introduction of the main notions of double category theory needed in this paper. We refer the reader to \cite{Grandis2019} for more details.

As our lax double presheaves will be lax double functors taking value in the weak double category $\dCat$, we start by introducing in \cref{subsec: weak dbl} the notions of weak double categories and (vertically) lax double functors between them. We further introduce horizontal and vertical transformations between such functors, as well as modifications.

By imposing strictness conditions, we retrieve in \cref{subsec: dbl} the notions of double categories, double functors, horizontal and vertical transformations between double functors, and modifications.

\subsection{Weak double categories}
\label{subsec: weak dbl}

We start by introducing (vertically) weak double categories. 

\begin{Definition} \label{def: weak double}
    A \emph{weak double category} $\dC$ consists of
    \begin{itemize}[leftmargin=1cm]
        \item objects $x,y,x',y',\ldots$, 
        \item horizontal morphisms $f\colon x\to y$, with a horizontal identity $1_x$ at each object $x$, and an associative and unital composition $gf$ for all composable horizontal morphisms $f,g$,
        \item vertical morphisms $u\colon x\bulletarrow x'$, with a vertical identity $e_x$ at each object $x$, and a composition $u'\bullet u$ for all composable vertical morphisms $u,u'$,
        \item squares $\alpha$ as depicted below, written inline as $\alpha\colon\edgesquare{u}{f}{f'}{v}\colon\nodesquare{x}{x'}{y}{y'}$ or simply as $\alpha\colon\edgesquare{u}{f}{f'}{v}$,
        \[
        \begin{tikzcd}
            x \arrow[r, "f"] \arrow[d,"\bullet" marking, "u"'] \arrow[rd, phantom, "\alpha"] & y \arrow[d,"\bullet" marking, "v"] \\
            x' \arrow[r, "f'"'] & y'              
        \end{tikzcd}
        \]
         with a horizontal identity square $1_u$ at each vertical morphism $u$, and an associative and unital horizontal composition $\beta\circ\alpha$ along vertical morphisms for all horizontally composable squares~$\alpha,\beta$, as well as a vertical identity square $e_f$ at each horizontal morphism $f$, and a vertical composition $\alpha'\bullet\alpha$ along horizontal morphisms for all vertically composable squares~$\alpha,\alpha'$. 
         
        If the vertical boundaries of a square are identities, i.e., $x=x'$, $y=y'$, $u=e_x$ and $v=e_y'$, we call such a square \emph{globular}. 
         
        \item for all composable vertical morphisms
        $x\overset{u}{\bulletarrow}x'\overset{u'}{\bulletarrow}x''\overset{u''}{\bulletarrow}x'''$, a horizontally invertible \emph{associator square}
        \[ \alpha_{u,u',u''}\colon \edgesquare{(u''\bullet u')\bullet u}{1_x}{1_{x'''}}{u''\bullet(u'\bullet u)}, \]
        \item for every vertical morphism 
        $u\colon x\bulletarrow x'$, horizontally invertible \emph{left} and \emph{right unitor squares}
        \[ \lambda_u\colon \edgesquare{u\bullet e_x}{1_x}{1_{x'}}{u} \quad \text{and} \quad \rho_u\colon \edgesquare{e_{x'}\bullet u}{1_x}{1_{x'}}{u}, \]
    \end{itemize}
    satisfying the following conditions:
    \begin{enumerate}[leftmargin=1cm]
        \item horizontal and vertical compositions of squares satisfy the interchange law, 
        \item for all composable horizontal morphisms $f,g$, we have $e_g\circ e_f=e_{gf}$, and for every object $x$, we have $e_{1_x}=1_{e_x}$,
        \item for all composable vertical morphisms
        $u,u'$, we have $1_{u'}\bullet 1_{u}=1_{u'\bullet u}$, 
        \item the associator square $\alpha_{u,u',u''}$ is natural in 
        $(u,u',u'')$, and satisfies the pentagon axiom,
        \item the unitor squares 
        $\lambda_u$ and $\rho_u$ are natural in $u$, compatible with associator squares, and for an object~$x$, one has
        $\lambda_{e_x}=\rho_{e_x}$.
    \end{enumerate}

    If the unitor squares are horizontal identities, we call $\dC$ a \emph{unitary weak double category}.
    
    For a more detailed description of the coherence conditions, we refer the reader to \cite[Definition 3.3.1]{Grandis2019}.
\end{Definition}

Next, we introduce a suitable notion of functors between weak double categories, namely the notion of a lax double functor. As horizontal composition in a weak double category is strictly associative and unital while the vertical one is not, it is not surprising that lax double functors only preserve strictly horizontal compositions and identities while, in the vertical direction, we get comparison squares.
 
\begin{Definition} \label{def: lax double}
    Given weak double categories $\dC,\dD$, a \emph{lax double functor} $X\colon\dC\to\dD$ consists of
    \begin{itemize}[leftmargin=1cm]
        \item assignments on objects, horizontal morphisms, vertical morphisms, and squares, which are compatible with sources and targets, 
        \item for all composable vertical morphisms 
        $x\overset{u}{\bulletarrow}x'\overset{u'}{\bulletarrow}x''$ in $\dC$, a \emph{composition comparison square} in $\dD$
        \[
        \mu_{u,u'}\colon\edgesquare{Xu'\bullet Xu}{1_{Xx}}{1_{Xx''}}{X(u'\bullet u)}\colon
        \nodesquare{Xx}{Xx''}{Xx}{Xx''},
        \]
        \item for every object $x$ in $\dC$, an \emph{identity comparison square} in $\dD$
        \[
        \epsilon_{x}\colon
        \edgesquare{e_{Xx}}{1_{Xx}}{1_{Xx}}{Xe_x}\colon
        \nodesquare{Xx}{Xx}{Xx}{Xx},
        \]
    \end{itemize}
    satisfying the following conditions:
    \begin{enumerate}[leftmargin=1cm]
    \item it preserves horizontal identities and horizontal compositions strictly,
    \item composition comparison squares $\mu_{u,u'}$ are natural with respect to $(u,u')$, and compatible with associator squares,
    \item identity comparison squares $\epsilon_x$ are natural with respect to $x$, and compatible with composition comparison squares and unitor squares.
    \end{enumerate}
    
    If all identity comparison squares are horizontal identities, i.e., if $X$ preserves strictly vertical identities, we call $X$ a \emph{normal lax double functor}. Consequently, the comparison squares $\mu_{u,e_{x'}}$ and $\mu_{e_x,u}$ agree with the identity.
    
   For a more detailed description of the coherence conditions, we refer the reader to \cite[Definition 3.5.1]{Grandis2019}.
\end{Definition}

Lax double functors between two weak double categories assemble into a weak double category, and we introduce here its horizontal morphisms, vertical morphisms, and squares.

\begin{Definition} \label{def: hor transf}
    Given lax double functors 
    $X,Y\colon\dC\to\dD$, a \emph{horizontal transformation} $F\colon X\Rightarrow Y$ consists of
    \begin{itemize}[leftmargin=1cm]
        \item for every object $x$ in $\dC$, a horizontal morphism 
        $Fx\colon Xx\to Yx$ in $\dD$,
        \item for every vertical morphism
        $u\colon x\bulletarrow x'$ in $\dC$, a square
        $F_u\colon\edgesquare{Xu}{F_x}{F_{x'}}{Yu}$ in $\dD$,
    \end{itemize}
     such that the components $F_x$ are natural in $x$, and the components $F_u$ are natural in $u$ and compatible with composition and identity comparison squares.

     For a more detailed description of the coherence conditions, we refer the reader to \cite[\textsection 3.5.4]{Grandis2019}.
\end{Definition}

\begin{Definition} \label{def: pseudo vert transf}
    Given lax double functors $X,X'\colon\dC\to\dD$, a \emph{colax vertical transformation} $U\colon X{\Bulletarrow}X'$ consists of 
    \begin{itemize}[leftmargin=1cm]
        \item for every object 
        $x$ in $\dC$, a vertical morphism 
        $U_x\colon Xx\bulletarrow X'x$ in $\dD$,
        \item for every horizontal morphism 
        $f\colon x\to y$ in $\dC$, a square
        $U_f\colon\edgesquare{U_x}{Xf}{X'f}{U_y}$ in $\dD$,
        \item for every vertical morphism 
        $u\colon x\bulletarrow x'$ in $\dC$, a \emph{naturality comparison square} in $\dD$
        \[
        U_u\colon
        \edgesquare{U_{x'}\bullet Xu}{1_{Xx}}{1_{X'x'}}{X'u\bullet U_x},
        \]
    \end{itemize}
    satisfying the following conditions:
    % \bfnote{There are some mistakes here: Naturality of $U_x,U_f$ doesnt make sense; }
    \begin{enumerate}[leftmargin=1cm]
        \item the components $U_f$ are compatible with composition of horizontal morphisms and horizontal identities,
        \item the components $U_u$ are natural in $u$ and compatible with composition and identity comparison squares.
    \end{enumerate}
    If the naturality comparison squares are invertible, we call $U$ a \emph{pseudo vertical transformation}.

    Moreover, if a pseudo vertical transformation is strictly natural in the vertical direction, i.e., if the natural comparison squares $U_u$ are horizontal identities, we call $U$ a \emph{vertical transformation}.
    
    For a more detailed description of the coherence conditions, we refer the reader to the transposed version of \cite[Definition 2.2]{bifunctor}. See also
    \cite[Definition 3.8.2]{Grandis2019}.
\end{Definition}

\begin{Definition}\label{def: modification}
    Given lax double functors 
    $X,X',Y,Y'\colon\dC\to\dD$, horizontal transformations $F\colon X\Rightarrow Y$, $F'\colon X'\Rightarrow Y'$ and colax vertical transformations 
    $U\colon X\Bulletarrow X'$, $V\colon Y\Bulletarrow Y'$, then a \emph{modification} 
    $A\colon\edgesquare{U}{F}{F'}{V}$ consists of, for every object $x$ in $\dC$, a square in $\dD$
    \[
    A_x\colon
    \edgesquare{U_x}{F_x}{F'_x}{V_x}\colon
    \nodesquare{Xx}{X'x}{Yx}{Y'x}
    \]
    satisfying the following conditions:
    \begin{enumerate}[leftmargin=1cm]
        \item (horizontal compatibility) 
        for every horizontal morphism $f\colon x\to y$ in $\dC$, one has
        \[
        A_y\circ U_f = V_f \circ A_x,
        \]
        \item (vertical compatibility)
        for every vertical morphism 
        $u\colon x\bulletarrow x'$ in $\dC$, one has
        \[
        (F'_u\bullet A_x)\circ U_u=
        V_u\circ(A_{x'}\bullet F_u).
        \]
        % \bfnote{Replaced "$A_y$" here with "$A_{x'}$".}
        \end{enumerate}

    If the vertical boundaries of a modification are identities, i.e., $X=X'$, $Y=Y'$, $U=e_X$ and $V=e_Y$, we call such a modification \emph{globular}. 
\end{Definition}

As claimed above, lax double functors assemble into a weak double category. As we will not make use of the vertical structure of this double category throughout the paper, we state the result as it appears in \cite[Theorem 3.8.4]{Grandis2019}, where the vertical morphisms are given by the pseudo vertical transformation. A version with colax vertical transformations also exists, as proven in \cite[\textsection 2]{bifunctor}.

\begin{Proposition} \label{prop:laxhom}
    Given two weak double categories $\dC$ and $\dD$, there is a weak double category $\dHomLax{\dC,\dD}$ of lax double functors $\dC\to\dD$, horizontal transformations, pseudo vertical transformations and modifications.

    We denote by $\HH\dHomLax{\dC,\dD}$ its underlying $2$-category of lax double functors, horizontal transformations, and globular modifications.
\end{Proposition}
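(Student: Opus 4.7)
The plan is to promote the componentwise data in $\dD$ to a weak double category structure on $\dHomLax{\dC,\dD}$, proceeding one layer at a time. First I would declare the cells: objects are lax double functors, horizontal morphisms are horizontal transformations (\cref{def: hor transf}), vertical morphisms are pseudo vertical transformations (\cref{def: pseudo vert transf}), and squares are modifications (\cref{def: modification}). The boundary conventions already match those of a double category, so the issue is compositions and coherence.

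Next I would define compositions. For horizontal composition of horizontal transformations $F\colon X\Rightarrow Y$ and $G\colon Y\Rightarrow Z$, set $(GF)_x:=G_x\circ F_x$ and $(GF)_u:=G_u\circ F_u$ using horizontal composition of squares in $\dD$; the required naturalities and compatibilities with the comparison squares $\mu$ and $\epsilon$ of the functors involved reduce to pasting the corresponding axioms for $F$ and $G$ and applying the interchange law in $\dD$. This composition is strictly associative and unital since horizontal composition in $\dD$ is. For the vertical composition of pseudo vertical transformations $U\colon X\Bulletarrow X'$ and $U'\colon X'\Bulletarrow X''$, set $(U'\bullet U)_x:=U'_x\bullet U_x$, $(U'\bullet U)_f:=U'_f\bullet U_f$, and construct the naturality comparison square at a vertical morphism $u\colon x\bulletarrow x'$ by pasting, from left to right, an associator square of $\dD$ rearranging $(U'_{x'}\bullet U_{x'})\bullet Xu$ into $U'_{x'}\bullet(U_{x'}\bullet Xu)$, the square $1_{U'_{x'}}\circ U_u$ (i.e.\ $U_u$ tensored on the left with $U'_{x'}$), a second associator square, the square $U'_u\circ 1_{U_x}$, and a final associator square landing on $X''u\bullet(U'_x\bullet U_x)$; this composite is horizontally invertible because each factor is. Modifications compose horizontally and vertically componentwise in~$\dD$; horizontal and vertical compatibility for the composites is a direct pasting of the corresponding conditions for the two modifications being composed.

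Then I would construct the coherence data. The associator modification $\alpha_{U,U',U''}$ has as its $x$-component the associator square $\alpha_{U_x,U'_x,U''_x}$ in $\dD$, and similarly the unitor modifications $\lambda_U$ and $\rho_U$ have as components the unitors in $\dD$; the vertical unit on $X$ is the pseudo vertical transformation $e_X$ whose components are the vertical identities in $\dD$ and whose naturality comparison squares are obtained by combining the unitors of $\dD$ with the identity comparison squares $\epsilon_x$ of $X$. Checking that these assignments actually define modifications requires the horizontal compatibility (which follows from the naturality of $\alpha,\lambda,\rho$ in $\dD$ with respect to horizontal morphisms) and the vertical compatibility (which uses the naturality of $\alpha,\lambda,\rho$ in $\dD$ with respect to vertical morphisms together with the compatibility clauses in \cref{def: lax double,def: pseudo vert transf} between the naturality comparison squares $U_u$ and the composition comparison squares $\mu_{u,u'}$).

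The main obstacle is the vertical compatibility of the associator and unitor modifications and the pentagon and triangle axioms for $\dHomLax{\dC,\dD}$: unwinding these at a single object $x$ produces long pastings of associator and unitor squares in $\dD$ interleaved with the naturality comparison squares $U_u,U'_u,U''_u$ and the lax comparison squares of $X,X',X'',X'''$. Here one must use the pseudo character of the vertical transformations in a serious way, inverting some $U_u$ squares, to reduce these pastings to the pentagon, triangle, and naturality axioms already available in $\dD$ and to the coherence axioms of the lax functors. Once this diagram chase is carried out, all the remaining axioms of \cref{def: weak double} for $\dHomLax{\dC,\dD}$ hold componentwise in $\dD$, and the underlying $2$-category description of $\HH\dHomLax{\dC,\dD}$ follows by restricting to vertical identities and globular modifications.
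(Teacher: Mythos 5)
Your construction is sound, but note that the paper does not prove this proposition at all: it is stated as imported from \cite[Theorem 3.8.4]{Grandis2019} (with the colax variant deferred to \cite[\textsection 2]{bifunctor}), so your proposal is doing work the paper deliberately outsources. What you describe is exactly the standard construction that the cited source carries out: componentwise horizontal composition (strict because $\dD$ is horizontally strict), vertical composition of pseudo vertical transformations with comparison square at $u$ given by the horizontal pasting of associators with the whiskered squares $1_{U'_{x'}}\bullet U_u$ and $U'_u\bullet 1_{U_x}$, and coherence modifications whose components are the associators and unitors of $\dD$. Three small corrections and one caveat. First, in the paper's conventions the whiskering you call ``$1_{U'_{x'}}\circ U_u$'' is a \emph{vertical} composite $1_{U'_{x'}}\bullet U_u$ (composition along the shared horizontal identity $1_{X'x'}$); the five squares are then composed \emph{horizontally}, as you say, and invertibility of the result indeed follows factorwise. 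Second, the naturality comparison squares of the unit $e_X$ are just $\rho_{Xu}$ pasted against $\lambda_{Xu}^{-1}$; the identity comparison squares $\epsilon_x$ of $X$ do not enter their construction, only the verification that $e_X$ satisfies the compatibility clause of \cref{def: pseudo vert transf}. Third, invertibility of the $U_u$ is not really what makes the pentagon and triangle work --- those hold componentwise from the axioms of $\dD$, and indeed the colax version also assembles into a weak double category per \cite{bifunctor}; pseudo-ness is mainly needed so that the composite comparison squares remain invertible. The caveat is that the entire content of the proposition lives in the coherence verifications you defer to a ``diagram chase'' (vertical compatibility of the associator and unitor modifications, well-definedness of vertical composition of modifications across the rebracketing associators, interchange); your outline identifies the right diagrams but does not close them, which is acceptable only because the full verification is exactly what \cite[Theorem 3.8.4]{Grandis2019} supplies.
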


\subsection{(Strict) double categories}

\label{subsec: dbl}

To simplify computations, except for the weak double category~$\dCat$, all of our double categories will be strict. Therefore, we further present here the strict version of double categories. We refer the reader to \cite[\textsection 3.2]{Grandis2019} for a more detailed introduction to strict double categories.

\begin{Definition}
    A \emph{double category} is a weak double category, where the composition of vertical morphisms and the vertical composition of squares are strictly associative and unital. In other words, the associator and unitor squares in \cref{def: weak double} are horizontal identities.
\end{Definition}

\begin{Definition}
    A \emph{double functor} is a lax double functor between double categories which preserves vertical compositions and identities strictly. In other words, the composition and identity comparison squares in \cref{def: lax double} are horizontal identities.
\end{Definition}

\begin{Notation}
    We denote by $\DblCat$ the category of double categories and double functors.
\end{Notation}

Due to the strictness of vertical compositions, we can now define two different underlying categories of a double category. 

\begin{Definition}
    Given a double category $\dC$, we define    \begin{itemize}[leftmargin=1cm]
        \item its \emph{underlying vertical category} $\Ver_0\dC$ to be the category of objects and vertical morphisms in $\dC$,
        \item its \emph{underlying horizontal category} $\Hor_0\dC$ to be the category of objects and horizontal morphisms in $\dC$. 
    \end{itemize}
    These constructions extend to functors 
    \[ \Ver_0,\Hor_0\colon \DblCat\to \Cat. \]
\end{Definition}

\begin{Remark}\label{rem: vertical/horizontal dbl cat}
The functors $\Ver_0,\Hor_0\colon \DblCat\to \Cat$ admit left adjoints 
\[ \dV,\dH\colon \Cat\to \DblCat,  \]
respectively. We call a category in the image of $\dV$ (resp.~$\dH$) a \emph{vertical} (resp.~\emph{horizontal}) double category. 
\end{Remark}

\begin{Remark} \label{adjunction VV-HH}
    Given a double category $\dC$, we can further define a category $\Ver_1\dC$ of horizontal morphisms and squares, with composition given by the vertical composition of squares in $\dC$. We can then see $\dC$ as an internal category to $\Cat$
    \[
    \begin{tikzcd}
        \Ver_0\dC \arrow[r, "i" description] & \Ver_1\dC \arrow[l, "t", shift left=2] \arrow[l, "s"', shift right=2] & \Ver_1\dC\times_{\Ver_0\dC}\Ver_1\dC. \arrow[l, "c"']
    \end{tikzcd}
    \]
    
\end{Remark}

We can, in fact, upgrade the underlying horizontal category to a $2$-category as follows.

\begin{Definition}
    Given a double category $\dC$, we define its \emph{underlying horizontal $2$-category} $\HH\dC$ to be the $2$-category of objects, horizontal morphisms, and globular squares in $\dC$. This construction extends to a functor 
    \[ \HH\colon \DblCat\to 2\Cat. \]
\end{Definition}

\begin{Remark} \label{adjunction HH}
    The functor $\HH\colon \DblCat\to 2\Cat$ admits a left adjoint 
    \[ \dH\colon 2\Cat\to \DblCat, \]
    which sees a $2$-category as a double category with only trivial vertical morphisms.
\end{Remark}

The following result can be deduced from \cite[Lemma B2.3.15(ii)]{elephant}, using that double categories are internal categories in $\Cat$.

\begin{Proposition}
\label{prop: cartesian closed}
    The category $\DblCat$ is cartesian closed.
\end{Proposition}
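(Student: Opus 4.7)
The plan is to apply the cited lemma from the Elephant directly. The key observation, already recorded in \cref{adjunction VV-HH}, is that a double category is precisely an internal category in $\Cat$, with double functors corresponding to internal functors; this yields an identification $\DblCat \cong \Cat(\Cat)$ of categories.

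To invoke \cite[Lemma B2.3.15(ii)]{elephant}, which asserts that $\Cat(\mathcal{E})$ is cartesian closed whenever $\mathcal{E}$ is a finitely complete cartesian closed category, I would verify the two hypotheses for $\mathcal{E}=\Cat$: first, that $\Cat$ admits all finite limits, which are computed componentwise on objects and morphisms, and second, that $\Cat$ is cartesian closed, with the functor category $[\CC,\DD]$ serving as the internal hom. Both are classical. The lemma then produces a cartesian closed structure on $\Cat(\Cat)$, and hence on $\DblCat$ via the identification above.

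Since the Elephant provides the internal hom through a general construction from the internal-category data of the target, no explicit calculation of $\dHom{\dC,\dD}$ is required at this stage in order to conclude cartesian closedness. The only step of substance is the recognition $\DblCat \cong \Cat(\Cat)$, which amounts to a reformulation of \cref{def: weak double} (in its strict form) in terms of the simplicial data displayed in \cref{adjunction VV-HH}. Consequently there is no real obstacle to this proof; the cartesian closed structure on $\DblCat$ is inherited by transport from the one on $\Cat$.
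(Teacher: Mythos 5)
Your proposal coincides with the paper's own argument: the paper deduces this proposition precisely by citing \cite[Lemma B2.3.15(ii)]{elephant} together with the observation that double categories are internal categories in $\Cat$, which is finitely complete and cartesian closed. Nothing further is required.
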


\begin{Notation}
    Given double categories $\dC$ and $\dD$, we denote by $\dHom{\dC,\dD}$ the internal hom in~$\DblCat$. It is the double category whose 
    \begin{itemize}[leftmargin=1cm]
        \item objects are double functors $\dC\to \dD$, 
        \item horizontal morphisms are horizontal transformations between (strict) double functors as defined in \cref{def: hor transf}, 
        \item vertical morphisms are vertical transformations between double functors as defined in \cref{def: pseudo vert transf},
        \item squares are modifications as defined in \cref{def: modification}.
    \end{itemize}
    We refer the reader to \cite[\textsection 3.2.7]{Grandis2019} for more details.
\end{Notation}

Using these structures, we can define two different $2$-categories of double categories, by picking the $2$-morphisms to be either the horizontal or vertical transformations. Each of them will play an important role in this paper: the one with horizontal transformations will be giving the naturality of the constructions, while the one with the vertical transformations will be used to define the $2$-category of double categorical discrete fibrations. 
    
\begin{Notation}
\label{not: 2-cat DblCat}
    The category $\DblCat$ can be upgraded into a $2$-category in two different ways: 
    \begin{itemize}[leftmargin=1cm]
        \item we write $\DblCat_h$ for the $2$-category of double categories, double functors, and horizontal transformations,
        \item we write $\DblCat_v$ for the $2$-category of double categories, double functors, and vertical transformations.
    \end{itemize}
\end{Notation}

\begin{Remark} \label{2-adjunctions} 
    By \cite[Proposition 2.5]{Model_Structure}, the adjunctions $\dV\dashv \Ver_0$ from \cref{rem: vertical/horizontal dbl cat} and $\dH\dashv \HH$ from \cref{adjunction HH} extend to $2$-adjunctions
    \[ \dV\colon \Cat\leftrightarrows \DblCat_v\colon \Ver_0 \quad \text{and}\quad \dH\colon 2\Cat\leftrightarrows \DblCat_h \colon \HH. \]
\end{Remark}

We will also make use of the horizontal opposite of a double category, which we now recall.

\begin{Definition}
    We define a $2$-functor $(-)^{\op}\colon\DblCat_h^{\co}\to\DblCat_h$, where $\DblCat_h^{\co}$ is the $2$-category obtained from $\DblCat_h$ by reversing the $2$-morphisms, sending
    \begin{itemize}[leftmargin=1cm]
        \item a double category 
        $\dC$ to its \emph{horizontal opposite double category} $\dC^{\op}$ which consists of the same objects and vertical morphisms as $\dC$, but where the direction of horizontal morphisms and squares is horizontally reversed,
        \item a double functor $G\colon\dC\to\dD$ to the double functor 
        $G^{\op}\colon\dC^{\op}\to\dD^{\op}$,
        which acts as $G$ on objects, horizontal morphisms, vertical morphisms, and squares,
        \item a horizontal transformation 
        $B\colon G\Rightarrow G'\colon\dC\to\dD$ to the horizontal transformation 
        \[ B^{\op}\colon G'^{\op}\Rightarrow G^{\op}\colon\dC^{\op}\to\dD^{\op}, \]
        whose
        components are the same as those of $B$.
    \end{itemize}
    It is straightforward to check that this construction is $2$-functorial.
\end{Definition}

\section{Lax double presheaves}

In this section, we introduce our lax double presheaves as lax double functors $\dC^{\op}\to \dCat$ with~$\dC$ a (strict) double category and $\dCat$ the weak double category of categories. To introduce the weak double category $\dCat$, we need two notions of morphisms between categories. The horizontal ones will simply be the functors, and the vertical ones will be the profunctors. For this, we first recall in \cref{subsec: profunctors} the notion of profunctors and their relations to two-sided discrete fibrations. Then, in \cref{subsec: dCat}, we review the construction of the weak double category $\dCat$. 

In \cref{subsec: lax presheaf}, we introduce the notion of lax double presheaves, and finally, in \cref{subsec: repr presheaf}, we study a first class of examples of lax double presheaves, the \emph{representable lax double presheaves}.

\subsection{Profunctors and two-sided discrete fibrations}

\label{subsec: profunctors}

We start by recalling profunctors, as well as their weakly associative and unital composition.

\begin{Definition}
    A \emph{profunctor} $U\colon\CC\bulletarrow\CC'$ between categories $\CC$ and $\CC'$ is a functor \[ U\colon\CC^{\op}\times\CC'\to\Set. \]
\end{Definition}

\begin{Notation}
    We denote by $\Prof(\CC,\CC')$ the category of profunctors $\CC^{\op}\times \CC'\to \Set$ and natural transformations between them. 
\end{Notation}

Composition of profunctors is defined via coends; see e.g.~\cite[\textsection 1.2]{riehl2014categoricalhomotopy} for a definition. As the category $\Set$ is cocomplete, coends in $\Set$ always exist and are given very explicitly by the following formula, as mentioned in \cite[(1.2.4)]{riehl2014categoricalhomotopy}.

\begin{Proposition}
\label{prop: coend construction}
    Given a functor $U\colon\CC^{\op}\times\CC\to\Set$, a coend of $U$ exists and can be computed as the coequalizer
    \[
    \gro^{x} U(x,x)\cong \mathrm{coeq}
    \left(
    \begin{tikzcd}
        \bigsqcup\limits_{x\xrightarrow{f}x'\in \CC}U(x',x)
        \arrow[r,yshift = 3pt]
        \arrow[r,yshift = -3pt,swap]
        &
        \bigsqcup\limits_{x\in \CC}
        U(x,x)
    \end{tikzcd}
    \right),
    \]
    where the two parallel maps are induced by $U(f,x)$ and $U(x',f)$, respectively.
\end{Proposition}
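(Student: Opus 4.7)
My plan is to prove this by showing that the coequalizer on the right hand side satisfies the universal property of a coend of $U$. Since $\Set$ has all small coproducts and coequalizers, the displayed coequalizer exists, so the whole content is to verify universality.

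First, I would recall that a coend of $U\colon\CC^{\op}\times\CC\to\Set$ is by definition a universal cowedge, i.e., a set $C$ together with maps $\omega_x\colon U(x,x)\to C$ for every object $x$ in $\CC$, satisfying the wedge condition $\omega_x\circ U(f,x)=\omega_{x'}\circ U(x',f)$ for every morphism $f\colon x\to x'$ in $\CC$, and initial among all such data. This is standard (and the definition referenced in the excerpt via \cite{riehl2014categoricalhomotopy}).

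Next, I would construct the canonical cowedge on the displayed coequalizer. Write $C$ for the coequalizer and $q\colon\bigsqcup_{x\in\CC}U(x,x)\to C$ for the quotient map, and define $\omega_x\colon U(x,x)\to C$ to be the composite of $q$ with the coproduct inclusion at $x$. The wedge condition for $\omega_\bullet$ with respect to a morphism $f\colon x\to x'$ then translates precisely to the equation $q\circ\iota_x\circ U(f,x)=q\circ\iota_{x'}\circ U(x',f)$, which holds because the two parallel maps in the coequalizer diagram are by construction $U(f,x)$ and $U(x',f)$ (on the summand indexed by $f$).

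Finally, to check universality, I would take an arbitrary cowedge $(\omega'_x\colon U(x,x)\to D)_x$ and use the universal property of the coproduct to obtain a unique map $\bigsqcup_x\omega'_x\colon\bigsqcup_{x\in\CC}U(x,x)\to D$. The wedge condition satisfied by the $\omega'_x$ is exactly the statement that this map coequalizes the two parallel arrows, so it factors uniquely through $q\colon\bigsqcup_x U(x,x)\to C$, yielding a unique mediating map $C\to D$ compatible with all the $\omega_x$. This is a routine diagram chase and I do not foresee any real obstacle; the only thing to be careful about is bookkeeping the index sets of the coproducts (objects of $\CC$ on one side, morphisms of $\CC$ on the other) so that the identification of the two parallel maps with $U(f,x)$ and $U(x',f)$ is unambiguous on each summand.
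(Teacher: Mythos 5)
Your argument is correct: exhibiting the coequalizer as a cowedge via the coproduct inclusions and checking that cowedges out of $U$ correspond exactly to maps out of $\bigsqcup_x U(x,x)$ coequalizing the two parallel arrows is precisely the standard proof. The paper itself offers no proof of this proposition --- it simply cites the formula from the literature --- and the argument given there is the same universal-property verification you describe, so there is nothing to flag.
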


\begin{Remark}
    As universal constructions, coends are unique up to a unique isomorphism. In what follows, we choose a specific coend for each functor $U\colon\CC^{\op}\times\CC\to\Set$, and speak of \emph{the coend} of~$U$.
\end{Remark}

We can now use the notion of coends to define composition of profunctors.

\begin{Construction}\label{def: comp of prof}
    Given profunctors 
        $U\colon\CC\bulletarrow\CC'$ and $U'\colon\CC'\bulletarrow\CC''$, their composition is the profunctor $U'\bullet U\colon\CC^{\op}\times\CC''\to\Set$ sending
        \begin{itemize}[leftmargin=1cm]
            \item an object $(x,x'')$ in $\CC\times\CC''$ to the set 
            $\gro^{x'\in\CC'}U'(x',x'')\times U(x,x')$ given by the coend of the functor $U'(-,x'')\times U(x,-)\colon \CC'^{\op}\times \CC'\to \Set$, 
            \item a morphism $(f,f'')\colon (x,x'')\to (y,y'')$ in $\CC\times\CC''$ to the unique induced map between coends
            \[
            \gro^{x'\in\CC'} U'(x',f'')\times U(f,x')\colon           
            \gro^{x'\in\CC'} U'(x',x'')\times U(y,x')\to
            \gro^{x'\in\CC'} U'(x',y'')\times U(x,x').
            \]
        \end{itemize}
        This construction extends to a functor
        \[
    \bullet\colon\Prof(\CC,\CC')\times\Prof(\CC',\CC'')\to
    \Prof(\CC,\CC'').
    \]
\end{Construction}

\begin{Remark}
    Composition of profunctors is not strictly associative as the order in which we take coequalizers matters. However, as coends are unique up to a unique isomorphism, composition of profunctors is associative up to a unique invertible comparison cell.
\end{Remark}

Moreover, this composition of profunctors admits as identities the following. 

\begin{Definition}
    The \emph{identity profunctor} $e_\CC\colon \CC\bulletarrow \CC$ at a category $\CC$ is given by the hom functor $\CC(-,-)\colon\CC^{\op}\times\CC\to\Set$.
\end{Definition}

It will be convenient if composition of profunctors is strictly unital. The following result implies that we can pick $U\bullet e_\CC\coloneqq U$ and $e_{\CC'}\bullet U\coloneqq U$, for every profunctor $U\colon \CC^{\op}\times \CC'\to \Set$. 

\begin{Lemma}
\label{lem: comp of prof is unital}
    Given a profunctor 
    $U\colon\CC^{\op}\times\CC'\to\Set$ and objects $x$ in $\CC$, $x''$ in $\CC'$, there are canonical natural bijections
    \[ U(x,x'')\cong \gro^{x'\in\CC}U(x',x'') \times \CC(x,x')
    \quad \text{and} \quad
    U(x,x'')\cong \gro^{x'\in\CC}\CC'(x',x'')\times U(x,x').
    \]
    In particular, this implies that there are canonical natural isomorphisms of profunctors
    \[ U\cong U\bullet e_\CC \quad \text{and} \quad U\cong e_{\CC'}\bullet U. \]
\end{Lemma}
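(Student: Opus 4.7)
The plan is to recognize this statement as the co-Yoneda lemma (also known as the density formula or ``ninja Yoneda lemma'') applied to the representable contravariant functor $\CC(x,-)^{\op}$ and the representable covariant functor $\CC'(-,x'')$, respectively. Rather than invoking it as a black box, I would prove it directly using the explicit coequalizer description of coends from \cref{prop: coend construction}, as this will automatically give the canonical bijection and make naturality transparent.

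For the first isomorphism, I would apply \cref{prop: coend construction} to the functor $U(-,x'')\times \CC(x,-)\colon \CC^{\op}\times\CC\to \Set$, yielding a presentation of $\gro^{x'\in\CC} U(x',x'')\times \CC(x,x')$ as the coequalizer of two parallel maps between the disjoint unions $\bigsqcup_{f\colon a\to b}U(b,x'')\times \CC(x,a)$ and $\bigsqcup_{x'}U(x',x'')\times \CC(x,x')$. Explicitly, the two maps identify $(U(f,x'')(v),h)$ with $(v,f\circ h)$ for $v\in U(b,x'')$ and $h\colon x\to a$. Then I would define
\[
\phi\colon \bigsqcup_{x'\in \CC}U(x',x'')\times \CC(x,x')\to U(x,x''), \qquad (u,g)\mapsto U(g,x'')(u),
\]
which coequalizes the two parallel maps by functoriality of $U$, hence descends to a map $\bar\phi$ from the coend to $U(x,x'')$. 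Conversely, I would define $\psi\colon U(x,x'')\to \gro^{x'\in \CC}U(x',x'')\times \CC(x,x')$ by $u\mapsto [(u,\id_x)]$.

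The composite $\bar\phi\circ\psi$ sends $u$ to $U(\id_x,x'')(u)=u$. For the other composite, given $(u,g)$ with $u\in U(x',x'')$ and $g\colon x\to x'$, I would apply the coequalizer relation with $f=g$, $v=u$, $h=\id_x$ to obtain $[(U(g,x'')(u),\id_x)]=[(u,g\circ \id_x)]=[(u,g)]$, so $\psi\circ\bar\phi=\id$. This gives the first canonical bijection; the second is proven by the symmetric argument applied to the coend of $\CC'(-,x'')\times U(x,-)\colon \CC'^{\op}\times\CC'\to\Set$, defining $\phi'(v,g'')\coloneqq U(x,g'')(v)$ and $\psi'(u)\coloneqq [(\id_{x''},u)]$.

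Finally, to upgrade these bijections to isomorphisms of profunctors $U\cong U\bullet e_\CC$ and $U\cong e_{\CC'}\bullet U$, I would check naturality in $(x,x'')$. This reduces to the observation that the maps $\bar\phi$ and $\psi$ defined above are visibly given by $U$-functoriality plus insertion of identities, and the action of a morphism $(f,f'')\colon(x,x'')\to(y,y'')$ on both sides is induced by $U(f,x'')$, $U(y,f'')$, and precomposition/postcomposition with $f$ and $f''$, so the required squares commute on representatives and therefore on the coequalizer. No step here should be genuinely hard; the only point requiring slight care is verifying that $\phi$ coequalizes the two parallel maps, which is a direct consequence of the functoriality identity $U(g\circ f,x'')=U(f,x'')\circ U(g,x'')$.
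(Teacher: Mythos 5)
Your proposal is correct and follows essentially the same route as the paper: the paper's proof exhibits $U(x,x'')$ as the coend via the cowedge maps $\iota_{x'}(u,f)=U(f,x'')(u)$, which are exactly your $\phi$, and you simply make the verification of the universal property explicit through the coequalizer presentation and the inverse $u\mapsto[(u,\id_x)]$. All the details (the coequalizer relation, the check that $\phi$ coequalizes, both composites, and naturality) are correct.
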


\begin{proof}
    The first bijection can be shown by proving that the set $U(x,x'')$ together with the maps
    \[
    \iota_{x'}\colon U(x',x'')\times\CC(x,x')\to 
    U(x,x''), \quad (u,f)\mapsto U(f,x'')(u), 
    \]
    for objects $x'$ in $\CC$, satisfies the universal property of the coend of $U(-,x'')\times\CC(x,-)\colon\CC^{\op}\times\CC\to\Set$. The second bijection can be shown analogously.
\end{proof}

In the remainder of this section, we study the connection between profunctors and two-sided discrete fibrations, which will be useful later. For this, let us first recall the definition of a two-sided discrete fibration.

\begin{Definition}
    Given a functor $P\colon\EE\to\CC$, we say that a morphism 
    $g\colon e\to e'$ in $\EE$
    \begin{itemize}[leftmargin=1cm]
        \item is a \emph{$P$-lift} of a morphism $f\colon c\to c'$ in $\CC$ if 
    $Pg=f$, 
    \item \emph{lies in the fiber of $P$ at an object $c$ in $\CC$} if it is a $P$-lift of the identity $1_c$.
    \end{itemize}
\end{Definition}

\begin{Definition}
\index{two-sided discrete fibration}
\label{def: two-sided fib}
    A functor $(P,Q)\colon \EE\to\CC\times\CC'$ is a \emph{two-sided discrete fibration} over $\CC\times\CC'$, if the following conditions hold:
    \begin{enumerate}[leftmargin=1cm]
        \item for every object $e$ in $\EE$ and every morphism $f\colon x\to Pe$ in $\CC$, there is a unique $P$-lift of $f$ with target $e$ lying in the fiber of $Q$ at $Qe$, i.e, there is a unique morphism 
        $P^*f\colon f^*e\to e$ in~$\EE$ such that
        $P(P^*f)=f$ and $Q(P^*f)=1_{Qe}$,
        \item for every object $e$ in $\EE$ and every morphism 
        $f'\colon Qe\to x'$ in $\CC'$, there is a unique $Q$-lift of $f'$ with source $e$ lying in the fiber of $P$ at $Pe$, i.e., there is a unique 
        morphism 
        $Q_!f'\colon e\to f'_!e$ in~$\EE$ such that 
        $Q(Q_!f')=f'$ and $P(Q_!f')=1_{Pe}$,
        \item for every morphism $g\colon e\to e'$ in $\EE$, the source of the unique $P$-lift of $Pg$ agrees with the target of the unique $Q$-lift of $Qg$ and their composite is $g$, i.e., one has 
        $(Pg)^*e'=(Qg)_!e$ and $P^*(Pg)\circ Q_!(Qg)=g$.
    \end{enumerate}
\end{Definition}

\begin{Notation}
    We denote by $\TSdiscFib(\CC,\CC')$ the full subcategory of the slice category $\Cat/_{\CC\times\CC'}$ spanned by the two-sided discrete fibrations over $\CC\times\CC'$.
\end{Notation}

There is an equivalence of categories between profunctors and two-sided discrete fibrations, that we now recall. 

\begin{Construction} \label{constr: functor fib}
    We construct a functor \[ \fib\colon\TSdiscFib(\CC,\CC')\to\Prof(\CC,\CC'). \]
    It sends a two-sided discrete fibration $(P,Q)\colon\EE\to\CC\times\CC'$
    to the profunctor $\fib(P,Q)\colon \CC^{\op}\times \CC'\to \Set$ sending
    \begin{itemize}[leftmargin=1cm]
        \item an object $(x,x')$ in $\CC\times\CC'$ to the fiber 
        $(P,Q)^{-1}(x,x')$ of $(P,Q)$ at $(x,x')$,
        \item a morphism $(f,f')\colon (x,x')\to (y,y')$ in $\CC^{\op}\times\CC'$ to the map
        $(P,Q)^{-1}(x,x')\to (P,Q)^{-1}(y,y')$ given by sending $e$ to 
        $f'_!f^*e=f^*f'_!e$,
    \end{itemize}
    and a morphism of two-sided discrete fibrations
    \[
    \begin{tikzcd}
        \EE
        \arrow[rr,"F"] 
        \arrow[rd,swap,"{(P,Q)}"]
        &&
        \EE'
        \arrow[ld,"{(P',Q')}"]
        \\
        &
        \CC\times\CC'
        &
    \end{tikzcd}
    \]
    to the natural transformation $\fib(F)\colon\fib(P,Q)\Rightarrow\fib(P',Q')$ whose component at an object $(x,x')$ in $\CC\times\CC'$ is the unique induced map between fibers \[ \fib(F)_{x,x'}\coloneqq F_{x,x'}\colon (P,Q)^{-1}(x,x')\to (P',Q')^{-1}(x,x'). \]
\end{Construction}

A proof of the following result can be found in \cite[Theorem 2.3.2]{CatFibrations}.

\begin{Theorem}
\label{thm: TSdisc vs Prof}
    The functor $\fib$ induces an equivalence of categories
    \[
    \begin{tikzcd}
         \TSdiscFib(\CC,\CC')
        \arrow[r,"\fib"]
        \arrow[r,phantom,swap,yshift= -5pt,"\simeq"]
        &
       \Prof(\CC,\CC').
    \end{tikzcd}
    \]
\end{Theorem}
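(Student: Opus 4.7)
The plan is to construct an explicit pseudo-inverse functor $\mathrm{el}\colon \Prof(\CC,\CC') \to \TSdiscFib(\CC,\CC')$ via a ``two-sided category of elements'' construction, and then to verify that $\fib \circ \mathrm{el} \cong \id$ and $\mathrm{el} \circ \fib \cong \id$ naturally. This is the classical correspondence between profunctors and two-sided discrete fibrations, adapted for the chosen strict model of $\Prof(\CC,\CC')$.

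Given a profunctor $U\colon \CC^{\op}\times \CC' \to \Set$, I would define $\mathrm{el}(U)$ to be the category whose objects are triples $(x,x',u)$ with $u\in U(x,x')$, and whose morphisms $(x,x',u)\to (y,y',v)$ are pairs $(f\colon x\to y,\,f'\colon x'\to y')$ satisfying the compatibility condition $U(f,y')(v)=U(x,f')(u)$ in $U(x,y')$; composition is inherited from $\CC\times\CC'$, and the compatibility is preserved by functoriality of $U$. The projection $(P,Q)\colon \mathrm{el}(U)\to \CC\times\CC'$ is then straightforwardly a two-sided discrete fibration: for $e=(y,y',v)$ and $f\colon x\to y$ in $\CC$, the unique $P$-lift of $f$ with target $e$ in the fiber of $Q$ at $y'$ is the pair $(f,1_{y'})\colon (x,y',U(f,y')(v))\to (y,y',v)$; dually for $Q$-lifts; and any morphism $(f,f')\colon (x,x',u)\to (y,y',v)$ factors uniquely as the $P$-lift of $f$ after the $Q$-lift of $f'$, using the compatibility condition to identify the intermediate object. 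On morphisms, a natural transformation $\phi\colon U\Rightarrow U'$ yields a functor over $\CC\times\CC'$ sending $(x,x',u)$ to $(x,x',\phi_{x,x'}(u))$.

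For the verification that $\mathrm{el}$ is a pseudo-inverse of $\fib$, one direction is essentially tautological: the fiber of $\mathrm{el}(U)\to \CC\times\CC'$ over $(x,x')$ is canonically in bijection with $U(x,x')$ via $(x,x',u)\mapsto u$, and naturality in $(x,x')$ recovers the action of $U$ on morphisms by direct inspection of how the $P$- and $Q$-lifts were defined; this gives a natural isomorphism $\fib\circ\mathrm{el} \cong \id_{\Prof(\CC,\CC')}$. In the other direction, for a two-sided discrete fibration $(P,Q)\colon \EE\to \CC\times\CC'$, the functor $\mathrm{el}(\fib(P,Q))\to \EE$ over $\CC\times\CC'$ sends $(x,x',e)$, with $e\in (P,Q)^{-1}(x,x')$, to $e$; this is a bijection on objects, and a bijection on morphisms precisely by the factorization axiom in \cref{def: two-sided fib}, which states that every morphism of $\EE$ decomposes uniquely as a $P$-lift after a $Q$-lift. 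The main obstacle, as I see it, is the bookkeeping required to match these factorizations with the compatibility condition defining morphisms in $\mathrm{el}(U)$, and to check naturality of both isomorphisms in morphisms of $\TSdiscFib(\CC,\CC')$ and $\Prof(\CC,\CC')$; conceptually, however, nothing beyond the defining axioms of a two-sided discrete fibration is needed.
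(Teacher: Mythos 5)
Your proposal is correct: the two-sided category of elements $\mathrm{el}(U)$ with morphisms $(f,f')$ subject to $U(f,y')(v)=U(x,f')(u)$, the verification that its projection satisfies the three axioms of \cref{def: two-sided fib}, and the two comparison isomorphisms (the second using axiom (3) to match morphisms of $\EE$ with compatible pairs $(f,f')$) all check out, and this is the standard argument. Note that the paper itself gives no proof of this statement, deferring instead to the cited reference, so your writeup supplies exactly the details the paper omits.
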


In particular, this equivalence is compatible with precomposition and pullback, as follows. 

\begin{Proposition}
\label{prop: naturality of fib}
    Given functors $G\colon \CC\to \DD$ and $G'\colon \CC'\to \DD'$, the following diagram of functors commutes up to natural isomorphism
\[
    \begin{tikzcd}
        \TSdiscFib(\DD,\DD')\arrow[r,"{\fib}"]\arrow[r,phantom,swap,yshift= -5pt,"\simeq"]\arrow[d,swap,"{(G\times G')^*}"] &
        \Prof(\DD,\DD')\arrow[d,"{(G^{\op}\times G')^*}"] 
        \\
        \TSdiscFib(\CC,\CC')\arrow[r,"{\fib}"]\arrow[r,phantom,swap,yshift= -5pt,"\simeq"]
        &
        \Prof(\CC,\CC')
    \end{tikzcd}
    \]
    where the left-hand functor is induced by taking pullbacks along $G\times G'\colon \CC\times \CC'\to \DD\times \DD'$ and the right-hand functor is induced by precomposing along $G^{\op}\times G'\colon \CC^{\op}\times \CC'\to \DD^{\op}\times \DD'$. 
\end{Proposition}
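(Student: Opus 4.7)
The plan is to exhibit an explicit natural isomorphism between the two composite functors on every two-sided discrete fibration, and then check that this family is natural in the fibration itself. Since coends are not involved (the profunctors produced by $\fib$ are given fiberwise as sets, with functoriality via the unique lifts), this reduces to an explicit bookkeeping argument using the definition of $\fib$ from \cref{constr: functor fib} together with the construction of the pullback.

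First I would fix a two-sided discrete fibration $(P,Q)\colon\EE\to\DD\times\DD'$ and describe the pullback $(G\times G')^*\EE$ concretely. Since pullbacks in $\Cat$ are computed level-wise, its objects are pairs $(e,(c,c'))$ with $Pe=Gc$ and $Qe=G'c'$, and its morphisms are pairs $(g,(f,f'))$ with $Pg=Gf$ and $Qg=G'f'$. I would then check that the induced functor $(P',Q')\colon (G\times G')^*\EE\to\CC\times\CC'$ is again a two-sided discrete fibration: the $(P')$-lift of a morphism $f\colon c\to c''$ in $\CC$ with target $(e,(c'',c'))$ and lying in the fiber of $Q'$ at $c'$ is given by the pair $(P^*(Gf),(f,1_{c'}))$, and symmetrically for $(Q')$-lifts; property~(3) of \cref{def: two-sided fib} is inherited from $(P,Q)$.

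Second, I would construct on objects the obvious bijection
\[
\Phi_{(P,Q),(c,c')}\colon (P',Q')^{-1}(c,c') \xrightarrow{\cong} (P,Q)^{-1}(Gc,G'c'),\qquad (e,(c,c'))\mapsto e,
\]
which is a bijection by the level-wise description of the pullback. To show naturality in $(c,c')\in\CC^{\op}\times\CC'$, I would unpack the action of $\fib((G\times G')^*(P,Q))$ on a morphism $(f,f')$: by \cref{constr: functor fib}, it is given by the assignment $(e,(c,c'))\mapsto (f')_!f^*(e,(c,c'))$, where the push and pull are computed in the pullback. Using the explicit $(P')$- and $(Q')$-lifts described above, these are respectively $((Gf)^*e,(d,c'))$ and $((G'f')_!e,(c,d'))$, so the composite push-pull evaluates to $((G'f')_!(Gf)^*e,(d,d'))$. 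Projecting via $\Phi$ yields exactly $(G'f')_!(Gf)^*e$, which is the action of $(G^{\op}\times G')^*\fib(P,Q)$ on the same morphism. Hence $\Phi_{(P,Q)}$ is a natural transformation of profunctors, and it is an isomorphism since every component is a bijection.

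Finally, to upgrade this to a natural isomorphism of functors, I would show naturality of $\Phi$ in the two-sided discrete fibration. Given a morphism $F\colon(P,Q)\to(\widetilde P,\widetilde Q)$ in $\TSdiscFib(\DD,\DD')$, the induced morphism $(G\times G')^*F$ on pullbacks acts as $(e,(c,c'))\mapsto (Fe,(c,c'))$, and chasing through $\Phi$ shows that the two composites agree on the nose. The main obstacle here is purely notational: the argument reduces to carefully identifying how the unique $P$- and $Q$-lifts behave under pullback, but once the pullback is described level-wise there is no genuine difficulty, since no coend computations or choices of representatives are involved.
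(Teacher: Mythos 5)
Your argument is correct. Note that the paper actually states \cref{prop: naturality of fib} without proof, treating it as a standard compatibility of the equivalence of \cref{thm: TSdisc vs Prof} with pullback and precomposition, so there is no proof in the paper to compare against; your explicit fiberwise verification is exactly the argument one would write out. All the key steps check out: the strict pullback in $\Cat$ has the level-wise description you give; the lifts $(P^*(Gf),(f,1_{c'}))$ and their duals are indeed the unique lifts in the pullback (uniqueness, which you do not spell out, follows immediately from uniqueness of lifts for $(P,Q)$, since any competing lift projects to a $P$-lift of $Gf$ in the fiber of $Q$); the projection $(e,(c,c'))\mapsto e$ is a bijection on fibers; and since $\fib$ is defined fiberwise via unique lifts rather than by a coend, both the naturality in $(c,c')$ and the naturality in the fibration reduce to the identity $(G'f')_!(Gf)^*e$ on both sides, as you observe. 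The only cosmetic issue is some looseness in tracking variance for morphisms of $\CC^{\op}\times\CC'$, which does not affect the argument.
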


As a consequence, we get the following result. 

\begin{Corollary}
\label{cor: nat.trf. for TS square}
    Given two-sided discrete fibrations $(P,Q)\colon \EE\to \CC\times \CC'$ and $(R,S)\colon \mathcal{F}\to \DD\times \DD'$, a commutative square of functors of the form
\[
    \begin{tikzcd}
        \EE\arrow[r,"{F}"]\arrow[d,swap,"{(P,Q)}"] & [10pt]
        \mathcal{F}  \arrow[d,"{(R,S)}"]
        \\
        \CC\times \CC'\arrow[r,"{G\times G'}"']
        &
        \DD\times \DD'
    \end{tikzcd}
    \]
   corresponds to a natural transformation 
\[ F\colon \fib(P,Q)\Rightarrow \fib(R,S)(G^{\op}\times G')\colon \CC^{\op}\times \CC'\to \Set, \]
 whose component at an object $(x,x')$ in $\CC^{\op}\times \CC'$ is given by the unique induced map between fibers 
\[ F_{x,x'}\colon (P,Q)^{-1}(x,x')\to (R,S)^{-1}(Gx,Gx'). \]
\end{Corollary}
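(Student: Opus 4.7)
The plan is to reduce this to a combination of \cref{thm: TSdisc vs Prof} and \cref{prop: naturality of fib}, viewing the commutative square as a morphism in $\TSdiscFib(\CC,\CC')$ after a change of base.

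First, I would use the universal property of the pullback to reinterpret the given commutative square. A commutative square of the stated form is equivalent to the data of a functor $\widetilde F\colon \EE\to (G\times G')^*\mathcal{F}$ over $\CC\times\CC'$, where $(G\times G')^*\mathcal{F}$ denotes the pullback of $(R,S)$ along $G\times G'$. Since pullbacks of two-sided discrete fibrations along any functor are again two-sided discrete fibrations (this is routine from \cref{def: two-sided fib}, as the unique $P$-lifts and $Q$-lifts pull back to unique lifts), $\widetilde F$ is a morphism in $\TSdiscFib(\CC,\CC')$.

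Next, I would apply the functor $\fib\colon\TSdiscFib(\CC,\CC')\to\Prof(\CC,\CC')$ from \cref{constr: functor fib} to the morphism $\widetilde F$ to obtain a natural transformation
\[ \fib(\widetilde F)\colon \fib(P,Q)\Rightarrow \fib\bigl((G\times G')^*(R,S)\bigr). \]
By \cref{prop: naturality of fib}, there is a canonical natural isomorphism $\fib((G\times G')^*(R,S))\cong \fib(R,S)\circ(G^{\op}\times G')$, and composing with this gives the desired natural transformation $\fib(P,Q)\Rightarrow \fib(R,S)\circ(G^{\op}\times G')$.

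Finally, I would identify the components. By \cref{constr: functor fib}, the component of $\fib(\widetilde F)$ at an object $(x,x')$ is the unique induced map between fibers
\[ \widetilde F_{x,x'}\colon (P,Q)^{-1}(x,x')\to \bigl((G\times G')^*(R,S)\bigr)^{-1}(x,x'), \]
and the latter fiber is canonically identified with $(R,S)^{-1}(Gx,G'x')$ via the pullback. Tracing through the constructions, this map sends $e$ to $Fe$, recovering exactly the formula $F_{x,x'}\colon (P,Q)^{-1}(x,x')\to (R,S)^{-1}(Gx,G'x')$ asserted in the statement. The main obstacle is simply bookkeeping of the identifications through the pullback; no genuine technical difficulty is expected since everything follows from the universal properties already established.
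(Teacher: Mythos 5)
Your proposal is correct and follows essentially the same route as the paper's proof: factor the square through the pullback $(G\times G')^*\mathcal{F}$ to get a morphism $\widetilde F$ in $\TSdiscFib(\CC,\CC')$, apply $\fib$, and use \cref{prop: naturality of fib} to identify $\fib((G\times G')^*(R,S))$ with $\fib(R,S)(G^{\op}\times G')$. The only difference is that you spell out the (routine) pullback stability of two-sided discrete fibrations and the identification of fibers, which the paper leaves implicit.
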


\begin{proof}
    By the universal property of pullback, such a square of functors corresponds to a functor $\hat{F}\colon\EE\to(G\times G')^*\mathcal{F}$ over $\CC\times \CC'$ as depicted in the following diagram:
    \[
    \begin{tikzcd}
        \EE \arrow[rrd, "F",bend left] \arrow[rdd, "{(P,Q)}"',bend right] \arrow[rd, "\hat{F}", dashed] &  &  \\
              & (G\times G')^*\mathcal{F} \arrow[r] \arrow[d] 
              \arrow[rd,near start, phantom,"\lrcorner"] & \mathcal{F}\arrow[d, "{(R,S)}"]  \\
        & \CC\times\CC' \arrow[r, "G\times G'"']  & \DD\times\DD'  
\end{tikzcd}
    \]
    Using \cref{prop: naturality of fib}, the functor $\hat{F}$ over $\CC\times \CC'$ corresponds to a natural transformation
    \[
    \fib(P,Q)\Rightarrow \fib((G\times G')^*(R,S))\cong
    (G^{\op}\times G')^*(\fib(R,S))=\fib(R,S)(G^{\op}\times G').
    \]
    Note that the description of the components of this natural transformation follows directly from the definition of the functor $\fib$.
\end{proof}

Finally, similarly to profunctors one can also define a composition for two-sided discrete fibrations as follows.

\begin{Construction}
Given two-sided discrete fibrations $(P,Q)\colon \EE\to \CC\times \CC'$ and $(P',Q')\colon \EE'\to \CC'\times \CC''$, let $\mathcal{A}$ be the wide subcategory of the pullback $\EE\times_{\CC'} \EE'$ with morphisms given by 
    \[ \{ (e, g^* e') \xrightarrow{(Q_! g, P'^*g)} (g_!e, e') \mid Qe\xrightarrow{g} P'e'\in \CC' \}. \]
    We define the composition $(P',Q')\bullet (P,Q)\colon \EE\times_{\CC'} \EE'_{/\sim}\to \CC\times \CC''$ to be the unique functor obtained by the universal property of the pushout $\EE\times_{\CC'} \EE'_{/\sim}$ defined by the following commutative diagram in $\Cat$.
     \[
        \begin{tikzcd}
           \mathcal{A}  \arrow[d] \arrow[r]
           \arrow[rd,phantom,near end,"\ulcorner"]
           & \EE\times_{\CC'} \EE' \arrow[rdd,bend left,"{(P\pi_1,Q'\pi_2)}"] \arrow[d] \\
            \pi_0\mathcal{A} \arrow[rrd,bend right] \arrow[r]& \EE\times_{\CC'} \EE'_{/\sim} \arrow[rd,dashed, "{(P',Q')\bullet (P,Q)}"'] \\
            & & \CC\times \CC''
        \end{tikzcd}
        \]
    Here $\pi_i$, for $i=1,2$ denote the canonical projections from the pullback $\EE\times_{\CC'} \EE'$, and $\pi_0\mathcal{A}$ is the set of path components of the category $\mathcal{A}$ and $\mathcal{A}\to \pi_0\mathcal{A}$ is the canonical projection. Note that the composite 
    \[ \AA\to \EE\times_{\CC'} \EE'\xrightarrow{(P\pi_1,Q'\pi_2)} \CC\times \CC'' \]
    factors through $\AA\to \pi_0\AA$ since, for every morphism $g\colon Qe\to P'e'$ in $\CC'$, we have
    \[ P\pi_1(Q_! g, P'^*g)=P(Q_! g)=\id_{Pe}  \quad \text{and} \quad Q'\pi_2(Q_! g, P'^*g)=Q'(P'^*g)=\id_{Q'e'}, \]
    and so we get an outer commutative diagram, as desired.
    
    Using \cref{lem:composite is two-sided disc fib} below, this construction extends to a functor 
    \[
    \bullet\colon\TSdiscFib(\CC,\CC')\times
    \TSdiscFib(\CC',\CC'')\to\TSdiscFib(\CC,\CC'').
    \]
\end{Construction}

\begin{Remark} \label{rem: unpack comp of two sided}
    Unpacking the universal property of the pushout, we observe that the category $\EE\times_{\CC'} \EE'_{/\sim}$ is obtained from the pullback $\EE\times_{\CC'} \EE'$ by making all morphisms of $\mathcal{A}$ into identities. In particular, for every morphism $g\colon Qe\to P'e'$ in $\CC'$, the objects $(e,g^* e')$ and $(g_!e,e')$ in $\EE\times_{\CC'} \EE'$ are identified in $\EE\times_{\CC'} \EE'_{/\sim}$.
\end{Remark}

\begin{Lemma} \label{lem:composite is two-sided disc fib}
    Given two-sided discrete fibrations $(P,Q)\colon \EE\to \CC\times \CC'$ and $(P',Q')\colon \EE'\to \CC'\times \CC''$, their composition
    \[ (P',Q')\bullet (P,Q)\colon \EE\times_{\CC'} \EE'_{/\sim}\to \CC\times \CC''\] 
    is a two-sided discrete fibration.
\end{Lemma}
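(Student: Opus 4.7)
The plan is to verify the three conditions of \cref{def: two-sided fib} for the functor $(P',Q')\bullet (P,Q)\colon \EE\times_{\CC'}\EE'_{/\sim}\to \CC\times \CC''$, combining the two-sided discrete fibration properties of $(P,Q)$ and $(P',Q')$ with the identifications $(e, g^*e')\sim (g_!e, e')$ recalled in \cref{rem: unpack comp of two sided}. Throughout, I write $[e,e']$ for the class of $(e,e')$ in the quotient and $[h,h']$ for the class of a morphism $(h,h')$ of $\EE\times_{\CC'}\EE'$.

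For the existence parts of conditions (1) and (2), given $[e,e']$ and $f\colon x\to Pe$ in $\CC$, the unique $P$-lift $P^*f\colon f^*e\to e$ in $\EE$ pairs with $1_{e'}$ to produce a morphism $(P^*f,1_{e'})$ of the pullback whose class $[P^*f,1_{e'}]\colon [f^*e,e']\to [e,e']$ projects to $(f, 1_{Q'e'})$. The dual construction using $Q'_!f'$ in $\EE'$ gives the analogous $Q'$-lift. For uniqueness of the $P$-lift, suppose $[h,h']\colon [\bar e,\bar e']\to [e,e']$ is a second such lift, and set $u=Qh=P'h'$. Condition (3) for $(P,Q)$ gives $h = P^*f\circ Q_!u$ with $u_!\bar e=f^*e$, while condition (3) for $(P',Q')$, together with $Q'h'=1_{Q'e'}$ (which forces $Q'_!(Q'h')=1_{\bar e'}$), gives $h' = P'^*u$ with $u^*e'=\bar e'$. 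Therefore in the pullback one has the factorization
\[
(h,h')=(P^*f,1_{e'})\circ (Q_!u, P'^*u),
\]
where the second factor is precisely a morphism of $\mathcal{A}$ (with $e=\bar e$ and $g=u$), and hence becomes an identity in the quotient; simultaneously, $(\bar e,\bar e')=(\bar e,u^*e')\sim(u_!\bar e,e')=(f^*e,e')$. This yields $[h,h']=[P^*f,1_{e'}]$. The uniqueness in condition (2) is dual.

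For condition (3), given $[g,g']\colon [e_1,e_1']\to [e_2,e_2']$, set $u=Qg=P'g'$. The candidate $P$-lift of $Pg$ with target $[e_2,e_2']$ has source $[(Pg)^*e_2, e_2']=[u_!e_1, e_2']$, while the candidate $Q'$-lift of $Q'g'$ with source $[e_1,e_1']$ has target $[e_1,(Q'g')_!e_1']=[e_1,u^*e_2']$; the two agree in the quotient by the very relation $\sim$. To compute their composite as $[g,g']$, one factors in the pullback
\[
(g,g')=(P^*(Pg),1_{e_2'})\circ (Q_!u, g'),\qquad (Q_!u,g')=(Q_!u, P'^*u)\circ (1_{e_1}, Q'_!(Q'g')),
\]
using $g'=P'^*u\circ Q'_!(Q'g')$ from condition (3) for $(P',Q')$. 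The middle factor $(Q_!u,P'^*u)$ lies in $\mathcal{A}$ and collapses to an identity, so in the quotient $[Q_!u,g']=[1_{e_1}, Q'_!(Q'g')]$, and the overall composite is exactly the $P$-lift followed by the $Q'$-lift of $[g,g']$. The main obstacle, and the point where I expect the most care is needed, is the bookkeeping in the quotient: the existence side of each condition is immediate, but both the uniqueness of lifts and the matching of the two candidate intermediate objects in condition (3) rely on realizing every ambiguity as \emph{exactly one} morphism of $\mathcal{A}$, which then disappears upon passing to $\EE\times_{\CC'}\EE'_{/\sim}$.
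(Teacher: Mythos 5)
Your proof is correct and follows essentially the same route as the paper's: you construct the lift $[P^*f,1_{e'}]$ (and its dual), and for both uniqueness and condition (3) you use condition (3) of the constituent fibrations to factor an arbitrary morphism of the pullback through a morphism of $\mathcal{A}$, which collapses to an identity in the quotient. The only cosmetic difference is that in condition (3) you factor $(g,g')$ top-down while the paper builds the composite of the two lifts and inserts the identity $[Q_!(Qg),P'^*(Qg)]$, but these are the same computation.
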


\begin{proof}
   By definition, the functor 
    $(P',Q')\bullet(P,Q)\colon \EE\times_{\CC'} \EE'_{/\sim}\to \CC\times \CC''$ acts as $P\colon \EE\to \CC$ and $Q'\colon \EE'\to \CC''$ on each component, and so we write $(P',Q')\bullet(P,Q)=(P\pi_1,Q'\pi_2)$, where $\pi_i$, for $i=1,2$, denote the canonical projections from the pullback $\EE\times_{\CC'} \EE'$.
    
    We show that $(P\pi_1,Q'\pi_2)\colon \EE\times_{\CC'} \EE'_{/\sim}\to \CC\times \CC'$ satisfies the conditions of a two-sided discrete fibration from \cref{def: two-sided fib}. To prove (1), let $[\hat{e},\hat{e}']$ be an object in $\EE\times_{\CC'} \EE'_{/\sim}$ and $f\colon x\to P\hat{e}$ be a morphism in $\CC$. Then a $P\pi_1$-lift of 
    $f$ with target $[\hat{e},\hat{e}']$ that lies in the fiber of~$Q'\pi_2$ at $Q'\hat{e}'$ is given by the morphism in $\EE\times_\CC'\EE'_{/\sim}$
    \[
    \left[P^*f,1_{\hat{e}'}\right]\colon [f^*\hat{e},\hat{e}']\to [\hat{e},\hat{e}'].
    \]
    Note that this is well-defined as $Q(P^* f)=1_{Q\hat{e}}=1_{P'\hat{e}'}=P'(1_{\hat{e}'})$ in $\CC'$. Now let 
    \[ [g,g']\colon [e,e']\to [\hat{e},\hat{e}'],  \]
    be another $P\pi_1$-lift of $f$ that lies in the fiber of $Q'\pi_2$ at $Q'\hat{e}'$, i.e., a morphism $[g,g']$ in $\EE\times_{\CC'}\EE'_{/\sim}$ such that $Pg=f$ and $Q'g'=1_{Q'\hat{e}'}$. By definition of the pullback $\EE\times_{\CC'}\EE'$, we further have $Qg=P'g'$ in~$\CC'$. Using condition (3) of \cref{def: two-sided fib} for the two-sided discrete fibrations $(P,Q)$ and $(P',Q')$, we have factorizations
    \[ g= 
        P^*(Pg)\circ Q_!(Qg)=P^*f\circ Q_!(Qg) \]
        and 
        \[ g'= P'^*(P'g')\circ Q'_!(Q'g')=
        P'^*(Qg)\circ Q'_!(1_{Q'\hat{e}'})=
        P'^*(Qg)\circ 1_{\hat{e}'}=P'^*(Qg), \]
        where we used that $Q'_!(1_{Q'\hat{e}'})=1_{\hat{e}'}$ by unicity of lifts.
    Since the morphism $[Q_!(Qg),P'^*(Qg)]$ is, by definition, an identity in $\EE\times_{\CC'}\EE'_{/\sim}$, we have that 
    
    \[ [g,g']=[P^*f\circ Q_!(Qg),P'^*(Qg)]=[P^*f,1_{\hat{e'}}]\circ [Q_!(Qg),P'^*(Qg)]=[P^*f,1_{\hat{e}'}]. \]
    This shows that $[P^*f,1_{\hat{e}'}]$ is the unique such lift. Condition (2) can be shown analogously.
    
    To show (3), let 
    $[g,g']\colon [e,e']\to [\hat{e},\hat{e}']$ be a morphism in $\EE\times_{\CC'}\EE'_{/\sim}$. Then the unique lifts of $Pg$ and $Q'g'$ provided by conditions (1) and (2) are the morphisms in $\EE\times_{\CC'}\EE'_{/\sim}$
    \[
    [P^*(Pg),1_{\hat{e}'}]\colon [(Pg)^*\hat{e},\hat{e}']\to
    [\hat{e},\hat{e}']
    \quad \text{and}\quad 
    [1_e,Q'_!(Q'g')]\colon [e,e']\to[e,(Q'g')_!e']. \]
    Recall that, by definition of the pullback $\EE\times_{\CC'}\EE'$, we have $Qg=P'g'$ in $\CC'$. Using property (3) for the two-sided discrete fibrations $(P,Q)$ and $(P',Q')$, we observe that
    
    \[ (Pg)^*\hat{e}=(Qg)_!e \quad \text{and} \quad (Q'g')_!e'=(P'g')^*\hat{e}'=(Qg)^*\hat{e}', \]
    and so the source of the first lift agrees with the target of the second lift by \cref{rem: unpack comp of two sided}. Therefore we can form their composite and we get
    \begin{align*}
        [P^*(Pg),&1_{\hat{e}'}]\circ[1_e,Q'_!(Q'g')]
        &
        \\
        &=
        [P^*(Pg),1_{\hat{e}'}]\circ[Q_!(Qg),P'^*(Qg)]\circ [1_e,Q'_!(Q'g')]
        &
        [Q_!(Qg),P'^*(Qg)]\sim 1_{[(Qg)_!e,\hat{e}']}
        \\
        &=[P^*(Pg),\circ Q_!(Qg),P'^*(Qg)\circ Q'_!(Q'g')] & \text{Composition in }\EE\times_{\CC'}\EE'_{/\sim} \\
        &=
        [g,g'].
        &
        \text{(3) for }(P,Q)\text{ and }(P',Q')
    \end{align*}
   This shows the desired result.
\end{proof}

\begin{Remark}
    As for profunctors, composition of two-sided discrete fibrations is not strictly associative. However, as pullbacks and pushouts are unique up to unique isomorphism, composition of two-sided discrete fibrations is associative up to a unique invertible comparison cell.
\end{Remark}

Composition of two-sided discrete fibrations is defined in such a way that it corresponds to composition of profunctors through the equivalence $\fib$.

\begin{Proposition}
\label{prop: fib compatible with composition}
   Given categories $\CC,\CC',\CC''$, the following diagram of functors commutes up to natural isomorphism
    \[
    \begin{tikzcd}        {\TSdiscFib(\CC,\CC')\times\TSdiscFib(\CC',\CC'')} \arrow[d, "\bullet"'] \arrow[r, "{\fib \times\fib}"]\arrow[r,phantom,swap,yshift= -5pt,"\simeq"] & {\Prof(\CC,\CC')\times\Prof(\CC',\CC'')} \arrow[d, "\bullet"] \\
        {\TSdiscFib(\CC,\CC'')} \arrow[r, "{\fib}"] \arrow[r,phantom,swap,yshift= -5pt,"\simeq"]                                                 & {\Prof(\CC,\CC'').}                                         
    \end{tikzcd}
    \]
\end{Proposition}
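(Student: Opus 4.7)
The plan is to construct, for each pair of two-sided discrete fibrations $(P,Q)\colon \EE\to \CC\times \CC'$ and $(P',Q')\colon \EE'\to \CC'\times \CC''$, an isomorphism
\[
\Phi_{(P,Q),(P',Q')}\colon \fib(P',Q')\bullet \fib(P,Q)\xrightarrow{\cong}\fib\bigl((P',Q')\bullet (P,Q)\bigr)
\]
of profunctors $\CC^{\op}\times \CC''\to \Set$, and then to check that these assemble into a natural isomorphism between the two composite functors.

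First, I would evaluate both sides at an object $(x,x'')\in\CC^{\op}\times\CC''$. By \cref{prop: coend construction}, the left-hand profunctor at $(x,x'')$ is the coequalizer of
\[
\bigsqcup_{g\colon x'\to y'\in\CC'}\fib(P',Q')(y',x'')\times\fib(P,Q)(x,x')\rightrightarrows \bigsqcup_{x'\in\CC'}\fib(P',Q')(x',x'')\times\fib(P,Q)(x,x'),
\]
whose two maps identify $(g^*e',e)\sim(e',g_!e)$ for every triple with $Qe=x'$ and $P'e'=y'$. By \cref{rem: unpack comp of two sided}, the right-hand profunctor at $(x,x'')$ is the set of equivalence classes $[e,e']$ with $(e,e')\in\EE\times_{\CC'}\EE'$, $Pe=x$, $Q'e'=x''$, modulo $(e,g^*e')\sim(g_!e,e')$ for every $g\colon Qe\to P'e'$. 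Up to a reordering of the factors, both sets consist of the same pairs subject to the same generating identification, so the assignment $(e',e)\mapsto[e,e']$ descends to a bijection $\Phi_{(P,Q),(P',Q'),(x,x'')}$. I would then check naturality in $(x,x'')$: unpacking \cref{constr: functor fib} with the explicit cartesian and opcartesian lifts in the composite fibration constructed in the proof of \cref{lem:composite is two-sided disc fib}, a morphism $(f,f'')$ in $\CC^{\op}\times\CC''$ acts on the right-hand side by $[e,e']\mapsto[f^*e,f''_!e']$, which corresponds under $\Phi$ to the action $(e',e)\mapsto (f''_!e',f^*e)$ on the coend from \cref{def: comp of prof}. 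For naturality in the pair of fibrations, a morphism $(F,F')$ in $\TSdiscFib(\CC,\CC')\times\TSdiscFib(\CC',\CC'')$ induces on both sides the fiberwise map $[e,e']\mapsto[Fe,F'e']$: on the coend side by \cref{constr: functor fib} applied to each factor, and on the composite side by inspection of the induced functor between pushouts.

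The main obstacle is the matching of the two equivalence relations at the level of objects in the first step. The coequalizer presents the relation directly as that generated by the displayed maps, while the pushout in $\Cat$ defining $\EE\times_{\CC'}\EE'_{/\sim}$ collapses the category $\mathcal{A}$ to its set of path components; one has to unpack this pushout to confirm that the resulting identification on objects is precisely the relation generated by $(e,g^*e')\sim(g_!e,e')$ for $g\colon Qe\to P'e'$, with no further object identifications introduced by composites of morphisms in $\mathcal{A}$. Once this matching is established, condition (3) of \cref{def: two-sided fib} ensures that the bijection is well-defined in both directions and the naturality verifications are formal.
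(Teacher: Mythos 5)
Your proposal is correct and follows essentially the same route as the paper: evaluate both composites at an object $(x,x'')$, recognize the coend as a coequalizer via \cref{prop: coend construction} and the fiber of the composite fibration as the quotient described in \cref{rem: unpack comp of two sided}, match the two generating relations, and deduce naturality in $(x,x'')$ from the unique-lift description of the actions and naturality in the fibrations from the universal property. The relation-matching concern you flag is benign, since both the set-level coequalizer and the passage to $\pi_0\mathcal{A}$ produce exactly the equivalence relation generated by $(e,g^*e')\sim(g_!e,e')$.
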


\begin{proof}
    Given two-sided discrete fibrations $(P,Q)\colon\EE\to\CC\times\CC'$ and $(P',Q')\colon\EE'\to\CC'\times\CC''$, we want to show that there is an isomorphism in $\Prof(\CC,\CC'')$
    \begin{equation}\label{iso comp}
        \fib((P',Q')\bullet(P,Q))\cong\fib(P',Q')\bullet\fib(P,Q)
    \end{equation}
    which is natural in $(P,Q)$ and $(P',Q')$. When evaluated at an object $(x,x'')$ in $\CC\times \CC''$, this amounts to showing that there is an isomorphism in $\Set$
    \begin{equation} \label{iso eq}\fib((P',Q')\bullet(P,Q))(x,x'')\cong\fib(P',Q')\bullet\fib(P,Q)(x,x'') 
    \end{equation}
    which is natural in $(x,x'')$. By definition, we have that the set $\fib(P',Q')\bullet\fib(P,Q)(x,x'')$ is the coequalizer of the diagram
    \[ \begin{tikzcd}
        \bigsqcup\limits_{x'\xrightarrow{g}\hat{x}'\in \CC'} (P',Q')^{-1}(\hat{x}',x'')\times (P,Q)^{-1}(x,x')
        \arrow[r,yshift = 3pt]
        \arrow[r,yshift = -3pt,swap]
        &
        \bigsqcup\limits_{x'\in \CC'}
        (P',Q')^{-1}(x',x'')\times (P,Q)^{-1}(x,x')
    \end{tikzcd} \]
    where the two parallel maps are induced by $g^*\times 1_{(P,Q)^{-1}(x,x')}$ and $1_{(P',Q')^{-1}(\hat{x}',x'')}\times g_!$, respectively. Hence, to obtain the isomorphism \eqref{iso eq}, it suffices to show that the set $\fib((P',Q')\bullet(P,Q))(x,x'')$ is also a coequalizer of the above diagram. However, by \cref{rem: unpack comp of two sided}, we see that the fiber $\fib((P',Q')\bullet(P,Q))(x,x'')$ consists of the quotient of the set \[ \bigsqcup\limits_{x'\in \CC'}
        (P',Q')^{-1}(x',x'')\times (P,Q)^{-1}(x,x') \]
        by the relation $(e,g^*e')\sim(g_!e,e')$, for all morphisms $g\colon Qe\to P'e'$ in $\CC'$. But these are precisely the relations enforced by the coequalizer. Hence we have a canonical isomorphism \eqref{iso eq}. Moreover, it is natural in $(x,x'')$ since, for both profunctors, the actions of morphisms in $\CC$ and $\CC''$ are determined by taking unique lifts along $P$ and $Q'$, therefore yielding the natural isomorphism \eqref{iso comp}.   

        Finally, the naturality of the isomorphism \eqref{iso comp} in $(P,Q)$ and $(P',Q')$ follows directly from the universal property of the coequalizers.
\end{proof}

Moreover, this composition of two-sided discrete fibration admits as identities the following. 

\begin{Definition} 
The \emph{identity two-sided discrete fibration} at a category $\CC$ is the two-sided discrete fibration $(s,t)\colon \CC^{[1]}\to \CC\times \CC$, where $[1]$ denotes the category associated with the poset $\{0<1\}$, often called the \emph{walking morphism}---so $\CC^{[1]}$ is the category of morphisms and commutative squares of morphisms in $\CC$---and the functor $(s,t)$ is induced by taking source and target. 
\end{Definition}

\begin{Remark} \label{rmk: image of identity}
    Note that, given a category $\CC$, the image of the identity two-sided discrete fibration $(s,t)\colon \CC^{[1]}\to \CC\times \CC$ under the equivalence $\fib\colon \TSdiscFib(\CC,\CC)\to \Prof(\CC,\CC)$ is the identity profunctor $e_\CC=\CC(-,-)\colon \CC^{\op}\times \CC\to \Set$. 
\end{Remark}

As a consequence, we get the following. 

\begin{Lemma}
\label{prop: composition of TS is unital}
    Given a two-sided discrete fibration $(P,Q)\colon \EE\to \CC\times \CC'$, there are canonical isomorphisms in $\TSdiscFib(\CC,\CC')$
    \[
    \begin{tikzcd}
        \EE \arrow[rr, "\cong"] \arrow[rd, "{(P,Q)}"'] &  & {\CC^{[1]}\times_\CC\EE_{/\sim}} \arrow[ld, "{(P,Q)\bullet(s,t)}"] 
        \\
        & \CC\times\CC' &
    \end{tikzcd}
    \quad\quad \text{and} \quad\quad
    \begin{tikzcd}
        \EE \arrow[rr, "\cong"] \arrow[rd, "{(P,Q)}"'] &  & {\EE\times_{\CC'}{\CC'^{[1]}}_{/\sim}} \arrow[ld, "{(s,t)\bullet(P,Q)}"] \\
         & \CC\times\CC' &  
    \end{tikzcd}
    \]
\end{Lemma}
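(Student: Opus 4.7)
The plan is to derive both isomorphisms from the corresponding unitality result for profunctors, \cref{lem: comp of prof is unital}, transported across the equivalence $\fib\colon \TSdiscFib(\CC,\CC')\xrightarrow{\simeq}\Prof(\CC,\CC')$ of \cref{thm: TSdisc vs Prof}.

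First, I would apply $\fib$ to the right-hand side $(P,Q)\bullet(s,t)\colon \CC^{[1]}\times_\CC\EE_{/\sim}\to \CC\times\CC'$ of the first claimed isomorphism. By \cref{prop: fib compatible with composition}, there is a canonical natural isomorphism
\[
\fib((P,Q)\bullet(s,t))\cong \fib(P,Q)\bullet \fib(s,t)
\]
in $\Prof(\CC,\CC')$. By \cref{rmk: image of identity}, we have $\fib(s,t)=e_\CC$, and then \cref{lem: comp of prof is unital} gives the canonical isomorphism $\fib(P,Q)\bullet e_\CC\cong \fib(P,Q)$. Composing these three canonical isomorphisms yields a canonical isomorphism $\fib((P,Q)\bullet(s,t))\cong \fib(P,Q)$ in $\Prof(\CC,\CC')$.

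Since $\fib$ is an equivalence of categories by \cref{thm: TSdisc vs Prof}, it is in particular fully faithful, so every isomorphism in $\Prof(\CC,\CC')$ between the images of two two-sided discrete fibrations lifts uniquely to an isomorphism in $\TSdiscFib(\CC,\CC')$, that is, to an isomorphism of functors over $\CC\times\CC'$. Applying this to the canonical isomorphism above produces the desired first isomorphism. The second isomorphism is obtained by the completely analogous argument, using instead the canonical isomorphism $U\cong e_{\CC'}\bullet U$ from \cref{lem: comp of prof is unital}.

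The main (minor) obstacle is to make sure the isomorphisms are genuinely canonical and fit together correctly; this follows because the comparison cells from \cref{prop: fib compatible with composition} and \cref{lem: comp of prof is unital} are determined by the universal properties of coends, pullbacks, and pushouts, which are unique up to unique isomorphism. As a sanity check, one could also construct the first isomorphism directly: the functor $\EE\to \CC^{[1]}\times_\CC\EE_{/\sim}$ sending $e$ to $[1_{Pe},e]$ is over $\CC\times\CC'$, and it is inverted by $[h\colon a\to Pe,e]\mapsto h^*e$, since the relation $(h,g^*e)\sim(g\circ h,e)$ imposed in the construction of the pushout, applied with $g=h$ and $h=1_a$, yields exactly $[1_a,h^*e]=[h,e]$.
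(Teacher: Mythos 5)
Your proof is correct and follows essentially the same route as the paper: transport along the equivalence $\fib$ using \cref{prop: fib compatible with composition}, \cref{rmk: image of identity}, and \cref{lem: comp of prof is unital}, then lift the resulting isomorphism back to $\TSdiscFib(\CC,\CC')$ because $\fib$ is an equivalence. The explicit inverse in your sanity check is a nice (correct) addition but not part of the paper's argument.
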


\begin{proof}
    We show the first isomorphism, and the second can be shown analogously.

    Let $(s,t)\colon \CC^{[1]}\to \CC\times \CC$ denote the identity two-sided discrete fibration. We have the following natural isomorphisms 
    \begin{align*}
        \fib((P,Q)\bullet (s,t)) & \cong\fib(P,Q)\bullet\fib(s,t) & \text{\cref{prop: fib compatible with composition}} \\
        & \cong\fib(P,Q)\bullet e_\CC & \text{\cref{rmk: image of identity}} \\
        & \cong \fib(P,Q) & \text{\cref{lem: comp of prof is unital}}
    \end{align*}
   Since $\fib$ is an equivalence, it reflects isomorphisms and so we get an isomorphism $\CC^{[1]}\times_\CC\EE_{/\sim}\cong \EE$ over $\CC\times \CC'$, as desired.
\end{proof}

\subsection{The weak double category \texorpdfstring{$\dCat$}{Cat}}

\label{subsec: dCat}

With the notion of profunctors at hand, we are now ready to introduce the weak double category $\dCat$. 

\begin{Definition}
    We define $\dCat$ to be the unitary weak double category whose
    \begin{itemize}[leftmargin=1cm]
        \item objects are (small) categories $\CC,\CC',\DD,\DD',\ldots$, 
        \item horizontal morphisms $\CC\to \DD$ are functors $F\colon \CC\to \DD$, with their ordinary identity and composition,
        \item vertical morphisms $\CC\bulletarrow \CC'$ are profunctors $U\colon \CC^{\op}\times \CC'\to \Set$, with identity at $\CC$ given by the hom functor $e_\CC\coloneqq \CC(-,-)\colon \CC^{\op}\times \CC\to \Set$ and composition defined as in \cref{def: comp of prof}; using \cref{lem: comp of prof is unital}, we impose $e_\CC\bullet U\coloneqq U$ and $U\bullet e_{\CC'}\coloneqq U$, for every profunctor $U\colon \CC\bulletarrow \CC'$, 
        \item squares
        \[
        \begin{tikzcd}
            \CC \arrow[d,,"\bullet" marking, "U"'] \arrow[r, "F"] \arrow[rd,phantom, "\alpha"] & \DD \arrow[d,"\bullet" marking, "V"] \\
            \CC' \arrow[r, "F'"']& \DD'
        \end{tikzcd}
        \]
        are natural transformations
        \[
        \begin{tikzcd}
            \CC^{\op}\times\CC' \arrow[rr, "F^{\op}\times F'"] \arrow[rd, "U"'] &      & \DD^{\op}\times\DD' \arrow[ld, "V"] \\
            {} \arrow[rru,shorten <=50pt, near end, "\alpha", Rightarrow] & \Set & 
        \end{tikzcd}
        \]
        with horizontal composition of horizontally composable squares
        $\alpha\colon\edgesquare{U}{F}{F'}{V}$ and
        $\beta\colon\edgesquare{V}{G}{G'}{W}$ given by the natural transformation
        \[
        (F^{\op}\times F')\beta\circ \alpha\colon U\Rightarrow
        W\circ ((GF)^{\op}\times (G'F')),
        \]
        and vertical composition of vertically composable squares
        $\alpha\colon\edgesquare{U}{F}{F'}{V}$ and
        $\alpha'\colon\edgesquare{U'}{F'}{F''}{V'}$ given by the natural transformation
        \[
        \alpha'\bullet\alpha\colon
        U'\bullet U\Rightarrow (V'\bullet V)\circ (F^{\op}\times F'')
        \]
        whose component at an object $(x,x'')$ of $\CC\times \CC''$ is the unique map between coends
        \[
            \gro^{x'\in\CC'} \alpha'_{x',x''}\times \alpha_{x,x'}\colon           
            \gro^{x'\in\CC'} U'(x',x'')\times U(x,x')\to
            \gro^{y'\in\DD'} V'(y',F''x'')\times V(Fx,y').
            \]
        induced by $\alpha'_{x',x''}\colon U'(x',x'')\to V'(F'x',F''x'')$ and $\alpha_{x,x'}\colon U(x,x')\to V(Fx,F'x')$,
        \item for composable profunctors $U,U',U''$, the associator square
        \[ \alpha_{U,U',U''}\colon\edgesquare{(U''\bullet U')\bullet U}{1_{\CC}}{1_{\CC'''}}{U''\bullet (U'\bullet U)} \]
        is the natural isomorphism
        \[
        \alpha_{U,U',U''}\colon
        ((U''\bullet U')\bullet U)\xRightarrow{\cong}
        (U''\bullet (U'\bullet U))
        \]
        whose component at an object $(x,x''')$ in $\CC\times \CC'''$ is the unique isomorphism between coends
        \[ 
            (\alpha_{U,U',U''})_{x,x'''}
            \colon
            ((U''\bullet U')\bullet U)(x,x''')\xrightarrow{\cong}
            (U''\bullet (U'\bullet U))(x,x''').
        \]
    \end{itemize}
\end{Definition}

The following result is mentioned in \cite[\textsection 3.4.3]{Grandis2019}, and can be deduced from \cite[\textsection 2]{Benabou}.

\begin{Lemma}
    The construction $\dCat$ is a unitary weak double category. 
\end{Lemma}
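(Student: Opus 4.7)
The plan is to verify each item of \cref{def: weak double} for $\dCat$, using that composition of profunctors and vertical composition of squares are defined via coends, hence characterized up to unique isomorphism by universal properties. Since \cref{lem: comp of prof is unital} gives canonical isomorphisms $U\cong U\bullet e_{\CC}$ and $U\cong e_{\CC'}\bullet U$, the choice $U\bullet e_{\CC}\coloneqq U$ and $e_{\CC'}\bullet U\coloneqq U$ makes composition of profunctors strictly unital, so the left and right unitor squares can be chosen as horizontal identities; this is what makes the resulting weak double category \emph{unitary}. The remaining task is to produce the associator squares, and to verify the interchange law, the strict horizontal functoriality of vertical identity squares, the composition law for vertical identity squares, and the pentagon and naturality axioms for the associator.

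First, I would check that all the basic data are well-defined. Horizontal structure is strict because it is built from ordinary functor composition and the natural vertical composition of natural transformations. For the vertical composition of squares, I would unpack the coend formula in \cref{prop: coend construction} to see that, given $\alpha\colon\edgesquare{U}{F}{F'}{V}$ and $\alpha'\colon\edgesquare{U'}{F'}{F''}{V'}$, the components $\alpha_{x,x'}$ and $\alpha'_{x',x''}$ assemble into a cocone on the defining diagram for $(U'\bullet U)(x,x'')$, yielding the unique induced map $(\alpha'\bullet\alpha)_{x,x''}$; the resulting natural transformation lands in $(V'\bullet V)\circ(F^{\op}\times F'')$ by naturality of $\alpha$ and $\alpha'$. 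The associator $\alpha_{U,U',U''}$ is then defined componentwise as the unique comparison map between the two iterated coends computing $((U''\bullet U')\bullet U)(x,x''')$ and $(U''\bullet(U'\bullet U))(x,x''')$; its horizontal invertibility, naturality in $(U,U',U'')$, and the pentagon axiom all follow from the uniqueness clause of the universal property of coends applied to the respective composite cocones.

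Next I would check the interchange law. Given four squares composing both ways, the two orders of composition each define a cocone on the same coend diagram (built from $U,U'$ for the vertical side and $F,F',F''$ for the horizontal side), so the universal property forces them to coincide. The same strategy handles the two identity compatibilities: the equality $e_g\circ e_f=e_{gf}$ is immediate from the strictness of horizontal composition, and the equality $1_{u'}\bullet 1_u = 1_{u'\bullet u}$ (i.e.\ $1_{U'}\bullet 1_U = 1_{U'\bullet U}$) holds because the identity cocone on $U'\bullet U$ is the unique map induced by identity components. Compatibility of the associator with the unitors (the triangle identity) uses the fact that the chosen strict unitality $U\bullet e_{\CC}=U$ reduces the triangle to the identity cocone on a coend, again determined by uniqueness.

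The step I expect to be the most delicate is the bookkeeping in the pentagon axiom: one must check that two a priori different comparison maps between the four-fold iterated coends $((U'''\bullet U'')\bullet U')\bullet U$ and $U'''\bullet(U''\bullet(U'\bullet U))$ agree. The conceptual reason is again uniqueness of coends, but making this rigorous requires writing each side as a cocone on the same underlying diagram indexed by composable chains in the intermediate categories, and verifying that the five comparison maps in the pentagon all arise from restricting this common cocone; this is essentially the classical coherence argument for the bicategory $\Prof$ of categories and profunctors, which is why \cite[\textsection 2]{Benabou} is invoked. Once this is in place, all remaining conditions of \cref{def: weak double} have been verified, and $\dCat$ is a unitary weak double category.
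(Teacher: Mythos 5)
Your proposal is correct, but note that the paper does not actually prove this lemma at all: it simply records that the statement ``is mentioned in \cite[\textsection 3.4.3]{Grandis2019}, and can be deduced from \cite[\textsection 2]{Benabou}.'' What you have written is precisely the verification that those references carry out, namely the standard coend-based coherence argument for the bicategory of profunctors, transposed to the double-categorical setting: all comparison cells (vertical composites of squares, associators, the interchange and pentagon identities) are maps out of coends induced by compatible cocones, hence determined and compared by the uniqueness clause of the universal property. Your identification of the pentagon as the delicate step, and its reduction to the fact that all five edges restrict from a single cocone on the unbiased iterated coend, is exactly the classical argument. The one point I would urge you to spell out slightly more is the unit strictification: after imposing $U\bullet e_{\CC}\coloneqq U$ and $e_{\CC'}\bullet U\coloneqq U$ via the canonical bijections of \cref{lem: comp of prof is unital}, you must check not only that the unitors become identities but also that this redefinition is compatible with the functoriality of $\bullet$ on squares and that the associator components $\alpha_{U,e_{\CC'},U'}$, which are now endomorphisms of $U'\bullet U$, are forced to be identities by the triangle identity; this follows from the explicit form of the maps $\iota_{x'}$ in that lemma, but it is a genuine computation rather than pure abstract nonsense. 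With that caveat, your argument establishes the lemma, and in more detail than the paper chooses to give.
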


Finally, we give a more convenient description of the globular squares in $\dCat$.

\begin{Lemma} \label{globular squares in Cat}
    A globular square $\alpha\colon \edgesquare{e_{\CC}}{F}{F'}{e_{\DD}}$ in $\dCat$ is equivalently a natural transformation $\alpha\colon F\Rightarrow F'\colon \CC\to \DD$. 
\end{Lemma}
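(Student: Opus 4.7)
The plan is to unpack the definition of a globular square in $\dCat$ with the stated boundaries, and then identify the resulting data with a natural transformation $F\Rightarrow F'$ via a Yoneda-style bijection.

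First, I would observe that by the definition of $\dCat$, a square $\alpha\colon\edgesquare{e_{\CC}}{F}{F'}{e_{\DD}}$ is a natural transformation of functors $\CC^{\op}\times\CC\to\Set$ of the form
\[
\alpha\colon e_\CC \Rightarrow e_\DD\circ (F^{\op}\times F'), \quad \text{i.e.,} \quad \alpha\colon \CC(-,-)\Rightarrow \DD(F-,F'-).
\]
Such a datum consists of, for each pair of objects $(x,y)$ in $\CC$, a map of sets $\alpha_{x,y}\colon \CC(x,y)\to\DD(Fx,F'y)$, subject to the naturality squares induced by morphisms in $\CC^{\op}\times\CC$.

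Next, I would construct the bijection. From left to right, given such an $\alpha$, define $\beta_x \coloneqq \alpha_{x,x}(1_x)\colon Fx\to F'x$ for each object $x$ in $\CC$. The naturality of $\alpha$ with respect to a morphism $f\colon x\to y$, considered both as a morphism $(f,1_y)\colon (y,y)\to (x,y)$ in $\CC^{\op}\times\CC$ and as $(1_x,f)\colon (x,x)\to (x,y)$, yields the identities
\[
\alpha_{x,y}(f) = F'f\circ\beta_x = \beta_y\circ Ff,
\]
which in particular shows that $\beta\coloneqq(\beta_x)_x$ is a natural transformation $F\Rightarrow F'$. Conversely, from a natural transformation $\beta\colon F\Rightarrow F'$, define $\alpha_{x,y}(f)\coloneqq \beta_y\circ Ff = F'f\circ\beta_x$ and verify that the collection $(\alpha_{x,y})_{x,y}$ is natural in $(x,y)\in\CC^{\op}\times\CC$, i.e., defines a natural transformation $\CC(-,-)\Rightarrow\DD(F-,F'-)$.

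Finally, I would check that these two assignments are mutually inverse: starting from $\alpha$, constructing $\beta$, and then reconstructing a square gives back the original $\alpha$ because of the formula $\alpha_{x,y}(f)=\alpha_{x,y}(f\circ 1_x)=F'f\circ \alpha_{x,x}(1_x)$ obtained from the naturality of $\alpha$; starting from $\beta$ and going back trivially recovers $\beta_x=\beta_x\circ F(1_x)$. The main technical point is simply keeping track of the variances in $\CC^{\op}\times \CC$ and verifying that the two naturality conditions on $\alpha$ amount precisely to the single naturality condition on $\beta$; this is essentially an instance of the Yoneda lemma applied to the identity profunctor $\CC(-,-)$, so there is no serious obstacle.
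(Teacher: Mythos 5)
Your proof is correct and follows essentially the same route as the paper: both identify a globular square with a natural transformation $\CC(-,-)\Rightarrow\DD(F-,F'-)$ and then apply the Yoneda lemma to the representables $\CC(-,x)$ to reduce to a family $\beta_x\in\DD(Fx,F'x)$ natural in $x$. The only difference is that you unfold the Yoneda computation by hand (defining $\beta_x=\alpha_{x,x}(1_x)$ and verifying the two naturality conditions explicitly) where the paper cites the Yoneda lemma after currying; both are fine.
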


\begin{proof}
    By definition, a globular square $\alpha\colon \edgesquare{e_{\CC}}{F}{F'}{e_{\DD}}$ in $\dCat$ is a natural transformation 
    \[ \alpha\colon\CC(-,-)\Rightarrow\DD(F(-),F'(-))\colon \CC^{\op}\times \CC\to \Set, \]
    corresponding uniquely to a family of natural transformations
    $\{\alpha_{-,x}\colon\CC(-,x)\Rightarrow\DD(F(-),F'x)\}_{x\in\CC}$ natural in $x$. By the usual Yoneda lemma, such a family corresponds then uniquely to a family 
    $\{\alpha_{x}\in\DD(Fx,F'x)\}_{x\in\CC}$ natural in $x$. But such a family defines precisely the components of a natural transformation $\alpha\colon F\Rightarrow F'$.
\end{proof}

\begin{Remark} \label{identity 2cell}
    Under the bijection of \cref{globular squares in Cat}, the identity natural transformation $\id_F$ at a functor $F\colon \CC\to \DD$ corresponds to the globular square $\id_F\colon \edgesquare{e_{\CC}}{F}{F}{e_{\DD}}$ in $\dCat$ given by the natural transformation $\id_F\colon \CC(-,-)\Rightarrow \DD(F(-),F(-))\colon \CC^{\op}\times \CC\to \Set$ whose component at objects $x,x'$ in $\CC$ 
    is the map of sets 
    \[ \CC(x,x')\to \DD(Fx,Fx') \]
    sending a morphism $g\colon x\to x'$ in $\CC$ to the morphism $Fg\colon Fx\to Fx'$ in $\DD$.
\end{Remark}

\begin{Remark} \label{rem: underlying 2-cat of dCat}
    As a consequence of \cref{globular squares in Cat}, we see that the underlying horizontal $2$-category $\HH\dCat$ is simply the $2$-category $\Cat$ itself. 
\end{Remark}

\subsection{The \texorpdfstring{$2$}{2}-category of lax double presheaves}

\label{subsec: lax presheaf}

Having constructed the weak double category~$\dCat$, we can now introduce our notion of lax double presheaves. 

\begin{Definition}
    Given a (strict) double category $\dC$, a \emph{lax double presheaf} over $\dC$ is a normal lax double functor $X\colon\dC^{\op}\to\dCat$.
\end{Definition}

\begin{Notation}
\label{not: PC}
    We denote by 
    $\P{C}\coloneqq \HH\dHomnLax{\dC^{\op},\dCat}$ the $2$-category of lax double presheaves over~$\dC$, horizontal transformations, and globular modifications. Here $\dHomnLax{\dC^{\op},\dCat}$ is the full double subcategory of the double category $\dHomLax{\dC^{\op},\dCat}$ from \cref{prop:laxhom} spanned by the normal lax double functors and, consequently, $\HH\dHomnLax{\dC^{\op},\dCat}$ is the corresponding $2$-subcategory of the $2$-category $\HH\dHomLax{\dC^{\op},\dCat}$ from \cref{prop:laxhom}.
\end{Notation}

In the case where $\dC=\dH\CC$ with $\CC$ a $2$-category, we can provide another description of $\P{\dH\CC}$.

\begin{Notation}
    Given a $2$-category $\CC$, we denote by $[\CC^{\op},\Cat]$ the $2$-category of $2$-presheaves, i.e., $2$-functors $\CC^{\op}\to \Cat$, $2$-natural transformations, and modifications; see e.g.~\cite[\textsection B.2]{Elements}.
\end{Notation}

\begin{Lemma} \label{lem: PC for 2-categories}
    Given a $2$-category $\CC$, there is an isomorphism of $2$-categories 
    \[ \P{\dH\CC}\cong [\CC^{\op},\Cat], \]
    which is natural in $\CC$. 
\end{Lemma}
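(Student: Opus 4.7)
The key observation is that the horizontal double category $\dH\CC$ has only trivial vertical morphisms: the vertical morphisms are exactly the identities $e_x$, and consequently every square in $\dH\CC$ is globular, i.e., corresponds to a $2$-cell of $\CC$. The plan is to unpack what the data of a normal lax double presheaf $X\colon(\dH\CC)^{\op}\to\dCat$ amount to, and observe that normality rigidifies all vertical data.

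First I would construct the assignment on objects. Given a normal lax double presheaf $X$, normality gives $Xe_x=e_{Xx}$ for every object $x$, so the only non-identity composition comparison squares $\mu_{u,u'}$ would need to involve identities $e_x$, and normality forces these to be identity squares as well (as noted after \cref{def: lax double}). Hence $X$ is determined by its action on objects, horizontal morphisms, and globular squares. Via \cref{globular squares in Cat}, a globular square in $\dCat$ with horizontal boundaries $F,F'\colon\CC\to\DD$ is precisely a natural transformation $F\Rightarrow F'$. It then follows from the functoriality axioms that $X$ corresponds exactly to a $2$-functor $\widetilde{X}\colon\CC^{\op}\to\Cat$ (with \cref{identity 2cell} ensuring the identity $2$-cells match up). In the other direction, any $2$-functor $\CC^{\op}\to\Cat$ extends uniquely to a normal lax double presheaf by declaring all comparison squares to be identities.

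Next I would unpack the $1$-morphisms. A horizontal transformation $F\colon X\Rightarrow Y$ between lax double presheaves $X,Y\colon(\dH\CC)^{\op}\to\dCat$ consists of functors $F_x\colon Xx\to Yx$ together with squares $F_{e_x}\colon\edgesquare{e_{Xx}}{F_x}{F_x}{e_{Yx}}$; again by \cref{globular squares in Cat} such a square is a natural transformation $F_x\Rightarrow F_x$, and the compatibility with identity comparison squares (combined with normality) forces $F_{e_x}=\id_{F_x}$. The remaining naturality conditions for $F$ on horizontal morphisms and globular squares of $\dH\CC$ coincide precisely with the axioms for $\widetilde{F}$ to be a $2$-natural transformation $\widetilde{X}\Rightarrow\widetilde{Y}$. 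For $2$-morphisms, a globular modification $A\colon\edgesquare{e_X}{F}{F'}{e_Y}$ consists of globular squares $A_x\colon\edgesquare{e_{Xx}}{F_x}{F'_x}{e_{Yx}}$ in $\dCat$, i.e., natural transformations $F_x\Rightarrow F'_x$, and the horizontal and vertical compatibility conditions in \cref{def: modification} reduce, under this identification, exactly to the modification axiom in $[\CC^{\op},\Cat]$.

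Putting these bijections together produces an isomorphism of $2$-categories $\P{\dH\CC}\cong[\CC^{\op},\Cat]$; that it is indeed $2$-functorial follows since all structural operations (composition of transformations, horizontal and vertical composition of modifications) are defined pointwise and the bijections above are themselves pointwise. Finally, naturality in $\CC$ reduces to the naturality of the adjunction $\dH\dashv\HH$ from \cref{adjunction HH}: a $2$-functor $G\colon\CC\to\DD$ induces a double functor $\dH G\colon\dH\CC\to\dH\DD$, and precomposition with $(\dH G)^{\op}$ corresponds under our identification to precomposition with $G^{\op}$. The main point to verify carefully is that all comparison squares are automatically identities under normality — this is essentially a consequence of the fact that every vertical morphism in $\dH\CC$ is a vertical identity, so that there are no nontrivial instances of the comparison square data to specify.
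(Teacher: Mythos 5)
Your proposal is correct, and it rests on the same two facts as the paper's proof: that a \emph{normal} lax double functor out of $\dH\CC^{\op}$ is automatically a strict double functor (because every vertical morphism of $\dH\CC$ is an identity, so normality kills all identity and composition comparison squares), and that $\HH\dCat=\Cat$ via \cref{globular squares in Cat}. The difference is in execution: the paper gets the isomorphism of hom-$2$-categories $\HH\dHom{\dH\CC^{\op},\dCat}\cong[\CC^{\op},\HH\dCat]$ in one stroke from the $2$-adjunction $\dH\dashv\HH$ of \cref{2-adjunctions}, and merely asserts the ``normal lax $=$ strict'' step, whereas you rebuild the correspondence on objects, $1$-morphisms, and $2$-morphisms by hand, only invoking the adjunction for naturality in $\CC$. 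Your route is longer but has the virtue of actually justifying the step the paper leaves implicit (in particular, that the only instances of $\mu_{u,u'}$ are $\mu_{e_x,e_x}$, which are identities by the remark following \cref{def: lax double}, and that the components $F_{e_x}$ of a horizontal transformation are forced to be identities); the paper's route buys brevity by outsourcing the $2$-categorical bookkeeping to the enriched adjunction. Both arguments are complete and compatible.
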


\begin{proof}
    By \cref{2-adjunctions}, we have a $2$-adjunction 
    \[ \dH\colon 2\Cat\leftrightarrows \DblCat_h\colon \HH. \]
    Hence this gives an isomorphism of categories 
    \[ \Hor_0\dHom{\dH\CC^{\op},\dCat} \cong 2\Cat(\CC^{\op},\HH\dCat) \]
    which can be promoted to an isomorphism of $2$-categories
    \[ \HH\dHom{\dH\CC^{\op},\dCat}\cong [\CC^{\op},\HH\dCat]. \]
    Then, noticing that normal lax double functors out of $\dH\CC^{\op}$ are simply (strict) double functors and using \cref{rem: underlying 2-cat of dCat}, we get an isomorphism of $2$-categories 
    \[ \P{\dH\CC}=\HH\dHomnLax{\dH\CC^{\op},\dCat}= \HH\dHom{\dH\CC^{\op},\dCat}\cong [\CC^{\op},\HH\dCat]=[\CC^{\op},\Cat],  \]
    as desired.
\end{proof}

To simplify computations later, we unpack here the data of the $2$-category $\P{C}$. 

\begin{Remark}
A lax double presheaf $X\colon \dC^{\op}\to \dCat$ consists of
\begin{itemize}[leftmargin=1cm]
    \item for every object $x$ in $\dC$, a category $Xx$,
    \item for every horizontal morphism 
    $f\colon x\to y$ in $\dC$, a functor $Xf\colon Xy\to Xx$,
    \item for every vertical morphism 
    $u\colon x\bulletarrow x'$ in $\dC$, a profunctor 
    $Xu\colon Xx^{\op}\times Xx'\to\Set$,
    \item for every square $\alpha\colon\edgesquare{u}{f}{f'}{v}$ in $\dC$, a natural transformation
    $X\alpha\colon Xv\Rightarrow Xu (Xf^{\op}\times Xf')$, 
    \item for all composable vertical morphisms $x\overset{u}{\bulletarrow}x'\overset{u'}{\bulletarrow}x''$ in $\dC$, a composition comparison natural transformation 
\[ \mu_{u,u'}\colon Xu'\bullet Xu\Rightarrow X(u'\bullet u)\colon Xx^{\op}\times Xx''\to\Set. \]
\end{itemize}
such that the following conditions are satisfied:
\begin{enumerate}[leftmargin=1cm]
    \item it preserves horizontal identities and horizontal compositions strictly,
    \item composition comparison natural transformations $\mu_{u,u'}$ are natural with respect to $(u,u')$ and compatible with the associator natural transformations in $\dCat$,
    \item it preserves vertical identities strictly; and, for every vertical morphism $u\colon x\bulletarrow x'$ in $\dC$, the natural transformations $\mu_{u,e_{x'}}$ and $\mu_{e_x,u}$ agree with the identity natural transformation at $Xu$.
\end{enumerate}
\end{Remark}

\begin{Remark}
Given lax double presheaves $X, Y\colon \dC^{\op}\to \dCat$, then a horizontal transformation $F\colon X\Rightarrow Y$ consists of
    \begin{itemize}[leftmargin=1cm]
        \item for every object $x$ in $\dC$, a functor $F_x\colon Xx\to Yx$,
        \item for every vertical morphism 
        $u\colon x\bulletarrow x'$ in $\dC$, a natural transformation
        \[ F_u\colon Xu\Rightarrow Yu\circ(F_x^{\op}\times F_{x'})\colon Xx^{\op}\times Xx'\to\Set, \]
    \end{itemize}
   such that the following conditions are satisfied:
    \begin{enumerate}[leftmargin=1cm]
        \item the functors $F_x$ are natural in $x$: for every horizontal morphism $f\colon x\to y$ in $\dC$, the following diagram of functors commutes
        \[
        \begin{tikzcd}
            Xy \arrow[r,"Xf"] \arrow[d, "F_y"']                  & Xx \arrow[d, "F_x"]                 \\
            Yy \arrow[r, "Yf"']  & Yx 
        \end{tikzcd}
        \]
        \item the natural transformations 
        $F_u$ are natural in $u$: for every square $\alpha\colon\edgesquare{u}{f}{f'}{v}\colon\nodesquare{x}{x'}{y}{y'}$ in $\dC$, the following diagram in $\Prof(Xy, Xy')$ commutes,
        \[
        \begin{tikzcd}
            Xv \arrow[d, "F_v"', Rightarrow] \arrow[r, "X\alpha", Rightarrow]                  
            &[40pt] Xu(Xf^{\op}\times Xf')
             \arrow[d, "F_u(Xf^{\op}\times Xf')", Rightarrow] \\
            Yv(F_{y}^{\op}\times F_{y'}) \arrow[r, "Y\alpha(F_{y}^{\op}\times F_{y'})"', Rightarrow] 
            &[30pt]
            Yu((Yf\circ F_y)^{\op}\times (Yf'\circ F_{y'}))              \end{tikzcd}
        \]
        where we use that $Yf\circ F_y=F_x\circ Xf$ and $Yf'\circ F_{y'}=F_{x'}\circ Xf'$ by (1),  
        \item the natural transformations $F_u$ are compatible with composition comparisons:
        for all composable vertical morphisms 
        $x\overset{u}{\bulletarrow} x'\overset{u'}{\bulletarrow}x''$ in $\dC$, the following diagram in 
        $\Prof(Xx, Xx'')$ commutes,
        \[
        \begin{tikzcd}
            Xu'\bullet Xu \arrow[d, "F_{u'}\bullet F_u"', Rightarrow] \arrow[r, "{\mu_{u,u'}}", Rightarrow] & [40pt] X(u'\bullet u)  \arrow[d, "F_{u'\bullet u}", Rightarrow] \\ (Yu'\bullet Yu) (F_x^{\op}\times F_{x''})
             \arrow[r, "{\mu_{u,u'}  (F_x^{\op}\times F_{x''})}"' {yshift=-2pt}, Rightarrow]                                        & Y(u'\bullet u) (F_x^{\op}\times F_{x''})
        \end{tikzcd}
        \]
        \item the natural transformations $F_u$ are compatible with vertical identities: for every object~$x$ in~$\dC$, the natural transformation $F_{e_x}$ is the identity at $Fx$.
    \end{enumerate}
\end{Remark}

\begin{Remark}
Given horizontal transformations $F,F'\colon X\Rightarrow Y\colon \dC^{\op}\to \dCat$, then a globular modification $A\colon\edgesquare{e_X}{F}{F'}{e_{Y}}$ consists of, for every object $x$ in $\dC$, a natural transformation
\[ A_x\colon F_x\Rightarrow F'_x\colon Xx\to Yx, \] 
such that the following conditions are satisfied:
\begin{enumerate}[leftmargin=1cm]
    \item the natural transformations $A_x$ are compatible with horizontal morphisms: for every horizontal morphism $f\colon x\to y$ in $\dC$, the following pasting diagram of functors and natural transformations commutes,
    \[
    \begin{tikzcd}
        & Xy \arrow[rr, "Xf"] \arrow[dd, "F_y"', bend right] \arrow[dd, "F'_y", bend left] && Xx \arrow[dd, "F_x"', bend right] \arrow[dd, "F'_x", bend left] &    
        \\
        {} \arrow[rr,shorten <=30pt,shorten >=30pt, "A_y", Rightarrow] & & {} \arrow[rr,shorten <=30pt,shorten >=30pt, Rightarrow,"A_x"] &   & {} 
        \\
        & Yy \arrow[rr, "Yf"']& & Yx &   
        \end{tikzcd}
    \]
    \item the natural transformations $A_x$ are compatible with vertical morphisms: for every vertical morphism $u\colon x\bulletarrow x'$ in $\dC$, the following diagram in 
    $\Prof(Xx, Xx')$ commutes.
    \[
    \begin{tikzcd}
        Xu \arrow[r, "F_u", Rightarrow] \arrow[d, "F'_u"', Rightarrow]          &[40pt]
        Yu (F_x^{\op}\times F_{x'}) \arrow[d, "Yu (F_x^{\op}\times A_{x'})", Rightarrow] \\
        Yu ({F'}_{x}^{\op}\times F'_{x'}) \arrow[r, "Yu (A_x^{\op}\times F'_{x'})"', Rightarrow] 
        &[20pt]
        Yu (F_x^{\op}\times F'_{x'}) 
        \end{tikzcd}
    \]
\end{enumerate}
\end{Remark}

Finally, we extend the construction $\P{C}$ into a $2$-functor. 

\begin{Construction} \label{def: 2-functor P}
    We define a $2$-functor
    \[
    \mathcal{P}\colon\DblCat_h^{\coop}\to 2\Cat,
    \]
    where $\DblCat_h^{\coop}$ is the $2$-category obtained from $\DblCat_h$ by reversing the morphisms and the $2$-morphisms, which sends
    \begin{itemize}[leftmargin=1cm]
        \item a double category $\dC$ to the weak double category $\P{C}$ of lax double presheaves over $\dC$, horizontal transformations, and globular modifications,
        \item a double functor $G\colon\dC\to\dD$ to the $2$-functor 
        \[
        \mathcal{P}_G\coloneqq (G^{\op})^*\colon\P{D}\to\P{C},
        \]
        induced by precomposition with $G^{\op}$,
        \item a horizontal transformation 
        $B\colon G\Rightarrow G'\colon\dC\to\dD$ to the $2$-natural transformation
        \[
        \mathcal{P}_B\coloneqq (B^{\op})^*\colon\mathcal{P}_{G'}\Rightarrow\mathcal{P}_{G}
        \]
        whose component at a lax double presheaf $X\colon\dD^{\op}\to\dCat$ is the horizontal transformation \[ (\mathcal{P}_B)_X\coloneqq X\circ B^{\op}\colon\mathcal{P}_{G'}(X)=X\circ G'^{\op}\to\mathcal{P}_G(X)=X\circ G^{\op} \]
        of lax double presheaves $\dC^{\op}\to\dCat$.
    \end{itemize}
    It is straightforward to check that this construction is $2$-functorial.
\end{Construction}

\subsection{Representable lax double presheaves}
\label{subsec: repr presheaf}

We now want to introduce our first class of examples. They will be induced by the following construction. 

\begin{Construction}
    Given a double category $\dC$, we define a normal lax double functor \[ \dC(-,-)\colon\dC^{\op}\times\dC\to\dCat \]
    by the following data:
    \begin{itemize}[leftmargin=1cm]
        \item it sends a pair $(x,\hat{x})$ of objects in $\dC$ to the category $\dC(x,\hat{x})$ whose 
        \begin{itemize}
            \item objects are horizontal morphisms $g\colon x\to\hat{x}$ in $\dC$,
            \item morphisms $g\to g'$ are globular squares $\eta\colon \edgesquare{e_{x}}{g}{g'}{e_{\hat{x}}}$ in $\dC$, 
            \item composition is given by the vertical composition of squares in $\dC$
        \end{itemize}
        \item it sends a pair $(x\xrightarrow{f}y,\hat{x}\xrightarrow{\hat{f}}\hat{y})$ of horizontal morphisms in $\dC$ to the functor
        \[
        \dC(f,\hat{f})\colon\dC(y,\hat{x})\to\dC(x,\hat{y})
        \]
        sending
        \begin{itemize}
            \item a horizontal morphism $g\colon y\to\hat{x}$ to the composite $\hat{f}\circ g\circ f\colon x\to\hat{y}$,
            \item a square $\eta\colon\edgesquare{e_y}{g}{g'}{e_{\hat{x}}}$ to the square 
            $e_{\hat{f}}\circ\eta\circ e_f\colon\edgesquare{e_x}{\hat{f}\circ g\circ f}{\hat{f}\circ g'\circ f}{e_{\hat{y}}}$,
        \end{itemize}
        \item it sends a pair $(x\overset{u}{\bulletarrow}x',\hat{x}\overset{\hat{u}}{\bulletarrow}\hat{x}')$ of vertical morphisms in $\dC$ to the profunctor
        \[
        \dC(u,\hat{u})\colon\dC(x,\hat{x})^{\op}\times\dC(x',\hat{x}')\to\Set
        \]
        sending
        \begin{itemize}
            \item an object 
            $(x\xrightarrow{g}\hat{x},x'\xrightarrow{g'}\hat{x}')$ of $\dC(x,\hat{x})\times\dC(x',\hat{x}')$ to the set of squares 
            $\left\{\edgesquare{u}{g}{g'}{\hat{u}}\colon\nodesquare{x}{x'}{\hat{x}}{\hat{x}'}\right\}$,
            \item a pair of squares 
            $\left(\alpha\colon\edgesquare{e_x}{g}{h}{e_{\hat{x}}},\alpha'\colon\edgesquare{e_{x'}}{g'}{h'}{e_{\hat{x}'}}\right)$ to the map 
            \[ \dC(u,\hat{u})(h,g')\to\dC(u,\hat{u})(g,h') \]
            sending a square $\eta\colon\edgesquare{u}{h}{g'}{\hat{u}}$ to the vertical composite of squares
             \[
            \begin{tikzcd}
            x 
        \arrow[d,"\bullet" marking, equal] \arrow[r, "g"] 
        \arrow[rd, "\alpha", phantom]           & 
        \hat{x} \arrow[d,"\bullet" marking,equal] \\
        x \arrow[d,swap,"\bullet" marking, "u"] \arrow[r, "h"] 
        \arrow[rd, "\eta", phantom]                
        & 
        \hat{x}
        \arrow[d,"\bullet" marking, "\hat{u}"]   \\
        x' \arrow[r, "g'"] 
        \arrow[d,"\bullet" marking, no head, equal] 
        \arrow[rd, "\alpha'", phantom] 
        &
        \hat{x}'
        \arrow[d,"\bullet" marking, equal] \\
        x' \arrow[r, "h'"']                          & \hat{x}' 
        \end{tikzcd}
        \]
        \end{itemize}
        \item it sends a pair $(\alpha\colon\edgesquare{v}{f}{f'}{u},\hat{\alpha}\colon\edgesquare{\hat{u}}{\hat{f}}{\hat{f}'}{\hat{v}})$ of squares in $\dC$ to the natural transformation 
        \[
        \dC(\alpha,\hat{\alpha})\colon\dC(u,\hat{u})\Rightarrow\dC(v,\hat{v})(\dC(f,\hat{f})^{\op}\times\dC(f',\hat{f}'))\colon\dC(y,\hat{x})^{\op}\times\dC(y',\hat{x}')\to\Set
        \] whose component at an object 
        $(y\xrightarrow{g}\hat{x},y'\xrightarrow{g'}\hat{x}')$ of 
        $\dC(y,\hat{x})\times\dC(y',\hat{x}')$ is the map
        \[
        \dC(\alpha,\hat{\alpha})_{g,g'}\colon\dC(u,\hat{u})(g,g')\to\dC(v,\hat{v})(\hat{f}\circ g\circ f,\hat{f}'\circ g'\circ f'),
        \]
        sending a square $\eta\colon\edgesquare{u}{g}{g'}{\hat{u}}$ to the horizontal composite of squares
       \[ 
        \begin{tikzcd}
            x \arrow[r, "f"] \arrow[d,"\bullet" marking, "v"'] \arrow[rd,phantom, "\alpha"] & y \arrow[d,"\bullet" marking, "u"'] \arrow[r, "g"] \arrow[rd,phantom, "\eta"] & \hat{x} \arrow[d, "\bullet" marking,"\hat{u}"] \arrow[r, "\hat{f}"] \arrow[rd,phantom, "\hat{\alpha}"] & \hat{y} \arrow[d,"\bullet" marking, "\hat{v}"] \\
            x' \arrow[r, "f'"']    & y' \arrow[r,"g'"']   & \hat{x}' \arrow[r, "\hat{f}'"'] & \hat{y}'                    
        \end{tikzcd}
        \]
        \item its composition comparison square at a pair 
        $(x,\hat{x})\overset{(u,\hat{u})}{\bulletarrow}(x',\hat{x}')\overset{(u',\hat{u}')}{\bulletarrow}(x'',\hat{x}'')$ of composable vertical morphisms in 
        $\dC$ is the natural transformation
        \[
        \mu_{(u,\hat{u}),(u',\hat{u}')}\colon
        \dC(u',\hat{u}')\bullet\dC(u,\hat{u})\Rightarrow
        \dC(u'\bullet u,\hat{u}'\bullet\hat{u})\colon\dC(x,\hat{x})^{\op}\times\dC(x'',\hat{x}'')\to\Set,
        \]
        whose component at an object 
        $(x\xrightarrow{g}\hat{x},x''\xrightarrow{g''}\hat{x}'')$ of $\dC(x,\hat{x})\times\dC(x'',\hat{x}'')$ is the map
        \[
        \dC(u',\hat{u}')\bullet\dC(u,\hat{u})(g,g'')\to
        \dC(u'\bullet u,\hat{u}'\bullet u')(g,g''),
        \]
        induced by the universal property of the coend from the family of maps, given at an object $x'\xrightarrow{g'}\hat{x}'$ of $\dC(x',\hat{x}')$, by the map
        \[
        \dC(u',\hat{u}')(g',g'')\times\dC(u,\hat{u})(g,g')\longrightarrow
        \dC(u'\bullet u,\hat{u}'\bullet\hat{u})(g,g''),
        \]
       sending an element $(\alpha'\colon \edgesquare{u'}{g'}{g''}{\hat{u}'},\alpha\colon \edgesquare{u}{g}{g'}{\hat{u}})$ of $\dC(u',\hat{u}')(g',g'')\times\dC(u,\hat{u})(g,g')$ to the vertical composite $\alpha'\bullet\alpha$ in $\dC(u'\bullet u,\hat{u}'\bullet\hat{u})(g,g'')$.
    \end{itemize}
\end{Construction}

\begin{Lemma}
    The construction 
    \[ \dC(-,-)\colon\dC^{\op}\times\dC\to\dCat \]
    is a normal lax double functor.
\end{Lemma}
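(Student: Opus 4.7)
The plan is to verify the data and axioms of a normal lax double functor in the order in which they are presented in \cref{def: lax double}, using the description of globular squares in $\dCat$ from \cref{globular squares in Cat} to reinterpret the coherence data in more familiar double-categorical terms.

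First, I would verify that the four assignments are well-defined and compatible with sources and targets. Checking that each $\dC(f,\hat{f})$ is a functor on $\dC(y,\hat{x})$ amounts to observing that horizontal whiskering with $e_f$ and $e_{\hat{f}}$ preserves vertical composition and identities of globular squares, which follows from the interchange law and the axiom $e_g \circ e_f = e_{gf}$ in $\dC$. Checking that each $\dC(u,\hat{u})$ is a functor $\dC(x,\hat{x})^{\op}\times\dC(x',\hat{x}')\to\Set$ reduces similarly to the interchange law and to the axiom $1_{u'}\bullet 1_u = 1_{u'\bullet u}$; the two variances arise precisely from vertical precomposition and postcomposition with globular squares. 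Finally, the naturality of $\dC(\alpha,\hat{\alpha})$ in $(g,g')$ follows from interchange applied to the relevant $3\times 3$ rearrangement of squares.

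Next, I would address axiom (1), the strict horizontal functoriality. Horizontal identities in $\dC^{\op}\times\dC$ are pairs of identity morphisms, and whiskering a square or a horizontal morphism with $e_{1_x} = 1_{e_x}$ returns it unchanged, giving $\dC(1_y,1_{\hat{y}})=\id_{\dC(y,\hat{x})}$. Composition is similar: associativity of horizontal composition of squares in $\dC$ and the identity $e_{gf}=e_g\circ e_f$ directly yield $\dC(f_2f_1,\hat{f}_2\hat{f}_1)=\dC(f_1,\hat{f}_2)\circ\dC(f_2,\hat{f}_1)$, and the analogous statement for squares follows from strict interchange. For axiom (3), normality is immediate: by definition $\dC(e_x,e_{\hat{x}})(g,g')$ is the set of globular squares $\edgesquare{e_x}{g}{g'}{e_{\hat{x}}}$, which by the definition of the hom category $\dC(x,\hat{x})$ is exactly $\dC(x,\hat{x})(g,g')=e_{\dC(x,\hat{x})}(g,g')$. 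This identifies the identity comparison squares with genuine identities, and by the unitality convention $U\bullet e_\CC\coloneqq U$ and $e_{\CC'}\bullet U\coloneqq U$ adopted in the definition of $\dCat$, the degenerate composition comparisons $\mu_{(u,\hat{u}),(e_{x'},e_{\hat{x}'})}$ and $\mu_{(e_x,e_{\hat{x}}),(u,\hat{u})}$ automatically reduce to the identity.

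The most substantial task is axiom (2): the naturality and associativity of the composition comparison $\mu_{(u,\hat{u}),(u',\hat{u}')}$. Naturality in $(u,u')$ and $(\hat{u},\hat{u}')$ boils down to showing that the maps defined on the representing family
\[(\alpha',\alpha)\mapsto\alpha'\bullet\alpha\]
descend to the coend and commute with horizontal whiskering by squares in $\dC$, which follows from interchange together with the coend identifications from \cref{prop: coend construction}. Compatibility with associators is the main obstacle: one must show that the diagram in $\Prof$ expressing $\mu$-compatibility with the associator square $\alpha_{\dC(u,\hat{u}),\dC(u',\hat{u}'),\dC(u'',\hat{u}'')}$ commutes. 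I would reduce this to the following: on representatives of the iterated coend, both routes send a triple $(\alpha'',\alpha',\alpha)$ of vertically composable squares to the triple vertical composite $\alpha''\bullet\alpha'\bullet\alpha$; since vertical composition of squares in $\dC$ is strictly associative, both routes agree, and the statement then follows from the universal property of the relevant coends. This strategy avoids manipulating the associator in $\dCat$ explicitly and reduces everything to the strict associativity and interchange already available in $\dC$.
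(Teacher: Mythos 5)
Your proposal is correct and follows essentially the same route as the paper's proof: the paper likewise dispatches horizontal functoriality as straightforward, derives the naturality of the composition comparison $\mu$ from the interchange law, obtains compatibility with associators from the strict associativity of vertical composition of squares in $\dC$, and establishes normality by observing that $\dC(e_x,e_{\hat{x}})$ is the identity profunctor $e_{\dC(x,\hat{x})}$ and $\dC(e_f,e_{\hat{f}})$ the identity natural transformation. Your write-up simply fills in more of the coend-level details that the paper leaves implicit.
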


\begin{proof}
    It is straightforward to see that $\dC(-,-)\colon\dC^{\op}\times\dC\to\dCat$ preserves horizontal compositions and identities. The fact that, given vertical morphisms $u,\hat{u}$ in $\dC$, the composition comparison natural transformations $\mu_{u,u'}$ are natural in $(u,u')$ follows from the interchange law that the compositions of squares in $\dC$ satisfy, and the fact that the natural transformations $\mu_{u,u'}$ are compatible with associator natural transformations follows from the associativity of vertical composition of squares in $\dC$. Finally, to see that $\dC(-,-)$ is normal, observe that, given objects $x,\hat{x}$ in $\dC$, then $\dC(e_x,e_{\hat{x}})$ is the identity profunctor $e_{\dC(x,\hat{x})}$, and similarly, given horizontal morphisms $f,\hat{f}$ in $\dC$, then $\dC(e_f,e_{\hat{f}})$ is the identity natural transformation at $\dC(f,\hat{f})$.
\end{proof}

\begin{Remark}
    We note that $\dC(-,-)$ is in general not a pseudo double functor. If this was the case, the composition comparison natural transformations \[
        \mu_{(u,\hat{u}),(u',\hat{u}')}\colon
        \dC(u',\hat{u}')\bullet\dC(u,\hat{u})\Rightarrow
        \dC(u'\bullet u,\hat{u}'\bullet\hat{u})\colon\dC(x,\hat{x})^{\op}\times\dC(x'',\hat{x}'')\to\Set,
        \]
        would be invertible. This would imply that every square
    $\edgesquare{u'\bullet u}{g}{g''}{\hat{u}'\bullet\hat{u}}$ in $\dC$ could be factorized into a vertical composite of two squares of the form
    $\edgesquare{u}{g}{g'}{\hat{u}}$ and 
    $\edgesquare{u'}{g'}{g''}{\hat{u}'}$.
    However, such a factorization does not exist in general, as we show in \cref{ex: no factorization} below. 
\end{Remark}

\begin{Example}\label{ex: no factorization}
    We consider the double category $\dC$ generated by the following data
    \[
    \begin{tikzcd}
        x \arrow[d,"\bullet" marking, "u"'] \arrow[r, "g"] \arrow[rdd,phantom, "\alpha"] & \hat{x} \arrow[d, "\bullet" marking, "\hat{u}"]   \\
        x' \arrow[d,"\bullet" marking, "u'"'] & \hat{x}' \arrow[d,"\bullet" marking, "\hat{u}'"] \\
        x'' \arrow[r, "g''"'] & \hat{x}'' 
    \end{tikzcd}
    \]
    Then, note that we have 
    \[ \dC(x',\hat{x}')=\varnothing, \quad \dC(x,\hat{x})=\{g\} \quad \text{and} \quad \dC(x'',\hat{x}'')=\{g''\}. \]
    Hence we get that
    \[
    \dC(u',\hat{u}')\bullet\dC(u,\hat{u})(g,g'')
    =
    \gro^{g'\in\dC(x',\hat{x}')=\varnothing}\dC(u',\hat{u}')(g',g'')\times\dC(u,\hat{u})(g,g')=\varnothing,
    \]
    while 
    \[ \dC(u'\bullet u,\hat{u}'\bullet\hat{u})(g,g'')=\{\alpha\}. \]
    Hence $(\mu_{(u,\hat{u}),(u',\hat{u}')})_{g,g''}$ is not a bijection, and therefore $\mu_{(u,\hat{u}),(u',\hat{u}')}$ is not invertible.  
\end{Example}

By fixing components, we get the following.

\begin{Construction}
    Given a double category $\dC$ and an object~$\hat{x}$ in $\dC$, the \emph{representable lax double presheaf at $\hat{x}$} is the composite of normal lax double functors
    \[ \dC(-,\hat{x})\colon \dC^{\op}\cong \dC^{\op}\times [0] \xrightarrow{\dC^{\op}\times \hat{x}}\dC^{\op}\times \dC \xrightarrow{\dC(-,-)}\dCat. \]
\end{Construction}

This construction can be extended on horizontal morphisms and globular squares as follows.

\begin{Construction}
    Given a double category $\dC$ and a horizontal morphism $\hat{f}\colon \hat{x}\to \hat{y}$ in $\dC$, the \emph{representable horizontal transformation at $\hat{f}$}
    \[ \dC(-,\hat{f})\colon\dC(-,\hat{x})\Rightarrow\dC(-,\hat{y})\colon \dC^{\op}\to\dCat \]
    is the horizontal transformation whose 
    \begin{itemize}[leftmargin=1cm]
        \item component at an object $x$ in $\dC$ is the functor $\dC(x,\hat{f})\coloneqq \dC(1_x,\hat{f})\colon \dC(x,\hat{x})\to \dC(x,\hat{y})$, 
        \item component at a vertical morphism $u\colon x\bulletarrow x'$ in $\dC$ is the natural transformation 
        \[
        \dC(u,\hat{f})\coloneqq \dC(1_u,e_{\hat{f}})\colon
        \dC(u,e_{\hat{x}})\Rightarrow
        \dC(u,e_{\hat{y}}) (\dC(x,\hat{f})^{\op}\times\dC(x',\hat{f}))
        \colon
        \dC(x,\hat{x})^{\op}\times
        \dC(x',\hat{x})\to\Set.
        \]
    \end{itemize}
    The fact that $\dC(-,\hat{f})$ is a horizontal transformation follows from the coherences of $\dC(-,-)$.  
\end{Construction}

\begin{Construction}
    Given a double category $\dC$ and a globular square $\hat{\alpha}\colon\edgesquare{e_{\hat{x}}}{\hat{f}}{\hat{f}'}{e_{\hat{x}'}}$ in $\dC$, the \emph{representable globular modification at $\hat{\alpha}$}
    \[ \dC(-,\hat{\alpha})\colon\edgesquare{e_{\dC(-,\hat{x})}}{\dC(-,\hat{f})}{\dC(-,\hat{f}')}{e_{\dC(-,\hat{x}')}} \]  
    is the globular modification whose component at an object $x$ in $\dC$ is the natural transformation
    \[ \dC(x,\hat{\alpha})\coloneqq \dC(e_{1_x},\hat{\alpha})\colon
    \dC(x,\hat{f})\Rightarrow
    \dC(x,\hat{f}').
    \]
    The fact that $\dC(-,\hat{\alpha})$ is a globular modification follows from the coherences of $\dC(-,-)$.
\end{Construction}

However, this construction fails to extend to vertical morphisms. 

\begin{Remark}
\label{rem: repr. not natural in vert. morphisms}
    Given a double category $\dC$ and a vertical morphism $\hat{u}\colon \hat{x}\bulletarrow\hat{x}'$ in $\dC$, we could consider the ``vertical transformation'' whose 
    \begin{itemize}[leftmargin=1cm]
        \item component at an object $x$ in $\dC$ is the profunctor 
        \[
        \dC(x,\hat{u})\coloneqq\dC(e_{x},\hat{u})\colon
        \dC(x,\hat{x})^{\op}\times
        \dC(x,\hat{x}')\to\Set,
        \]
        \item component at a horizontal morphism $f\colon x\to y$ in $\dC$ is the natural transformation
        \[
        \dC(f,\hat{u})\coloneqq\dC(e_f,1_{\hat{u}})\colon
        \dC(y,\hat{u})\Rightarrow
        \dC(x,\hat{u})        (\dC(f,\hat{x})^{\op}\times
        \dC(f,\hat{x}')).
        \]
    \end{itemize}
    One might expect to get a colax vertical transformation $\dC(-,\hat{u})\colon \dC(-,\hat{x})\Bulletarrow \dC(-,\hat{x}')$. However, there are in general no naturality comparison squares. Indeed, given a vertical morphism $u\colon x\bulletarrow x'$ in~$\dC$, such a naturality comparison square would be a natural transformation of the form
    \[ \dC(x',\hat{u})\bullet \dC(u,\hat{x})\Rightarrow \dC(u,\hat{x}')\bullet \dC(x,\hat{u})\colon \dC(x,\hat{x})^{\op}\times \dC(x',\hat{x}')\to \Set. \]
    Thus, when evaluated at an object 
    $(x\xrightarrow{g}\hat{x},x'\xrightarrow{g'}\hat{x}')$ in $\dC(x,\hat{x})\times \dC(x',\hat{x}')$, we would get a map
    \begin{equation}
    \label{eq: C(-,u) not vertical}
    \begin{matrix}
        \gro^{s\in \dC(x',\hat{x})}\dC(x',\hat{u})(s,g')\times\dC(u,\hat{x})(g,s)
        &
        \longrightarrow
        &
        \gro^{t\in\dC(x,\hat{x}')}\dC(u,\hat{x}')(t,g')\times\dC(x,\hat{u})(g,t).
    \end{matrix}
    \end{equation}
    But in general, such a map does not exist, as we show in \cref{ex: rep at u is not pseudo} below. 

The core issue here is that elements in the left-hand set of \eqref{eq: C(-,u) not vertical} are represented by pairs $(\alpha,\alpha')$ of squares as depicted below left for some horizontal morphism $s\colon x'\to\hat{x}$ in $\dC$, while elements of the right-hand set of \eqref{eq: C(-,u) not vertical} are represented by pairs $(\beta,\beta')$ of squares as depicted below right for some horizontal morphism $t\colon x\to \hat{x}'$. 
    \[
    \begin{tikzcd}
        x \arrow[d,"\bullet" marking,swap, "u"] \arrow[r, "g"] \arrow[rd, phantom,"\alpha"] & \hat{x} \arrow[d,"\bullet" marking,equal] \\
      x' \arrow[d,"\bullet" marking,equal] \arrow[r, "s"] \arrow[rd, phantom,"\alpha'"]  & \hat{x} \arrow[d,"\bullet" marking,"\hat{u}"] \\
        x' \arrow[r, swap,"g'"]                    & \hat{x}',              
    \end{tikzcd} \quad \quad \quad \quad \begin{tikzcd}
        x \arrow[d, "\bullet" marking,equal] \arrow[r, "g"] \arrow[rd,phantom, "\beta"]         & \hat{x} \arrow[d,"\bullet" marking, "\hat{u}"] \\
        x \arrow[d,"\bullet" marking, "u"'] \arrow[r, "t"] \arrow[rd,phantom, "\beta'"] & \hat{x}' \arrow[d,"\bullet" marking,equal]           \\
        x' \arrow[r, "g'"']                                   & \hat{x}'                    
    \end{tikzcd}
    \]
    But there is no natural assignment between such pairs.
    
    Can we evade this problem by introducing a new notion of vertical transformation? Note that the vertical composites $\alpha'\bullet \alpha$ and $\beta'\bullet \beta$ of the above squares share the same boundaries, so the composition comparison transformations of $\dC(-,-)$ fit into the following picture
    \[
    \begin{tikzcd}
        {\dC(x,\hat{x})} \arrow[rd,"\bullet" marking] \arrow[r,"\bullet" marking, "{\dC(x,\hat{u})}"] \arrow[d, "\bullet" marking,"{\dC(u,\hat{x})}"'] & {\dC(x,\hat{x}')} \arrow[d, "\bullet" marking,"{\dC(u,\hat{x}')}"] \arrow[ld,shorten >=25pt,near start, Rightarrow] \\
        {\dC(x',\hat{x})} \arrow[r,"\bullet" marking, "{\dC(x',\hat{u})}"'] \arrow[ru,shorten >=25pt, near start, Rightarrow]                  & {\dC(x',\hat{x}')},          
    \end{tikzcd}
    \]
    where the diagonal vertical morphism is given by $\mathbb{C}(u,\hat{u})$, which might be a good candidate for replacing the naturality comparison squares. This fixes our issue regarding naturality comparison squares, but introduces yet another problem: such vertical transformations do not compose! Indeed, the fact that the $2$-morphisms in the above diagram face opposite directions prevents the composition of two such transformations.

    These issues are the core of our decision to work in the $2$-category $\P{C}$ of lax double presheaves, in which vertical transformations are not considered, rather than in the double category $\dHomnLax{\dC^{\op},\dCat}$.
\end{Remark}

\begin{Example}

\label{ex: rep at u is not pseudo}
    We consider the double category $\dC$ generated by the following data
    \[
    \begin{tikzcd}
        x \arrow[d,"\bullet" marking,swap, "u"] \arrow[r, "g"] \arrow[rd, phantom,"\alpha"] & \hat{x} \arrow[d,"\bullet" marking,equal] \\
      x' \arrow[d,"\bullet" marking,equal] \arrow[r, "h"] \arrow[rd, phantom,"\alpha'"]  & \hat{x} \arrow[d,"\bullet" marking,"\hat{u}"] \\
        x' \arrow[r, swap,"g'"]                    & \hat{x}',              
    \end{tikzcd}
    \]
    Then, note the we have     
    \[ \dC(x,\hat{x}')=\varnothing, \quad \dC(x,\hat{x})=\{g\}, \quad \dC(x',\hat{x})=\{h\} \quad \text{and} \quad \dC(x',\hat{x}')=\{g'\}. \]
    Hence we get that 
   \[ \gro^{s\in\dC(x',\hat{x})=\{h\}}\dC(x',\hat{u})(s,g')\times\dC(u,\hat{x})(g,s)
        =
        \{(\alpha',\alpha)\} \]
        while
        \[
        \gro^{t\in\dC(x,\hat{x}')=\varnothing}\dC(u,\hat{x}')(t,g')\times\dC(x,\hat{u})(g,t)
        =
        \varnothing. \]
    This implies that a map as in \eqref{eq: C(-,u) not vertical} cannot exist.
\end{Example}

\section{Yoneda lemma}

In this section, we show a $2$-categorical version of the Yoneda lemma for double categories using the weak double category $\dCat$. More precisely, given a double category $\dC$ and a double lax presheaf~$X$ over $\dC$, we prove that there is a $2$-natural isomorphism between the category of morphisms from a representable lax double presheaf $\dC(-,\hat{x})$ to $X$ and the category $X\hat{x}$.

In \cref{subsec: Yoneda1}, we construct the Yoneda comparison map and show that it is $2$-natural. In \cref{subsec: Yoneda2}, we construct its inverse henceforth proving the desired Yoneda lemma. Finally, in \cref{subsec: Yoneda3}, we deduce that there is a Yoneda embedding from the underlying horizontal $2$-category of $\dC$ into its $2$-category of lax double presheaves $\P{C}$.

\subsection{Statement of Yoneda lemma}
\label{subsec: Yoneda1}

We start by constructing the components of the Yoneda isomorphism. 

\begin{Construction} 
\label{constr:yonedafunctor}
    Given a double category $\dC$, a lax double presheaf $X\colon \dC^{\op}\to \dCat$, and an object $\hat{x}\in \dC$, we construct a functor 
    \[ \Psi_{\hat{x},X}\colon \P{C}(\dC(-,\hat{x}),X)\to X\hat{x}.
    \]
    It sends a horizontal transformation $\varphi\colon \dC(-,\hat{x})\Rightarrow X$ to the object $\varphi_{\hat{x}}(1_{\hat{x}})$ of $X \hat{x}$, i.e., the image of the identity $1_{\hat{x}}$ under the functor $\varphi_{\hat{x}}\colon \dC(\hat{x},\hat{x})\to X\hat{x}$. It sends a modification 
    \[
     \begin{tikzcd}
            \dC(-,\hat{x})
            \arrow[r,Rightarrow,"\phi"]
            \arrow[d,equal,"\bullet" marking]
            \arrow[rd,phantom,"\nu"]
            &
            X
            \arrow[d,equal,"\bullet" marking]
            \\
            \dC(-,\hat{x})
            \arrow[r,Rightarrow,"\phi'"']
            &
            X
    \end{tikzcd}
    \]
    to the morphism 
    $(\nu_{\hat{x}})_{1_{\hat{x}}}\colon\phi_{\hat{x}}(1_{\hat{x}})\to\phi'_{\hat{x}}(1_{\hat{x}})$ in $X\hat{x}$, i.e., the component of the natural transformation 
    $\nu_{\hat{x}}\colon\phi_{\hat{x}}\Rightarrow\phi'_{\hat{x}}\colon\dC(\hat{x},\hat{x})\to X\hat{x}$ at $1_{\hat{x}}$.
\end{Construction}

We now state the Yoneda lemma.

\begin{Theorem} \label{thm:Yoneda}
    Given a double category $\dC$, a lax double presheaf $X\colon \dC^{\op}\to \dCat$, and an object~$\hat{x}$ in $\dC$, the functor
    \[ \Psi_{\hat{x},X}\colon \P{C}(\dC(-,\hat{x}),X)\to X\hat{x}
    \]
    is an isomorphism, which is $2$-natural in $\hat{x}$ in $\HH\dC$ and in $X$ in $\P{C}$.
\end{Theorem}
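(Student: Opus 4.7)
The plan is to produce an explicit inverse $\Phi_{\hat{x},X}\colon X\hat{x}\to\P{C}(\dC(-,\hat{x}),X)$ to $\Psi_{\hat{x},X}$. Given an object $a\in X\hat{x}$, I define a horizontal transformation $\phi^a\colon\dC(-,\hat{x})\Rightarrow X$ as follows: the $x$-component is the functor $\phi^a_x\colon\dC(x,\hat{x})\to Xx$ sending a horizontal morphism $f\colon x\to\hat{x}$ to $Xf(a)$ and a globular square $\eta\colon f\Rightarrow f'$ to $(X\eta)_a$, where $X\eta\colon Xf\Rightarrow Xf'$ arises from the globular square $X\eta$ in $\dCat$ via \cref{globular squares in Cat}; at a vertical morphism $u\colon x\bulletarrow x'$, the natural transformation $\phi^a_u$ has value at a pair $(f,f')$ the map sending a square $\alpha\colon\edgesquare{u}{f}{f'}{e_{\hat{x}}}$ to the element $(X\alpha)_{a,a}(1_a)\in Xu(Xf(a),Xf'(a))$. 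On a morphism $\gamma\colon a\to a'$ in $X\hat{x}$, $\Phi_{\hat{x},X}$ produces the globular modification $\nu^\gamma\colon\phi^a\Rightarrow\phi^{a'}$ whose $x$-component at $f$ is $Xf(\gamma)\colon Xf(a)\to Xf(a')$.

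I would next verify routinely that $\phi^a$ is a horizontal transformation, $\nu^\gamma$ is a globular modification, and $\Phi_{\hat{x},X}$ is functorial. Each coherence axiom reduces to an application of the functoriality of $X$ on composable horizontal morphisms, the naturality and coherence of $X$ on squares, the coend formula for profunctor composition in $\dCat$, and the normality of $X$, which forces $\phi^a_{e_x}=\id_{\phi^a_x}$ on the nose. That $\Psi_{\hat{x},X}\circ\Phi_{\hat{x},X}=\id_{X\hat{x}}$ is then immediate from normality: $\Psi(\phi^a)=\phi^a_{\hat{x}}(1_{\hat{x}})=X(1_{\hat{x}})(a)=a$, and similarly on morphisms.

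The main obstacle is the reverse composite $\Phi_{\hat{x},X}\circ\Psi_{\hat{x},X}=\id$: given $\phi\colon\dC(-,\hat{x})\Rightarrow X$ with $a=\phi_{\hat{x}}(1_{\hat{x}})$, I must show $\phi=\phi^a$ on every kind of cell. The horizontal naturality of $\phi$ along a horizontal morphism $f\colon x\to\hat{x}$, evaluated at $1_{\hat{x}}\in\dC(\hat{x},\hat{x})$, directly yields $\phi_x(f)=Xf(a)$. The key step is the identity $\phi_x(\eta)=(X\eta)_a$ for a globular square $\eta\colon f\Rightarrow f'$; I would derive it by applying the naturality of $\phi_u$ in $u$ to the square $\eta$ itself, whose vertical boundaries are $e_x$ and $e_{\hat{x}}$. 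The compatibility of $\phi$ with the identity comparison squares of the normal lax double functors $\dC(-,\hat{x})$ and $X$ forces $\phi_{e_x}=\id_{\phi_x}$ and $\phi_{e_{\hat{x}}}=\id_{\phi_{\hat{x}}}$, so the naturality equation reduces to an equality of horizontal composites of two globular squares in $\dCat$; evaluating the corresponding natural transformations at $1_{\hat{x}}$ via \cref{globular squares in Cat} yields exactly $\phi_x(\eta)=(X\eta)_a$. A parallel computation using the naturality of $\phi_u$ for a general vertical morphism $u$, evaluated at $1_{\hat{x}}$, matches $\phi_u$ with $\phi^a_u$; the analogous argument works for globular modifications.

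Finally, the $2$-naturality of $\Psi$ in $\hat{x}\in\HH\dC$ and in $X\in\P{C}$ is obtained by unwinding: since $\Psi_{\hat{x},X}$ is defined as evaluation at $1_{\hat{x}}$, each naturality condition collapses, on a horizontal transformation $\phi$, to the horizontal naturality of $\phi$ evaluated at $1_{\hat{x}}$ or to similar one-line computations. The genuine $2$-categorical content of the Yoneda lemma therefore enters only through the identification of \cref{globular squares in Cat}, which powers the crucial step $\phi_x(\eta)=(X\eta)_a$.
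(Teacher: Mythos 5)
Your proposal is correct and follows essentially the same route as the paper's proof: the same explicit inverse $\Phi_{\hat{x},X}$ with components $Xf(a)$, $(X\eta)_a$ and $(X\alpha)_{a,a}(1_a)$, the same recovery of $\phi$ from $\phi_{\hat{x}}(1_{\hat{x}})$ via the naturality of $\phi$ at globular and general squares combined with $\phi_{e_x}=\id$, and the same unwinding for the $2$-naturality. The only cosmetic slip is attributing $X1_{\hat{x}}=\id_{X\hat{x}}$ to normality rather than to strict preservation of horizontal identities, which every lax double functor satisfies.
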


We show that it is $2$-natural in $\hat{x}$ and $X$, separately. 

\begin{Proposition}
    The functors
    \[ \Psi_{\hat{x},X}\colon \P{C}(\dC(-,\hat{x}),X)\to X\hat{x}
    \]
    are $2$-natural in $\hat{x}$ in $\HH\dC$.
\end{Proposition}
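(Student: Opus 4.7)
The plan is to verify the two conditions constituting 2-naturality of the family $\{\Psi_{\hat{x},X}\}_{\hat{x}}$ in $\hat{x}\in \HH\dC$: compatibility with horizontal morphisms $\hat{f}\colon \hat{x}\to \hat{y}$ (1-naturality) and with globular squares $\hat{\alpha}\colon \edgesquare{e_{\hat{x}}}{\hat{f}}{\hat{f}'}{e_{\hat{y}}}$ (2-naturality). Both conditions involve only horizontal $1$-cells and globular $2$-cells of $\dC$, so they will ultimately reduce to the naturality axioms of horizontal transformations and modifications of lax double presheaves. Using $\HH\dCat = \Cat$ from \cref{rem: underlying 2-cat of dCat}, both $\hat{x}\mapsto \P{C}(\dC(-,\hat{x}),X)$ and $\hat{x}\mapsto X\hat{x}$ are 2-functors $(\HH\dC)^{\op}\to \Cat$, and $\Psi_{\hat{x},X}$ is a candidate 2-natural transformation between them.

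For 1-naturality, I would check that the square
\[
\begin{tikzcd}
\P{C}(\dC(-,\hat{y}),X) \arrow[r, "\Psi_{\hat{y},X}"] \arrow[d, "(\dC(-,\hat{f}))^*"'] & X\hat{y} \arrow[d, "X\hat{f}"] \\
\P{C}(\dC(-,\hat{x}),X) \arrow[r, "\Psi_{\hat{x},X}"'] & X\hat{x}
\end{tikzcd}
\]
commutes by direct evaluation. On an object $\varphi\colon \dC(-,\hat{y})\Rightarrow X$, the upper-right path yields $X\hat{f}(\varphi_{\hat{y}}(1_{\hat{y}}))$, while the lower-left path yields $(\varphi\circ \dC(-,\hat{f}))_{\hat{x}}(1_{\hat{x}})=\varphi_{\hat{x}}(\dC(\hat{x},\hat{f})(1_{\hat{x}}))=\varphi_{\hat{x}}(\hat{f})$. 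Equality is the naturality axiom for the components $\varphi_x$ in $x$ with respect to $\hat{f}$, applied to $1_{\hat{y}}\in \dC(\hat{y},\hat{y})$, since $\dC(\hat{f},\hat{y})(1_{\hat{y}})=1_{\hat{y}}\circ\hat{f}=\hat{f}$. The check on a morphism (globular modification $\nu$) is the verbatim analogue using the horizontal compatibility axiom of a modification.

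For 2-naturality, I would verify that, for every globular square $\hat{\alpha}\colon \hat{f}\Rightarrow \hat{f}'$, the equality
\[
\varphi_{\hat{x}}(\hat{\alpha}) = (X\hat{\alpha})_{\varphi_{\hat{y}}(1_{\hat{y}})}
\]
holds in $X\hat{x}$ for each $\varphi$. The left-hand side is the image under $\Psi_{\hat{x},X}$ of the modification $\varphi\circ \dC(-,\hat{\alpha})$ evaluated at $1_{\hat{x}}$, using the computation $(\dC(\hat{x},\hat{\alpha}))_{1_{\hat{x}}} = \hat{\alpha}\circ e_{1_{\hat{x}}}=\hat{\alpha}$ for the representable modification. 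The right-hand side is the whiskering of the natural transformation $X\hat{\alpha}$ with $\Psi_{\hat{y},X}(\varphi)$. Their equality is precisely naturality axiom (2) of horizontal transformations applied to $\hat{\alpha}$ (viewed as a square with trivial vertical boundaries in $\dC^{\op}$), which---using normality and the vanishing $F_{e_{\hat{x}}}=\id$---expresses exactly the 2-naturality of $\varphi$ restricted to $\HH$. The main, quite modest, obstacle is the bookkeeping of horizontal opposites and of the contravariant actions of $\dC(-,-)$ and $X$ in $\hat{x}$; no substantial new difficulty arises beyond the classical 2-categorical Yoneda naturality applied to $\HH\dC$, and indeed one may alternatively collapse the whole argument by factoring $\Psi_{\hat{x},X}$ through the classical 2-categorical Yoneda isomorphism for $\HH X\colon (\HH\dC)^{\op}\to \Cat$ precomposed with the restriction functor $\P{C}(\dC(-,\hat{x}), X)\to [(\HH\dC)^{\op},\Cat](\HH\dC(-,\hat{x}),\HH X)$, both factors being tautologically 2-natural in $\hat{x}$.
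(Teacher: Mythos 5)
Your proof is correct and follows essentially the same route as the paper: both naturality squares (for horizontal morphisms $\hat{f}$ and for globular squares $\hat{\alpha}$) are verified by direct evaluation at a horizontal transformation $\varphi$ (resp.\ a globular modification $\nu$), reducing via $\dC(\hat{f},\hat{y})(1_{\hat{y}})=\hat{f}$ and $\dC(\hat{\alpha},\hat{y})_{1_{\hat{y}}}=\hat{\alpha}$ to the naturality and horizontal-compatibility axioms of $\varphi$ and $\nu$. The closing remark about factoring through the classical $2$-Yoneda isomorphism for $\HH X$ is a reasonable alternative shortcut, but it is not needed and the paper does not take it.
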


\begin{proof}
    To prove $2$-naturality in $\hat{x}$, we first have to show that, given a horizontal morphism $\hat{f}\colon \hat{x}\to\hat{y}$ in $\dC$, the following diagram of functors commutes.
    \[
    \begin{tikzcd}
        {\P{C}(\dC(-,\hat{y}),X)} \arrow[d, "{\dC(-,\hat{f})^*}"'] \arrow[r, "{\Psi_{\hat{y},X}}"] & X\hat{y} \arrow[d, "X\hat{f}"] \\
        {\P{C}(\dC(-,\hat{x}),X)} \arrow[r, "{\Psi_{\hat{x},X}}"']                  & X\hat{x}         
    \end{tikzcd}
    \]\
    To do so, we evaluate both composites at a horizontal transformation
$\phi\colon\dC(-,\hat{y})\Rightarrow X$:
\begin{align*}
    \Psi_{\hat{x},X}(\dC(-,\hat{f})^*(\phi))
    &=
    \Psi_{\hat{x},X}(\phi\circ\dC(-,\hat{f}))
    &
    \text{Definition of }\dC(-,\hat{f})^*
    \\
    &=
    (\phi\circ\dC(-,\hat{f}))_{\hat{x}}(1_{\hat{x}})
    &
    \text{Definition of }\Psi_{\hat{x},X}
    \\
    &=
    \phi_{\hat{x}}(\dC(\hat{x},\hat{f})(1_{\hat{x}}))
    &
    \text{Composition of transformations}
    \\
    &=
    \phi_{\hat{x}}(\hat{f})
    &
    \text{Definition of }\dC(\hat{x},\hat{f})
    \\
    &=
    \phi_{\hat{x}}(\dC(\hat{f},\hat{y})(1_{\hat{y}}))
    &
    \text{Definition of }\dC(\hat{f},\hat{y})
    \\
    &=
    X\hat{f}(\phi_{\hat{y}}(1_{\hat{y}}))
    &
    \text{Naturality of }\phi
    \\
    &=
    X\hat{f}(\Psi_{\hat{y},X}(\phi)),
    &
    \text{Definition of }\Psi_{\hat{y},X}
\end{align*}
as well as at a globular modification 
$\nu\colon\edgesquare{e_{\dC(-,\hat{y})}}{\phi}{\phi'}{e_X}$:
\begin{align*}
    \Psi_{\hat{x},X}(\dC(-,\hat{f})^*(\nu))
    &=
    \Psi_{\hat{x},X}(\nu\circ\dC(-,\hat{f}))
    &
    \text{Definition of }\dC(-,\hat{f})^*
    \\
    &=
    ((\nu\circ\dC(-,\hat{f}))_{\hat{x}})_{1_{\hat{x}}}
    &
    \text{Definition of }\Psi_{\hat{x},X}
    \\
    &=
    (\nu_{\hat{x}})_{\dC(\hat{x},\hat{f})(1_{\hat{x}})}
    &
    \text{Whiskering}
    \\
    &=
    (\nu_{\hat{x}})_{\hat{f}\circ 1_{\hat{x}}}
    &
    \text{Definition of }\dC(\hat{x},\hat{f})
    \\
    &=
    (\nu_{\hat{x}})_{1_{\hat{y}}\circ\hat{f}}
    &
    \hat{f}\circ 1_{\hat{x}}=\hat{f}=1_{\hat{y}}\circ\hat{f}
    \\
    &=
    (\nu_{\hat{x}})_{\dC(\hat{f},\hat{y})(1_{\hat{y}})}
    &
    \text{Definition of }\dC(\hat{f},\hat{y})
    \\
    &=
    X\hat{f}((\nu_{\hat{y}})_{1_{\hat{y}}})
    &
    \text{Horizontal compatibility of }\nu
    \\
    &=
    X\hat{f}(\Psi_{\hat{y},X}(\nu)).
    &
    \text{Definition of }\Psi_{\hat{y},X}
\end{align*}
This shows that the two composites agree on objects and morphisms, as desired. 

Next, we have to show that, given a globular square $\hat{\alpha}\colon\edgesquare{e_{\hat{x}}}{\hat{f}}{\hat{f'}}{e_{\hat{y}}}$ in $\dC$, the following pasting diagram of functors and natural transformations commutes.
    \[
    \begin{tikzcd} 
& {\P{C}(\dC(-,\hat{y}),X)} \arrow[dd, swap,"{\dC(-,\hat{f})^*}", bend right=49] \arrow[dd, "{\dC(-,\hat{f}')^*}", bend left=49] \arrow[rr, "{\Psi_{\hat{y},X}}"] &      
& X\hat{y} \arrow[dd, swap,"X\hat{f}", bend right =49] \arrow[dd, "X\hat{f}'", bend left=49] &    
\\
{} \arrow[rr, "{\dC(-,\hat{\alpha})^*}", shorten <=50pt, shorten >=50pt,Rightarrow] 
&                     
&
{} \arrow[rr, "X\hat{\alpha}", shorten <= 25pt, shorten >=25pt, Rightarrow] &             & {} 
\\
& {\P{C}(\dC(-,\hat{x}),X)} \arrow[rr, "{\Psi_{\hat{x},X}}"]                           &
& X\hat{x}                                    &   
\end{tikzcd}
\]
To do so, we evaluate both whiskerings at a horizontal transformation $\phi\colon\dC(-,\hat{y})\Rightarrow X$:
\begin{align*}
    (\Psi_{\hat{x},X}\circ\dC(-,\hat{\alpha})^*)_{\phi}
    &=
    \Psi_{\hat{x},X}(\phi\circ\dC(-,\hat{\alpha}))
    &
    \text{Definition of }\dC(-,\hat{\alpha})^*
    \\
    &=
    ((\phi\circ\dC(-,\hat{\alpha}))_{\hat{x}})_{1_{\hat{x}}}
    &
    \text{Definition of }\Psi_{\hat{x},X}
    \\
    &=
    \phi_{\hat{x}}(\dC(\hat{x},\hat{\alpha})_{1_{\hat{x}}})
    &
    \text{Whiskering}
    \\
    &=
    \phi_{\hat{x}}(\hat{\alpha}\circ e_{1_{\hat{x}}})
    &
    \text{Definition of }\dC(\hat{x},\hat{\alpha})
    \\
    &=
    \phi_{\hat{x}}(e_{1_{\hat{y}}}\circ\hat{\alpha})
    &
    \hat{\alpha}\circ e_{1_{\hat{x}}}=\hat{\alpha}=
    e_{1_{\hat{y}}}\circ\hat{\alpha}
    \\
    &=
    \phi_{\hat{x}}(\dC(\hat{\alpha},\hat{y})_{1_{\hat{y}}})
    &
    \text{Definition of }\dC(\hat{\alpha},\hat{y})
    \\
    &=
    (X\hat{\alpha})_{\phi_{\hat{y}}(1_{\hat{y}})}
    &
    2\text{-Naturality of }\phi
    \\
    &=
    (X\hat{\alpha})_{\Psi_{\hat{y},X}(\phi)}
    &
    \text{Definition of }\Psi_{\hat{y},X}
    \\
    &=
    (X\hat{\alpha}\circ\Psi_{\hat{y},X})_\phi.
    &
    \text{Whiskering}
\end{align*}
This shows that the components of the two whiskerings agree, as desired. 
\end{proof}

\begin{Proposition}
    The functors
    \[ \Psi_{\hat{x},X}\colon \P{C}(\dC(-,\hat{x}),X)\to X\hat{x}
    \]
    are $2$-natural in $X$ in $\P{C}$.
\end{Proposition}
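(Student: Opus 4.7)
The plan is to mirror the calculation for $2$-naturality in $\hat{x}$ just carried out. There are two compatibilities to verify: with morphisms and with $2$-cells in $\P{C}$.

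First, for a horizontal transformation $F\colon X\Rightarrow Y$ of lax double presheaves, I would show that the square
\[
\begin{tikzcd}
    {\P{C}(\dC(-,\hat{x}),X)} \arrow[r, "{\Psi_{\hat{x},X}}"] \arrow[d, "F_*"'] & X\hat{x} \arrow[d, "F_{\hat{x}}"] \\
    {\P{C}(\dC(-,\hat{x}),Y)} \arrow[r, "{\Psi_{\hat{x},Y}}"']                  & Y\hat{x}
\end{tikzcd}
\]
commutes, where $F_*$ denotes post-composition with $F$. Evaluated at a horizontal transformation $\phi\colon \dC(-,\hat{x})\Rightarrow X$, both composites yield $F_{\hat{x}}(\phi_{\hat{x}}(1_{\hat{x}}))$, using that the $\hat{x}$-component of $F\circ\phi$ is $F_{\hat{x}}\circ \phi_{\hat{x}}$. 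Evaluated at a globular modification $\nu\colon \phi\Rightarrow \phi'$, both composites yield $F_{\hat{x}}((\nu_{\hat{x}})_{1_{\hat{x}}})$, by the analogous whiskering identity $((F\circ\nu)_{\hat{x}})_{g} = F_{\hat{x}}((\nu_{\hat{x}})_{g})$ for modifications.

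Second, for a globular modification $M\colon F\Rightarrow F'$ in $\P{C}$, I would verify that the two whiskerings of the pasting diagram
\[
\begin{tikzcd}
    & {\P{C}(\dC(-,\hat{x}),X)} \arrow[dd, swap, "F_*", bend right=49] \arrow[dd, "F'_*", bend left=49] \arrow[rr, "{\Psi_{\hat{x},X}}"] && X\hat{x} \arrow[dd, swap, "F_{\hat{x}}", bend right=49] \arrow[dd, "F'_{\hat{x}}", bend left=49] & \\
    {} \arrow[rr, "M_*", shorten <=50pt, shorten >=50pt, Rightarrow] && {} \arrow[rr, "M_{\hat{x}}", shorten <=25pt, shorten >=25pt, Rightarrow] && {} \\
    & {\P{C}(\dC(-,\hat{x}),Y)} \arrow[rr, "{\Psi_{\hat{x},Y}}"'] && Y\hat{x} &
\end{tikzcd}
\]
coincide. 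Their components at a horizontal transformation $\phi\colon\dC(-,\hat{x})\Rightarrow X$ both reduce to $(M_{\hat{x}})_{\phi_{\hat{x}}(1_{\hat{x}})}$, using that the $\hat{x}$-component of the whiskered modification $M\phi$ is the whiskering $M_{\hat{x}}\phi_{\hat{x}}$ of a natural transformation with a functor.

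The main obstacle is not conceptual but purely notational: every required identity reduces to unpacking how horizontal composition of horizontal transformations of lax double presheaves and of modifications acts at a fixed object $\hat{x}$. No coherence data of $X$, $Y$, $F$, $F'$, or $M$ enters the verification, since $\Psi_{\hat{x},-}$ only inspects the value at the identity $1_{\hat{x}}$.
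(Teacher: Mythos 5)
Your proposal is correct and follows essentially the same route as the paper's proof: verify the naturality square for a horizontal transformation $F$ by evaluating both composites on objects (horizontal transformations $\phi$) and morphisms (globular modifications $\nu$), then verify the pasting compatibility for a globular modification by comparing components at each $\phi$, with both sides reducing to whiskering identities at $1_{\hat{x}}$. Your closing observation that no coherence data of the presheaves enters is accurate and consistent with the paper's computation.
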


\begin{proof}
    To prove $2$-naturality in $X$, we first have to show that, given a horizontal transformation $F\colon X\Rightarrow Y$ of lax double presheaves over $\dC$, the following diagram of functors commutes. 
    \[
    \begin{tikzcd}
        {\P{C}(\dC(-,\hat{x}),X)} \arrow[d, "F_*"'] \arrow[r, "{\Psi_{\hat{x},X}}"] & X\hat{x} \arrow[d, "F_{\hat{x}}"] \\
        {\P{C}(\dC(-,\hat{x}),Y)} \arrow[r, "{\Psi_{\hat{x},Y}}"']                  & Y\hat{x}                         
    \end{tikzcd}
    \]
    To do so, we evaluate both composites at a horizontal transformation $\phi\colon\dC(-,\hat{x})\Rightarrow X$:
    \begin{align*}
        \Psi_{\hat{x},Y}(F_*(\phi))
        &=
        \Psi_{\hat{x},Y}(F\circ\phi)
        &
        \text{Definition of }F_*
        \\
        &=
        (F\circ\phi)_{\hat{x}}(1_{\hat{x}})
        &
        \text{Definition of }\Psi_{\hat{x},Y}
        \\
        &=
        F_{\hat{x}}(\phi_{\hat{x}}(1_{\hat{x}}))
        &
        \text{Composition of horizontal transformations}
        \\
        &=
        F_{\hat{x}}(\Psi_{\hat{x},X}(\phi))
        &
        \text{Definition of }\Psi_{\hat{x},X}
    \end{align*}
    as well as at a globular modification 
    $\nu\colon\edgesquare{e_{\dC(-,\hat{x})}}{\phi}{\phi'}{e_X}$:
    \begin{align*}
        \Psi_{\hat{x},Y}(F_*(\nu))
        &=
        \Psi_{\hat{x},Y}(F\circ\nu)
        &
        \text{Definition of }F_*
        \\
        &=
        ((F\circ\nu)_{\hat{x}})_{1_{\hat{x}}}
        &
        \text{Definition of }\Psi_{\hat{x},X}
        \\
        &=
        F_{\hat{x}}((\nu_{\hat{x}})_{1_{\hat{x}}})
        &
        \text{Whiskering}
        \\
        &=
        F_{\hat{x}}(\Psi_{\hat{x},X}(\nu)).
        &
        \text{Definition of }\Psi_{\hat{x},X}
    \end{align*}
    This shows that the two composites agree on objects and morphisms, as desired.

    Next, we have to show that, given a globular modification 
    $A\colon\edgesquare{e_X}{F}{F'}{e_Y}$ of lax double presheaves over $\dC$, the following pasting diagram of functors and natural transformations commutes.
    \[
    \begin{tikzcd} 
& {\P{C}(\dC(-,\hat{x}),X)} \arrow[dd, swap,"{F_*}", bend right=49] \arrow[dd, "{F'_*}", bend left=49] \arrow[rr, "{\Psi_{\hat{x},X}}"] &      
& X\hat{x} \arrow[dd, swap,"F_{\hat{x}}", bend right =49] \arrow[dd, "F'_{\hat{x}}", bend left=49] &    
\\
{} \arrow[rr, "{A_*}", shorten <=50pt, shorten >=50pt,Rightarrow] 
&                     
&
{} \arrow[rr, "A_{\hat{x}}", shorten <= 25pt, shorten >=25pt, Rightarrow] &             & {} 
\\
& {\P{C}(\dC(-,\hat{x}),Y)} \arrow[rr, "{\Psi_{\hat{x},Y}}"']                           &
& Y\hat{x}                                    & 
\end{tikzcd}
    \]
    To do so, we evaluate both whiskerings at a horizontal transformation $\phi\colon\dC(-,\hat{x})\Rightarrow X$:
    \begin{align*}
        (\Psi_{\hat{x},Y}\circ A_*)_{\phi}
        &=
        \Psi_{\hat{x},Y}(A\circ\phi)
        &
        \text{Definition of }A_*
        \\
        &=
        ((A\circ\phi)_{\hat{x}})_{1_{\hat{x}}}
        &
        \text{Definition of }\Psi_{\hat{x},Y}
        \\
        &=
        (A_{\hat{x}})_{\phi_{\hat{x}}(1_{\hat{x}})}
        &
        \text{Whiskering}
        \\
        &=
        (A_{\hat{x}})_{\Psi_{\hat{x},X}(\phi)}
        &
        \text{Definition of }\Psi_{\hat{x},X}
        \\
        &=
        (A_{\hat{x}}\circ\Psi_{\hat{x},X})_\phi.
        &
        \text{Whiskering}
    \end{align*}
    This shows that the components of the two whiskerings agree, as desired. 
\end{proof}

\subsection{Proof of Yoneda lemma}

\label{subsec: Yoneda2}

To prove the Yoneda lemma, namely \cref{thm:Yoneda}, let us fix a double category $\dC$, a lax double presheaf $X\colon \dC^{\op}\to \dCat$ and an object $\hat{x}\in \dC$. We construct an inverse $\Phi_{\hat{x},X}\colon X\hat{x}\to \P{C}(\dC(-,\hat{x}),X)$ of the functor $\Psi_{\hat{x},X}$ from \cref{constr:yonedafunctor}. We start with its assignment on objects.

\begin{Construction} \label{constr:onobjects}
    Given an object $x_-$ in $X\hat{x}$, we construct a horizontal transformation
    \[ \Phi_{\hat{x},X}(x_-)\colon \dC(-,\hat{x})\Rightarrow X\colon \dC^{\op}\to\dCat \]
    such that
    \begin{itemize}[leftmargin=1cm]
        \item its component at an object $x$ in $\dC$ is given by the functor 
        \[ \Phi_{\hat{x},X}(x_-)_x\colon \dC(x,\hat{x})\to Xx \]
        sending an object $x\xrightarrow{g}\hat{x}$ in $\dC(x,\hat{x})$ to the object $Xg(x_-)$ in $Xx$, i.e., the image of the object $x_-$ under the functor $Xg\colon X\hat{x}\to Xx$, and a square $\eta\colon\edgesquare{e_x}{g}{g'}{e_{\hat{x}}}$ to the morphism 
        $(X\eta)_{x_-}\colon Xg(x_-)\to Xg'(x_-)$ in $Xx$, i.e., the component at $x_-$ of the natural transformation 
        $X\eta\colon Xg\Rightarrow Xg'\colon X\hat{x}\to Xx$,
        \item its component at a vertical morphism $u\colon x\bulletarrow x'$ in $\dC$ is given by the natural transformation 
        \[ 
            \Phi_{\hat{x},X}(x_-)_u\colon
            \dC(u,\hat{x})\Rightarrow
            Xu (\Phi_{\hat{x},X}(x_-)_x^{\op}\times\Phi_{\hat{x},X}(x_-)_{x'})\colon
            \dC(x,\hat{x})^{\op}\times\dC(x',\hat{x})\to \Set
        \]
        whose component at an object $(x\xrightarrow{g}\hat{x},x'\xrightarrow{g'}\hat{x})$ of $\dC(x,\hat{x})\times\dC(x',\hat{x})$
        is the map
        \[
        \dC(u,\hat{x})(g,g')\longrightarrow
        Xu(Xg(x_-),Xg'(x_-))
        \]
        sending a square
        $\eta\colon\edgesquare{u}{g}{g'}{e_{\hat{x}}}$ to the element 
        $(X\eta)_{x_-,x_-}(1_{x_-})$, i.e., the image of the identity $1_{x_-}$ under the component $(X\eta)_{x_-,x_-}\colon X\hat{x}(x_-,x_-)\to Xu(Xg(x_-),Xg'(x_-))$ of the natural transformation $X\eta\colon Xe_{\hat{x}}=e_{X\hat{x}}\Rightarrow Xu(Xg^{\op}\times Xg')$ at $x_-$.
    \end{itemize}
\end{Construction}

\begin{Lemma}
    The construction \[ \Phi_{\hat{x},X}(x_-)\colon \dC(-,\hat{x})\Rightarrow X \] is a horizontal transformation. 
\end{Lemma}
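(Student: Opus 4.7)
The plan is to check, one by one, the four axioms that $\Phi_{\hat{x},X}(x_-)$ must satisfy as a horizontal transformation between the normal lax double presheaves $\dC(-,\hat{x})$ and $X$, namely: (1) naturality of the component functors with respect to horizontal morphisms, (2) naturality of the component natural transformations with respect to squares, (3) compatibility with composition comparison squares, and (4) compatibility with identity comparison squares. Throughout, the key inputs will be the strict horizontal functoriality of $X$, the lax vertical structure of $X$ encoded by $\mu_{u,u'}$, and the normality of $X$, together with the explicit recipes defining $\dC(f,\hat{x})$, $\dC(u,\hat{x})$, $\dC(\alpha,\hat{x})$ and the composition comparisons of $\dC(-,\hat{x})$.

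For (1), given a horizontal morphism $f\colon x\to y$ in $\dC$, I show that $\Phi_{\hat{x},X}(x_-)_x \circ \dC(f,\hat{x}) = Xf\circ \Phi_{\hat{x},X}(x_-)_y$. On an object $g\colon y\to \hat{x}$ of $\dC(y,\hat{x})$, both sides evaluate to $X(g\circ f)(x_-)=Xf(Xg(x_-))$ by strict horizontal functoriality of $X$; on a globular square $\eta\colon\edgesquare{e_y}{g}{g'}{e_{\hat{x}}}$, both sides yield $(X(\eta\circ e_f))_{x_-}=Xf((X\eta)_{x_-})$ by the same token. For (2), given a square $\alpha\colon\edgesquare{u}{f}{f'}{v}$ in $\dC$, I evaluate both composites of the required commuting square of natural transformations at a pair $(g,g')$ and then at an element $\eta\in\dC(v,\hat{x})(g,g')$. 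By the definitions of $\dC(\alpha,\hat{x})$ and of the components of $\Phi_{\hat{x},X}(x_-)_u$, both paths produce the element $(X(\eta\circ\alpha))_{x_-,x_-}(1_{x_-})$, so the diagram commutes.

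For (3), I must verify that, for composable vertical morphisms $x\overset{u}{\bulletarrow}x'\overset{u'}{\bulletarrow}x''$, the square comparing $\Phi_{\hat{x},X}(x_-)_{u'}\bullet\Phi_{\hat{x},X}(x_-)_u$ and $\Phi_{\hat{x},X}(x_-)_{u'\bullet u}$ via the composition comparisons $\mu_{u,u'}$ on both sides commutes. This is the main obstacle: one has to chase the construction through the coend defining $Xu'\bullet Xu$ and verify that the natural transformation induced by $\Phi_{\hat{x},X}(x_-)_{u'}$ and $\Phi_{\hat{x},X}(x_-)_u$ on each summand $\dC(u',\hat{x})(g',g'')\times\dC(u,\hat{x})(g,g')$ of the coend defining $\dC(u',\hat{x})\bullet\dC(u,\hat{x})$ factors through the quotient and matches the map induced by $\Phi_{\hat{x},X}(x_-)_{u'\bullet u}\circ \mu_{u,u'}$. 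Concretely, by the universal property of coends it suffices to work on the summand level, where both paths send a representative pair of squares $(\alpha'\colon\edgesquare{u'}{g'}{g''}{e_{\hat{x}}},\alpha\colon\edgesquare{u}{g}{g'}{e_{\hat{x}}})$ to $(X(\alpha'\bullet\alpha))_{x_-,x_-}(1_{x_-})$: on one side this follows directly from the definitions, and on the other side it is precisely the axiom of the lax double functor $X$ expressing compatibility of $X\alpha'\bullet X\alpha$ with $\mu_{u,u'}$ and $X(\alpha'\bullet\alpha)$. Passing to the coend then yields the desired equality.

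Finally, for (4), the normality of $X$ gives $Xe_x=e_{Xx}=X\hat{x}(-,-)$, and by construction $\dC(e_x,\hat{x})$ is the identity profunctor on $\dC(x,\hat{x})$. Unpacking $\Phi_{\hat{x},X}(x_-)_{e_x}$ at an object $(g,g')$ and a globular square $\eta\colon\edgesquare{e_x}{g}{g'}{e_{\hat{x}}}$ then sends $\eta$ to $(X\eta)_{x_-,x_-}(1_{x_-})=(X\eta)_{x_-}$, which is exactly the action of the component $\Phi_{\hat{x},X}(x_-)_x$ on $\eta$ under the bijection of \cref{globular squares in Cat}. Hence $\Phi_{\hat{x},X}(x_-)_{e_x}$ is the identity, so $\Phi_{\hat{x},X}(x_-)$ is a horizontal transformation.
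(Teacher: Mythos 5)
Your proposal is correct and follows essentially the same route as the paper: it verifies the same four axioms (naturality in horizontal morphisms, naturality in squares, compatibility with the composition comparisons via the universal property of the coend, and triviality of the component at vertical identities via normality and \cref{globular squares in Cat}), with the same key computations. The only cosmetic difference is that in step (1) on morphisms the paper explicitly invokes normality of $X$ to identify $Xe_f$ with the identity square at $Xf$, which your sketch folds into ``strict horizontal functoriality''; since you list normality among your key inputs, this is immaterial.
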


\begin{proof}
     We first show naturality of the components $\Phi_{\hat{x},X}(x_-)_x$ in $x$. For this, let $f\colon y\to x$ be a horizontal morphism in $\dC$. We want to show that the following diagram of functors commutes. 
     \[
    \begin{tikzcd}
        \dC(x,\hat{x})
        \arrow[r,"{\dC(f,\hat{x})}"]
        \arrow[d,swap,"{\Phi_{\hat{x},X}(x_-)_x}"]
        &
        \dC(y,\hat{x})
        \arrow[d,"{\Phi_{\hat{x},X}(x_-)_y}"]
        \\
        Xx
        \arrow[r,swap,"{Xf}"]
        &
        Xy
    \end{tikzcd}
    \]
    To do so, we evaluate both composites at an object $x\xrightarrow{g}\hat{x}$ of $\dC(x,\hat{x})$:
    \begin{align*}
        \Phi_{\hat{x},X}(x_-)_y\circ\dC(f,\hat{x})(g)
        &=
        \Phi_{\hat{x},X}(x_-)_y(g\circ f)
        &
        \text{Definition of }\dC(f,\hat{x})
        \\
        &
        =X(g\circ f)(x_-)
        &
        \text{Definition of }\Phi_{\hat{x},X}(x_-)
        \\
        &=
        Xf\circ Xg(x_-)
        &
        \text{Functorality of }X
        \\
        &=
        Xf\circ\Phi_{\hat{x},X}(x_-)_x(g)
        &
        \text{Definition of }\Phi_{\hat{x},X}(x_-)
    \end{align*}
    as well as at a morphism $\eta\colon\edgesquare{e_x}{g}{g'}{e_{\hat{x}}}$ of $\dC(x,\hat{x})$: 
    \begin{align*}
        \Phi_{\hat{x},X}(x_-)_y\circ\dC(f,\hat{x})(\eta)
        &=
        \Phi_{\hat{x},X}(x_-)_y(\eta\circ e_f)
        &
        \text{Definition of }\dC(f,\hat{x})
        \\
        &=
        X(\eta\circ e_f)_{x_-,x_-}(1_{x_-})
        &
        \text{Definition of }
        \Phi_{\hat{x},X}(x_-)
        \\
        &=
        Xe_f\circ X\eta_{x_-,x_-}(1_{x_-})
        &
        \text{Functorality of }X
        \\
        &=
        Xf\circ X\eta_{x_-,x_-}(1_{x_-})
        &\text{Normality of } X
        \\
        &=
        Xf\circ\Phi_{\hat{x},X}(x_-)_x(\eta)
        &\text{Definition of }\Phi_{\hat{x},X}(x_-)
    \end{align*}
     This shows that the two composites agree on objects and morphisms, as desired. 

     Next, we show naturality of the components $\Phi_{\hat{x},X}(x_-)_u$ in $u$. For this, let $\alpha\colon\edgesquare{v}{f}{f'}{u}\colon\nodesquare{y}{y'}{x}{x'}$ be a square in $\dC$. We want to show that the following diagram in $\Prof(\dC(x,\hat{x}),\dC(x',\hat{x}))$ commutes.
    \[
    \begin{tikzcd}
        \dC(u,\hat{x})
        \arrow[r,Rightarrow,"{\dC(\alpha,\hat{x})}"]
        \arrow[d,swap,Rightarrow,"{\Phi_{\hat{x},X}(x_-)_u}"]
        & [40pt]
        \dC(v,\hat{x})(\dC(f,\hat{x})^{\op}\times \dC(f',\hat{x}))
        \arrow[d,Rightarrow,"{\Phi_{\hat{x},X}(x_-)_v(\dC(f,\hat{x})^{\op}\times \dC(f',\hat{x}))}"]
        \\
        Xu(\Phi_{\hat{x},X}(x_-)_x^{\op}\times \Phi_{\hat{x},X}(x_-)_{x'})
        \arrow[r,swap,Rightarrow,"{X\alpha(\Phi_{\hat{x},X}(x_-)_x^{\op}\times \Phi_{\hat{x},X}(x_-)_{x'})}" {yshift=-3pt}]
        &
        Xv((Xf\circ \Phi_{\hat{x},X}(x_-)_x)^{\op}\times (Xf'\circ\Phi_{\hat{x},X}(x_-)_{x'}))
    \end{tikzcd}
    \] 
    where we recall that, by the first part, we have the relations $Xf\circ \Phi_{\hat{x},X}(x_-)_x=\Phi_{\hat{x},X}(x_-)_y\circ \dC(f,\hat{x})$ and $Xf'\circ \Phi_{\hat{x},X}(x_-)_{x'}=\Phi_{\hat{x},X}(x_-)_{y'}\circ \dC(f',\hat{x})$. When evaluated at an object $(x\xrightarrow{g} \hat{x},x'\xrightarrow{g'}\hat{x})$ of $\dC(x,\hat{x})\times\dC(x',\hat{x})$, this amounts to showing that the following diagram in $\Set$ commutes. 
    \[
    \begin{tikzcd}
        \dC(u,\hat{x})(g,g')
        \arrow[r,"{(\dC(\alpha,\hat{x}))_{g,g'}}"]
        \arrow[d,swap,"{(\Phi_{\hat{x},X}(x_-)_u)_{g,g'}}"]
        & [45pt]
        \dC(u,\hat{x})(gf,g'f')
        \arrow[d,"{(\Phi_{\hat{x},X}(x_-)_v)_{gf,g'f'}}"]
        \\
        Xv(Xg(x_-),Xg'(x_-))
        \arrow[r,swap,"{(X\alpha)_{Xg(x_-),Xg'(x_-)}}"]
        &
        Xu(X(gf)(x_-),X(g'f')(x_-))
    \end{tikzcd}
    \]
    To do so, we evaluate both composites at an element 
    $\eta\colon\edgesquare{u}{g}{g'}{e_{\hat{x}}}$ in $\dC(u,\hat{x})(g,g')$:
    \begin{align*}
        (\Phi_{\hat{x},X}(x_-)_v)_{gf,g'f'}( \dC(\alpha,\hat{x})(\eta))
        &=
        X(\dC(\alpha,\hat{x})(\eta))_{x_-,x_-}(1_{x_-})
        &
        \text{Definition of }\Phi_{\hat{x},X}(x_-)_v
        \\
        &=
        X(\eta\circ\alpha)_{x_-,x_-}(1_{x_-})
        &
        \text{Definition of }\dC(\alpha,\hat{x})
        \\
        &=
        (X\alpha)_{Xg(x_-),Xg'(x_-)}((X\eta)_{x_-,x_-}(1_{x_-}))
        &
        \text{Functorality of }X
        \\
        &=
        (X\alpha)_{Xg(x_-),Xg'(x_-)}(\Phi_{\hat{x},X}(x_-)_u(\eta)).
        &
        \text{Definition of }\Phi_{\hat{x},X}(x_-)_u
    \end{align*}
    This shows that the two composites agree, as desired. 

    Next, we show compatibility of $\Phi_{\hat{x},X}(x_-)_u$ with composition comparison squares. For this, let $u\colon x\bulletarrow x'$ and $u'\colon x'\bulletarrow x''$ be composable vertical morphisms in $\dC$. We want to show that 
    the following diagram in $\Prof(\dC(x,\hat{x}),\dC(x'',\hat{x}))$ commutes.
    \[
    \begin{tikzcd}
        {\dC(u',\hat{x})\bullet\dC(u,\hat{x})} \arrow[r, "{\mu_{u,u'}}", Rightarrow] \arrow[d, "{\Phi_{\hat{x},X}(x_-)_{u'}\bullet\Phi_{\hat{x},X}(x_-)_u}"', Rightarrow] 
        &[50pt]
        {\dC(u'\bullet u,\hat{x})}
        \arrow[d, "{\Phi_{\hat{x},X}(x_-)_{u'\bullet u}}", Rightarrow] 
        \\
       (Xu'\bullet Xu)
        (\Phi_{\hat{x},X}(x_-)_x^{\op}\times
        \Phi_{\hat{x},X}(x_-)_{x''})  \arrow[r, "{\mu_{u,u'}(\Phi_{\hat{x},X}(x_-)_x^{\op}\times
        \Phi_{\hat{x},X}(x_-)_{x''})}"' {yshift=-3pt}, Rightarrow]                                                             & X(u'\bullet u)
        (\Phi_{\hat{x},X}(x_-)_x^{\op}\times
        \Phi_{\hat{x},X}(x_-)_{x''})
    \end{tikzcd}
    \]
    When evaluated at an object 
    $(x\xrightarrow{g}\hat{x},x''\xrightarrow{g''}\hat{x})$ of 
    $\dC(x,\hat{x})\times\dC(x'',\hat{x})$, this amounts to showing that the following diagram in $\Set$ commutes.
    \[
    \begin{tikzcd}
        {(\dC(u',\hat{x})\bullet\dC(u,\hat{x}))(g,g'')} \arrow[r, "{(\mu_{u,u'})_{g,g''}}"] \arrow[d, "{(\Phi_{\hat{x},X}(x_-)_{u'}\bullet\Phi_{\hat{x},X}(x_-)_u)_{g,g''}}"'] 
        &[60pt] {\dC(u'\bullet u,\hat{x})(g,g'')}
         \arrow[d, "{(\Phi_{\hat{x},X}(x_-)_{u'\bullet u})_{g,g''}}"] \\
       {(Xu'\bullet Xu)(Xg(x_-),Xg''(x_-)))}  \arrow[r, "{(\mu_{u,u'})_{Xg(x_-),Xg''(x_-)}}"']                 &
        {X(u'\bullet u)(Xg(x_-),Xg''(x_-))}    
\end{tikzcd}
    \]
    By the universal properties of the coequalizers, it is enough to show that, given an object $x'\xrightarrow{g'}\hat{x}$ of $\dC(x',\hat{x})$, the following diagram in $\Set$ commutes.
    \[
    \begin{tikzcd}
        {\dC(u',\hat{x})(g',g'')\times \dC(u,\hat{x})(g,g')} \arrow[r, "{\bullet}"] \arrow[d, "{(\Phi_{\hat{x},X}(x_-)_{u'})_{g',g''}\times (\Phi_{\hat{x},X}(x_-)_u)_{g,g'}}"'] 
        &[25pt] {\dC(u'\bullet u,\hat{x})(g,g'')}\arrow[d,"{(\Phi_{\hat{x},X}(x_-)_{u'\bullet u})_{g,g''}}"]
         \\
        {Xu'(Xg'(x_-),Xg''(x_-))\times Xu(Xg(x_-),Xg'(x_-))} \arrow[r,"{(\mu_{u,u'})_{Xg(x_-),Xg''(x_-)}}"' {yshift=-3pt}]                 &
        {X(u'\bullet u)(Xg(x_-),Xg''(x_-))}    
\end{tikzcd}
    \]
    To do so, we evaluate both composites at an element
    $(\eta'\colon\edgesquare{u'}{g'}{g''}{e_{\hat{x}}},\eta\colon\edgesquare{u}{g}{g'}{e_{\hat{x}}})$ of the set $
    \dC(u',\hat{x})(g',g'')\times\dC(u,\hat{x})(g,g')$:
    \begin{align*}
        (\mu_{u,u'})&_{Xg(x_-),Xg''(x_-)}((\Phi_{\hat{x},X}(x_-)_{u'}\times
        \Phi_{\hat{x},X}(x_-)_u)((\eta',\eta)))
        &
        \\
        &=
        (\mu_{u,u'})_{Xg(x_-),Xg''(x_-)}((X\eta')_{x_-,x_-}(1_{x_-}),(X\eta)_{x_-,x_-}(1_{x_-}))
        &
        \text{Definition of }\Phi_{\hat{x},X}(x_-)
        \\
        &=
        X(\eta'\bullet\eta)_{x_-,x_-}(1_{x_-})
        &
        \text{Naturality of }\mu_{u,u'}
        \\
        &=
        \Phi_{\hat{x},X}(x_-)(\eta'\bullet\eta)
        &
        \text{Definition of }\Phi_{\hat{x},X}(x_-)_{u'\bullet u}
    \end{align*}
    This shows that both composites agree, as desired. 
    
    Finally, we show compatibility of $\Phi_{\hat{x},X}(x_-)_u$ with vertical identities. Given an object $x$ of $\dC$, we want to show that the natural transformation of functors $\dC(x,\hat{x})^{\op}\times \dC(x,\hat{x})\to \Set$ 
    \[ \Phi_{\hat{x},X}(x_-)_{e_x}\colon \dC(e_x,\hat{x})\Rightarrow Xe_x(\Phi_{\hat{x},X}(x_-)_x^{\op}\times \Phi_{\hat{x},X}(x_-)_x)\] 
    is the identity at $\Phi_{\hat{x},X}(x_-)_x$. When evaluated at an object $(x\xrightarrow{g}\hat{x},x\xrightarrow{g'}\hat{x})$ of $\dC(x,\hat{x})^{\op}\times \dC(x,\hat{x})$, this is given by the map of sets 
    \[ (\Phi_{\hat{x},X}(x_-)_{e_x})_{g,g'}\colon \dC(x,\hat{x})(g,g')\to Xx(Xg(x_-),Xg'(x_-))\]
    sending an element $\eta\colon \edgesquare{e_x}{g}{g'}{e_{\hat{x}}}$ in $\dC(x,\hat{x})(g,g')$ to $(X\eta)_{x_-,x_-}(1_{x_-})
    =\Phi_{\hat{x},X}(x_-)_x(\eta)$.
    Hence, by \cref{identity 2cell}, we see that $\Phi_{\hat{x},X}(x_-)_{e_x}$ is the identity at $\Phi_{\hat{x},X}(x_-)_x$.
\end{proof}

We now turn to the assignment of $\Psi_{\hat{x},X}$ on morphisms. 

\begin{Construction}\label{constr:onmorphisms}
    Given a morphism $u_-\colon x_-\to x'_-$ in $X\hat{x}$, we construct a globular modification in $\P{C}$
    \[
    \begin{tikzcd}
        \dC(-,\hat{x})
        \arrow[r,Rightarrow,"{\Phi_{\hat{x},X}(x_-)}"]
        \arrow[d,"\bullet" marking,equal]
        \arrow[rd,phantom,"{\Phi_{\hat{x},X}(u_-)}"]
        & [10pt]
        X
        \arrow[d,"\bullet" marking,equal]
        \\
        \dC(-,\hat{x})
        \arrow[r,swap,Rightarrow,"{\Phi_{\hat{x},X}(x'_-)}"]
        &
        X
    \end{tikzcd}
    \]
    such that its component at an object $x\in\dC$ is given by the natural transformation
    \[
    \Phi_{\hat{x},X}(u_-)_x\colon
    \Phi_{\hat{x},X}(x_-)_x\Rightarrow
    \Phi_{\hat{x},X}(x'_-)_x
    \colon
    \dC(x,\hat{x})\to Xx
    \]
    whose component at an object $x\xrightarrow{g}\hat{x}$ of $\dC(x,\hat{x})$ is given by the morphism
    \[
    Xg(u_-)\colon Xg(x_-)=\Phi_{\hat{x},X}(x_-)_x(g)\longrightarrow
    Xg(x'_-)=\Phi_{\hat{x},X}(x'_-)_x(g),
    \]
    i.e., the image of the morphism $u_-$ under the functor $Xg\colon X\hat{x}\to Xx$.
\end{Construction}

\begin{Lemma}
    The construction $\Phi_{\hat{x},X}(u_-)$ is a modification. 
\end{Lemma}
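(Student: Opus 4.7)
My plan is to verify directly the two conditions for globular modifications (horizontal and vertical compatibility with morphisms in $\dC$) as unpacked in the discussion following \cref{def: modification}. Write $F \coloneqq \Phi_{\hat{x},X}(x_-)$, $F' \coloneqq \Phi_{\hat{x},X}(x'_-)$, and $A \coloneqq \Phi_{\hat{x},X}(u_-)$. Since the vertical boundaries are identities, I must check: for every horizontal morphism $f\colon y\to x$ in $\dC$, the whiskerings $A_y \cdot \dC(f,\hat{x})$ and $Xf \cdot A_x$ agree; and for every vertical morphism $u\colon x\bulletarrow x'$ in $\dC$, the square involving $F_u$, $F'_u$, $A_x$ and $A_{x'}$ in $\Prof(\dC(x,\hat{x}),\dC(x',\hat{x}))$ commutes.

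For horizontal compatibility, I would evaluate both whiskerings at an object $g\colon x\to\hat{x}$ of $\dC(x,\hat{x})$: by the definition of $A$ and the functoriality of $X$, both sides collapse to $X(gf)(u_-) = Xf(Xg(u_-))$. This mirrors the calculation already carried out in the previous lemma for the naturality of $\Phi_{\hat{x},X}(x_-)_x$ in $x$, so it requires only unraveling the definitions.

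The main content is the vertical compatibility. Here I would evaluate the square in $\Prof$ at a pair $(g,g')$ of horizontal morphisms and at an element $\eta\colon\edgesquare{u}{g}{g'}{e_{\hat{x}}}$ of $\dC(u,\hat{x})(g,g')$. By the definition of $\Phi_{\hat{x},X}$ on vertical morphisms in \cref{constr:onobjects}, applying $F_u$ to $\eta$ yields $(X\eta)_{x_-,x_-}(1_{x_-})$, and applying $F'_u$ yields $(X\eta)_{x'_-,x'_-}(1_{x'_-})$. Post-composing respectively with $Xu(\id_{Xg(x_-)}, Xg'(u_-))$ and with $Xu(Xg(u_-), \id_{Xg'(x'_-)})$, the crucial observation is that $X\eta$ is a natural transformation of profunctors $X\hat{x}(-,-)\Rightarrow Xu(Xg(-),Xg'(-))$, so naturality in its second (covariant) argument rewrites the first path as $(X\eta)_{x_-,x'_-}(u_-)$, while naturality in its first (contravariant) argument rewrites the second path as the same element $(X\eta)_{x_-,x'_-}(u_-)$.

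The main obstacle I anticipate is purely notational: keeping track of the contravariance conventions in the two slots of the profunctor $Xu$, and of the fact that $X$ is normal so that $Xe_{\hat{x}}=e_{X\hat{x}}=X\hat{x}(-,-)$ and $X\eta$ may indeed be treated as a natural transformation out of the identity profunctor. Once those conventions are fixed, both verifications reduce to the bare naturality of $X\eta$ together with the functoriality of $X$ on horizontal morphisms, with no appeal to composition comparison squares.
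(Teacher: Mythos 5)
Your proposal is correct and follows the same route as the paper: horizontal compatibility reduces to $Xf(Xg(u_-))=X(gf)(u_-)$ by functoriality of $X$, and vertical compatibility is checked elementwise on a square $\eta\colon\edgesquare{u}{g}{g'}{e_{\hat{x}}}$, where both paths collapse to $(X\eta)_{x_-,x'_-}(u_-)$ by naturality of $X\eta$ in each of its two arguments (using normality of $X$ to view $X\eta$ as a transformation out of the hom profunctor $X\hat{x}(-,-)$). Your observation that no composition comparison squares are needed is also accurate, since those only enter the conditions on the horizontal transformations themselves, not on the modification.
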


\begin{proof}
    We start by showing horizontal compatibility of $\Phi_{\hat{x},X}(u_-)$. Given a horizontal morphism 
    $f\colon x\to y$ in $\dC$, we have to show that the following pasting diagram of functors and natural transformations commutes.
     \[
    \begin{tikzcd} 
& {\dC(y,\hat{x})} \arrow[dd, swap,"{\Phi_{\hat{x},X}(x_-)_y}", bend right=52] \arrow[dd, "{\Phi_{\hat{x},X}(x'_-)_y}", bend left=52] \arrow[rr, "{\dC(f,\hat{x})}"] 
&[40pt]      
& {\dC(x,\hat{x})} \arrow[dd, swap,"{\Phi_{\hat{x},X}(x_-)_x}", bend right =52] \arrow[dd, "{\Phi_{\hat{x},X}(x'_-)_x}", bend left=52] &    
\\
{} \arrow[rr,yshift = -9pt, "{\Phi_{\hat{x},X}(u_-)_y}" {xshift = -20pt}, shorten <=30pt, shorten >=70pt,Rightarrow] 
&                     
&[40pt]
{} \arrow[rr, yshift = -9pt, "{\Phi_{\hat{x},X}(u_-)_x}", shorten <= 30pt, shorten >=30pt, Rightarrow] &             & {} 
\\
& {Xy} \arrow[rr, swap,"{Xf}"]                           &[40pt]
& {Xx}                                  & 
\end{tikzcd}
    \]
    To do so, we evaluate both whiskerings at an object $y\xrightarrow{g}\hat{x}$ of $\dC(y,\hat{x})$:
    \begin{align*}
        (Xf \circ \Phi_{\hat{x},X}(u_-)_y)_g
        &=
        Xf((\Phi_{\hat{x},X}(u_-)_y)_g)
        &
        \text{Whiskering}
        \\
        &=
        Xf(Xg(u_-))
        &
        \text{Definition of }\Phi_{\hat{x},X}(u_-)_y
        \\
        &=
        X(gf)(u_-)
        &
        \text{Functorality of }X
        \\
        &=
        (\Phi_{\hat{x},X}(u_-)_x)_{gf}
        &
        \text{Definition of }\Phi_{\hat{x},X}(u_-)_x
        \\
        &=
        (\Phi_{\hat{x},X}(u_-)_x)_{\dC(f,\hat{x})(g)}
        &
        \text{Definition of }\dC(f,\hat{x})
        \\
        &=
        (\Phi_{\hat{x},X}(u_-)_x \circ \dC(f,\hat{x}))_g.
        &
        \text{Whiskering}
    \end{align*}
    This shows that the components of the two whiskerings agree, as desired. 

    Next, we show vertical compatibility of $\Phi_{\hat{x},X}(u_-)$. Given a vertical morphism $u\colon x\bulletarrow x'$ in $\dC$, we have to show that the following diagram commutes in $[\dC(x,\hat{x})^{\op}\times\dC(x',\hat{x}),\Set]$:
    \[
        \begin{tikzcd}
            {\dC(u,\hat{x})} \arrow[r, "{\Phi_{\hat{x},X}(x_-)_u}", Rightarrow] \arrow[d, "{\Phi_{\hat{x},x}(x'_-)_u}"', Rightarrow]                                                           & [60pt] {Xu (\Phi_{\hat{x},X}(x_-)_x^{\op}\times\Phi_{\hat{x},X}(x_-)_{x'})} \arrow[d, "{Xu(\Phi_{\hat{x},X}(x_-)_x^{\op}\times\Phi_{\hat{x},X}(u_-)_{x'})}" {yshift=-3pt}, Rightarrow] \\
            {Xu (\Phi_{\hat{x},X}(x'_-)_x^{\op}\times\Phi_{\hat{x},X}(x'_-)_{x'})} \arrow[r, "{Xu (\Phi_{\hat{x},X}(u_-)_x^{\op}\times\Phi_{\hat{x},X}(x'_-)_{x'})}"' {yshift=-3pt}, Rightarrow] & {Xu (\Phi_{\hat{x},X}(x_-)_x^{\op}\times\Phi_{\hat{x},X}(x'_-)_{x'})}     
        \end{tikzcd}
        \]
        When evaluated at an object $(x\xrightarrow{g}\hat{x},x'\xrightarrow{g'}\hat{x})$ of 
        $\dC(x,\hat{x})\times\dC(x',\hat{x})$, this amounts to showing that the following diagram in $\Set$ commutes.
        \[
        \begin{tikzcd}
            {\dC(u,\hat{x})(g,g')} \arrow[d, "{(\Phi_{\hat{x},X}(x'_-)_u)_{g,g'}}"'] \arrow[r, "{(\Phi_{\hat{x},X}(x_-)_u)_{g,g'}}"] & [20pt]{Xu(Xg(x_-),Xg'(x_-))} \arrow[d, "Xg'(u_-)_*"] \\
            {Xu(Xg(x'_-),Xg'(x'_-))} \arrow[r, "Xg(u_-)^*"']                       & {Xu(Xg(x_-),Xg'(x'_-))}               
        \end{tikzcd}
        \]
        To do so, we evaluate both composites at an element
        $\eta\colon\edgesquare{u}{g}{g'}{e_{\hat{x}}}$ in $\dC(u,\hat{x})(g,g')$:
        \begin{align*}
            Xg'(u_-)_*((\Phi_{\hat{x},X}(x_-)_u)_{g,g'}(\eta))
            &=
            Xg'(u_-)_*((X\eta)_{x_-,x_-}(1_{x_-}))
            &
            \text{Definition of }\Phi_{\hat{x},X}(x_-)_u
            \\
            &=
            (X\eta)_{x_-,x'_-}((u_-)_*(1_{x_-}))
            &
            \text{Naturality of }X\eta
            \\
            &=
            (X\eta)_{x_-,x'_-}(u_-)
            &
            \text{Definition of }(u_-)_*
            \\
            &=
            (X\eta)_{x_-,x'_-}((u_-)^*(1_{x'_-}))
            &
            \text{Definition of }(u_-)^*
            \\
            &=
            Xg(u_-)^*((X\eta)_{x'_-,x'_-}(1_{x'_-}))
            &
            \text{Naturality of }X\eta
            \\
            &=
            Xg(u_-)^*((\Phi_{\hat{x},X}(x'_-)_u)_{g,g'}(\eta))
            &
            \text{Definition of }\Phi_{\hat{x},X}(x'_-)_u
        \end{align*}
        This shows that the two composites agree, as desired. 
\end{proof}

Putting everything together, we get the following. 

\begin{Construction} \label{constr:yonedainverse}
    Given a double category $\dC$, a lax double presheaf $X\colon \dC^{\op}\to \dCat$, and an object $\hat{x}\in \dC$, we construct a functor 
    \[ \Phi_{\hat{x},X}\colon X\hat{x}\to \P{C}(\dC(-,\hat{x}),X)
    \]
    which sends 
    \begin{itemize}[leftmargin=1cm]
        \item an object $x_-$ in $X\hat{x}$ to the horizontal transformation $\Phi_{\hat{x},X}(x_-)\colon \dC(-,\hat{x})\Rightarrow X$ as in \cref{constr:onobjects}, 
        \item a morphism $u_-\colon x_-\to x'_-$ in $X\hat{x}$ to the modification
    $\Phi_{\hat{x},X}(u_-)\colon\edgesquare{e_{\dC(-,\hat{x})}}{\Phi_{\hat{x},X}(x_-)}{\Phi_{\hat{x},X}(x'_-)}{e_X}$ as in
    \cref{constr:onmorphisms}.
    \end{itemize}
\end{Construction}

We are now ready to prove the Yoneda lemma.

\begin{proof}[Proof of \cref{thm:Yoneda}]
We show that the functors 
\[ \Psi_{\hat{x},X}\colon \P{C}(\dC(-,\hat{x}),X)\to X\hat{x}\quad \text{and} \quad \Phi_{\hat{x},X}\colon X\hat{x}\to \P{C}(\dC(-,\hat{x}),X) \]
   from \cref{constr:yonedafunctor,constr:yonedainverse} are inverses to each other. 
   
    We first consider the composite 
\[  X\hat{x} \xrightarrow{\Phi_{\hat{x},X}}\P{C}(\dC(-,\hat{x}),X)\xrightarrow{\Psi_{\hat{x},X}} X\hat{x}. \]
It sends an object $x_-$ in $X\hat{x}$ to the object 
\begin{align*}
    \Psi_{\hat{x},X}(\Phi_{\hat{x},X}(x_-))
    &=
    \Phi_{\hat{x},X}(x_-)_{\hat{x}}(1_{\hat{x}})
    &
    \text{Definition of }\Psi_{\hat{x},X}
    \\
    &=
    X1_{\hat{x}}(x_-)
    &
    \text{Definition of }\Phi_{\hat{x},X}
    \\
    &=
    1_{X\hat{x}}(x_-)=x_-
    &
    \text{Functorality of }X
\end{align*}
and a morphism $u_-\colon x_-\to x'_-$ in $X\hat{x}$ to the morphism
\begin{align*}
    \Psi_{\hat{x},X}(\Phi_{\hat{x},X}(u_-))
    &=
    (\Phi_{\hat{x},X}(u_-)_{\hat{x}})_{1_{\hat{x}}}
    &
    \text{Definition of }\Psi_{\hat{x},X}
    \\
    &=
    X1_{\hat{x}}(u_-)
    &
    \text{Definition of }\Phi_{\hat{x},X}(u_-)
    \\
    &=
    1_{X\hat{x}}(u_-)=u_-.
    &
    \text{Functorality of } X
\end{align*}
This shows that the composite $\Psi_{\hat{x},X}\circ \Phi_{\hat{x},X}$ is the identity.

We then consider the composite 
\[ \P{C}(\dC(-,\hat{x}),X)\xrightarrow{\Psi_{\hat{x},X}} X\hat{x} \xrightarrow{\Phi_{\hat{x},X}}\P{C}(\dC(-,\hat{x}),X) \]
It sends a horizontal transformation $\varphi\colon \dC(-,\hat{x})\Rightarrow X$ to the horizontal transformation 
\[ \Phi_{\hat{x},X}(\Psi_{\hat{x},X}(\varphi))=\Phi_{\hat{x},X}(\varphi_{\hat{x}}(1_{\hat{x}}))\colon \dC(-,\hat{x})\Rightarrow X\]
whose 
\begin{itemize}[leftmargin=1cm]
    \item component at an object $x$ in $\dC$ is given by the functor 
    \[ \Phi_{\hat{x},X}(\varphi_{\hat{x}}(1_{\hat{x}}))_x\colon \dC(x,\hat{x})\to Xx \]
    defined as follows. It sends an object $x\xrightarrow{g}\hat{x}$ in $\dC(x,\hat{x})$ to the object 
    \begin{align*}
        \Phi_{\hat{x},X}(\phi_{\hat{x}}(1_{\hat{x}}))_x(g)
        &=
        Xg(\varphi_{\hat{x}}(1_{\hat{x}}))
        &
        \text{Definition of }\Phi_{\hat{x},X}
        \\
        &=
        \varphi_{x}(\dC(g,\hat{x})(1_{\hat{x}}))
        &
        \text{Naturality of }\phi
        \\
        &=
        \varphi_x(1_{\hat{x}}\circ g)
        =
        \varphi_{x}(g)
        &
        \text{Definition of }\dC(g,\hat{x})
    \end{align*}
    and a morphism $\eta\colon\edgesquare{e_x}{g}{g'}{e_{\hat{x}}}$ in $\dC(x,\hat{x})$ to the morphism
    \begin{align*}
        \Phi_{\hat{x},X}(\phi_{\hat{x}}(1_{\hat{x}}))_x(\eta)
        &=
        (X\eta)_{\phi_{\hat{x}}(1_{\hat{x}})}
        &
        \text{Definition of }
        \Phi_{\hat{x},X}
        \\
        &=
        \phi_x(\dC(\eta,\hat{x})_{1_{\hat{x}}})
        &
        \text{Naturality of }\phi
        \\
        &=
        \phi_x(\eta).
        &
        \text{Definition of }\dC(\eta,\hat{x})
    \end{align*}
    This shows that 
    $\Phi_{\hat{x},X}(\Psi_{\hat{x},X}(\phi))_x=\Phi_{\hat{x},X}(\phi_{\hat{x}}(1_{\hat{x}}))=\phi_x$.
    \item component at a vertical morphism 
    $u\colon x\bulletarrow x'$ in $\dC$ is given by the natural transformation
    \[
    \Phi_{\hat{x},X}(\phi_{\hat{x}}(1_{\hat{x}}))_u\colon
    \dC(u,\hat{x})
    \Rightarrow Xu\left(\Phi_{\hat{x},X}(\varphi_{\hat{x}}(1_{\hat{x}}))_x^{\op}\times\Phi_{\hat{x},X}(\varphi_{\hat{x}}(1_{\hat{x}}))_{x'}\right)=
    Xu(\phi_x^{\op}\times\phi'_x)
    \]
    defined as follows. Its component at an object   $(x\xrightarrow{g}\hat{x},x'\xrightarrow{g'}\hat{x})$ of $\dC(x,\hat{x})\times\dC(x',\hat{x})$ is the map 
    \[ (\Phi_{\hat{x},X}(\phi_{\hat{x}}(1_{\hat{x}}))_u)_{g,g'}\colon \dC(u,\hat{x})(g,g')\to Xu(\varphi_x(g),\varphi'_x(g')) \]
    which sends an element 
    $\eta\colon\edgesquare{u}{g}{g'}{e_{\hat{x}}}$ of $\dC(u,\hat{x})(g,g')$ to the element
    \begin{align*}
        (\Phi_{\hat{x},X}(\phi_{\hat{x}}(1_{\hat{x}}))_u)_{g,g'}(\eta) &= (X\eta)_{\phi_{\hat{x}}(1_{\hat{x}}),\phi_{\hat{x}}(1_{\hat{x}})}(1_{\phi_{\hat{x}}(1_{\hat{x}})}) & \text{Definition of } \Phi_{\hat{x},X} \\
        &=(X\eta)_{\phi_{\hat{x}}(1_{\hat{x}}),\phi_{\hat{x}}(1_{\hat{x}})}(e_{1_{\hat{x}}})
        &
        \text{Functorality of }\phi_{\hat{x}}
        \\
        &=
        \phi_u(\dC(\eta,\hat{x})_{1_{\hat{x}}, 1_{\hat{x}}}(e_{1_{\hat{x}}}))
        &
        \text{Naturality of }\phi
        \\
        &=
        \phi_u(e_{1_{\hat{x}}}\circ\eta)=
        \phi_u(\eta).
        &
        \text{Definition of }\dC(\eta,\hat{x})
        \\
    \end{align*}
    This shows that $\Phi_{\hat{x},X}(\Psi_{\hat{x},X}(\phi))_u=\Phi_{\hat{x},X}(\phi_{\hat{x}}(1_{\hat{x}}))_u=\phi_u$.
\end{itemize}
Putting these two together, we deduce that $\Phi_{\hat{x},X}(\Psi_{\hat{x},X}(\phi))=\phi$.

The composite $\Phi_{\hat{x},X}\circ \Psi_{\hat{x},X}$ sends a globular modification 
$\nu\colon\edgesquare{e_{\dC(-,\hat{x})}}{\phi}{\phi'}{e_X}$ to the globular modification
\[
\begin{tikzcd}
    \dC(-,\hat{x})
    \arrow[d,"\bullet" marking,equal]
    \arrow[r,Rightarrow,"\Phi_{\hat{x},X}(\phi_{\hat{x}}(1_{\hat{x}}))=\phi"]
    \arrow[rd,phantom,
    "\Phi_{\hat{x},X}\circ\Psi_{\hat{x},X}(\nu)=
    \Phi_{\hat{x},X}((\nu_{\hat{x}})_{1_{\hat{x}}})"]
    &[140pt]
    X
    \arrow[d,"\bullet" marking,equal]
    \\
    \dC(-,\hat{x})
    \arrow[r,swap,Rightarrow,"\Phi_{\hat{x},X}(\phi'_{\hat{x}}(1_{\hat{x}}))=\phi'"]
    &[140pt]
    X
\end{tikzcd}
\]
defined as follows. Its component at an object $x$ in $\dC$ is given by the natural transformation
\[
\Phi_{\hat{x},X}((\nu_{\hat{x}})_{1_{\hat{x}}})_x
\colon
\phi_x\Rightarrow\phi'_x\colon\dC(x,\hat{x})\to Xx.
\]
whose component at an object 
$x\xrightarrow{g}\hat{x}$ of $\dC(x,\hat{x})$ is
\begin{align*}
    (\Phi_{\hat{x},X}((\nu_{\hat{x}})_{1_{\hat{x}}})_x)_g
    &=
    Xg((\nu_{\hat{x}})_{1_{\hat{x}}})
    &
    \text{Definition of }\Phi_{\hat{x},X}
    \\
    &=
    (\nu_x)_{\dC(g,\hat{x})(1_{\hat{x}})}
    &
    \text{Horizontal compatibility of }\nu
    \\
    &=
    (\nu_x)_{1_{\hat{x}}\circ g}=(\nu_x)_g.
    &
    \text{Definition of }\dC(g,\hat{x})
\end{align*}
This shows that $\Phi_{\hat{x},X}(\Phi_{\hat{x},X}(\nu))=\nu$. Hence this proves that the composite $\Phi_{\hat{x},X}\circ \Psi_{\hat{x},X}$ is the identity, and concludes the proof.
\end{proof}

As a consequence of \cref{lem: PC for 2-categories,thm:Yoneda}, when taking $\dC=\dH\CC$ with $\CC$ a $2$-category, we retrieve the usual Yoneda lemma for $2$-categories; see \cite[Lemma 8.3.16]{2Dim_Categories}.

\begin{Corollary} \label{Yoneda for 2-categories}
    Given a $2$-category $\CC$, a $2$-presheaf $X\colon \CC^{\op}\to \Cat$, and an object~$\hat{x}$ in $\CC$, the functor
    \[ \Psi_{\hat{x},X}\colon [\CC^{\op},\Cat](\CC(-,\hat{x}),X)\to X\hat{x}
    \]
    is an isomorphism, which is $2$-natural in $\hat{x}$ in $\CC$ and in $X$ in $[\CC^{\op},\Cat]$.
\end{Corollary}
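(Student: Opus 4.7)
The plan is to deduce this corollary directly from the Yoneda lemma for double categories (Theorem \ref{thm:Yoneda}) by specializing to the horizontal double category $\dC=\dH\CC$ and then translating the statement through the isomorphism $\P{\dH\CC}\cong [\CC^{\op},\Cat]$ from \cref{lem: PC for 2-categories}.

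First I would apply \cref{thm:Yoneda} to the double category $\dC=\dH\CC$ and to any lax double presheaf $X\colon (\dH\CC)^{\op}\to\dCat$. Since $\dH\CC$ has only trivial vertical morphisms and squares, a normal lax double functor out of $(\dH\CC)^{\op}$ is the same as a (strict) double functor, which under the $2$-adjunction $\dH\dashv \HH$ (\cref{2-adjunctions}) together with $\HH\dCat=\Cat$ (\cref{rem: underlying 2-cat of dCat}) corresponds to a $2$-functor $\CC^{\op}\to\Cat$. This is exactly the content of \cref{lem: PC for 2-categories}, which provides the $2$-isomorphism $\P{\dH\CC}\cong [\CC^{\op},\Cat]$ natural in $\CC$.

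Next I would verify that the representable lax double presheaf $\dH\CC(-,\hat{x})\colon (\dH\CC)^{\op}\to\dCat$ corresponds, under this isomorphism, to the usual $2$-representable $\CC(-,\hat{x})\colon \CC^{\op}\to \Cat$. This is a direct unpacking: for an object $x$ in $\CC$, the category $\dH\CC(x,\hat{x})$ consists of horizontal morphisms (i.e.\ $1$-cells in $\CC$) and globular squares (i.e.\ $2$-cells in $\CC$), which is precisely the hom-category $\CC(x,\hat{x})$, with the functoriality in $x$ matching. Thus \cref{thm:Yoneda} yields an isomorphism of categories
\[ \Psi_{\hat{x},X}\colon [\CC^{\op},\Cat](\CC(-,\hat{x}),X)\cong \P{\dH\CC}(\dH\CC(-,\hat{x}),X)\xrightarrow{\cong} X\hat{x}. \]

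Finally, for the $2$-naturality statements, I would invoke the corresponding naturality in \cref{thm:Yoneda}: $2$-naturality in $\hat{x}$ in $\HH(\dH\CC)\cong \CC$ transfers immediately to $2$-naturality in $\hat{x}$ in $\CC$, and $2$-naturality in $X$ in $\P{\dH\CC}$ transfers through the isomorphism $\P{\dH\CC}\cong [\CC^{\op},\Cat]$ to $2$-naturality in $X$ in $[\CC^{\op},\Cat]$. Since each translation step is either a direct consequence of an already-established isomorphism or an unpacking of definitions, no step presents a real obstacle; the main thing to be careful about is checking that the isomorphism of \cref{lem: PC for 2-categories} identifies the representables on both sides, but this is immediate from the definitions.
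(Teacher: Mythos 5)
Your proposal is correct and follows exactly the route the paper takes: the corollary is deduced by specializing \cref{thm:Yoneda} to $\dC=\dH\CC$ and transporting the statement through the isomorphism $\P{\dH\CC}\cong[\CC^{\op},\Cat]$ of \cref{lem: PC for 2-categories}. Your additional check that the representable $\dH\CC(-,\hat{x})$ is identified with $\CC(-,\hat{x})$ under this isomorphism is the one detail the paper leaves implicit, and you handle it correctly.
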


\subsection{Yoneda embedding}

\label{subsec: Yoneda3}

Using the Yoneda lemma, we can construct a Yoneda embedding from the underlying horizontal $2$-category of a double category $\dC$ to its $2$-category of lax double presheaves.

\begin{Construction}
    Given a double category $\dC$, there is a Yoneda $2$-functor
    \[ \mathcal{Y}_\dC\colon \HH\dC\to \P C \]
    sending 
    \begin{itemize}[leftmargin=1cm]
        \item an object $\hat{x}$ of $\dC$ to the representable lax double presheaf $\dC(-,\hat{x})\colon \dC^{\op}\to \dCat$, 
        \item a horizontal morphism 
        $\hat{f}\colon \hat{x}\to \hat{y}$ of $\dC$ to the representable horizontal transformation \[ \dC(-,\hat{f})\colon \dC(-,\hat{x})\Rightarrow\dC(-,\hat{y}), \]
        \item a globular square $\hat{\alpha}\colon\edgesquare{e_{\hat{x}}}{\hat{f}}{\hat{f}'}{e_{\hat{y}}}$ to the representable globular modification \[ \dC(-,\hat{\alpha})\colon\edgesquare{e_{\dC(-,\hat{x})}}{\dC(-,\hat{f})}{\dC(-,\hat{f'})}{e_{\dC(-,\hat{y})}}. \]
    \end{itemize}
    It is straightforward to check that this construction is $2$-functorial.
\end{Construction}

\begin{Theorem}
    Given a double category $\dC$, the Yoneda $2$-functor
    \[ \mathcal{Y}_\dC\colon \HH\dC\to \P C \]
    is an embedding of $2$-categories.
\end{Theorem}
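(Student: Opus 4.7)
The plan is to reduce the claim to the Yoneda lemma proven as \cref{thm:Yoneda}. I interpret ``embedding of $2$-categories'' to mean a $2$-functor that is $2$-fully-faithful, namely one whose induced functors on hom-categories are isomorphisms. Fix two objects $\hat{x},\hat{y}$ in $\dC$. The first observation is that the hom-category $(\HH\dC)(\hat{x},\hat{y})$ is, by definition, the category of horizontal morphisms $\hat{x}\to\hat{y}$ and globular squares between them, which is precisely the category $\dC(\hat{x},\hat{y})=\dC(-,\hat{y})(\hat{x})$ obtained by evaluating the representable lax double presheaf $\dC(-,\hat{y})$ at $\hat{x}$.

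Applying \cref{thm:Yoneda} to the lax double presheaf $X=\dC(-,\hat{y})$, we obtain an isomorphism of categories
\[
\Psi_{\hat{x},\dC(-,\hat{y})}\colon \P{C}(\dC(-,\hat{x}),\dC(-,\hat{y})) \xrightarrow{\cong} \dC(\hat{x},\hat{y}).
\]
Hence, to show that the Yoneda $2$-functor $\mathcal{Y}_{\dC}$ restricts to an isomorphism on the hom-category $\dC(\hat{x},\hat{y})$, it suffices to verify that the composite
\[
\dC(\hat{x},\hat{y}) \xrightarrow{\mathcal{Y}_{\dC}} \P{C}(\dC(-,\hat{x}),\dC(-,\hat{y})) \xrightarrow{\Psi_{\hat{x},\dC(-,\hat{y})}} \dC(\hat{x},\hat{y})
\]
is the identity functor; since the second arrow is an isomorphism, this will force $\mathcal{Y}_{\dC}$ to be its two-sided inverse on this hom-category.

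The verification is then a direct unwinding of the definitions. On a horizontal morphism $\hat{f}\colon\hat{x}\to\hat{y}$, the composite equals $\Psi_{\hat{x},\dC(-,\hat{y})}(\dC(-,\hat{f})) = \dC(\hat{x},\hat{f})(1_{\hat{x}}) = \dC(1_{\hat{x}},\hat{f})(1_{\hat{x}}) = \hat{f}\circ 1_{\hat{x}}\circ 1_{\hat{x}} = \hat{f}$. On a globular square $\hat{\alpha}\colon\edgesquare{e_{\hat{x}}}{\hat{f}}{\hat{f}'}{e_{\hat{y}}}$, the composite equals $(\dC(\hat{x},\hat{\alpha}))_{1_{\hat{x}}} = (\dC(e_{1_{\hat{x}}},\hat{\alpha}))_{1_{\hat{x}}} = \hat{\alpha}\circ e_{1_{\hat{x}}}\circ e_{1_{\hat{x}}} = \hat{\alpha}$, using horizontal unitality of identity squares in $\dC$.

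No serious obstacle is expected: the argument is the standard ``evaluate the representable at the identity'' trick, and all the substantive work has already been absorbed into the Yoneda lemma. The only small point to be careful about is bookkeeping the horizontal identities in the definitions of $\dC(x,\hat{f})=\dC(1_x,\hat{f})$ and $\dC(x,\hat{\alpha})=\dC(e_{1_x},\hat{\alpha})$, which are precisely what allow the Yoneda isomorphism to recover $\hat{f}$ and $\hat{\alpha}$ on the nose.
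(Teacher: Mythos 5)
Your proposal is correct and takes essentially the same route as the paper: both reduce the statement to the Yoneda lemma (\cref{thm:Yoneda}) applied to the representable presheaf $\dC(-,\hat{y})$. In fact your version is slightly more complete, since you explicitly verify that the composite $\Psi_{\hat{x},\dC(-,\hat{y})}\circ\mathcal{Y}_{\dC}$ is the identity on $\dC(\hat{x},\hat{y})$ — i.e.\ that the abstract Yoneda isomorphism really is inverse to the action of $\mathcal{Y}_{\dC}$ on hom-categories — a point the paper's proof leaves implicit.
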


\begin{proof}
     Let $\hat{x}$ and $\hat{y}$ be objects in $\dC$. Then by \cref{thm:Yoneda}, we have a canonical isomorphism of categories 
    \[ \dC(\hat{x},\hat{y})\cong \P C(\dC(-,\hat{x}),\dC(-,\hat{y})). \]
    This shows that $\mathcal{Y}_\dC\colon \HH\dC\to \P C$ is $2$-fully-faithful, as desired.
\end{proof}

\section{Discrete double fibrations}

We now want lax double presheaves from a fibrational point of view. For this, we recall in this section the notion of discrete double fibrations. In \cref{subsec: doublediscfib}, we recollect the definition and main properties of discrete double fibrations. In particular, we construct a $2$-category of discrete double fibrations over a double category $\dC$ and show that it is $2$-functorial in $\dC$. In \cref{subsec: fibers}, we study the fibers of a discrete double fibration at both an object and a vertical morphism, which will be useful in the next section to construct a lax double presheaf from a discrete double fibration. Finally, in \cref{subsec: repdiscfib}, we present a first class of examples of discrete double fibrations, which are given by canonical projections from slice double categories. As these will correspond to representable lax double presheaves, we refer to them as \emph{representable discrete double fibrations}.

\subsection{The \texorpdfstring{$2$}{2}-category of discrete double fibrations}
\label{subsec: doublediscfib}

We start by introducing the $2$-category of discrete double fibrations. 

\begin{Definition}
\label{def: discrete double fibration}
\index{discrete double fibration}
    A double functor $P\colon\dE\to\dC$ is a \emph{discrete double fibration} if the following commutative square in $\Cat$ is a pullback
    \[
    \begin{tikzcd}
        \Ver_1\dE \arrow[r, "t"] \arrow[d,swap, "\Ver_1P"] \arrow[rd,phantom,near start, "\lrcorner"] & \Ver_0\dE\arrow[d, "\Ver_0P"] \\
        \Ver_1\dC \arrow[r, "t"']                & \Ver_0\dC              
    \end{tikzcd}
    \]
\end{Definition}

\begin{Notation}
     We denote by 
    $\dFib(\dC)$ the $2$-full $2$-subcategory of the slice $2$-category $\DblCat_v/\dC$  spanned by the discrete double fibrations over $\dC$. In other words,
    \begin{itemize}
        \item an object in $\dFib(\dC)$ is a discrete double fibration $P\colon \dE\to \dC$ over $\dC$, 
        \item given discrete double fibrations $P\colon \dE\to \dC$ and $Q\colon \dF\to \dC$, a morphism from $P$ to $Q$ in $\dFib(\dC)$ is a double functor $F\colon \dE\to \dF$ such that $Q\circ F=P$,
        \item given discrete double fibrations $P\colon \dE\to \dC$, $Q\colon \dF\to \dC$ and double functors $F\colon \dE\to \dF$, $F'\colon \dE\to \dF$ with $Q\circ F=P=Q\circ F'$, a $2$-morphism from $F$ to $F'$ in $\dFib(\dC)$ is a vertical transformation $A\colon F\Bulletarrow F'$ such that $Q \circ A= P$, i.e., $Q(A_x)=e_{Px}$ for all objects $x$ in $\dE$ and $Q(A_f)=e_{Pf}$ for all horizontal morphisms $f$ in $\dE$.
    \end{itemize}
\end{Notation}

\begin{Remark} \label{remark: dbldiscfib lifts}
    Unpacking the pullback condition, a double functor $P\colon \dE\to \dC$ is a discrete double fibration if and only if the following conditions hold:
    \begin{enumerate}[leftmargin=1cm]
        \item for every object $y_-$ in $\dE$ and every horizontal morphism 
        $f\colon x\to Py_-$ in $\dC$, there is a unique horizontal morphism 
        $f_-\colon x_-\to y_-$ in $\dE$ such that
        $Pf_-=f$. We write $f^*y_-\coloneqq x_-$ and 
        $P^*f\coloneqq f_-$.
        \item for every vertical morphism 
        $v_-\colon x_-\bulletarrow x'_-$ in $\dE$ and every square 
        $\alpha\colon\edgesquare{u}{f}{f'}{Pv_-}$ in $\dC$, there is a unique square
        $\alpha_-\colon\edgesquare{u_-}{f_-}{f'_-}{v_-}$ in $\dE$ such that 
        $P\alpha_-=\alpha$. We write $\alpha^* v_-\coloneqq u_-$ and
        $P^*\alpha\coloneqq \alpha_-$.
    \end{enumerate}
\end{Remark}

\begin{Remark}
    Note that, by unicity of the lifts, if $f\colon x\to y$ and $g\colon y\to z$ are two composable horizontal morphisms in $\dC$ and $z_-$ is an object of $\dE$ such that $Pz_-=z$, then \[ (g\circ f)^* z_-=f^*g^* z_-. \]
    Similarly, if $\alpha\colon\edgesquare{u}{f}{f'}{v}$ and $\beta\colon \edgesquare{v}{g}{g'}{w}$ are two horizontally composable squares in $\dC$ and $w_-$ is a vertical morphism in $\dE$ such that $Pw_-=w$, then \[ (\beta\circ\alpha)^*w_-=\alpha^*\beta^*w_-. \] 
\end{Remark}

The construction $\dFib(\dC)$ extends to a pseudo-functor $\DblCat_h^{\coop}\to 2\Cat$. To define its action on morphisms, we use
the following pullback stability of discrete double fibrations. This result follows from \cite[Proposition 4.11]{internal_GC} by taking $\mathcal{V}=\Cat$ and the fact that pullbacks in $2\Cat$ are $2$-categorical pullbacks.

\begin{Lemma}
\label{lem: pb preserve fibrations}
    Discrete double fibrations are stable under pullback. In particular, by universality of pullbacks, a double functor 
    $G\colon\dC\to\dD$ induces a $2$-functor $G^*\colon\dFib(\dD)\to\dFib(\dC)$ by taking pullbacks along $G$.
\end{Lemma}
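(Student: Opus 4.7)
The plan rests on two earlier facts: \cref{def: discrete double fibration} characterizes discrete double fibrations as those double functors whose target square $t\colon\Ver_1\to\Ver_0$ is a pullback in $\Cat$, and $\DblCat$ is the category of internal categories in $\Cat$ (cf.\ \cref{adjunction VV-HH} and \cref{adjunction HH}), so that limits in $\DblCat$ are computed levelwise by applying $\Ver_0$ and $\Ver_1$.

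For the stability under pullback, let $Q\colon\dF\to\dD$ be a discrete double fibration and $G\colon\dC\to\dD$ a double functor. I would form the pullback $G^*\dF\coloneqq\dC\times_\dD\dF$ in $\DblCat$ with canonical projection $G^*Q\colon G^*\dF\to\dC$. Applying $\Ver_0$ and $\Ver_1$ yields pullbacks in $\Cat$, and substituting the assumption $\Ver_1\dF\cong\Ver_1\dD\times_{\Ver_0\dD}\Ver_0\dF$ together with pullback pasting gives
\[ \Ver_1(G^*\dF)\cong\Ver_1\dC\times_{\Ver_1\dD}\bigl(\Ver_1\dD\times_{\Ver_0\dD}\Ver_0\dF\bigr)\cong\Ver_1\dC\times_{\Ver_0\dD}\Ver_0\dF\cong\Ver_1\dC\times_{\Ver_0\dC}\Ver_0(G^*\dF), \]
which is exactly the condition ensuring that $G^*Q\colon G^*\dF\to\dC$ is a discrete double fibration.

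For the $2$-functoriality of $G^*\colon\dFib(\dD)\to\dFib(\dC)$, the universal property of the pullback induces, for every morphism $F\colon\dE\to\dF$ over $\dD$, a canonical double functor $G^*F\colon G^*\dE\to G^*\dF$ over $\dC$; likewise, given a vertical transformation $A\colon F\Bulletarrow F'$ over $\dD$, one obtains a vertical transformation $G^*A\colon G^*F\Bulletarrow G^*F'$ over $\dC$ by applying the universal property on both $\Ver_0$ (to the object components of $A$) and $\Ver_1$ (to the horizontal-morphism components of $A$). All identities expressing functoriality and $2$-functoriality—preservation of composition, of identities, and of both operations composing vertical transformations—follow from the uniqueness clause of the pullback.

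The main obstacle is verifying that the induced family $G^*A$ assembles into a genuine vertical transformation, i.e., that the naturality squares $(G^*A)_f$ in $G^*\dF$ are well-defined and that their compatibilities (horizontal identity and composition) are preserved. This reduces, via the levelwise computation of pullbacks and uniqueness of lifts, to the corresponding naturality data for $A$ in $\dF$; it is the only step where one must unpack the definition of a vertical transformation rather than invoke the universal property abstractly.
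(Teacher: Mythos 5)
Your argument is correct. The paper does not actually prove this lemma: it is deferred to an external reference (Proposition~4.11 of the cited work on the internal Grothendieck construction, specialized to $\mathcal{V}=\Cat$), so what you have written is a self-contained direct proof of what the paper outsources. Your key computation is the right one: since $\DblCat$ consists of internal categories in $\Cat$ (\cref{adjunction VV-HH}), pullbacks are computed levelwise by $\Ver_0$ and $\Ver_1$, and the chain
\[ \Ver_1(G^*\dF)\cong\Ver_1\dC\times_{\Ver_1\dD}\bigl(\Ver_1\dD\times_{\Ver_0\dD}\Ver_0\dF\bigr)\cong\Ver_1\dC\times_{\Ver_0\dD}\Ver_0\dF\cong\Ver_1\dC\times_{\Ver_0\dC}\Ver_0(G^*\dF) \]
is exactly the pasting/cancellation argument needed to verify \cref{def: discrete double fibration} for $G^*Q$. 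Two small points you should make explicit if you write this up: first, the middle and last isomorphisms implicitly use that $t\circ\Ver_1 G=\Ver_0 G\circ t$, which holds because $G$ is a double functor; second, it is cleaner to phrase the argument via the pullback pasting lemma (the outer rectangle and the right-hand square are pullbacks, hence so is the left-hand square), since this guarantees that the isomorphism you produce is the \emph{canonical} comparison map $(\Ver_1(G^*Q),t)$ rather than some abstract isomorphism. Your treatment of $2$-functoriality is also sound: the components of a vertical transformation over $\dD$ at objects (vertical morphisms, i.e., morphisms of $\Ver_0\dF$) and at horizontal morphisms (squares, i.e., morphisms of $\Ver_1\dF$) pair with vertical identities in $\dC$ precisely because $A$ lies over $\dD$, and all coherences follow from uniqueness in the universal property. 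What the paper's citation buys is brevity and generality (the result for arbitrary $\mathcal{V}$); what your proof buys is a transparent, elementary verification requiring nothing beyond the pullback characterization already stated in the paper.
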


We now describe its action on $2$-morphisms.

\begin{Construction} \label{constr:Fib(B)}
    Given a horizontal transformation $B\colon G\Rightarrow G'\colon\dC\to\dD$, we define a $2$-natural transformation
        \[
        B^*\colon G'^*\Rightarrow G^*\colon\dFib(\dD)\to\dFib(\dC),
        \]
        whose component at a discrete double fibration 
        $P\colon\dE\to\dD$ is given by the morphism of discrete double fibrations over $\dC$
        \[
        \begin{tikzcd}
        G'^*\dE \arrow[rd, "G'^*P"'] \arrow[rr, "B^*_P"] &     & G^*\dE \arrow[ld, "G^*P"] \\
        & \dC & 
        \end{tikzcd}
    \]
    which sends
    \begin{itemize}[leftmargin=1cm]
        \item an object 
        $(x,x_-)$ in $G'^*\dE$, i.e., a pair of objects $x$ in $\dC$ and $x_-$ in $\dE$ such that $Px_-=G'x$, to the object 
        $(x,(B_x)^*x_-)$ in $G^*\dE$, where $(B_x)^*x_-$ denotes the source of the unique lift of the horizontal morphism $B_x\colon Gx\to G'x$ in $\dD$ with target $x_-$,
        \item a horizontal morphism 
        $(f,f_-)\colon (x,x_-)\to (y,y_-)$ in $G'^*\dE$, i.e., a pair of horizontal morphisms $f\colon x\to y$ in $\dC$ and $f_-\colon x_-\to y_-$ in $\dE$ such that $Pf_-=G'f$, to the horizontal morphism in~$G^*\dE$
        \[ (f,P^*Gf)\colon (x,(B_x)^*x_-)\to (y,(B_y)^*y_-), \]
        where $P^*Gf\colon (B_x)^*x_-\to (B_y)^*y_-$ denotes the unique lift of $Gf$,
        \item a vertical morphism 
        $(u,u_-)\colon (x,x_-)\bulletarrow(x',x'_-)$ in $G'^*\dE$, i.e., a pair of vertical morphisms $u\colon x\bulletarrow x'$ in $\dC$ and $u_-\colon x_-\bulletarrow x'_-$ in $\dE$ such that $Pu_-=G'u$, to the vertical morphism in~$G^*\dE$
        \[ (u,(B_u)^* u_-)\colon (x,(B_x)^*x_-)
        \bulletarrow(x',(B_{x'})^* x'_-), \]
        where $(B_u)^* u_-$ denotes the source of the unique lift of the square $B_u\colon \edgesquare{Gu}{B_x}{B_{x'}}{G'u}$ in $\dD$ with target $u_-$,
        \item a square 
        $(\alpha,\alpha_-)\colon\edgesquare{(u,u_-)}{(f,f_-)}{(f',f'_-)}{(v,v_-)}\colon\nodesquare{(x,x_-)}{(x',x'_-)}{(y,y_-)}{(y',y'_-)}$ in $G'^*\dE$, i.e., a pair of squares $\alpha\colon \edgesquare{u}{f}{f'}{v}$ in $\dC$ and $\alpha_-\colon \edgesquare{u_-}{f_-}{f'_-}{v_-}$ in $\dE$ such that $P\alpha_-=G'\alpha$, to the square in $G^*\dE$
        \[
        \begin{tikzcd}
            {(x,(B_x)^*x_-)} \arrow[d,"\bullet" marking, "{(u,(B_u)^*u_-)}"'] \arrow[r, "{(f,P^*f)}"] \arrow[rd, phantom,"{(\alpha,P^*G\alpha)}"] 
            &[30pt]
            {(y,(B_y)^*y_-)} \arrow[d,"\bullet" marking, "{(v,(B_v)^*v_-)}"] \\
            {(x',(B_{x'})^* x'_-)} \arrow[r, "{(f',P^*f')}"'] &[30pt]
            {(y',(B_{y'})^*y'_-)}  
            \end{tikzcd} \]
            where $P^*G\alpha\colon \edgesquare{(B_u)^*u_-}{P^*f}{P^*f'}{(B_v)^*v_-}$ denotes the unique lift of $G\alpha$.
            \end{itemize}
\end{Construction}

\begin{Lemma}
    The construction $B^*\colon G'^*\Rightarrow G^*\colon\dFib(\dD)\to\dFib(\dC)$ is a $2$-natural transformation.
\end{Lemma}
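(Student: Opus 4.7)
The plan is to check all the required conditions in turn, leveraging throughout the fact that in a discrete double fibration $P\colon\dE\to\dD$, horizontal morphisms and squares in $\dD$ with a fixed target admit unique $P$-lifts (\cref{remark: dbldiscfib lifts}). All verifications reduce to this unicity, since compositions and identities of lifts are forced to be lifts of the corresponding compositions and identities.

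First, I would show that for each discrete double fibration $P\colon\dE\to\dD$ the assignment $B^*_P\colon G'^*\dE\to G^*\dE$ is a double functor over $\dC$. Compatibility with the projections onto $\dC$ is immediate because $B^*_P$ acts as the identity on the first component. To check functoriality, note that by unicity, if $f,f'$ are composable horizontal morphisms in $\dC$ then $P^*G(f'f)=P^*Gf'\circ P^*Gf$, because both sides lift $G(f'f)=Gf'\circ Gf$ with the same target; similarly $P^*G(1_x)=1_{(B_x)^*x_-}$, and the analogous statements hold for squares with vertical and horizontal composition. The components $B^*_P$ on vertical morphisms and squares use the unique lifts of the squares $B_u$ and the coherence then again follows from unicity of lifts and the fact that $B$ is a horizontal transformation, so that $B_{u'\bullet u}$ corresponds to the pasting of $B_u$ and $B_{u'}$ and $B_{e_x}=e_{B_x}$.

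Next, I would verify $2$-naturality in $P$. Given a morphism $F\colon(\dE,P)\to(\dF,Q)$ in $\dFib(\dD)$, the equality of double functors $G^*F\circ B^*_P=B^*_Q\circ G'^*F\colon G'^*\dE\to G^*\dF$ needs to be checked on objects, horizontal morphisms, vertical morphisms, and squares. On the first coordinate, both composites act as the identity of $\dC$; on the second coordinate, both deliver a lift of the appropriate component of $B$, respectively a lift of $G$ applied to a morphism or square, in $\dF$; the equality is then enforced by unicity of such lifts, using the fact that $F$ commutes with the projections so that $F$ sends the relevant lifts in $\dE$ to lifts in $\dF$.

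Finally, given a $2$-morphism $A\colon F\Bulletarrow F'$ in $\dFib(\dD)$ between $F,F'\colon(\dE,P)\to(\dF,Q)$, I would verify the equality of vertical transformations $G^*A\ast B^*_P=B^*_Q\ast G'^*A$ componentwise. By the description of $G^*$ and $G'^*$ on $2$-morphisms (obtained from the universal property of pullback), the vertical components of both whiskerings at an object $(x,x_-)$ of $G'^*\dE$ are given, on the second coordinate, by a lift of some component of $A$ along $Q$, and analogously for horizontal morphisms. The expected obstacle, if any, is keeping track of the various unique lifts, but since every pertinent square and morphism in $\dC$ and $\dD$ has a unique lift in $\dE$ or $\dF$, all required equalities are forced, and no genuine combinatorial work arises beyond bookkeeping.
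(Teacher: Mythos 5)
Your proposal is correct and follows essentially the same route as the paper, which likewise reduces every verification (well-definedness of the components $B^*_P$ as double functors over $\dC$, their functoriality, and $2$-naturality in $P$) to the unicity of lifts along the discrete double fibrations involved. You simply spell out more of the bookkeeping (e.g.\ the use of the coherence of $B$ as a horizontal transformation for vertical composites and identities) than the paper does.
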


\begin{proof}
    First note that the components $B^*_P\colon G'^*\dE\to G^*\dE$ are well-defined double functors over $\dC$. The fact that it preserves compositions and identities is a consequence of the unicity of lifts. 

    Similarly, the $2$-naturality of $B^*_P$ in $P$ follows from the unicity of lifts. 
\end{proof}

Putting everything together, we get the following.

\begin{Construction}
\label{def: dFib}
    We define a pseudo-functor
    \[
    \dFib\colon\DblCat_h^{\coop}\to 2\Cat,
    \]
    which sends
    \begin{itemize}[leftmargin=1cm]
        \item a double category $\dC$ to the $2$-category $\dFib(\dC)$ of discrete double fibrations, double functors over $\dC$, and vertical transformations over $\dC$,
        \item a double functor $G\colon\dC\to\dD$ to the $2$-functor
        \[
        \dFib(G)\coloneqq G^*\colon\dFib(\dD)\to\dFib(\dC)
        \]
        induced by taking pullback along $G$,
        \item a horizontal transformation $B\colon G\Rightarrow G'\colon\dC\to\dD$ to the $2$-natural transformation
        \[
        \dFib(B)\colon\dFib(G')=G'^*\Rightarrow\dFib(G)=G^*\colon\dFib(\dD)\to\dFib(\dC)
        \]
        from \cref{constr:Fib(B)}.
    \end{itemize}
    It is straightforward to check that this construction is pseudo-functorial.
\end{Construction}

Finally, we state the following result, which will be useful later on. A proof can be found in \cite[Proposition 4.12]{internal_GC} by taking $\mathcal{V}=\Cat$.

\begin{Lemma}
\label{lem: iso of ddcis fib can be checked on Ver}
    A morphism $F\colon\dE\to\dF$ between discrete double fibrations $P\colon\dE\to\dC$ and $Q\colon\dF\to\dC$ is an isomorphism of double categories 
    if and only if the induced functor $\Ver_0 F\colon\Ver_0\dE\to\Ver_0\dF$ is an isomorphism of categories.
\end{Lemma}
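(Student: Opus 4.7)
The approach I would take is to exploit the pullback characterization of discrete double fibrations given in \cref{def: discrete double fibration} together with the internal-category description of double categories recalled in \cref{adjunction VV-HH}. The forward direction is immediate: an isomorphism of double categories induces an isomorphism on any functorial piece of its structure, in particular on the underlying vertical category. So the plan focuses on the converse.

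First I would observe that, since $F$ is a double functor over $\dC$, i.e.\ $QF=P$, the induced functors $\Ver_0 F\colon \Ver_0\dE \to \Ver_0\dF$ and $\Ver_1 F\colon \Ver_1\dE \to \Ver_1\dF$ commute with the target functors $t$ and with the projections $\Ver_0 P,\Ver_0 Q$ and $\Ver_1 P,\Ver_1 Q$, respectively. Using that $P$ and $Q$ are discrete double fibrations, we have pullback squares
\[
\Ver_1 \dE \cong \Ver_1 \dC \times_{\Ver_0 \dC} \Ver_0 \dE
\quad \text{and} \quad
\Ver_1 \dF \cong \Ver_1 \dC \times_{\Ver_0 \dC} \Ver_0 \dF,
\]
so by the universal property of the pullback defining $\Ver_1\dF$, the functor $\Ver_1 F$ is precisely the one induced by $\Ver_0 F$, i.e.\ it is obtained from $\Ver_0 F$ by pulling back along $t\colon \Ver_1\dC \to \Ver_0\dC$.

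Next I would invoke that the pullback functor $\Ver_1\dC \times_{\Ver_0\dC}(-)\colon \Cat/\Ver_0\dC \to \Cat/\Ver_1\dC$ preserves isomorphisms: assuming $\Ver_0 F$ is an isomorphism of categories, it follows that $\Ver_1 F$ is an isomorphism as well. Hence $F$ is a bijection on objects and vertical morphisms (through $\Ver_0 F$) and on horizontal morphisms and squares (through $\Ver_1 F$), and these inverse bijections are compatible with the source, target, identity, and composition functors encoding the internal category structure of $\dE$ and $\dF$ in $\Cat$. They therefore assemble into a double functor $F^{-1}\colon \dF \to \dE$ inverse to $F$, yielding the desired conclusion.

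I do not anticipate a genuine obstacle here: once one recognizes that $\Ver_1 F$ is literally a pullback of $\Ver_0 F$ along $t$, the result is a formal consequence of the stability of isomorphisms under pullback in $\Cat$. The only point requiring minor care is the routine verification that the componentwise inverses respect all structure maps of the double categories, which follows automatically since $\Ver_0 F^{-1}$ and $\Ver_1 F^{-1}$ are functors and the entire double-categorical structure of $\dE$ and $\dF$ is captured by their internal-category data in $\Cat$.
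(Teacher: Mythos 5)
Your proof is correct, and it is essentially the intended argument: the paper itself does not prove this lemma but defers to \cite[Proposition 4.12]{internal_GC}, whose proof is exactly your observation that the pullback squares defining discrete double fibrations identify $\Ver_1 F$ with the base change of $\Ver_0 F$ along $t\colon\Ver_1\dC\to\Ver_0\dC$, after which inverting the internal functor levelwise is formal. The only point worth making explicit is that $(\Ver_0 F)^{-1}$ lives over $\Ver_0\dC$ because $QF=P$, which you implicitly use when pulling back; otherwise nothing is missing.
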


\subsection{Fibers of discrete double fibrations} 
\label{subsec: fibers}

We now want to study the fibers of a discrete double fibration at both an object and a vertical morphism, which will be useful to construct a lax double presheaf out of a discrete double fibration. Let us first define these. 

\begin{Construction}
\label{constr: definition of P_u}
Given a discrete double fibration $P\colon \dE\to \dC$, an object $x$ in $\dC$, and a vertical morphisms $u\colon x\bulletarrow x'$ in $\dC$, we define the following pullbacks in $\DblCat$,
\[
    \begin{tikzcd}
        P^{-1}x \arrow[r] \arrow[d] \arrow[rd,phantom,near start, "\lrcorner"] & \dE \arrow[d, "P"] \\
        {[0]} \arrow[r, "x"']                                & \dC               
    \end{tikzcd}\quad\quad
    \begin{tikzcd}
        P^{-1}u \arrow[r] \arrow[d] \arrow[rd, phantom,near start,"\lrcorner"] & {\dHom{\dV[1],\dE}} \arrow[d, "{\dHom{\dV[1],P}}"] \\
        {[0]} \arrow[r, "u"']                                & {\dHom{\dV[1],\dC}}                          
    \end{tikzcd}
    \]
    where $\dV[1]$ denotes the double category free on a vertical morphism.
Note that there is a canonical induced map $P_u\colon P^{-1}u\to P^{-1}x\times P^{-1}x'$ obtained using the universal property of pullback as follows: 
    \[
    \begin{tikzcd}
    P^{-1}u \arrow[rr] \arrow[dd] 
    \arrow[rd, "\exists! P_u", dashed,swap] &                                                       & {\dHom{\dV[1],\dE}} \arrow[dd, "{\dHom{\dV[1],P}}" {yshift=-10pt}] \arrow[rd, "{(s,t)}"] &                                      \\
    & P^{-1}x\times P^{-1}x' \arrow[rr] \arrow[dd] \arrow[rd,phantom,near start, "\lrcorner"] & & \dE\times\dE \arrow[dd, "P\times P"] \\
    {[0]} \arrow[rr, "u"' {xshift=20pt}] \arrow[rd,equal]  && {\dHom{\dV[1],\dC}} \arrow[rd, "{(s,t)}"] & \\
    & {[0]} \arrow[rr, "{(x,x')}"']  && \dC\times\dC 
    \end{tikzcd}
    \]
\end{Construction}

We start by showing that the fiber $P^{-1}x$ at an object is a category, seen as an object in~$\DblCat$ through the vertical embedding $\dV\colon \Cat\to \DblCat$. Explicitly, this means that its horizontal morphisms and squares are all trivial. 

\begin{Proposition} \label{prop: Px category}
    Given a discrete double fibration $P\colon \dE\to \dC$ and an object $x$ in $\dC$, the double category $P^{-1}x$ is a vertical double category. We also denote by $P^{-1}x$ the corresponding category. 
\end{Proposition}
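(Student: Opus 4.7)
The plan is to unpack the pullback definition of $P^{-1}x$ and then use the lifting property of discrete double fibrations (Remark~\ref{remark: dbldiscfib lifts}) to rule out non-trivial horizontal morphisms and non-trivial squares. Since $[0]$ is the double category with a single object and no non-trivial morphisms, an object of $P^{-1}x$ is an object $x_-$ in $\dE$ with $Px_-=x$; a horizontal morphism $f_-\colon x_-\to y_-$ is a horizontal morphism in $\dE$ with $Pf_-=1_x$; a vertical morphism $u_-\colon x_-\bulletarrow x'_-$ is a vertical morphism in $\dE$ with $Pu_-=e_x$; and a square $\alpha_-$ is a square in $\dE$ with $P\alpha_-=1_{e_x}$.

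First I would show that all horizontal morphisms in $P^{-1}x$ are identities. Let $f_-\colon x_-\to y_-$ be such a horizontal morphism, so $Px_-=Py_-=x$ and $Pf_-=1_x$. By Remark~\ref{remark: dbldiscfib lifts}(1), there is a unique horizontal morphism in $\dE$ with target $y_-$ lifting $1_x\colon x\to Py_-$. Since the horizontal identity $1_{y_-}\colon y_-\to y_-$ also lifts $1_x$ with target $y_-$, uniqueness forces $x_-=y_-$ and $f_-=1_{y_-}$.

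Next I would show that all squares in $P^{-1}x$ are horizontal identity squares. Let $\alpha_-\colon\edgesquare{u_-}{f_-}{f'_-}{v_-}$ be a square in $P^{-1}x$, so $P\alpha_-=1_{e_x}$; in particular, its four boundaries lie in $P^{-1}x$ and hence, by the previous step, $f_-=1_{x_-}$ and $f'_-=1_{x'_-}$. By Remark~\ref{remark: dbldiscfib lifts}(2), there is a unique square in $\dE$ with lower vertical boundary $v_-$ lifting the identity square $1_{e_x}\colon\edgesquare{e_x}{1_x}{1_x}{e_x}=\edgesquare{Pv_-}{1_x}{1_x}{Pv_-}$. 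The horizontal identity square $1_{v_-}\colon\edgesquare{v_-}{1_{x_-}}{1_{x'_-}}{v_-}$ is such a lift, so uniqueness gives $u_-=v_-$ and $\alpha_-=1_{v_-}$.

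Combining these two observations, $P^{-1}x$ has only trivial horizontal morphisms and only horizontal identity squares, so it lies in the image of the embedding $\dV\colon\Cat\to\DblCat$ from Remark~\ref{rem: vertical/horizontal dbl cat}, i.e., it is a vertical double category. The only mild subtlety is matching the boundary data in the lifting conditions correctly; once this bookkeeping is done, uniqueness of lifts does all the work.
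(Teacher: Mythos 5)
Your proof is correct, but it takes a different route from the paper. You argue elementwise: unpacking the pullback, you use the unique-lifting properties of \cref{remark: dbldiscfib lifts} to show that every horizontal morphism of $P^{-1}x$ lifting $1_x$ must coincide with a horizontal identity, and that every square lifting $1_{e_x}=e_{1_x}$ must coincide with a horizontal identity square $1_{v_-}$, so that $P^{-1}x$ lies in the image of $\dV$. The paper instead gives a purely diagrammatic argument: it pastes the pullback square defining $P$ as a discrete double fibration next to the pullback squares defining $\Ver_0 P^{-1}x$ and $\Ver_1 P^{-1}x$ (using that $\Ver_0,\Ver_1$ preserve pullbacks), and concludes $\Ver_1 P^{-1}x\cong \Ver_0 P^{-1}x$ by cancellation of pullbacks, which is exactly the verticality condition. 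Your approach is more elementary and makes explicit where the lifting axioms are used; the paper's is shorter and reuses the defining pullback of \cref{def: discrete double fibration} directly, which is why it transfers immediately to the fiber $P^{-1}u$ via \cref{prop: PV1 double disc fib}. One cosmetic point: in your second step the lift provided by \cref{remark: dbldiscfib lifts}(2) is taken along the \emph{right-hand} vertical boundary $v_-$ (the "target" in the paper's convention), not the "lower" one; the argument is unaffected once the boundaries are matched as you indicate.
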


\begin{proof}
    Consider the following diagram in $\Cat$
    \[ \begin{tikzcd}
        \Ver_1 P^{-1}x \arrow[r] \arrow[d] \arrow[rd,phantom,near start, "\lrcorner"] & \Ver_1\dE \arrow[rd,phantom,near start, "\lrcorner"] \arrow[d, "\Ver_1 P"'] \arrow[r,"t"] & \Ver_0\dE \arrow[d, "\Ver_0 P"] \\
        {[0]} \arrow[r, "x"']                                & \Ver_1\dC \arrow[r,"t"'] & \Ver_0\dC             
    \end{tikzcd}
    = \begin{tikzcd}
        \Ver_1 P^{-1}x \arrow[d] \arrow[r,"t"] & \Ver_0 P^{-1}x \arrow[r] \arrow[d] \arrow[rd,phantom,near start, "\lrcorner"] & \Ver_0\dE \arrow[d, "\Ver_0 P"] \\
        {[0]} \arrow[r,equal] & {[0]} \arrow[r, "x"']                                & \Ver_0\dC               
    \end{tikzcd} \]
    Here, the leftmost and rightmost squares are pullbacks by definition of $P^{-1}x$ and the fact that the functor $\Ver_i$ preserves pullbacks for $i=0,1$, and the middle left square is a pullback by definition of $P$ being a discrete double fibration. Hence, by cancellation of pullbacks, the middle right square is also a pullback and so $\Ver_1 P^{-1}x\cong \Ver_0 P^{-1}x$, showing that $P^{-1}x$ is a vertical double category. 
\end{proof}

Unpacking definition, we have the following explicit description of the fiber $P^{-1}x$. 

\begin{Remark}
    Given an object $x$ in $\dC$, the category $P^{-1}x$ is the category whose 
    \begin{itemize}[leftmargin=1cm]
        \item objects are objects $x_-$ in $\dE$ such that $Px_-=x$, 
        \item morphisms are vertical morphisms $s_-\colon x_-\bulletarrow \hat{x}_-$ in $\dE$ such that $Ps_-=e_x$. 
    \end{itemize}
\end{Remark}

Moreover, every horizontal morphism in $\dC$ acts on fibers by taking unique lifts as follows. 

\begin{Construction}\label{def: f on fibers}
    Given a horizontal morphism $f\colon x\to y$ in $\dC$, there is an induced functor
    \[ f^*\colon P^{-1}y\to P^{-1}x \]
    given by sending
    \begin{itemize}[leftmargin=1cm]
        \item an object $y_-$ in $P^{-1}y$ to the object $f^*y_-$ in $P^{-1}x$, i.e., the source of the unique lift of the horizontal morphism $f$ with target $y_-$,
        \item a morphism $s_-\colon y_-\bulletarrow \hat{y}_-$ in $P^{-1}y$ to the morphism $e_f^*s_-\colon f^*y_-\bulletarrow f^*\hat{y}_-$ in $P^{-1}x$, i.e., the source of the unique lift of the identity square $e_f$ with target $s_-$.
    \end{itemize}
\end{Construction}

Next, we show that the fiber $P^{-1}u$ at a vertical morphism is also a category, embedded vertically in $\DblCat$.

\begin{Lemma}
\label{lem: Ver commutes with [1]}
    Given a double category $\dC$ and $i=0,1$, there is an isomorphism of categories 
    \[ \Ver_i \dHom{\dV[1],\dE} \cong (\Ver_i\dE)^{[1]}\]
    natural in $\dC$. 
\end{Lemma}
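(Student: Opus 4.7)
The plan is to combine the cartesian closedness of $\DblCat$ (\cref{prop: cartesian closed}) with the adjunction $\dV\dashv\Ver_0$ from \cref{2-adjunctions}, together with a small combinatorial identity among the generating double categories $\dV[1]$ and $\dH[1]$.

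First, I would establish the identity $\dV[1]\times\dV[1]\cong\dV([1]\times[1])$ of double categories. Both sides have only identity horizontal morphisms and only identity squares, hence are of the form $\dV J$ for their underlying vertical category $J$. Since $\Ver_0$ preserves products (being a right adjoint) and $\Ver_0\dV[1]\cong[1]$, one computes $\Ver_0(\dV[1]\times\dV[1])\cong[1]\times[1]\cong\Ver_0\dV([1]\times[1])$, which gives the identification. The same reasoning shows that $\dV$ preserves binary products more generally.

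For the case $i=0$, I would then apply these adjunctions directly. An object of $\Ver_0\dHom{\dV[1],\dE}$ is a double functor $\dV[1]\to\dE$, which by $\dV\dashv\Ver_0$ corresponds to a functor $[1]\to\Ver_0\dE$, i.e., an object of $(\Ver_0\dE)^{[1]}$. A morphism of $\Ver_0\dHom{\dV[1],\dE}$ is a double functor $\dV[1]\to\dHom{\dV[1],\dE}$, which by cartesian closedness and the first step becomes a double functor $\dV[1]\times\dV[1]\cong\dV([1]\times[1])\to\dE$, and then by $\dV\dashv\Ver_0$ a functor $[1]\times[1]\to\Ver_0\dE$, i.e., a morphism of $(\Ver_0\dE)^{[1]}$. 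Source, target, identities, and composition on both sides are induced by restriction along the standard functors between $[0]$, $[1]$, and $[1]\times[1]$, hence correspond under these adjunctions.

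For the case $i=1$, I would reduce to $i=0$ using the identification $\Ver_1\dD\cong\Ver_0\dHom{\dH[1],\dD}$ valid for any double category $\dD$, which follows by a similar adjunction argument using that $\dH[1]\times\dV[1]$ is the walking square. Cartesian closedness and the symmetry of the product then give
\[ \Ver_1\dHom{\dV[1],\dE}\cong\Ver_0\dHom{\dH[1]\times\dV[1],\dE}\cong\Ver_0\dHom{\dV[1],\dHom{\dH[1],\dE}}, \]
and applying the $i=0$ case with $\dHom{\dH[1],\dE}$ in place of $\dE$ yields $(\Ver_0\dHom{\dH[1],\dE})^{[1]}\cong(\Ver_1\dE)^{[1]}$, as desired. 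Naturality in the double category argument (``natural in $\dC$'' in the statement being presumably a typo for ``natural in $\dE$'') follows from the naturality of the adjunction isomorphisms involved. The main subtlety is in carefully checking the combinatorial identifications of the small double categories $\dV[1]\times\dV[1]$ and $\dH[1]\times\dV[1]$, but these reduce to the observation that horizontal and square structures in vertical double categories are trivial.
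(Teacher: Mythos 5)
Your proof is correct and follows essentially the same route as the paper: the $i=1$ case is reduced to $i=0$ via the identification $\Ver_1\dD\cong\Ver_0\dHom{\dH[1],\dD}$ and the product--hom adjunctions, exactly as in the paper's proof. The only difference is that for $i=0$ the paper simply invokes the $2$-adjunction $\dV\dashv\Ver_0$ of \cref{2-adjunctions}, which directly yields the isomorphism of $\Cat$-enriched homs, whereas you re-derive that instance by hand via cartesian closedness and the identity $\dV[1]\times\dV[1]\cong\dV([1]\times[1])$; both are fine, and you are also right that ``natural in $\dC$'' should read ``natural in $\dE$''.
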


\begin{proof}
    By \cref{2-adjunctions}, we have a $2$-adjunction
    \[ \dV\colon \Cat \leftrightarrows \DblCat_v \colon \Ver_0. \]
    In particular, for the $\Cat$-enriched homs 
    $\Ver_0\dHom{-,-}$ of $\DblCat_v$ and 
    $(-)^{(-)}$ of $\Cat$, this means that we have an isomorphism of categories
    \[
    \Ver_0\dHom{\dV[1],\dE}\cong (\Ver_0\dE)^{[1]}.
    \]
    This shows the case $i=0$. For the case $i=1$, observe that, for every double category $\dC$, we have $\Ver_1\dC\cong\Ver_0\dHom{\dH[1],\dC}$, where $\dH[1]$ denotes the double category free on a horizontal morphism. Then we have natural isomorphisms of categories
    \begin{align*}
        \Ver_1\dHom{\dV[1],\dE}
        &\cong
        \Ver_0\dHom{\dH[1],\dHom{\dV[1],\dE}}
        & \text{Observation}
        \\
        &\cong
        \Ver_0\dHom{\dH[1]\times\dV[1],\dE}
        &
        (-)\times\dV[1]\ \dashv\ \dHom{\dH[1],-}
        \\
        &\cong
        \Ver_0\dHom{\dV[1],\dHom{\dH[1],\dE}}
        &
        \dH[1]\times(-)\ \dashv\ \dHom{\dH[1],-}
        \\
        &\cong
        (\Ver_0\dHom{\dH[1],\dE})^{[1]}
        &  \text{Case } i=0
        \\
        &\cong
        (\Ver_1\dE)^{[1]}, & \text{Observation}
    \end{align*}
     where the product--internal homs adjunctions hold by \cref{prop: cartesian closed}.
\end{proof}

\begin{Lemma} \label{prop: PV1 double disc fib}
    Given a discrete double fibration $P\colon \dE\to \dC$, the induced double functor \[
    \dHom{\dV[1],P}\colon \dHom{\dV[1],\dE}\to \dHom{\dV[1],\dC} \]
    is also a discrete double fibration.
\end{Lemma}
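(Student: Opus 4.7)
The plan is to reduce the desired pullback square to the one witnessing that $P$ is a discrete double fibration, by applying the right adjoint $(-)^{[1]}\colon \Cat\to \Cat$.

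By \cref{def: discrete double fibration}, we must show that the commutative square in $\Cat$
\[
\begin{tikzcd}
\Ver_1\dHom{\dV[1],\dE} \arrow[r,"t"] \arrow[d,"\Ver_1\dHom{\dV[1],P}"'] & \Ver_0\dHom{\dV[1],\dE} \arrow[d,"\Ver_0\dHom{\dV[1],P}"] \\
\Ver_1\dHom{\dV[1],\dC} \arrow[r,"t"'] & \Ver_0\dHom{\dV[1],\dC}
\end{tikzcd}
\]
is a pullback. First, I would invoke \cref{lem: Ver commutes with [1]} to identify, naturally in $\dE$, the categories $\Ver_i\dHom{\dV[1],\dE}\cong (\Ver_i\dE)^{[1]}$ for $i=0,1$. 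Chasing the naturality of this isomorphism in $\dE$ through the target functor $t\colon \Ver_1(-)\Rightarrow \Ver_0(-)$ shows that, under these isomorphisms, the above square is isomorphic to the square
\[
\begin{tikzcd}
(\Ver_1\dE)^{[1]} \arrow[r,"t^{[1]}"] \arrow[d,"(\Ver_1 P)^{[1]}"'] & (\Ver_0\dE)^{[1]} \arrow[d,"(\Ver_0 P)^{[1]}"] \\
(\Ver_1\dC)^{[1]} \arrow[r,"t^{[1]}"'] & (\Ver_0\dC)^{[1]}
\end{tikzcd}
\]
obtained by applying the functor $(-)^{[1]}\colon\Cat\to\Cat$ to the pullback square witnessing that $P$ is a discrete double fibration.

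Since $(-)^{[1]}=\Fun([1],-)$ is right adjoint to $(-)\times[1]$, it preserves all limits, and in particular pullbacks. Therefore, the second square is a pullback, and hence so is the first, which shows that $\dHom{\dV[1],P}$ is a discrete double fibration. The only step requiring a small verification is the identification of $t$ on $\dHom{\dV[1],\dE}$ with $t^{[1]}$ under the isomorphism of \cref{lem: Ver commutes with [1]}, but this follows directly from the naturality statement in that lemma applied to the source/target maps of $\dHom{\dV[1],\dE}$.
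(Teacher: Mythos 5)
Your proof is correct and follows essentially the same route as the paper's: both reduce the pullback condition via the natural isomorphisms $\Ver_i\dHom{\dV[1],\dE}\cong(\Ver_i\dE)^{[1]}$ from \cref{lem: Ver commutes with [1]} and then conclude using that $(-)^{[1]}$ preserves pullbacks as a right adjoint. Your extra remark about checking that $t$ is identified with $t^{[1]}$ under the naturality of that isomorphism is a point the paper leaves implicit, but it is the same argument.
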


\begin{proof}
     We have to show that the following square
      is a pullback in $\Cat$
      \[
      \begin{tikzcd}
            {\Ver_1(\dHom{\dV[1],\dE})} \arrow[d, "{\Ver_1 (\dHom{\dV[1],P})}"'] \arrow[r, "t"] & {\Ver_0(\dHom{\dV[1],\dE})} \arrow[d, "{\Ver_0 (\dHom{\dV[1],P})}"] \\
            {\Ver_1(\dHom{\dV[1],\dC})} \arrow[r, "t"']                                  & {\Ver_0(\dHom{\dV[1],\dE})}                                 
        \end{tikzcd}
      \]
      By the natural isomorphism in \cref{lem: Ver commutes with [1]}, this is equivalent to showing that the following square is a pullback in $\Cat$
      \[
      \begin{tikzcd}
            {(\Ver_1\dE)^{[1]}} \arrow[d, "{(\Ver_1 P)^{[1]}}"'] \arrow[r, "{t^{[1]}}"] & {(\Ver_0\dE)^{[1]}} \arrow[d, "{(\Ver_0 P)^{[1]}}"] \\
            {(\Ver_1\dC)^{[1]}} \arrow[r, "{t^{[1]}}"']                                 & {(\Ver_0\dC)^{[1]}},                                
        \end{tikzcd}
      \]
     But this follows directly from the facts that $P$ is a discrete double fibration and that $(-)^{[1]}$ preserves pullbacks, as a right adjoint.
\end{proof}

\begin{Proposition}
    Given a discrete double fibration $P\colon \dE\to \dC$  and a vertical morphism $u\colon x\bulletarrow x'$ in $\dC$, the double category $P^{-1}u$ is a vertical double category. We also denote by $P^{-1}u$ the corresponding category. 
\end{Proposition}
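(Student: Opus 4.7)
The plan is to reduce this statement to \cref{prop: Px category}, applied to a different discrete double fibration. Unpacking the definition of $P^{-1}u$ in \cref{constr: definition of P_u}, we see that $P^{-1}u$ is precisely the fiber over the object $u$ of the double functor $\dHom{\dV[1],P}\colon\dHom{\dV[1],\dE}\to\dHom{\dV[1],\dC}$, viewing $u$ as an object of $\dHom{\dV[1],\dC}$. By \cref{prop: PV1 double disc fib}, this double functor is itself a discrete double fibration. Therefore, we may invoke \cref{prop: Px category} applied to the discrete double fibration $\dHom{\dV[1],P}$ at the object $u$ to conclude that $P^{-1}u$ is a vertical double category.

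Alternatively, and equivalently, one can simply repeat the pullback-pasting argument used in the proof of \cref{prop: Px category}: using that $\Ver_i$ preserves pullbacks for $i=0,1$, the defining pullback square for $P^{-1}u$ yields pullback squares relating $\Ver_i P^{-1}u$ to $\Ver_i\dHom{\dV[1],\dE}$, and then \cref{prop: PV1 double disc fib} together with pullback cancellation forces $\Ver_1 P^{-1}u \cong \Ver_0 P^{-1}u$, which is the definition of being a vertical double category.

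There is no genuine obstacle in this argument; the only nontrivial ingredient is the identification of $\dHom{\dV[1],P}$ as a discrete double fibration, which is already handled in \cref{prop: PV1 double disc fib} (and ultimately rests on \cref{lem: Ver commutes with [1]} together with the fact that $(-)^{[1]}$ preserves pullbacks as a right adjoint). Once that is in hand, the present proposition is a direct corollary of \cref{prop: Px category} applied to a new discrete double fibration rather than a fresh argument.
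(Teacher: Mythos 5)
Your proposal is correct and matches the paper's own proof exactly: the paper also deduces this by applying \cref{prop: Px category} to the discrete double fibration $\dHom{\dV[1],P}$ from \cref{prop: PV1 double disc fib}, viewing $P^{-1}u$ as its fiber at $u$. Your second, expanded version is just the same argument unrolled, so there is nothing to add.
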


\begin{proof}
    This follows from applying \cref{prop: Px category} to the discrete double fibration $\dHom{\dV[1],P}$ from \cref{prop: PV1 double disc fib}.
\end{proof}

Unpacking definition, we have the following explicit description of the fiber $P^{-1}u$. 

\begin{Remark}\label{descr: fiber at u}
    Given a vertical morphism $u\colon x\bulletarrow x'$ in $\dC$, the category $P^{-1}u$ is the category whose
    \begin{itemize}[leftmargin=1cm]
        \item objects are vertical morphisms $u_-\colon x_-\bulletarrow x'_-$ in $\dE$ such that $Pu_-=u$, 
        \item morphisms $u_-\to \hat{u}_-$ are pairs $(s_-,s'_-)$ of vertical morphisms $s_-$ and $s'_-$ in $\dE$ such that $Ps_-=e_x$ and $Ps'_-=e_{x'}$, and making the following diagram in $\dE$ commutes.
        \begin{equation} \label{eq: comm square in Pu} \begin{tikzcd}
        x_-
        \arrow[d,"u_-"',"\bullet" marking]
        \arrow[r,"s_-","\bullet" marking]
        &
        \hat{x}_-
        \arrow[d,"\hat{u}_-","\bullet" marking]
        \\
        x'_-
        \arrow[r,"s'_-"' {yshift=-2pt},"\bullet" marking]
        &
        \hat{x}'_-.
    \end{tikzcd} \end{equation}
    \end{itemize}
\end{Remark}

Moreover, every square in $\dC$ acts on fibers by taking unique lifts as follows.

\begin{Construction}\label{def: alpha on fibers}
    Given a square $\alpha\colon \edgesquare{u}{f}{f'}{v}$ in $\dC$, there is an induced functor
    \[ \alpha^*\colon P^{-1}v\to P^{-1}u \]
    given by sending
    \begin{itemize}[leftmargin=1cm]
        \item an object $v_-\colon y_-\bulletarrow y'_-$ in $P^{-1}v$ to the object $\alpha^*v_-\colon f^*y_-\bulletarrow f'^*y'_-$ in $P^{-1}u$, i.e., the source of the unique lift of $\alpha$ with target $v_-$,
        \item a morphism $(s_-,s'_-)\colon v_-\to v'_-$ in $P^{-1}v$ to the morphism $(e_f^*s_-,e_{f'}^*s'_-)\colon \alpha^*v_-\to  \alpha^*v'_-$ in~$P^{-1}u$, where $e_f^*s_-$ and $e_{f'}^*s'_-$ are the sources of the unique lifts of the identity squares $e_f$ and $e_{f'}$ with targets $s_-$ and $s'_-$, respectively.
    \end{itemize}
    Note that the assignment on morphisms is well-defined as 
    \[ \alpha^*v'_-\bullet e_f^* s_-= \alpha^*(v'_-\bullet s_-)=\alpha^*(s'_-\bullet v_-) = e_{f'}^*s'_-\bullet \alpha^* v_- \]
    by unicity of lifts and by definition of the morphism $(s_-,s'_-)$ in $P^{-1}v$.  
\end{Construction}

Next, we study the double functor $P_u$, which now amounts simply to an ordinary functor. We show that this is a two-sided discrete fibration. Hence, using the equivalence from \cref{thm: TSdisc vs Prof}, this will yield a profunctor associated with each vertical morphism $u$.

\begin{Proposition}
\label{prop: P_u is TSdiscFib}
    Given a discrete double fibration $P\colon \dE\to \dC$ and a vertical morphism $u\colon x\bulletarrow x'$ in $\dC$, the functor 
    \[ P_u\colon P^{-1}u\to P^{-1}x\times P^{-1}x' \]
    is a two-sided discrete fibration. 
\end{Proposition}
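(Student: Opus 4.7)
The plan is to verify the three defining conditions of a two-sided discrete fibration from \cref{def: two-sided fib} directly, using the explicit description of $P^{-1}u$ recorded in \cref{descr: fiber at u}. Observe first that under $P_u$, the two projections to $P^{-1}x$ and $P^{-1}x'$ act as source and target of vertical morphisms: on objects they send $u_-\colon x_-\bulletarrow x'_-$ to $x_-$ and $x'_-$ respectively, and on a morphism $(s_-, s'_-)$ they extract its two components.

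For condition (1), given $u_-\colon x_-\bulletarrow x'_-$ in $P^{-1}u$ and a morphism $s_-\colon y_-\bulletarrow x_-$ in $P^{-1}x$ (so $Ps_- = e_x$), the candidate lift of $s_-$ with target $u_-$ lying in the fiber over $x'_-$ is
\[ (s_-, e_{x'_-})\colon u_-\bullet s_-\longrightarrow u_-. \]
Its source lies in $P^{-1}u$ because $P(u_-\bullet s_-) = u\bullet e_x = u$, and the commutative square of \cref{descr: fiber at u} holds trivially since $u_-\bullet s_- = e_{x'_-}\bullet(u_-\bullet s_-)$. Uniqueness is forced by the same equation: any lift with first component $s_-$ and second component $e_{x'_-}$ must have source $u_-\bullet s_-$. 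Condition (2) is established symmetrically, with lift $(e_{x_-}, s'_-)\colon u_-\to s'_-\bullet u_-$ of a morphism $s'_-\colon x'_-\bulletarrow z'_-$ in $P^{-1}x'$.

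For condition (3), given a morphism $g = (s_-, s'_-)\colon u_-\to\hat{u}_-$ in $P^{-1}u$, the source of the unique lift of its first component is $\hat{u}_-\bullet s_-$ while the target of the unique lift of its second component is $s'_-\bullet u_-$; these agree precisely by the commutativity \eqref{eq: comm square in Pu} that defines morphisms of $P^{-1}u$. Their composite in $P^{-1}u$, which is componentwise vertical composition in $\dE$, equals $(s_-\bullet e_{x_-},\, e_{\hat{x}'_-}\bullet s'_-) = (s_-, s'_-) = g$, as required. I do not anticipate any real obstacle: the whole proof is a short bookkeeping argument that relies only on strict unitality of vertical composition in $\dE$, the functoriality of $P$, and the commutativity condition already built into \cref{descr: fiber at u}.
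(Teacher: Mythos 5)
Your proof is correct and follows essentially the same route as the paper's: the lift for condition (1) is the morphism $(s_-,e_{x'_-})\colon u_-\bullet s_-\to u_-$, condition (2) is dual, and condition (3) reduces to the commutativity condition built into the description of morphisms of $P^{-1}u$ together with strict unitality of vertical composition. Your uniqueness remark for (1) is slightly more explicit than the paper's, but the argument is the same.
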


\begin{proof}
    First note that the functor $P_u\colon P^{-1}u\to P^{-1}x\times P^{-1}x'$ is given by sending 
    \begin{itemize}[leftmargin=1cm]
        \item an object $u_-\colon x_-\bulletarrow x'_-$ in $P^{-1}u$ to the object $(x_-,x'_-)$ of $P^{-1}x\times P^{-1}x'$, 
        \item a morphism $(s_-,s'_-)\colon u_-\to \hat{u}_-$ in $P^{-1}u$ to the morphism $(s_-,s'_-)$ of $P^{-1}x\times P^{-1}x'$.
    \end{itemize}
    We write $P_u=(P_u^1,P_u^2)$ for its components.

    We show that $(P_u^1,P_u^2)$ satisfies the conditions of a two-sided discrete fibration from 
    \cref{def: two-sided fib}. To prove (1), let $\hat{u}_-\colon \hat{x}_-\to \hat{x}'_-$ be an object in $P^{-1}u$ and $s_-\colon x_-\bulletarrow \hat{x}_-$ be a morphism in $P^{-1}x$. Then the unique $P_u^1$-lift of $s_-$ with target $\hat{u}_-$ that lies in the fiber of $P_u^2$ at $\hat{x}'_-$ is given by the following commutative square.
    \[
    \begin{tikzcd}
    x_-
    \arrow[r,"\bullet" marking,"s_-"]
    \arrow[d,swap,"\bullet" marking,"\hat{u}_-\bullet s_-"]
    &
    \hat{x}_-
    \arrow[d,"\bullet" marking,"\hat{u}_-"]
    \\
    \hat{x}'_-
    \arrow[r,"\bullet" marking,"e_{\hat{x}'_-}"']
    &
    \hat{x}'_-
    \end{tikzcd}
    \]
    Condition (2) can be shown analogously.
     
     To see (3), let 
    $(s_-,s'_-)\colon u_-\to \hat{u}_-$ be a morphism in $P^{-1}u$ as in \eqref{eq: comm square in Pu}. Then the unique lifts of $s_-$ and $s'_-$ provided by conditions (1) and (2) are the commutative squares
    \[
    \begin{tikzcd}
    x_-
    \arrow[r,"\bullet" marking,"s_-"]
    \arrow[d,swap,"\bullet" marking,"\hat{u}_-\bullet s_-"]
    &
    \hat{x}_-
    \arrow[d,"\bullet" marking,"\hat{u}_-"]
    \\
    \hat{x}'_-
    \arrow[r,"\bullet" marking,"e_{\hat{x}'_-}"']
    &
    \hat{x}'_-
    \end{tikzcd}
    \quad \text{and}\quad
    \begin{tikzcd}
    x_-
    \arrow[r,"\bullet" marking,"e_{x_-}"]
    \arrow[d,swap,"\bullet" marking,"u_-"]
    &
    x_-
    \arrow[d,"\bullet" marking,"s'_-\bullet u_-"]
    \\
    x'_-
    \arrow[r,"\bullet" marking,"s'_-"' {yshift=-2pt}]
    &
    \hat{x}'_-
    \end{tikzcd}
    \]
    respectively. Then, by definition of the morphism $(s_-,s'_-)$ in $P^{-1}u$, we have that $\hat{u}_-\bullet s_-=s'_-\bullet u_-$ and, moreover, the two squares compose to the commutative square \eqref{eq: comm square in Pu} defining $(s_-,s'_-)$. This proves that $P_u$ is a two-sided discrete fibration.
\end{proof}

In particular, as lax double presheaves are normal, we want to be able to associate the identity profunctor, i.e., the hom functor, to a vertical identity. The following result allows us to do so. 

\begin{Lemma} 
\label{Prop: fiber at vertical id}
    Given a discrete double fibration $P\colon \dE\to \dC$ and an object $x$ in $\dC$, the two-sided discrete fibration 
    \[ P_{e_x}\colon P^{-1}e_x\to P^{-1}x\times P^{-1}x \]
    is canonically isomorphic to the identity two-sided discrete fibration
    \[ (s,t)\colon (P^{-1}x)^{[1]}\to P^{-1}x\times P^{-1}x. \]
\end{Lemma}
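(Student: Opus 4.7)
The plan is to unfold the description of the fiber $P^{-1}e_x$ provided in \cref{descr: fiber at u} and observe that it agrees tautologically with the category of morphisms $(P^{-1}x)^{[1]}$, and then check compatibility with the projections to $P^{-1}x\times P^{-1}x$.

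First I would spell out the two categories side by side. By \cref{descr: fiber at u}, an object of $P^{-1}e_x$ is a vertical morphism $u_-\colon x_-\bulletarrow x'_-$ in $\dE$ with $Pu_-=e_x$. Since $Px_-=s(Pu_-)=s(e_x)=x$ and similarly $Px'_-=x$, such a $u_-$ is precisely a morphism in the category $P^{-1}x$. Similarly, a morphism $u_-\to \hat u_-$ in $P^{-1}e_x$ is a pair $(s_-,s'_-)$ of vertical morphisms in $\dE$ over $e_x$, i.e.\ morphisms in $P^{-1}x$, fitting in the commutative square \eqref{eq: comm square in Pu}, which is by definition a morphism $u_-\to\hat u_-$ in the arrow category $(P^{-1}x)^{[1]}$.

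Then I would define the comparison functor
\[ \Theta\colon P^{-1}e_x\xrightarrow{\ } (P^{-1}x)^{[1]} \]
by these identifications: send the object $u_-$ to the morphism $u_-$ of $P^{-1}x$, and send the morphism $(s_-,s'_-)$ to the commutative square it represents. Functoriality is immediate since vertical composition of squares in $\dE$ over $e_x$ corresponds exactly to composition of commutative squares in $P^{-1}x$, and the identity on $u_-$ in $P^{-1}e_x$ is $(e_{x_-},e_{x'_-})$, which is the identity square on $u_-$ in $(P^{-1}x)^{[1]}$. An obvious inverse is given by reading the same data backwards, so $\Theta$ is an isomorphism of categories.

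Finally, I would verify that $\Theta$ is a morphism over $P^{-1}x\times P^{-1}x$, i.e.\ $(s,t)\circ \Theta = P_{e_x}$. This is immediate on objects, since $P_{e_x}$ sends $u_-\colon x_-\bulletarrow x'_-$ to $(x_-,x'_-)=(s(u_-),t(u_-))$, and on morphisms the projection of $(s_-,s'_-)$ equals the source–target pair of the corresponding commutative square. No step presents a real obstacle; the only thing to keep track of is translating consistently between the ``square-shaped'' data of $P^{-1}e_x$ in $\dE$ and the ``commutative-square'' data of $(P^{-1}x)^{[1]}$ inside the vertical double category $P^{-1}x$ (\cref{prop: Px category}), and this is precisely the content of \cref{descr: fiber at u} applied to $u=e_x$.
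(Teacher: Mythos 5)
Your proof is correct, but it takes a different route from the paper. The paper does not unpack the fibers at all: it recalls that $P^{-1}e_x$ is \emph{defined} (\cref{constr: definition of P_u}) as the pullback of $\dHom{\dV[1],P}$ along $e_x\colon [0]\to\dHom{\dV[1],\dC}$, and then observes that $(P^{-1}x)^{[1]}=\dHom{\dV[1],P^{-1}x}$ also sits in such a pullback square, because $\dHom{\dV[1],-}$ preserves pullbacks as a right adjoint (\cref{prop: cartesian closed}) and $P^{-1}x$ is itself a pullback. The isomorphism, together with its compatibility with the projections to $P^{-1}x\times P^{-1}x$, then falls out of the uniqueness of pullbacks, which also makes the word ``canonically'' in the statement transparent and keeps the argument aligned with the $\dHom{\dV[1],-}$ formalism used throughout \cref{subsec: fibers}. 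Your argument instead makes the isomorphism completely explicit: via \cref{descr: fiber at u} applied to $u=e_x$, objects of $P^{-1}e_x$ are morphisms of $P^{-1}x$ and morphisms are commutative squares, which is literally the arrow category, and the compatibility with $(s,t)$ versus $P_{e_x}$ is a direct check. What the explicit approach buys is a concrete description of the comparison functor (useful if one later needs to compute with it); what it costs is the hands-on verification of functoriality and of the triangle over $P^{-1}x\times P^{-1}x$, which the abstract argument gets for free. Both proofs are complete; just make sure you cite \cref{prop: Px category} when treating $P^{-1}x$ as an ordinary category, as you do.
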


\begin{proof}
    By definition of $P_{e_x}$ (see \cref{constr: definition of P_u}), to get the desired isomorphism it suffices to show that the following commutative square in $\DblCat$ is a pullback.
    \[
    \begin{tikzcd}
        {\dHom{\dV[1],P^{-1}x}=(P^{-1}x)^{[1]}} \arrow[d] \arrow[r] \arrow[rd,near start, "\lrcorner", phantom] & {\dHom{\dV[1],\dE}} \arrow[d, "{\dHom{\dV[1],P}}"] \\
        {[0]} \arrow[r,swap, "e_x"] & {\dHom{\dV[1],\dC}}  
    \end{tikzcd}
    \]
    However, this follows directly from the definition of the fiber $P^{-1}x$ and the fact that $\dHom{\dV[1],-}$ preserves pullbacks, as a right adjoint.
\end{proof}

\subsection{Representable discrete double fibrations} 

\label{subsec: repdiscfib}

We now introduce our first examples of discrete double fibrations.

\begin{Definition}
    Given a double category $\dC$ and an object $\hat{x}$ in $\dC$, we define the \emph{slice double category} over $\hat{x}$ to be the following pullback in $\DblCat$.
    \[
    \begin{tikzcd}
    \dC/\hat{x} \arrow[r] \arrow[d] \arrow[rd,phantom,near start, "\lrcorner"] & {\dHom{\dH[1],\dC}} \arrow[d, "{(s,t)}"] \\
    \dC \arrow[r, swap,"\id_{\dC}\times\{\hat{x}\}"]             & \dC\times\dC 
    \end{tikzcd}
    \]
\end{Definition}

\begin{Remark} \label{def: double slice}
    Explicitly, given a double category $\dC$ and an object $\hat{x}$, we have that $\dC/\hat{x}$ is the
    double category whose
    \begin{itemize}[leftmargin=1cm]
        \item objects are pairs 
        $(x,g)$ of an object $x$ in $\dC$ and a horizontal morphism $g\colon x\to\hat{x}$ in $\dC$,
        \item horizontal morphisms $(x,g)\to (y,h)$ are horizontal morphisms $f\colon x\to y$ in $\dC$ such that $g=h\circ f$,
        \item vertical morphisms $(x,g)\bulletarrow (x',g')$ are pairs $(u,\eta)$
        of a vertical morphism $u\colon x\bulletarrow x'$ in $\dC$ and a square $\eta\colon\edgesquare{u}{g}{g'}{e_{\hat{x}}}$ in $\dC$,
        \item squares $\alpha\colon \edgesquare{(u,\eta)}{f}{f'}{(v,\theta)}$ are squares 
        $\alpha\colon\edgesquare{u}{f}{f'}{v}$ in $\dC$ such that 
        $\eta=\theta\circ\alpha$.
    \end{itemize}
    Moreover, the canonical projection double functor $\dC/\hat{x}\to \dC$ is given by projecting onto the first component.
\end{Remark}

As claimed before, the canonical projection is a discrete double fibration. A proof can be found in \cite[Proposition 4.17]{internal_GC} by taking $\mathcal{V}=\Cat$.

\begin{Proposition}
\label{ex: C/x->C is discrete double}
    Given a double category $\dC$ and an object $\hat{x}$ in $\dC$, the canonical projection from the slice double category over $\hat{x}$
    \[ \dC/\hat{x}\to\dC \]
    is a discrete double fibration.
\end{Proposition}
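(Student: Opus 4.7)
The plan is to verify the two lifting conditions characterizing discrete double fibrations given in \cref{remark: dbldiscfib lifts} directly, using the explicit description of the slice double category in \cref{def: double slice}. An alternative approach would be to express the projection $\dC/\hat{x}\to\dC$ as a pullback of a discrete double fibration and invoke the pullback stability from \cref{lem: pb preserve fibrations}, but the direct verification is short and, in the end, amounts to the same bookkeeping.

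For condition (1), I would start with an object $(y,h)$ in $\dC/\hat{x}$ and a horizontal morphism $f\colon x\to y$ in $\dC$. A lift with target $(y,h)$ must be a horizontal morphism in $\dC/\hat{x}$ of the form $f\colon (x,g)\to(y,h)$ for some $g\colon x\to\hat{x}$, and by the definition of horizontal morphisms in $\dC/\hat{x}$ the value $g=h\circ f$ is forced. Hence both the source object $(x,h\circ f)$ and the lift $f$ itself are uniquely determined.

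For condition (2), let $(v,\theta)\colon(y,h)\bulletarrow(y',h')$ be a vertical morphism in $\dC/\hat{x}$ and $\alpha\colon\edgesquare{u}{f}{f'}{v}$ be a square in $\dC$. A lift with target $(v,\theta)$ must be a square in $\dC/\hat{x}$ of the form $\alpha\colon\edgesquare{(u,\eta)}{f}{f'}{(v,\theta)}$; by the compatibility condition $\eta=\theta\circ\alpha$ built into the definition of squares in $\dC/\hat{x}$, the source vertical morphism $(u,\theta\circ\alpha)\colon(x,h\circ f)\bulletarrow(x',h'\circ f')$ is uniquely determined, and its horizontal source/target objects and horizontal boundary morphisms $f,f'$ are uniquely determined as horizontal lifts by condition (1). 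Thus the lift exists and is unique.

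The main obstacle is essentially nil: the only thing to check is that every piece of extra data carried by the slice construction (the horizontal morphism attached to each object, and the square attached to each vertical morphism) is entirely forced by composition with the target datum in $\dC/\hat{x}$, which is immediate from \cref{def: double slice}.
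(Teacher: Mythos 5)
Your proof is correct. The paper does not actually spell out an argument for this statement --- it simply cites \cite[Proposition 4.17]{internal_GC} (specialized to $\mathcal{V}=\Cat$) --- so your direct verification of the two lifting conditions of \cref{remark: dbldiscfib lifts} against the explicit description in \cref{def: double slice} is a perfectly good self-contained replacement: both lifts are indeed forced, the source object by $g=h\circ f$ and the source vertical morphism by $\eta=\theta\circ\alpha$, whose boundary $\edgesquare{u}{h\circ f}{h'\circ f'}{e_{\hat{x}}}$ matches the required source object $(x,h\circ f)$ and target object $(x',h'\circ f')$.
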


\section{Grothendieck construction for lax double presheaves}

In this section, we study the link between lax double presheaves and discrete double fibrations. More precisely, given a double category $\dC$, we prove that there is a pseudo-natural $2$-equivalence---induced by the Grothendieck construction---between the $2$-category of lax double presheaves over $\dC$ and the $2$-category of discrete double fibrations over $\dC$. 

In \cref{subsec: GC1}, we define the Grothendieck construction of a lax double presheaf over a double category $\dC$. We show, using the coherence conditions for normal lax double functors, that this actually defines a discrete double fibration over $\dC$, where the base is an actual strict double category. This may sound surprising at first since only the base double category of a lax double presheaf is strict, but not the presheaf itself. We further show that this construction is pseudo-natural in $\dC$. In \cref{subsec: GC2}, we construct the weak inverse of the Grothendieck construction induced by taking fibers of discrete double fibrations. We show that there are $2$-natural isomorphisms relating the Grothendieck construction and its weak inverse, henceforth giving the desired $2$-equivalence.

\subsection{Grothendieck construction} 
\label{subsec: GC1}

We start by defining the Grothendieck construction of a lax double presheaf. 

\begin{Construction}
\label{def: dgro}
    Given a lax double presheaf $X\colon\dC^{\op}\to\dCat$, its 
    \emph{Grothendieck construction} is the double category $\dgro_\dC X$ whose 
    \begin{itemize}[leftmargin=1cm]
        \item objects are pairs $(x,x_-)$ of objects $x$ in $\dC$ and $x_-$ in $Xx$,
        \item horizontal morphisms 
        $(x,x_-)\to (y,y_-)$ are horizontal morphisms 
        $f\colon x\to y$ in $\dC$ satisfying 
        $x_-=Xf(y_-)$, where we recall that $Xf$ is the data of a functor $Xf\colon Xy\to Xx$,
        \item vertical morphisms 
        $(x,x_-)\bulletarrow (x',x'_-)$ are pairs $(u,u_-)$ of a vertical morphism 
        $u\colon {x\bulletarrow x'}$ in~$\dC$ and an element 
        $u_-$ in $Xu(x_-,x'_-)$, where we recall that $Xu$ is the data of a profunctor $Xu\colon Xx^{\op}\times Xx'\to\Set$,
        \item squares 
        \[
        \begin{tikzcd}
            {(x,x_-)} \arrow[d,"\bullet" marking, "{(u,u_-)}"'] \arrow[r, "{f}"] \arrow[rd, phantom,"{\alpha}"] 
            &
            {(y,y_-)} \arrow[d,"\bullet" marking, "{(v,v_-)}"] \\
            {(x',x'_-)} \arrow[r, "{f'}"'] &[30pt]
            {(y',y'_-)}                  
\end{tikzcd}
        \]
        are squares 
        $\alpha\colon\edgesquare{u}{f}{f'}{v}$ in $\dC$ satisfying
        ${u_-=(X\alpha)_{y_-,y'_-}(v_-)}$, where we recall that $X\alpha$ is the data of a natural transformation 
        $X\alpha\colon Xv\Rightarrow Xu\circ(Xf^{\op}\times Xf')\colon Xy^{\op}\times Xy'\to \Set$.
    \end{itemize}
    Compositions and identities for horizontal morphisms and squares are defined as in $\dC$. The vertical composite of two composable vertical morphisms 
    $(x,x_-)\overset{(u,u_-)}{\bulletarrow}(x',x'_-)\overset{(u',u'_-)}{\bulletarrow}(x'',x''_-)$ is given by the pair $(u'\bullet u,u'_-\bullet u_-)$ of the composite $u'\bullet u$ in $\dC$ and the element
    \[
        u'_-\bullet u_-\coloneqq 
        (\mu_{u,u'})_{x_-,x''_-}([u'_-,u_-]),
     \]
     where $[u'_-,u_-]$ is the element in $(Xu'\bullet Xu)(x_-,x''_-) =\int^{\hat{x}'_-\in X'x'}Xu'(\hat{x}'_-,x''_-)\times Xu(x_-,\hat{x}'_-)$ represented by the element $(u'_-,u_-)$ of $Xu'(x',x'')\times Xu(x,x')$,  and $\mu_{u,u'}$ is the composition comparison natural transformation
    \[ \mu_{u,u'}\colon Xu'\bullet Xu\Rightarrow X(u'\bullet u)\colon Xx^{\op}\times Xx''\to \Set. \]
     The vertical identity at an object $(x,x_-)$ is given by the pair 
     $(e_x,1_{x_-})$, where we recall that $Xe_x=e_{Xx}$ is the hom functor $Xx(-,-)\colon Xx^{\op}\times Xx\to \Set$, as $X$ is normal.
     
     It comes with a canonical projection double functor 
     $\pi_X\colon\dgro_\dC X\to\dC$ given by projecting onto the first component.
\end{Construction}

\begin{Lemma}
    The Grothendieck construction $\dgro_{\dC}X$ is a strict double category. 
\end{Lemma}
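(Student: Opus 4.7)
The plan is to verify in turn the horizontal structure, the vertical structure, and the interchange law, and in each case to extract the relevant axiom from the data of the normal lax double functor~$X$. The horizontal structure on~$\dgro_\dC X$ is simply inherited from~$\dC$: if $f\colon(x,x_-)\to(y,y_-)$ and $g\colon(y,y_-)\to(z,z_-)$ are horizontal morphisms, then strict horizontal functoriality of~$X$ gives $X(gf)(z_-)=Xf(Xg(z_-))=Xf(y_-)=x_-$, so $gf$ is again a horizontal morphism of $\dgro_\dC X$, and the horizontal composite of squares is well-defined by the analogous strict horizontal functoriality of~$X$ on squares. Horizontal associativity and unitality then hold strictly because they do in $\dC$ and~$\dCat$.

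The heart of the proof is strict vertical associativity and unitality. For unitality, given a vertical morphism $(u,u_-)\colon (x,x_-)\bulletarrow(x',x'_-)$, normality of~$X$ says that $Xe_x$ is literally the hom functor $e_{Xx}$ and that the comparison $\mu_{e_x,u}$ is the identity natural transformation; under the strictly unital composition of profunctors in~$\dCat$ (imposed via \cref{lem: comp of prof is unital}), the element $[u_-,1_{x_-}]$ of $(Xu\bullet e_{Xx})(x_-,x'_-)=Xu(x_-,x'_-)$ is exactly $u_-$, so $(u,u_-)\bullet(e_x,1_{x_-})=(u,u_-)$, and symmetrically on the other side. For associativity, given three composable vertical morphisms with labels $u_-,u'_-,u''_-$, one must check that in $X((u''\bullet u')\bullet u)(x_-,x'''_-)=X(u''\bullet(u'\bullet u))(x_-,x'''_-)$ (equal because $\dC$ is strict) the two iterated applications of~$\mu$ produce the same element. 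This is precisely the content of the axiom that $\mu_{u,u'}$ is compatible with the associator squares of~$\dCat$: the pentagon identifying $(Xu''\bullet Xu')\bullet Xu$ with $Xu''\bullet(Xu'\bullet Xu)$ carries $[[u''_-,u'_-],u_-]$ to $[u''_-,[u'_-,u_-]]$, and the two nested $\mu$'s then agree.

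For the interchange law on squares, the horizontal direction is governed by the strict horizontal functoriality of~$X$ and the vertical direction by the naturality of the composition comparisons $\mu_{u,u'}$ in $(u,u')$ together with naturality of $X\alpha$ in~$\alpha$. Concretely, unpacking both sides of an interchange diagram of four squares reduces, after applying the conditions that $u_-=(X\alpha)_{y_-,y'_-}(v_-)$ for each constituent square, to the coherence square expressing that $\mu$ is natural with respect to the component squares of the outer rectangle; this is then exactly the interchange in $\dC$ itself, lifted along the identifications provided by $X$. The main obstacle is the bookkeeping with coend representatives in the vertical composition, where one must track that the chosen representative $[u'_-,u_-]$ of an element of $(Xu'\bullet Xu)(x_-,x''_-)$ behaves well under both $\mu_{u,u'}$ and the associator of~$\dCat$; once this is handled, strictness is an immediate consequence of the axioms of a normal lax double functor. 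The projection $\pi_X\colon\dgro_\dC X\to\dC$ is then a strict double functor by construction, since all structure on $\dgro_\dC X$ is defined so as to project onto the corresponding structure in~$\dC$.
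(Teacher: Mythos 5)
Your proposal is correct and follows essentially the same route as the paper: well-definedness of horizontal composition from strict horizontal functoriality of $X$, well-definedness of vertical composition of squares from naturality of the comparison transformations $\mu_{u,u'}$, strict vertical associativity from compatibility of $\mu$ with the associators of $\dCat$, and strict vertical unitality from normality (i.e., $\mu_{e_x,u}=\mu_{u,e_{x'}}=\mathrm{id}_{Xu}$) together with the strictly unital composition of profunctors. The only cosmetic difference is that you file the closure of vertical composition of squares under the heading of ``interchange,'' whereas the interchange law itself is inherited verbatim from $\dC$ once closure is established; the ingredients you invoke are nonetheless the right ones.
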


\begin{proof}
    We first show that composition of horizontal morphisms and squares is well-defined. Given composable horizontal morphisms $f\colon (x,x_-)\to (y,y_-)$ and $g\colon (y,y_-)\to (z,z_-)$ in $\dgro_{\dC}X$, then the composite $g\circ f\colon (x,x_-)\to (z,z_-)$ is also a horizontal morphism in $\dgro_{\dC}X$ since 
     \[ X(g\circ f)(z_-)=Xf(Xg(z_-))=Xf(y_-)=x_-. \]
    Then, given horizontally composable squares in $\dgro_\dC X$
    \[
        \begin{tikzcd}
            {(x,x_-)} \arrow[d,"\bullet" marking, "{(u,u_-)}"'] \arrow[r, "{f}"] \arrow[rd, phantom,"{\alpha}"] 
            &
            {(y,y_-)} 
            \arrow[d,"\bullet" marking, "{(v,v_-)}"] 
            \arrow[r,"g"]
            \arrow[rd,phantom,"\beta"]
            &
            (z,z_-)
            \arrow[d,"\bullet" marking,"{(w,w_-)}"]
            \\
            {(x',x'_-)} \arrow[r, "{f'}"'] &[30pt]
            {(y',y'_-)}  
            \arrow[r,swap,"g'"]
            &
            (z',z'_-)
\end{tikzcd}
        \]
        their horizontal composite $\beta\circ \alpha$ is also a square in $\dgro_\dC X$ since
        \begin{align*}
            X(\beta\circ\alpha)_{z_-,z'_-}(w_-)
            &=
            (X\alpha)_{Xg(z_-),Xg'(z'_-)}((X\beta)_{z_-,z'_-}(w_-))
            &
            \text{Functorality of }X
            \\
            &=
            (X\alpha)_{y_-,y'_-}((X\beta)_{z_-,z'_-}(w_-))
            &
            \textstyle\text{Horizontal morphisms in }\dgro_\dC X
            \\
            &=
            (X\alpha)_{y_-,y'_-}(v_-)=u_-
            &
            \textstyle\text{Squares in }\dgro_\dC X
        \end{align*}
    Finally, given vertically composable squares in $\dgro_\dC X$
    \[
    \begin{tikzcd}
        {(x,x_-)} \arrow[r, "f"] \arrow[d, "\bullet" marking,"{(u,u_-)}"'] \arrow[rd, phantom,"\alpha"]       & {(y,y_-)} \arrow[d,"\bullet" marking, "{(v,v_-)}"]     \\
        {(x',x'_-)} \arrow[r, "f'"] \arrow[d, "\bullet" marking, "{(u',u'_-)}"'] \arrow[rd, phantom,"\alpha'"] & {(y',y'_-)} \arrow[d,"\bullet" marking, "{(v',v'_-)}"] \\
        {(x'',x''_-)} \arrow[r, "f''"']                                              & {(y'',y''_-)}                       
\end{tikzcd}
    \]
    their vertical composite $\alpha'\bullet \alpha$ is also a square in $\dgro_\dC X$ since
    \begin{align*}
        X(\alpha'\bullet\alpha&)_{y_-,y''_-}(v'_-\bullet v_-) & \\
        &=
        X(\alpha'\bullet\alpha)_{y_-,y''_-}((\mu_{v,v'})_{y_-,y''_-}([v'_-,v_-]))
        &
        \text{Definition of }\bullet
        \\
        &=
        (\mu_{u,u'})_{x_-,x''_-}((X\alpha'\bullet X\alpha)_{y_-,y''_-}([v'_-,v_-]))
        &
        \text{Naturality of } \mu
        \\
        &=
        (\mu_{u,u'})_{x_-,x''_-}([(X\alpha')_{y'_-,y''_-}(v'_-),(X\alpha)_{y_-,y'_-}(v_-)])
        &
        \text{Comp.~of profunctors}
        \\
        &=
        (\mu_{u,u'})_{x_-,x''_-}([u'_-,u_-])
        &
        \textstyle\text{Squares in }\dgro_\dC X
        \\
        &=
        u'_-\bullet u_-.
        &
        \text{Definition of }\bullet
    \end{align*}
    
    Now, it remains to show that composition of vertical morphisms in $\dgro_{\dC}X$ is strictly associative and unital. We start by showing associativity. Given three composable vertical morphisms $(x,x_-)\overset{(u,u_-)}{\bulletarrow}
        (x',x'_-)\overset{(u',u'_-)}{\bulletarrow}
        (x'',x''_-)\overset{(u'',u''_-)}{\bulletarrow}
        (x''',x'''_-)$ in $\dgro_{\dC}X$, we have that
        \begin{align*}
            (u''_-\bullet u'_-)\bullet u_-
            &=
            (\mu_{u,u''\bullet u'})_{x'_-,x'''_-}([u''_-\bullet u'_-,u_-]) 
            &
            \text{Definition of }\bullet
            \\
            &=
            (\mu_{u,u''\bullet u'})_{x'_-,x'''_-}([(\mu_{u',u''})_{x'_-,x'''_-}([u''_-,u'_-]),u_-])
            &
            \text{Definition of }\bullet
            \\
            &=
            (\mu_{u'\bullet u,u''})_{x_-,x'''_-}
            ([u''_-,(\mu_{u,u'})_{x_-,x''_-}([u'_-,u_-]])
            &
            \text{Associativity of }\mu
            \\
            &=
            (\mu_{u'\bullet u,u''})_{x_-,x'''_-}
            ([u''_-,u'_-\bullet u_-])
            &
            \text{Definition of }\bullet
            \\
            &=
            u''_-\bullet(u'_-\bullet u_-)
            &
            \text{Definition of }\bullet
        \end{align*}
        
        Finally, we show unitality. Given a vertical morphism $(u,u_-)\colon (x,x_-)\bulletarrow (x',x'_-)$ in $\dgro_{\dC}X$, we have that
        \[ u_-\bullet 1_{x_-}=(\mu_{u,e_x})_{x_-,x'_-}([u_-,1_{x_-}])=(1_{Xu})_{x_-,x'_-}(u_-)=1_{Xu(x_-,x'_-)}(u_-)=u_- \]
        using that $\mu_{u,e_x}$ is the identity at $Xu$. Similarly, we have $1_{x'_-}\bullet u_-=u_-$. This concludes the proof that $\dgro_\dC X$ is a strict double category. 
\end{proof}

As a first example, we compute the Grothendieck construction of a representable lax double presheaf. 

\begin{Proposition}
    \label{prop: dgro of representable}
    Given a double category $\dC$ and an object $\hat{x}$ in $\dC$, we have
    \[
    \textstyle\dgro_\dC\dC(-,\hat{x})=\dC/\hat{x}.
    \]
\end{Proposition}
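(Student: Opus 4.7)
The plan is to prove this equality by directly unpacking the definitions of both sides. Both double categories will turn out to have identical data and identical structure maps, so the result is essentially tautological. No step poses a genuine obstacle; the only care required is in tracking how the normality and the composition comparisons of the representable $\dC(-,-)$ encode the structure of the slice.

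First, I will match the underlying data level by level. An object of $\dgro_\dC\dC(-,\hat{x})$ is a pair $(x,g)$ with $x\in\dC$ and $g\in\dC(x,\hat{x})$, i.e., a horizontal morphism $g\colon x\to\hat{x}$ in $\dC$, matching an object of $\dC/\hat{x}$. For a horizontal morphism $(x,g)\to(y,h)$, the Grothendieck condition $g=\dC(-,\hat{x})(f)(h)$ unpacks, by definition of $\dC(f,1_{\hat{x}})$, to $g=1_{\hat{x}}\circ h\circ f=h\circ f$, recovering the defining condition in $\dC/\hat{x}$. For a vertical morphism $(x,g)\bulletarrow(x',g')$, the set $\dC(u,\hat{x})(g,g')=\dC(u,e_{\hat{x}})(g,g')$ is by construction the set of squares $\eta\colon\edgesquare{u}{g}{g'}{e_{\hat{x}}}$ in $\dC$, again matching the slice side. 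For a square with vertical boundaries $(u,\eta),(v,\theta)$ and horizontal boundaries $f,f'$, I will evaluate $(X\alpha)_{h,h'}(\theta)=\dC(\alpha,e_{1_{\hat{x}}})_{h,h'}(\theta)=e_{1_{\hat{x}}}\circ\theta\circ\alpha=\theta\circ\alpha$, so the Grothendieck condition $\eta=(X\alpha)_{h,h'}(\theta)$ becomes $\eta=\theta\circ\alpha$, which is the slice condition.

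Next, I will check that the double-categorical structure agrees. Horizontal composition and identities on both sides are inherited directly from $\dC$, so they coincide. For vertical identities, the vertical identity at $(x,g)$ in $\dgro_\dC\dC(-,\hat{x})$ is $(e_x,1_g)$, where $1_g$ is the identity on $g$ in the category $\dC(x,\hat{x})$; by the definition of the representable, this identity is the globular square $e_g\colon\edgesquare{e_x}{g}{g}{e_{\hat{x}}}$, matching the vertical identity in $\dC/\hat{x}$. The crucial point of the argument is that the composition comparison $\mu_{u,u'}$ of $\dC(-,-)$ was defined precisely as the vertical composition of squares in $\dC$, so the composite $(u'\bullet u,u'_-\bullet u_-)$ in $\dgro_\dC\dC(-,\hat{x})$ is given by vertically composing the chosen squares in $\dC$, exactly as in $\dC/\hat{x}$.

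Finally, the canonical projection $\pi_{\dC(-,\hat{x})}\colon\dgro_\dC\dC(-,\hat{x})\to\dC$ and the canonical projection $\dC/\hat{x}\to\dC$ are both first-coordinate projections and therefore agree. The entire proof is routine unpacking, with the only delicate book-keeping being the use of normality of $\dC(-,-)$ (to identify $Xe_x=e_{Xx}$ and the identity $1_g$ with the globular square $e_g$) and the explicit formula for $\mu_{u,u'}$ as vertical composition of squares in $\dC$.
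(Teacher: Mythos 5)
Your proposal is correct and follows essentially the same route as the paper: unpack the objects, horizontal morphisms, vertical morphisms, and squares of $\dgro_\dC\dC(-,\hat{x})$ and observe that the Grothendieck conditions reduce to the defining conditions of $\dC/\hat{x}$. The paper's proof stops at comparing the underlying data, while you additionally verify that the vertical identities and the composition comparison $\mu_{u,u'}$ reproduce the slice's vertical structure; this extra check is correct and harmless, but not a different argument.
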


\begin{proof}
    Explicitly, the double category $\dgro_\dC\dC(-,\hat{x})$ is such that
    \begin{itemize}[leftmargin=1cm]
        \item an object $(x,x_-)$ in $\dgro_\dC\dC(-,\hat{x})$ is the data of an object $x$ in $\dC$ and an object $x_-$ in $\dC(x,\hat{x})$, i.e., a horizontal morphism $x_-\colon x\to \hat{x}$ in $\dC$, 
         \item a horizontal morphism $f\colon (x,x_-)\to (y,y_-)$ in $\dgro_\dC\dC(-,\hat{x})$ is the data of a horizontal morphism $f\colon x\to y$ in $\dC$ satisfying $x_-=\dC(f,\hat{x})(y_-)=y_-\circ f$, 
         \item a vertical morphism $(u,u_-)\colon (x,x_-)\bulletarrow (x',x_-')$ in $\dgro_\dC\dC(-,\hat{x})$ is the data of a vertical morphism $u\colon x\bulletarrow x'$ in $\dC$ and an element $u_-$ 
 in $\dC(u,\hat{x})(x_-,x'_-)$, i.e., a square $u_-\colon\edgesquare{u}{x_-}{x'_-}{e_{\hat{x}}}$ in~$\dC$, 
         \item a square $\alpha\colon\edgesquare{(u,u_-)}{f}{f'}{(v,v_-)}$ in $\dgro_\dC\dC(-,\hat{x})$ is the data of a square $\alpha\colon\edgesquare{u}{f}{f'}{v}$ in $\dC$ satisfying $u_-=\dC(\alpha,\hat{x})_{y_-,y'_-}(v_-)=v_-\circ\alpha$.
    \end{itemize}
    By comparing with the data of the double category $\dC/\hat{x}$ as described in \cref{def: double slice}, we get the desired result. 
\end{proof}

By \cref{ex: C/x->C is discrete double}, we know that the canonical projection $\dgro_\dC\dC(-,\hat{x})=\dC/\hat{x}\to \dC$ is a discrete double fibration. The following shows that the Grothendieck construction of any lax double presheaf is also a discrete double fibration.

\begin{Proposition}
\label{prop: pi_x is ddisc fib}
    The canonical projection $\pi_X\colon\dgro_\dC X\to\dC$ is a discrete double fibration.
\end{Proposition}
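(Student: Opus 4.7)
The plan is to verify the two unpacked lifting conditions for $\pi_X$ from \cref{remark: dbldiscfib lifts}, since these are equivalent to the pullback condition in \cref{def: discrete double fibration}.

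For horizontal lifting, I would fix an object $(y,y_-)$ in $\dgro_\dC X$ and a horizontal morphism $f\colon x\to y$ in $\dC$. By the very definition of horizontal morphisms in $\dgro_\dC X$, any lift of $f$ with target $(y,y_-)$ must equal $f$ itself, and its source is forced to be $(x, Xf(y_-))$. Existence and uniqueness are therefore immediate: the lift is $f\colon (x,Xf(y_-))\to(y,y_-)$.

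For square lifting, I would fix a vertical morphism $(v,v_-)\colon (y,y_-)\bulletarrow (y',y'_-)$ in $\dgro_\dC X$ and a square $\alpha\colon\edgesquare{u}{f}{f'}{v}$ in $\dC$. The horizontal lifts of the top and bottom sides are already uniquely determined by the previous paragraph, namely the lifts of $f$ and $f'$ have sources $(x,Xf(y_-))$ and $(x',Xf'(y'_-))$. Then any lift of $\alpha$ must be $\alpha$ itself, and its source vertical morphism must be of the form $(u,u_-)$ where $u_-$ is an element of $Xu(Xf(y_-),Xf'(y'_-))$ satisfying the defining square condition $u_-=(X\alpha)_{y_-,y'_-}(v_-)$. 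Thus $u_-$ is uniquely determined. The assignment is well-defined precisely because $X\alpha$ is by definition a natural transformation $Xv\Rightarrow Xu\circ(Xf^{\op}\times Xf')$, so $(X\alpha)_{y_-,y'_-}(v_-)$ lies in the correct set.

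There is no serious obstacle: the proof is essentially tautological once one has unpacked both the Grothendieck construction and the lifting characterization of discrete double fibrations. The only mild point worth mentioning is that the lift produced in condition (2) really is a square in $\dgro_\dC X$ (rather than just in $\dC$), which, as noted, is exactly what the naturality of $X\alpha$ guarantees. No use of the composition comparisons $\mu_{u,u'}$ or the normality of $X$ is required at this stage, since the lifting conditions only concern individual squares and horizontal morphisms rather than vertical composites.
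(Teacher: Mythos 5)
Your proposal is correct and follows essentially the same route as the paper: both verify the two unpacked lifting conditions of \cref{remark: dbldiscfib lifts}, exhibiting the horizontal lift $f\colon (x,Xf(y_-))\to(y,y_-)$ and the square lift with source vertical morphism $(u,(X\alpha)_{y_-,y'_-}(v_-))$. Your additional remarks on why uniqueness is forced and why the comparison cells $\mu_{u,u'}$ and normality play no role here are accurate.
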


\begin{proof}
    We check the conditions of a discrete double fibration from \cref{remark: dbldiscfib lifts}. 
    \begin{enumerate}[leftmargin=1cm]
        \item Given an object $(y,y_-)$ in $\dgro_\dC X$ and a horizontal morphism
    $f\colon x\to y$ in $\dC$, the unique lift of~$f$ with target $(y,y_-)$ is given by the horizontal morphism 
    $f\colon (x,Xf(y_-))\to (y,y_-)$ in~$\dgro_\dC X$.
        \item Given a vertical morphism 
    $(v,v_-)\colon (y,y_-)\bulletarrow (y',y'_-)$ in $\dgro_\dC X$ and a square
    $\alpha\colon\edgesquare{u}{f}{f'}{v}\colon\nodesquare{x}{x'}{y}{y'}$ in~$\dC$, the unique lift of~$\alpha$ with target $(v,v_-)$ is the square
    \[
    \begin{tikzcd}
        (x,Xf(y_-))
        \arrow[r,"f"]
        \arrow[d,"\bullet" marking,swap,"{(u,(X\alpha)_{y_-,y'_-}(v_-))}"]
        \arrow[rd,phantom,"\alpha"]
        &
        (y,y_-)
        \arrow[d,"\bullet" marking,"{(v,v_-)}"]
        \\
        (x',Xf'(y'_-))
        \arrow[r,"f'"']
        &
        (y',y'_-).
    \end{tikzcd} 
    \]
    \end{enumerate}
    This shows that $\pi_X\colon\dgro_\dC X\to\dC$ is a discrete double fibration. 
\end{proof}

We can extend the Grothendieck construction to a $2$-functor $\dgro_\dC\colon \P{C}\to \dFib(\dC)$. We first describe its assignment on morphisms. 

\begin{Construction}
\label{def: dgro of horizontal transformations}
    Given a horizontal transformation 
    $F\colon X\Rightarrow Y\colon\dC^{\op}\to\dCat$, we define a double functor over $\dC$
    \[\textstyle \dgro_\dC F\colon \dgro_\dC X\to \dgro_\dC Y \] sending
    \begin{itemize}
        \item 
        an object $(x,x_-)$ in $\dgro_\dC X$ to the object $(x,F_xx_-)$ in $\dgro_\dC Y$, where $F_x x_-$ is the image of $x_-$ under the functor $F_x\colon Xx\to Yx$, 
        \item a horizontal morphism 
        $f\colon (x,x_-)\to (y,y_-)$ in $\dgro_\dC X$ to the horizontal morphism in $\dgro_\dC Y$
        \[ f\colon (x,F_xx_-)\to (y,F_yy_-), \]
        where $Yf(F_yy_-)=F_x(Xf(y_-))=F_x x_-$ by naturality of $F_x$, 
        \item 
        a vertical morphism
        $(u,u_-)\colon (x,x_-)\bulletarrow (x',x'_-)$ in $\dgro_\dC X$ to the vertical morphism in $\dgro_\dC Y$ \[ (u,(F_u)_{x_-,x_-'}u_-)\colon(x,F_xx_-)\bulletarrow (x',F_{x'}x'_-),  \]
        where $(F_u)_{x_-,x_-'}u_-$ is the image of $u_-$ under the component \[ (F_u)_{x_-,x_-'}\colon Xu(x_-,x_-')\to Yu(F_xx_-,F_{x'}x_-') \] of the natural transformation $F_u\colon Xu\Rightarrow Yu(F_x^{\op}\times F_{x'})\colon Xx^{\op}\times Xx'\to \Set$. 
        \item a square $\alpha\colon\edgesquare{(u,u_-)}{f}{f'}{(v,v_-)}\colon\nodesquare{(x,x_-)}{(x',x'_-)}{(y,y_-)}{(y',y'_-)}$ in $\dgro_\dC X$
        to the square in $\dgro_\dC Y$
        \[
        \begin{tikzcd}
            {(x,F_xx_-)} \arrow[d,"\bullet" marking, swap,"{(u,(F_u)_{x_-,x'_-}u_-)}"] \arrow[r, "f"] \arrow[rd,phantom, "\alpha"] & {(y,F_yy_-)} \arrow[d,"\bullet" marking,"{(v,(F_v)_{y_-,y'_-}v_-)}"] \\
            {(x',F_{x'}x'_-)} 
            \arrow[r, "f'"']                 & {(y',F_{y'}y'_-)} 
        \end{tikzcd}
        \] 
        where $(Y\alpha)_{F_y y_-,F_{y'}y'_-}((F_v)_{y_-,y_-'}v_-)=(F_u)_{x_-,x'_-}((X\alpha)_{y_-,y'_-}v_-)=(F_u)_{x_-,x'_-}u_-$ by naturality of $F_u$. 
    \end{itemize}
    
    Note that, by construction, the double functor $\dgro_\dC F$ clearly lies over $\dC$.
\end{Construction}

\begin{Lemma}
    The construction $\dgro_\dC F\colon \dgro_\dC X\to \dgro_\dC Y$ is a strict double functor. 
\end{Lemma}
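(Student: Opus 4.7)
The plan is to verify that $\dgro_\dC F$ respects all the structure of a strict double functor. Since $\dgro_\dC F$ acts as the identity on the $\dC$-component of objects, horizontal morphisms, vertical morphisms, and squares, and since horizontal composition in $\dgro_\dC X$ and $\dgro_\dC Y$ is inherited from $\dC$, the preservation of horizontal identities and of horizontal composition of both horizontal morphisms and squares is immediate. Similarly, the fact that $\dgro_\dC F$ is a functor on the underlying graph (i.e., respects sources and targets) is built into \cref{def: dgro of horizontal transformations}. The substantive content is the strict preservation of vertical composition and vertical identities in the second component.

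For vertical identities, I will use the assumption that $F$ is compatible with vertical identities, which says that for every object $x$ in $\dC$, the natural transformation $F_{e_x}\colon Xe_x\Rightarrow Ye_x(F_x^{\op}\times F_x)$ is the identity at $F_x$. Under the identification of \cref{globular squares in Cat} and \cref{identity 2cell}, this means that $(F_{e_x})_{x_-,x_-}$ sends $1_{x_-}$ to $1_{F_x x_-}$. Thus $\dgro_\dC F(e_x,1_{x_-})=(e_x,1_{F_x x_-})$, which is the vertical identity on $(x,F_x x_-)$ in $\dgro_\dC Y$.

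For vertical composition of vertical morphisms, given composable vertical morphisms $(u,u_-)$ and $(u',u'_-)$ in $\dgro_\dC X$, I need to show the identity
\[ (F_{u'\bullet u})_{x_-,x''_-}\bigl((\mu^X_{u,u'})_{x_-,x''_-}([u'_-,u_-])\bigr) = (\mu^Y_{u,u'})_{F_x x_-,F_{x''}x''_-}\bigl([(F_{u'})_{x'_-,x''_-}(u'_-),(F_u)_{x_-,x'_-}(u_-)]\bigr) \]
in $Y(u'\bullet u)(F_x x_-,F_{x''}x''_-)$. This is exactly the evaluation at $(x_-,x''_-)$ of the compatibility of $F$ with composition comparison squares (the third axiom for horizontal transformations in the unpacked description after \cref{def: hor transf}), applied to the element $[u'_-,u_-]$. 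The preservation of vertical composition of squares is entirely analogous, with the same compatibility providing the equality after chasing both sides through the relevant naturality squares of $F_u$ and $F_{u'}$; alternatively, it follows automatically once the first and second components are separately checked, since squares in $\dgro_\dC X$ are characterised by their underlying square in $\dC$ and the compatibility condition already satisfied.

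The only potentially delicate point is bookkeeping: one must keep straight that the representative $[u'_-,u_-]$ of the coend element in $(Xu'\bullet Xu)(x_-,x''_-)$ is sent by the coend of $F_{u'}\times F_u$ to $[(F_{u'})_{x'_-,x''_-}(u'_-),(F_u)_{x_-,x'_-}(u_-)]$, so that the compatibility axiom applies directly. Once this is unpacked, the coherence axiom is exactly what is needed and no further computation is required. Consequently, $\dgro_\dC F$ is a strict double functor, and by construction it lies over $\dC$.
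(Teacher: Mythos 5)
Your proposal is correct and follows essentially the same route as the paper's proof: horizontal structure is preserved for free, vertical identities are handled via the normality condition $F_{e_x}=1_{F_x}$, and vertical composition reduces to the compatibility of $F$ with the composition comparison squares evaluated at the coend element $[u'_-,u_-]$, with exactly the bookkeeping you describe. The displayed identity you isolate is precisely the chain of equalities in the paper's computation.
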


\begin{proof}
    By construction, the double functor $\dgro_\dC F$ clearly preserves compositions and identities of horizontal morphisms and squares. It remains to show that it preserves compositions and identities of vertical morphisms. 
    
    Given composable vertical morphisms 
    $(x,x_-)\overset{(u,u_-)}{\bulletarrow}(x',x'_-)\overset{(u',u'_-)}{\bulletarrow}(x'',x''_-)$ in $\dgro_{\dC}X$, we have:
    \begin{align*}
        \textstyle\dgro_\dC F((u'&,u'_-)\bullet(u,u_-)) &\\
        &=
        (u'\bullet u,(F_{u'\bullet u})_{x_-,x''_-}(u'_-\bullet u_-))
        &
        \textstyle\text{Definition of }\dgro_\dC F
        \\
        &=
        (u'\bullet u,(F_{u'\bullet u})_{x_-,x''_-}(\mu_{u,u'})_{x_-,x''_-}([u'_-,u_-]))
        &
        \textstyle\text{Composition in }\dgro_\dC X
        \\
        &=
        (u'\bullet u,(\mu_{u,u'})_{x_-,x''_-}(F_{u'}\bullet F_u)_{x_-,x''_-}([u'_-,u_-]))
        &
        \text{Compatibility of } F \text{ and } \mu
        \\
        &=
        (u'\bullet u,(\mu_{u,u'})_{x_-,x''_-}([(F_{u'})_{x'_-,x''_-}u'_-,(F_u)_{x_-,x'_-}u_-]))
        &
        \text{Comp.~of profunctors}
        \\
        &=
        (u',(F_{u'})_{x'_-,x''_-}u'_-)\bullet (u,(F_u)_{x_-,x'_-}u_-)
        &
        \textstyle\text{Composition in }\dgro_\dC Y
        \\
        &=
        \textstyle\dgro_\dC F(u',u'_-)\bullet
        \dgro_\dC F(u,u_-).
        &
        \textstyle\text{Definition of }\dgro_\dC F
    \end{align*}
   This shows that $\dgro_\dC F$ preserves vertical composition. Now, given an object $(x,x_-)$ of $\dgro_\dC X$, we have:
    \begin{align*}
        \textstyle\dgro_\dC F(e_{(x,x_-)})
        &=
        \textstyle\dgro_\dC F(e_x,1_{x_-})
        &
        \textstyle\text{Vertical identity in }\dgro_\dC X
        \\
        &=
        (e_x,(F_{e_x})_{x_-,x_-}1_{x_-})
        &
        \textstyle\text{Definition of }\dgro_\dC F
        \\
        &=
        (e_x,1_{F_x(x_-)})
        &
        F_{e_x}=1_{F_{x}}
        \\
        &=
        e_{(x,F_xx_-)}
        &
        \textstyle\text{Vertical identity in }\dgro_\dC Y
        \\
        &=
        e_{\dgro_\dC F(x,x_-)}.
        &
        \textstyle\text{Definition of }\dgro_\dC F
    \end{align*}
    This shows that $\dgro_\dC F$ preserves vertical identities and concludes the proof that $\dgro_\dC F$ is a strict double functor. 
\end{proof}

Now, we describe the assignment of $\dgro_\dC$ on $2$-morphisms.

\begin{Construction}
\label{def: dgro for globular modifications}
    Given a globular modification $A\colon\edgesquare{e_X}{F}{F'}{e_{Y}}\colon\nodesquare{X}{X}{Y}{Y}$ of lax double pre\-sheaves $\dC^{\op}\to \Cat$, we define a vertical transformation over $\dC$
   \[ \textstyle\dgro_{\dC}A\colon\dgro_{\dC}F\Bulletarrow \dgro_{\dC}F'\colon \dgro_{\dC} X\to \dgro_{\dC} Y \]
   whose
   \begin{itemize}[leftmargin=1cm]
       \item component at an object 
    $(x,x_-)$ in $\dgro_\dC X$ is given by the vertical morphism in $\dgro_\dC Y$
    \[ (e_x,(A_x)_{x_-})\colon (x,F_xx_-)\bulletarrow(x,F'_xx_-), \]
    where $(A_x)_{x_-}$ is the component at $x_-$ of the natural transformation $A_x\colon F_x\Rightarrow F'_x\colon Xx\to Yx$,
    \item component at a horizontal morphism 
    $f\colon (x,x_-)\to (y,y_-)$ in $\dgro_\dC X$ is given by the square in~$\dgro_\dC Y$
        \[
        \begin{tikzcd}
            {(x,F_xx_-)} \arrow[d,swap,"\bullet" marking, "{(e_x,(A_x)_{x_-})}"] \arrow[r, "f"] \arrow[rd,phantom, "e_f"] & {(y,F_yy_-)} \arrow[d,"\bullet" marking, "{(e_y,(A_y)_{y_-})}"] \\
            {(x,F'_xx_-)} \arrow[r, "f"']  & {(y,F'_yy_-)}                  
        \end{tikzcd}
        \]
        where $(Ye_f)_{F_yy_-,F'_yy_-}((A_y)_{y_-})=Yf((A_y)_{y_-})=(A_x)_{Xf(y_-)}=(A_x)_{x_-}$ by normality of $Y$ and horizontal compatibility of $A_x$.
   \end{itemize}
   
   Note that, by construction, the vertical transformation $\dgro_\dC A$ clearly lies over $\dC$. 
\end{Construction}

\begin{Lemma}
    The construction $\dgro_{\dC}A\colon\dgro_{\dC}F\Bulletarrow \dgro_{\dC}F'\colon \dgro_{\dC} X\to \dgro_{\dC} Y$ is a vertical transformation.
\end{Lemma}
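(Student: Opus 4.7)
The plan is to verify that $\dgro_\dC A$ satisfies the axioms of a (strict) vertical transformation from \cref{def: pseudo vert transf}. Because both $\dgro_\dC F$ and $\dgro_\dC F'$ are strict double functors between strict double categories and $A$ is globular, the naturality comparison squares for $\dgro_\dC A$ are expected to be horizontal identities, so the verification reduces to: (a) well-definedness of the assigned data in $\dgro_\dC Y$; (b) horizontal compatibility of the components at horizontal morphisms; (c) strict naturality with respect to vertical morphisms; and (d) compatibility with squares. Throughout, we will use normality of $Y$ (so that $\mu_{u,e_{x'}}$ and $\mu_{e_x,u}$ are identities) together with the canonical identifications $Yu \bullet e_{Yx} = Yu = e_{Yx'} \bullet Yu$ from \cref{lem: comp of prof is unital} to trivialize the unit isomorphisms of the profunctor composition.

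For (a), the vertical morphism $(e_x, (A_x)_{x_-})$ is well-defined in $\dgro_\dC Y$ because $(A_x)_{x_-}$ is a morphism of $Yx$ and hence an element of $Ye_x(F_x x_-, F'_x x_-)$ by normality. The square $e_f$ with the proposed boundaries requires $(A_x)_{x_-} = (Ye_f)_{F_y y_-, F'_y y_-}((A_y)_{y_-})$; using normality of $Y$ (so $Ye_f = e_{Yf}$) together with \cref{identity 2cell}, this reads $(A_x)_{x_-} = Yf((A_y)_{y_-})$, which is precisely the horizontal compatibility of the globular modification $A$ at $f$, evaluated at $y_-$ (recalling $x_- = Xf(y_-)$). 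Step (b) is immediate: since $(\dgro_\dC A)_f = e_f$ lies in the strict double category $\dgro_\dC Y$, we have $(\dgro_\dC A)_{g \circ f} = e_{g \circ f} = e_g \circ e_f$ and $(\dgro_\dC A)_{1_x} = e_{1_x} = 1_{e_x}$.

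The key step is (c). For $(u, u_-)\colon (x,x_-)\bulletarrow (x',x'_-)$ in $\dgro_\dC X$, we must show
\[(e_{x'}, (A_{x'})_{x'_-}) \bullet (u, (F_u)_{x_-, x'_-} u_-) = (u, (F'_u)_{x_-, x'_-} u_-) \bullet (e_x, (A_x)_{x_-})\]
in $\dgro_\dC Y$. Both sides project to $u$ in $\dC$. For the second components, unfolding the bullet composition via \cref{def: dgro} and applying normality together with \cref{lem: comp of prof is unital}, the left-hand side becomes the element $Yu(F_x x_-, (A_{x'})_{x'_-})((F_u)_{x_-, x'_-} u_-)$ and the right-hand side becomes $Yu((A_x)_{x_-}, F'_{x'} x'_-)((F'_u)_{x_-, x'_-} u_-)$. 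Their equality is exactly the vertical compatibility of $A$ at $u$ from \cref{def: modification}, evaluated at $u_-$, after observing that the naturality comparison squares of the identity vertical transformations $e_X$ and $e_Y$ are themselves identities. Finally, step (d) follows from (c) by uniqueness of lifts under the discrete double fibration $\pi_Y\colon \dgro_\dC Y \to \dC$ (\cref{prop: pi_x is ddisc fib}): both sides of the square-naturality equation project to $\alpha$ in $\dC$, and their vertical boundaries agree by (c).

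The main obstacle is step (c): the bullet composition in $\dgro_\dC Y$ is encoded via coends, and one must carefully apply the canonical isomorphisms of \cref{lem: comp of prof is unital} to recognize the composite as the profunctor action of the morphism $(A_x)_{x_-}$ (respectively $(A_{x'})_{x'_-}$) on an element of $Yu$ in the correct variable, then match this with the vertical compatibility of the globular modification.
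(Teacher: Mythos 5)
Your proof is correct and follows essentially the same route as the paper's: the crux in both is the naturality of the object-components with respect to vertical morphisms, obtained by unfolding the bullet composition in $\dgro_\dC Y$, using normality of $Y$ to trivialize the unit comparison cells, and recognizing the resulting identity as the vertical compatibility of the globular modification $A$; the remaining conditions are handled by the observation that the components at horizontal morphisms are identity squares. Your treatment is slightly more explicit about the coend identifications from \cref{lem: comp of prof is unital}, but the argument is the same.
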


\begin{proof}
    We first show that the components $(\dgro_\dC A)_{(x,x_-)}$ are natural in objects $(x,x_-)$. Given a vertical morphism
    $(u,u_-)\colon (x,x_-)\bulletarrow (x',x'_-)$ in
    $\dgro_\dC X$, we have to show that the following diagram in $\dgro_\dC Y$ commutes.
    \[
    \begin{tikzcd}
        {\dgro_\dC F(x,x_-)} \arrow[d,"\bullet" marking, "{\dgro_\dC F(u,u_-)}"'] \arrow[r,"\bullet" marking, "{(\dgro_\dC A)_{(x,x_-)}}"] 
        &[30pt]
        {\dgro_{\dC}F'(x,x_-)} \arrow[d,"\bullet" marking, "{\dgro_\dC F'(u,u_-)}"] \\
        {\dgro_\dC F(x',x'_-)} \arrow[r, "\bullet" marking, "{(\dgro_{\dC}A)_{(x',x'_-)}}"']              &[30pt]
        {\dgro_\dC F'(x',x'_-)}        
        \end{tikzcd}
    \]
    This we can do by a short computation:
    \begin{align*}
       \textstyle \dgro_\dC F'(u,&u_-)\circ  \textstyle (\dgro_{\dC}A)_{(x,x_-)} & \\
        &=
        (u,(F'_u)_{x_-,x'_-}(u_-))\bullet
        (e_x,(A_x)_{x_-})
        &
        \textstyle\text{Definition of }\dgro_\dC A\text{, }\dgro_\dC F'
        \\
        &=
        (u\bullet e_x,(\mu_{e_x,u})_{x_-,x'_-}([(F'_u)_{x_-,x'_-}(u_-),(A_x)_{x_-}])
        &
        \textstyle\text{Composition in }\dgro_\dC Y
        \\
        &=
        (e_{x'}\bullet u,
        (\mu_{u,e_{x'}})_{x_-,x'_-}([(A_{x'})_{x'_-},(F_u)_{x_-,x'_-}(u_-)])
        &
        \text{Vertical compatibility of }A
        \\
        &=
        (e_{x'},(A_{x'})_{x'_-})\bullet
        (u,(F_u)_{x_-,x'_-}(u_-))
        &
        \textstyle\text{Composition in }\dgro_\dC Y
        \\
        &=
        \textstyle(\dgro_\dC A)_{(x',x'_-)}\bullet
        \dgro_\dC F(u,u_-)
        &
        \textstyle\text{Definition of }\dgro_\dC A\text{, }\dgro_\dC F
    \end{align*}
    The naturality of the components $(\dgro_\dC A)_{f}$ in horizontal morphisms $f$ is then straightforward from the fact that these components are given by the vertical identity squares $e_f$. 
\end{proof}

Putting everything together, we get the following. 

\begin{Construction} \label{constr: dgroC}
    Given a double category $\dC$, we define a $2$-functor
    \[ \textstyle\dgro_\dC\colon\P{C}\to\dFib(\dC),
    \]
    sending
    \begin{itemize}[leftmargin=1cm]
        \item 
    a double presheaf
    $X\colon\dC^{\op}\to\dCat$ to the discrete double fibration $\pi_X\colon\dgro_\dC X\to\dC$ from \cref{def: dgro},
    \item a horizontal transformation 
    $F\colon X\Rightarrow Y$ to the double functor $\dgro_\dC F\colon \dgro_\dC X\to \dgro_\dC Y$ over $\dC$ from \cref{def: dgro of horizontal transformations},
    \item a globular modification $A\colon\edgesquare{e_X}{F}{F'}{e_{X'}}$ to the vertical transformation $\dgro_{\dC}A\colon\dgro_{\dC}F\Bulletarrow \dgro_{\dC}F'$ over $\dC$ from \cref{def: dgro for globular modifications}.
    \end{itemize}
    It is straightforward to check that this construction is $2$-functorial.
\end{Construction}

Next, we want to show that the construction $\dgro_\dC$ is pseudo-natural in $\dC$. For this, we start by constructing the pseudo-naturality comparison cells. 

\begin{Construction}
\label{constr: dgro_G}
    Given a double functor $G\colon\dC\to\dD$, we construct a $2$-natural isomorphism 
    \[
    \begin{tikzcd}
        \P{D}
        \arrow[r,"(G^{\op})^*"]
        \arrow[swap,d,"\dgro_\dD"]
        &
        \P{C}
        \arrow[d,"\dgro_\dC"]\arrow[ld,"\dgro_G"',"\cong",Rightarrow]
        \\
        \dFib(\dD)
        \arrow[r,swap,"G^*"]
        &
        \dFib(\dC)
    \end{tikzcd}
    \]
    whose component at a lax double presheaf $X\colon \dD^{\op}\to \dCat$ is the invertible double functor over $\dC$
    \[ \textstyle (\dgro_G)_X\colon \dgro_\dC(G^{\op})^*X=\dgro_\dC XG^{\op}\longrightarrow G^*\dgro_\dD X=\dC\times_\dD\dgro_\dD X \]
    given by sending 
    \begin{itemize}[leftmargin=1cm]
        \item an object $(x,x_-)$ in $\dgro_\dC XG^{\op}$, i.e., a pair of objects $x$ in $\dC$ and $x_-$ in $XG(x)$, to the object 
        $(x,(Gx,x_-))$ in $\dC\times_\dD\dgro_\dD X$,
        \item a horizontal morphism $f\colon (x,x_-)\to (y,y_-)$ in $\dgro_\dC XG^{\op}$, i.e., a horizontal morphism $f\colon x\to y$ in $\dC$ such that $XGf(y_-)=x_-$, to the horizontal morphism in $\dC\times_\dD\dgro_\dD X$
        \[ (f,Gf)\colon (x,(Gx,x_-))\to (y,(Gy,y_-)), \]
        \item a vertical morphism $(u,u_-)\colon (x,x_-)\bulletarrow (x',x'_-)$ in $\dgro_\dC XG^{\op}$, i.e., a pair of a vertical morphism $u\colon x \bulletarrow x'$ in $\dC$ and an element $u_-$ in $XGu(x_-,x_-')$, to the vertical morphism in $\dC\times_\dD\dgro_\dD X$
        \[ (u,(Gu,u_-))\colon (x,(Gx,x_-))\bulletarrow (x',(Gx',x'_-)), \]
        \item a square 
        $\alpha\colon\edgesquare{(u,u_-)}{f}{f'}{(v,v_-)}\colon\nodesquare{(x,x_-)}{(x',x'_-)}{(y,y_-)}{(y',y'_-)}$ in $\dgro_\dC XG^{\op}$, i.e., a square $\alpha\colon \edgesquare{u}{f}{f'}{v}$ in $\dC$ such that $(XG\alpha)_{x_-,x_-'}(v_-)=u_-$, to the square in $\dC\times_\dD\dgro_\dD X$
        \[
        \begin{tikzcd}
            {(x,(Gx,x_-))} \arrow[d,"\bullet" marking, "{(u,(Gu,u_-))}"'] \arrow[r, "{(f,Gf)}"] \arrow[rd, phantom,"{(\alpha,G\alpha)}"] & [10pt] {(y,(Gy,y_-))} \arrow[d, "\bullet" marking,"{(v,(Gv,v_-))}"] \\
            {(x',(Gx',x'_-))} \arrow[r, "{(f',Gf')}"']  & {(y',(Gy',y'_-))}.                       
        \end{tikzcd}
        \]
        
    \end{itemize}
    Note that it admits an obvious inverse given by projecting onto the first and last components, so that the components $(\dgro_G)_X$ are invertible.
\end{Construction}

\begin{Lemma}
    The construction $\dgro_G\colon\dgro_\dC\circ(G^{\op})^*\Rightarrow G^*\circ\dgro_\dD$ is a $2$-natural isomorphism. 
\end{Lemma}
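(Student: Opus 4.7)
The plan is to verify two things: first, that each component $(\dgro_G)_X$ is indeed an invertible double functor over $\dC$; second, that the family $\{(\dgro_G)_X\}_X$ is $2$-natural in $X$, i.e., compatible with horizontal transformations of lax double presheaves and with globular modifications between them.

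For the first part, invertibility is already noted in the construction via the obvious projection inverse, so it suffices to check that $(\dgro_G)_X$ is a strict double functor. The assignments on all four types of cells project to $\id_\dC$ on the first coordinate, so the functor lies over $\dC$ by construction. Preservation of horizontal identities and compositions of horizontal morphisms and squares is immediate from the fact that $G$ itself preserves them. For vertical structure, the key observation is that the composition comparison squares of the precomposed lax double presheaf $XG^{\op}\colon\dC^{\op}\to\dCat$ are, by definition of precomposition, given by $\mu^{XG^{\op}}_{u,u'}=\mu^X_{Gu,Gu'}$. Hence for composable vertical morphisms $(u,u_-),(u',u'_-)$ in $\dgro_\dC XG^{\op}$ we have
\[
u'_-\bullet u_-=(\mu^X_{Gu,Gu'})_{x_-,x''_-}([u'_-,u_-]),
\]
which is precisely the second coordinate of the vertical composite of $(Gu,u_-)$ and $(Gu',u'_-)$ in $\dgro_\dD X$. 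Normality of $X$ together with $Ge_x=e_{Gx}$ ensures preservation of vertical identities.

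For $2$-naturality in $X$, I would fix a horizontal transformation $F\colon X\Rightarrow Y$ of lax double presheaves over $\dD$ and check the equality of double functors $(\dgro_G)_Y\circ \dgro_\dC(F\cdot G^{\op})=(\id_\dC\times_\dD \dgro_\dD F)\circ(\dgro_G)_X$. Evaluated on a vertical morphism $(u,u_-)$ in $\dgro_\dC XG^{\op}$, the first composite yields $(u,(Gu,(F_{Gu})_{x_-,x'_-}u_-))$ and so does the second, since $(F\cdot G^{\op})_u=F_{Gu}$ by definition of horizontal whiskering; analogous one-line checks handle objects, horizontal morphisms, and squares. For a globular modification $A\colon F\Rightarrow F'$, a similar identification $(\dgro_G)_Y\circ\dgro_\dC(A\cdot G^{\op})=(\id_\dC\times_\dD\dgro_\dD A)\circ(\dgro_G)_X$ follows from $(A\cdot G^{\op})_x=A_{Gx}$, so the component squares on both sides read $(e_x,(e_{Gx},(A_{Gx})_{x_-}))$.

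I do not anticipate a real obstacle: every verification reduces to the definitional identity that precomposition of a lax double functor along a strict double functor transports its composition comparison squares in the expected way, together with the fact that $G$ preserves all structure strictly. The only mildly delicate point is keeping track of the identification of the pullback $G^*\dgro_\dD X=\dC\times_\dD\dgro_\dD X$ with its description as pairs $(x,(Gx,x_-))$; once one writes vertical composition in this pullback coordinate-wise, every check is a single line. The naturality of $\dgro_G$ in $G$ (i.e., pseudo-functoriality as $G$ varies) will then be automatic from the universal property of the pullback, and will be invoked where needed in the sequel.
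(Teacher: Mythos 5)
Your proposal is correct and follows essentially the same route as the paper: a direct componentwise verification of $2$-naturality in $X$ using the definitional identities $(F\circ G^{\op})_u=F_{Gu}$ and $(A\circ G^{\op})_x=A_{Gx}$, with invertibility of the components coming from the obvious projection inverse. The only difference is cosmetic — you additionally spell out why each $(\dgro_G)_X$ is a strict double functor (via $\mu^{XG^{\op}}_{u,u'}=\mu^X_{Gu,Gu'}$), a point the paper absorbs into the construction itself, and you verify naturality at vertical morphisms where the paper does so at objects, each deferring the remaining cell types to an analogous check.
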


\begin{proof}
    It remains to prove that the components $(\dgro_G)_X$ are $2$-natural in $X$. Given a horizontal transformation $F\colon X\Rightarrow Y\colon\dD^{\op}\to\dCat$, we have to show that the following diagram of double functors over $\dC$ commutes.
    \[
    \begin{tikzcd}
        \dgro_\dC XG^{\op}
        \arrow[r,"(\dgro_G)_X"]
        \arrow[swap,d,"\dgro_\dC (G^{\op})^*F=\dgro_\dC F \circ G^{\op}"]
        & [10pt]
        G^* \dgro_\dD X=\dC\times_\dD \dgro_\dD X
        \arrow[d,"{G^*(\dgro_\dD F)=\dC\times_\dD \dgro_\dD F}"]
        \\
        \dgro_\dC YG^{\op}
        \arrow[r,swap,"(\dgro_G)_Y"]
        &
        G^* \dgro_\dD Y=\dC\times_\dD \dgro_\dD Y
    \end{tikzcd}
    \]
    We check that the above diagram commutes on objects, and the proof is similar for horizontal morphisms, vertical morphisms, and squares. To do so, we evaluate both composites at an object $(x,x_-)$ in $\dgro_\dC XG^{\op}$:
        \begin{align*}
           \textstyle(\dgro_G)_Y\circ \dgro_\dC F\circ G^{\op} (x,x_-)
            & =
            \textstyle(\dgro_G)_Y(x,(F \circ G^{\op})_xx_-)
            &
            \textstyle\text{Definition of }\dgro_\dC F\circ G^{\op}
            \\
            &=
            \textstyle(\dgro_G)_Y(x,F_{Gx}x_-)
            & \text{Whiskering}
            \\
            &=
            (x,(Gx,F_{Gx}x_-))
            &
            \textstyle\text{Definition of }(\dgro_G)_Y
            \\
            &=
            \textstyle(\dC\times_\dD\dgro_\dD F)(x,(Gx,x_-))
            &
            \textstyle\text{Definition of } \dC\times_\dD\dgro_\dD F
            \\
            &=
            \textstyle(\dC\times_\dD\dgro_\dD F)\circ(\dgro_G)_X(x,x_-).
            &
            \textstyle\text{Definition of }(\dgro_G)_X
        \end{align*}
        This shows that the desired diagram commutes. 
        
        Next, given a globular modification 
        $A\colon\edgesquare{e_X}{F}{F'}{e_Y}\colon\nodesquare{X}{X}{Y}{Y}$ in $\P{D}$, we have to check that the following diagram of double functors and vertical transformations commutes
        \[
        \begin{tikzcd}
            & \dgro_\dC XG^{\op} \arrow[dd, "\dgro_\dC F\circ G^{\op}"', bend right=55] \arrow[dd, "\dgro_\dC F'\circ G^{\op}", bend left=55] \arrow[rr, "(\dgro_G)_X"] 
            &[30pt]
            & G^*\dgro_\dD X \arrow[dd, "G^*\dgro_\dD F'", bend left=55] \arrow[dd, swap,"G^*\dgro_\dD F", bend right=55] &    
            \\
            {} \arrow[rr, shorten <=30pt, shorten >=60pt,"\dgro_\dC A\circ G^{\op}" {xshift=-15pt},yshift=-5pt,"\bullet" {xshift=-15pt,yshift=-6pt,scale=1.3} marking, Rightarrow] 
            &[30pt]
            & {} \arrow[rr,shorten <=30pt,shorten >=30pt,"\bullet" marking, "G^*\dgro_\dD A", Rightarrow, yshift=-5pt] &  & {} 
            \\
            & \dgro_\dC YG^{\op}\arrow[rr, "(\dgro_G)_Y",swap] 
            &[30pt] 
            & G^*\dgro_\dD Y  &   
        \end{tikzcd}
         \]
        To do so, we first evaluate both whiskerings at an object $(x,x_-)$ of 
        $\dgro_\dC XG^{\op}$:
        \begin{align*}
            \textstyle
            (\dgro_G)_Y (\dgro_\dC A\circ G^{\op})_{(x,x_-)})
            &=
            \textstyle
            (\dgro_G)_Y(e_x,((A\circ G^{\op})_x)_{x_-})
            &
             \textstyle\text{Definition of } \dgro_\dC A\circ G^{\op}
            \\
            &=
            \textstyle
            (\dgro_G)_Y(e_x,(A_{Gx})_{x_-})
            &
            \text{Whiskering}
            \\
            &=
            (e_x,(Ge_x,(A_{Gx})_{x_-}))
            &
             \textstyle\text{Definition of } (\dgro_G)_Y
            \\
            &=
            (e_x,(e_{Gx},(A_{Gx})_{x_-}))
            &
            \text{Functorality of }G
            \\
            &=
            \textstyle
            (G^*\dgro_\dD A)_{(x,(Gx,x_-))}
            &
            \textstyle \text{Definition of }G^*\dgro_\dD A
            \\
            &=
            \textstyle
            (G^*\dgro_\dD A)_{(\dgro_G)_X(x,x_-)}.
            &
            \textstyle \text{Definition of } (\dgro_G)_X
        \end{align*}
        This shows that the components at objects of $\dgro_\dC XG^{\op}$ of the two whiskerings agree. The fact that their components at horizontal morphisms agree is then straightforward from the fact that they are both given by the vertical identity square $(e_f,e_{Gf})$ in $G^*\dgro_\dD Y$. 
\end{proof}

Putting the components $\dgro_\dC$ and $\dgro_G$ together, we get the following. 

\begin{Proposition}
    The Grothendieck construction defines a pseudo-natural transformation of pseudo-functors
    \[ \textstyle\dgro\colon\mathcal{P}\Rightarrow\dFib\colon\DblCat_h^{\coop}\to 2\Cat, \]
    where $\mathcal{P}$ is the $2$-functor from \cref{def: 2-functor P} and $\dFib$ is the pseudo-functor from \cref{def: dFib}.
\end{Proposition}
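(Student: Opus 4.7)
The plan is to verify the three coherence axioms defining a pseudo-natural transformation of pseudo-functors $\DblCat_h^{\coop}\to 2\Cat$, noting that we already have the $2$-functor components $\dgro_\dC\colon\P{C}\to\dFib(\dC)$ from \cref{constr: dgroC} and the invertible $2$-natural transformations $\dgro_G\colon\dgro_\dC\circ(G^{\op})^*\Rightarrow G^*\circ\dgro_\dD$ from \cref{constr: dgro_G}. Since $\mathcal{P}$ is a strict $2$-functor and the pseudo-functor structure of $\dFib$ is encoded in the canonical isomorphisms $(\id_\dC)^*\cong\id$ and $(HG)^*\cong G^*H^*$ arising from the universal property of pullbacks, the three axioms to check are: (i) $\dgro_{\id_\dC}$ agrees with the unit comparison of $\dFib$; (ii) $\dgro_{HG}$ agrees with the appropriate pasting of $\dgro_G$ and $\dgro_H$ via the composition comparison of $\dFib$; and (iii) for each horizontal transformation $B\colon G\Rightarrow G'\colon\dC\to\dD$, the modification-like pasting equation relating $\dgro_G$, $\dgro_{G'}$, $\mathcal{P}_B$, and $\dFib(B)$ holds.

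For axioms (i) and (ii), I would unpack the explicit description of $(\dgro_G)_X$ from \cref{constr: dgro_G}. Axiom (i) is immediate: the component $(\dgro_{\id_\dC})_X$ sends $(x,x_-)\in\dgro_\dC X$ to $(x,(x,x_-))\in\dC\times_\dC\dgro_\dC X$, which is precisely the inverse of the canonical isomorphism $\dC\times_\dC\dgro_\dC X\cong \dgro_\dC X$, and the extension to higher cells is analogous. For axiom (ii), given composable double functors $G\colon\dC\to\dD$ and $H\colon\dD\to\dE$, both $(\dgro_{HG})_X$ and the pasting of $(\dgro_G)_{XH^{\op}}$ with the $G^*$-whiskering of $(\dgro_H)_X$ ultimately send an object $(x,x_-)\in\dgro_\dC X(HG)^{\op}$ to $(x,(HGx,x_-))$ in $(HG)^*\dgro_\dE X$, matching up via the canonical isomorphism $(HG)^*\cong G^*H^*$ of $\dFib$, and a similar bookkeeping handles horizontal morphisms, vertical morphisms, and squares.

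The hard part will be axiom (iii). Fixing $X\colon\dD^{\op}\to\dCat$ and an object $(x,x_-)\in\dgro_\dC XG'^{\op}$ with $x_-\in XG'x$, I plan to evaluate both pastings explicitly on objects first. The first pasting applies $\dgro_\dC((\mathcal{P}_B)_X)$, which by definition of $\mathcal{P}_B$ has component $XB_x\colon XG'x\to XGx$ at $x$, so $(x,x_-)$ is sent to $(x,XB_x(x_-))$, and then $(\dgro_G)_X$ yields $(x,(Gx,XB_x(x_-)))$. The second pasting applies $(\dgro_{G'})_X$ to obtain $(x,(G'x,x_-))$, and then $B^*_{\pi_X}$ from \cref{constr:Fib(B)} sends this to $(x,(B_x)^*(G'x,x_-))$, where $(B_x)^*(G'x,x_-)$ denotes the source of the unique $\pi_X$-lift of $B_x\colon Gx\to G'x$ with target $(G'x,x_-)$; by the explicit construction of lifts in the proof of \cref{prop: pi_x is ddisc fib}, this source is $(Gx,XB_x(x_-))$, giving agreement on objects. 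The main obstacle is extending this verification to horizontal morphisms, vertical morphisms, and squares, where I must match $X$ applied to the components $B_f$ and $B_u$ of $B$ against the corresponding unique lifts of these cells in $\dgro_\dD X$; however, the latter are always determined by the action of $X$, so once this is unpacked the verification reduces to a systematic component-wise check.
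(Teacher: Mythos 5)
Your proposal follows essentially the same route as the paper: it verifies the same three coherence conditions (unit, composition, and naturality against a horizontal transformation $B$), using the same explicit description of $(\dgro_G)_X$, the same identification of $(B_x)^*(G'x,x_-)$ with $(Gx,XB_x(x_-))$ via the unique lifts of \cref{prop: pi_x is ddisc fib}, and the same reduction of the higher cells to componentwise bookkeeping. One cosmetic point: a horizontal transformation $B$ has no component $B_f$ at a horizontal morphism $f$ (only a naturality condition there), so the check on horizontal morphisms of $\dgro_\dC XG'^{\op}$ uses the naturality of the $B_x$ rather than any such component; this does not affect the argument.
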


\begin{proof}
    We first show that the components $\dgro_G$ are functorial in $G$. Given composable double functors 
    $G\colon \dC\to\dD$ and $H\colon \dD\to\dE$, we have to show that the following composite coincides with the $2$-natural transformation $\dgro_{HG}$
    \[ \textstyle\dgro_\dC ((HG)^{\op})^*=\dgro_\dC (G^{\op})^*(H^{\op})^*\xRightarrow{\dgro_G (H^{\op})^*} G^* \dgro_\dD (H^{\op})^* \xRightarrow{G^*\dgro_H} G^*H^*\dgro_\dE \cong (HG)^* \dgro_\dE. \]
   When evaluated at a lax double presheaf $X\colon\dE^{\op}\to\dCat$, this amounts to showing that the following composite coincides with the double functor $(\dgro_{HG})_X$
   \[ \textstyle \dgro_C XH^{\op}G^{\op} \xrightarrow{(\dgro_G)_{XH^{\op}}} G^*\dgro_\dD XH^{\op} \xrightarrow{G^*(\dgro_H)_X} G^* H^* \dgro_\dE X\cong (HG)^*\dgro_\dE X. \] 
   But this is straightforward from the definitions of $\dgro_G$, $\dgro_H$, and $\dgro_{HG}$. Moreover, given a double category $\dC$, a similar computation shows that $\dgro_{\id_\dC}=\id_{\dgro_\dC}$.

    Finally, we show that the components $\dgro_G$ are natural in $G$. Given a horizontal transformation 
    $B\colon G\Rightarrow G'\colon\dC\to\dD$, we have to show that the following diagram of $2$-natural transformations commutes, 
    \[
    \begin{tikzcd}
        \dgro_\dC (G'^{\op})^* \arrow[d, "\dgro_\dC (B^{\op})^*"', Rightarrow] \arrow[r, "\dgro_{G'}", Rightarrow] & G'^*\dgro_\dD \arrow[d, "B^*\dgro_\dD", Rightarrow] \\
        \dgro_\dC (G^{\op})^* \arrow[r, "\dgro_G"', Rightarrow]              & G^*\dgro_\dD.            
\end{tikzcd}
    \]
    where we recall that the left-hand $(B^{\op})^*$ is as in \cref{def: 2-functor P} and the right-hand $B^*$ is as in \cref{constr:Fib(B)}.
    When evaluated at a lax double presheaf $X\colon\dD^{\op}\to\dCat$, this amounts to showing that the following diagram of double functors commutes.
    \[
    \begin{tikzcd}
        \dgro_\dC XG'^{\op} \arrow[d, "\dgro_\dC X\circ B^{\op}"'] \arrow[r, "(\dgro_{G'})_X"] & G'^*\dgro_\dD X \arrow[d, "B^*_{\int\!\!\!\int_\dD\! X}"] \\
        \dgro_\dC X G^{\op} \arrow[r, "(\dgro_G)_X"']              & G^*\dgro_\dD X.            
\end{tikzcd}
    \]
    We check that the above diagram commutes on objects, and the proof is similar for horizontal morphisms, vertical morphisms, and squares. To do so, we evaluate both composites at an object $(x,x_-)$ in $\dgro_\dC XG'^{\op}$:
    \begin{align*}
        \textstyle B^*_{\dgro_\dD X}(\dgro_{G'})_X(x,x_-) & = B^*_{\dgro_\dD X} (x, (G'x,x_-)) & \textstyle\text{Definition of }\dgro_{G'} \\
        & = (x, (B_x)^*(G'x,x_-)) & \textstyle\text{Definition of } B^*
        \\
        & = (x, (Gx, X(B_x)x_-)) & \textstyle\text{Unique lifts in } \dgro_\dD X \\
        &=
        \textstyle(\dgro_G)_X(x,X(B_x)x_-)
        &
         \textstyle\text{Definition of }(\dgro_G)_X
        \\
        &=\textstyle(\dgro_G)_X(x,(X\circ B^{\op})_x x_-) & 
        \text{Whiskering}
        \\
        &=\textstyle(\dgro_G)_X(\dgro_\dC (X\circ B^{\op})(x,x_-)
        &
       \textstyle \text{Definition of }\dgro_\dC X\circ B^{\op}
    \end{align*}
    This shows that the desired diagram commutes. 
\end{proof}

\subsection{Grothendieck \texorpdfstring{$2$}{2}-equivalence} 
\label{subsec: GC2}

We now show that the Grothendieck construction establishes a $2$-equivalence between the $2$-categories $\P{C}$ of lax double presheaves and $\dFib(\dC)$ of discrete double fibrations over a fixed double category $\dC$.

\begin{Theorem}
\label{thm: grothendieck equivalence}
\index{Grothendieck equivalence}
    Given a double category $\dC$, the Grothendieck construction
    \[
    \textstyle\dgro_{\dC}\colon\P{C}\longrightarrow\dFib(\dC)
    \]
    is a $2$-equivalence of $2$-categories, which is pseudo-natural in $\dC$.
\end{Theorem}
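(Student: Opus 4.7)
The plan is to construct an explicit weak inverse $\Theta_\dC\colon\dFib(\dC)\to\P{C}$ by taking fibers, and then to produce $2$-natural isomorphisms witnessing that $\Theta_\dC$ and $\dgro_\dC$ are inverse up to isomorphism. Given a discrete double fibration $P\colon\dE\to\dC$, I would define $\Theta_\dC(P)\colon\dC^{\op}\to\dCat$ as follows: on an object $x$ of $\dC$, set $\Theta_\dC(P)(x)\coloneqq P^{-1}x$, which is a category by \cref{prop: Px category}; on a horizontal morphism $f\colon x\to y$, take the functor $f^*\colon P^{-1}y\to P^{-1}x$ from \cref{def: f on fibers}; on a vertical morphism $u\colon x\bulletarrow x'$, take the profunctor $\fib(P_u)\colon P^{-1}x\bulletarrow P^{-1}x'$, where $P_u$ is a two-sided discrete fibration by \cref{prop: P_u is TSdiscFib} and $\fib$ is the equivalence of \cref{thm: TSdisc vs Prof}; on a square $\alpha$, take the natural transformation between profunctors produced by \cref{cor: nat.trf. for TS square} applied to the lifting square coming from $\alpha$. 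Normality is then forced by \cref{Prop: fiber at vertical id}, which identifies $P_{e_x}$ with the identity two-sided discrete fibration, so $\fib(P_{e_x})=e_{P^{-1}x}$; the composition comparison squares arise from \cref{prop: fib compatible with composition}, applied to the canonical comparison $P^{-1}u'\times_{P^{-1}x'}P^{-1}u_{/\sim}\to P^{-1}(u'\bullet u)$ induced by vertical composition in $\dE$.

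The action of $\Theta_\dC$ on morphisms and $2$-morphisms is determined similarly: a double functor $F\colon\dE\to\dF$ over $\dC$ restricts to functors between fibers over $x$ and to morphisms of two-sided discrete fibrations between fibers over $u$, and \cref{prop: naturality of fib} then translates the latter into natural transformations of profunctors, giving the components of a horizontal transformation $\Theta_\dC(F)$; analogously, a vertical transformation over $\dC$ produces a globular modification after applying $\fib$ fiberwise. Checking that $\Theta_\dC$ is a $2$-functor is a routine unwinding of definitions using that $\fib$ is an equivalence.

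Next I would construct the two $2$-natural isomorphisms. For $\dgro_\dC\circ\Theta_\dC\cong\id_{\dFib(\dC)}$, unique lifting along $P$ gives a bijection between objects of $\dgro_\dC\Theta_\dC(P)$ (pairs $(x,x_-)$ with $x\in\dC$, $x_-\in P^{-1}x$) and objects of $\dE$, and this assembles into a canonical morphism of discrete double fibrations over $\dC$; by \cref{lem: iso of ddcis fib can be checked on Ver} it suffices to check that the induced functor on $\Ver_0$ is an isomorphism, which holds essentially by construction. For $\Theta_\dC\circ\dgro_\dC\cong\id_{\P C}$, observe that for a lax double presheaf $X$ the fiber $\pi_X^{-1}x$ is exactly $Xx$, and $\pi_X^{-1}u\to\pi_X^{-1}x\times\pi_X^{-1}x'$ is the two-sided discrete fibration classified by $Xu$ under \cref{thm: TSdisc vs Prof}, so the components of the comparison are identities up to the chosen canonical isomorphisms of coends; coherence of these comparisons with horizontal transformations and globular modifications is straightforward.

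Finally, pseudo-naturality of $\dgro_\dC$ in $\dC$ is already established, and pseudo-naturality of $\Theta_\dC$ in $\dC$ follows formally from the pseudo-naturality of $\dgro$ and the fact that pseudo-natural transformations between $2$-functors that are componentwise equivalences transport to their inverses; alternatively, one can directly exhibit the comparison cells $\Theta_G\colon\Theta_\dC G^*\cong (G^{\op})^*\Theta_\dD$ from \cref{prop: naturality of fib}. The main obstacle I anticipate is verifying the pentagon and unit coherences for the composition comparison squares of $\Theta_\dC(P)$: they must follow from the associativity and unitality of composition of two-sided discrete fibrations up to the canonical coend comparisons, and carefully tracking these canonical isomorphisms through \cref{prop: fib compatible with composition} and \cref{prop: composition of TS is unital} is where the bookkeeping is most delicate.
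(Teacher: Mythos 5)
Your proposal is correct and follows essentially the same route as the paper: the paper's weak inverse $\ddel_\dC$ is defined exactly as your $\Theta_\dC$ (fibers on objects, $f^*$ on horizontal morphisms, $\fib(P_u)$ on vertical morphisms, with normality from \cref{Prop: fiber at vertical id} and composition comparisons from \cref{prop: fib compatible with composition}), and the two $2$-natural isomorphisms are obtained by the same explicit fiberwise identifications. The only cosmetic difference is that the paper exhibits the inverse of the unit component directly rather than invoking \cref{lem: iso of ddcis fib can be checked on Ver}, and you have the order of the factors in the pushout $P^{-1}u\times_{P^{-1}x'}P^{-1}u'_{/\sim}$ reversed, which is immaterial.
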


In order to show this, we construct a weak inverse $2$-functor $\ddel_\dC\colon \dFib(\dC)\to \P{C}$ of the $2$-functor $\dgro_\dC$ from \cref{constr: dgroC}. We start with its assignment on objects. 

\begin{Construction}
\label{constr: ddel on objects}
Given a discrete double fibration $P\colon\dE\to\dC$, we construct  a lax double presheaf 
\[ \ddel_\dC(P)\colon \dC^{\op}\to \dCat \]
which sends
    \begin{itemize}[leftmargin=1cm]
        \item an object 
        $x$ in $\dC$ to the category $P^{-1}x$ given by the fiber of $P$ at $x$, 
        \item a horizontal morphism $f\colon x\to y$ in $\dC$ to the functor 
        \[ \ddel_\dC(P)f\coloneqq f^*\colon \ddel_\dC(P)y=P^{-1}y\to \ddel_\dC(P)x=P^{-1}x, \]
        from \cref{def: f on fibers},
        \item a vertical morphism $u\colon x\bulletarrow x'$ in $\dC$ to the profunctor
        \[ \ddel_\dC(P)u\colon
            \ddel_\dC(P)x^{\op}
            \times
            \ddel_\dC(P)x'=P^{-1}x^{\op}\times P^{-1}x'
            \to
            \Set\]
        given by the image under the equivalence $\fib\colon \TSdiscFib(P^{-1}x,P^{-1}x')\xrightarrow{\simeq}\Prof(P^{-1}x, P^{-1}x')$ from \cref{thm: TSdisc vs Prof} of the two-sided discrete fibration from 
        \cref{prop: P_u is TSdiscFib}
        \[ P^{-1}u\to P^{-1}x\times P^{-1}x';  \]
        using \cref{Prop: fiber at vertical id}, we impose $\ddel_\dC(P)e_x\coloneqq e_{P^{-1}x}$, for every object $x$ in $\dC$,
        \item a square $\alpha\colon\edgesquare{u}{f}{f'}{v}\colon\nodesquare{x}{x'}{y}{y'}$ in $\dC$ to the natural transformation
        \[
        \ddel_\dC(P)\alpha\colon\ddel_\dC(P)v\Rightarrow\ddel_\dC(P)u\circ(\ddel_\dC(P)f^{\op}\times\ddel_\dC(P)f')
        \]
        which is induced as in \cref{cor: nat.trf. for TS square} by the following commutative square
        \[
        \begin{tikzcd}
            P^{-1}v \arrow[d] \arrow[r, "\alpha^*"]               & [10pt] P^{-1}u \arrow[d]      \\
            P^{-1}y\times P^{-1}y' \arrow[r, "{(f^*,{f'}^*)}"'] & P^{-1}x\times P^{-1}x'
        \end{tikzcd}
        \]
        where $\alpha^*$ is the functor from \cref{def: alpha on fibers}; note that the above diagram indeed commutes by construction of $f^*, f'^*,\alpha^*$,
        \item for composable vertical morphisms $x\overset{u}{\bulletarrow}x'\overset{u'}{\bulletarrow}x''$ in $\dC$, the composition comparison square is given by the natural transformation 
        \[ \mu_{u,u'}\colon \ddel_\dC(P)u'\bullet\ddel_\dC(P)u\Rightarrow 
            \ddel_\dC(P)(u'\bullet u) \]
            given by the image under the equivalence $\fib\colon \TSdiscFib(P^{-1}x,P^{-1}x'')\xrightarrow{\simeq}\Prof(P^{-1}x, P^{-1}x'')$ from \cref{thm: TSdisc vs Prof} of the morphism of two-sided discrete fibrations 
            \[ 
            \begin{tikzcd}
            P^{-1}u\times_{P^{-1}x'} P^{-1}u'_{/\sim}\arrow[rd] \arrow[rr,"{-\bullet-}"] &    & P^{-1}(u'\bullet u) \arrow[ld] \\
                 & P^{-1}x\times P^{-1}x'' & 
            \end{tikzcd}
            \]
            induced by vertical composition in $\dE$.
    \end{itemize}
\end{Construction}

\begin{Lemma}
    The construction $\ddel_\dC(P)\colon \dC^{\op}\to \dCat$ is a lax double presheaf.
\end{Lemma}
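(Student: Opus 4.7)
The plan is to verify the three axioms of a normal lax double functor $\dC^{\op}\to\dCat$ listed in \cref{def: lax double}: strict preservation of horizontal identities and compositions; naturality of the composition comparisons $\mu_{u,u'}$ in $(u,u')$ together with the pentagon compatibility with the associators in $\dCat$; and strict preservation of vertical identities, including triviality of $\mu_{u,e_{x'}}$ and $\mu_{e_x,u}$. The main tool throughout will be the equivalence $\fib$ of \cref{thm: TSdisc vs Prof}, which lets me translate every axiom---stated on the profunctor side in $\dCat$---into a corresponding statement about two-sided discrete fibrations, where it reduces to a routine consequence of unicity of lifts in $P$.

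First I would check strict horizontal functoriality. The assignments $f\mapsto f^*$ and $\alpha\mapsto\alpha^*$ on fibers from \cref{def: f on fibers,def: alpha on fibers} are defined via unique lifts in $P$ (\cref{remark: dbldiscfib lifts}); unicity immediately gives $(1_x)^*=\id_{P^{-1}x}$, $(gf)^*=f^*g^*$, $(e_f)^*=e_{f^*}$, and $(\beta\circ\alpha)^*=\alpha^*\beta^*$. Under the bijection of \cref{globular squares in Cat} between globular squares of $\dCat$ and natural transformations, the equality $\ddel_\dC(P)e_f=\id_{f^*}$ follows from the corresponding fact for identity morphisms of two-sided discrete fibrations, transported via \cref{prop: naturality of fib} together with the identification of $P^{-1}e_y$ with the walking morphism two-sided discrete fibration on $P^{-1}y$ provided by \cref{Prop: fiber at vertical id}.

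Next I would establish naturality of $\mu_{u,u'}$ in $(u,u')$ and the pentagon coherence. The component $\mu_{u,u'}$ is obtained by transporting, via $\fib$, the morphism of two-sided discrete fibrations $P^{-1}u\times_{P^{-1}x'}P^{-1}u'_{/\sim}\to P^{-1}(u'\bullet u)$ induced by vertical composition in $\dE$. The naturality square with respect to a pair of compatible squares in $\dC$ reduces, via \cref{cor: nat.trf. for TS square} and \cref{prop: fib compatible with composition}, to the statement that unique lifts of squares respect vertical composition in $\dE$, which is automatic from unicity. The pentagon axiom reduces similarly to associativity of vertical composition in $\dE$.

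Finally I would verify normality. By construction and \cref{Prop: fiber at vertical id}, $\ddel_\dC(P)e_x=e_{P^{-1}x}$. For a vertical morphism $u\colon x\bulletarrow x'$, the natural transformations $\mu_{u,e_{x'}}$ and $\mu_{e_x,u}$ correspond under $\fib$ to the canonical isomorphisms of \cref{prop: composition of TS is unital}, which become equalities under the conventions $U\bullet e_\CC=U=e_{\CC'}\bullet U$ imposed on composition of profunctors in $\dCat$; since vertical composition in $\dE$ with a vertical identity returns the original morphism, both $\mu$'s are indeed the identity. The main obstacle I anticipate is the bookkeeping across the equivalence $\fib$: the data and structural morphisms defining $\ddel_\dC(P)$ live most naturally on the fibration side, while the axioms for a lax double functor into $\dCat$ are phrased on the profunctor side, so each verification requires an explicit invocation of \cref{prop: naturality of fib,prop: fib compatible with composition,cor: nat.trf. for TS square} to pass between the two.
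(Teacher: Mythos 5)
Your proposal is correct and follows essentially the same route as the paper: verify the lax double functor axioms by transporting each one across the equivalence $\fib$ of \cref{thm: TSdisc vs Prof} (via \cref{prop: naturality of fib,prop: fib compatible with composition,cor: nat.trf. for TS square}) to a statement about two-sided discrete fibrations, where it follows from unicity of lifts and the strict associativity and unitality of vertical composition in $\dE$, with normality handled exactly as you describe via \cref{Prop: fiber at vertical id} and \cref{prop: composition of TS is unital}.
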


\begin{proof}
    First note that $\ddel_\dC(P)$ preserves horizontal compositions and identities for horizontal morphisms and squares by unicity of lifts. 
    
    We first show that the composition comparison natural transformations $\mu_{u,u'}$ are natural in $(u,u')$. Given squares $\alpha\colon\edgesquare{u}{f}{f'}{v}$ and $\alpha'\colon\edgesquare{u'}{f'}{f''}{v'}$ in $\dC$, we have to show that
    the following diagram in $\Prof(P^{-1}x,P^{-1}x'')$ commutes.
     \[
    \begin{tikzcd}
        \ddel_{\dC}(P)v'\bullet\ddel_\dC(P)v \arrow[r, "\ddel_\dC(P)\alpha'\bullet\ddel_\dC(P)\alpha", Rightarrow] \arrow[d, "{\mu_{v,v'}}"', Rightarrow] 
        &[40pt]
        (\ddel_\dC(P)u\bullet\ddel_\dC(P)u)({f^*}^{\op}\times f''^*) \arrow[d, "{\mu_{u,u'}({f^*}^{\op}\times f''^*)}", Rightarrow] \\
        \ddel_\dC(P)(v'\bullet v) \arrow[r, "\ddel_{\dC}(P)(\alpha'\bullet\alpha)"', Rightarrow]                                   &[40pt]
        \ddel_\dC(P)(u'\bullet u)({f^*}^{\op}\times f''^*)                
        \end{tikzcd}
        \]
        Under the natural equivalence $\fib$ from \cref{thm: TSdisc vs Prof,prop: naturality of fib} and using \cref{prop: fib compatible with composition}, this amounts to showing that the following diagram of functors commutes.
        \[
        \begin{tikzcd}
        P^{-1}v\times_{P^{-1}y'} P^{-1}v'_{/\sim}\arrow[d, "{-\bullet-}"'] \arrow[r, "\alpha^*\times \alpha'^*"] & [20pt] P^{-1}u\times_{P^{-1}x'} P^{-1}u'_{/\sim} \arrow[d, "{-\bullet-}"] \\
            P^{-1}(v'\bullet v) \arrow[r, "(\alpha'\bullet\alpha)^*"']                          & P^{-1}(u'\bullet u)                        
        \end{tikzcd}
        \]
        But this follows directly from the unicity of lifts.

        Next, we show that the composition comparison natural transformations $\mu_{u,u'}$ are compatible with vertical composition. Given three composable vertical morphisms $x\overset{u}{\bulletarrow}x'\overset{u'}{\bulletarrow}x''\overset{u''}{\bulletarrow} x'''$ in $\dC$, we have to show that the following diagram in $\TSdiscFib(P^{-1}x,P^{-1}x''')$ commutes.
        \[
    \begin{tikzcd}
        \ddel_{\dC}(P)u''\bullet\ddel_\dC(P)u'\bullet\ddel_\dC(P)u\arrow[r, "\mu_{u',u''}\bullet \ddel_\dC(P)u", Rightarrow] \arrow[d, "{\ddel_{\dC}(P)u''\bullet\mu_{u,u'}}"', Rightarrow] 
        &[40pt]
        \ddel_{\dC}(P)(u''\bullet u')\bullet\ddel_\dC(P)u \arrow[d, "\mu_{u,u''\bullet u'}", Rightarrow] \\
        \ddel_{\dC}(P)u''\bullet\ddel_\dC(P)(u'\bullet u) \arrow[r, "\mu_{u'\bullet u,u''}"', Rightarrow]                                   &[40pt]
        \ddel_\dC(P)(u''\bullet u'\bullet u)({f^*}^{\op}\times f''^*)                
        \end{tikzcd}
        \]
        Under the equivalence $\fib$ from \cref{thm: TSdisc vs Prof}, this amounts to showing that the following diagram in $\TSdiscFib(P^{-1}x,P^{-1}x''')$ commutes.
        \[
        \begin{tikzcd}
        {P^{-1}u\times_{P^{-1}x'} P^{-1}u'\times_{P^{-1}x''} P^{-1}u''_{/\sim}} \arrow[d, "{(-\bullet-)\times 1_{P^{-1}u''}}"'] \arrow[r, "{1_{P^{-1}u}\times (-\bullet-)}"] & [35pt] {P^{-1}u\times_{P^{-1}x'} P^{-1}(u''\bullet u')_{/\sim}} \arrow[d, "{-\bullet-}"] \\
            {P^{-1}(u'\bullet u)\times_{P^{-1}x''} P^{-1}u''_{/\sim}}\arrow[r, "{-\bullet-}"']                          & P^{-1}(u''\bullet u'\bullet u)    
        \end{tikzcd}
        \]
        But this follows directly from the fact that vertical composition in $\dE$ is strictly associative.

        Next, we show that $\ddel_\dC(P)$ preserves vertical identities. By construction, it preserves the vertical identity at an object. Given a horizontal morphism $f\colon x\to y$ in $\dC$, we have to show that the natural transformation $\ddel_\dC(P)e_f$ is the identity at $\ddel_\dC(P)f=f^*$. By definition, we have that $\ddel_\dC(P)e_f$ is the natural transformation induced as in \cref{cor: nat.trf. for TS square} by the below left commutative square. 
        \[
        \begin{tikzcd}
           {P^{-1}e_y} \arrow[r, "{e_f^*}"] \arrow[d, "{P_{e_y}}"']        & {P^{-1}e_x} \arrow[d, "{P_{e_x}}"]   \\
            P^{-1}y\times P^{-1}y \arrow[r, "{(f^*,f^*)}"'] & P^{-1}x\times P^{-1}x
        \end{tikzcd} 
        \quad \quad
        \begin{tikzcd}
            {(P^{-1}y)^{[1]}} \arrow[d, "{(s,t)}"'] \arrow[r, "{(f^*)^{[1]}}"] & {(P^{-1}x)^{[1]}} \arrow[d, "{(s,t)}"] \\
            P^{-1}y\times P^{-1}y \arrow[r, "{(f^*,f^*)}"']                    & P^{-1}x\times P^{-1}x     
        \end{tikzcd}
        \]
        Since the above left commutative square is canonically isomorphic to the above right commutative square by \cref{Prop: fiber at vertical id}, we see that the desired natural transformation is the identity at $f^*$.
        
        Finally, we show that the composition comparison natural transformations $\mu_{u,e_{x'}}$ and $\mu_{u,e_x}$ agree with the identity natural transformation at $\ddel_\dC(P)u$. Given a vertical morphism $u\colon x\bulletarrow x'$ in $\dC$, by \cref{Prop: fiber at vertical id,prop: composition of TS is unital}, we have canonical isomorphisms over $P^{-1}x\times P^{-1}x'$
        \[
        P^{-1}e_x\times_{P^{-1}x} P^{-1}u_{/\sim}\cong (P^{-1}x)^{[1]}\times_{P^{-1}x} P^{-1}u_{/\sim}\cong P^{-1}u
        \]
        and
        \[
        P^{-1}u\times_{P^{-1}x'} {P^{-1}e_{x'}}_{/\sim}\cong
        P^{-1}u\times_{P^{-1}x'}{(P^{-1}x')^{[1]}}_{/\sim}\cong P^{-1}u.
        \]
        Note that these isomorphisms are induced by vertical composition, therefore $\mu_{e_x,u}=1_{\ddel_\dC(P)u}=\mu_{u,e_{x'}}$, concluding the proof.
\end{proof}

We now turn to the assignment of $\ddel_\dC$ on morphisms.

\begin{Construction}
\label{constr: ddel on 1-morphisms}
    Given a morphism of discrete double fibrations
    \[
    \begin{tikzcd}
        \dE\arrow[rr,"F"]\arrow[rd,swap,"P"] &&
        \dF\arrow[ld,"Q"]
        \\
        & \dC &
    \end{tikzcd}
    \]
    we construct a horizontal transformation 
    \[ \ddel_\dC(F)\colon\ddel_\dC(P)\Rightarrow\ddel_\dC(Q) \]
    whose 
    \begin{itemize}[leftmargin=1cm]
        \item component at an object $x$ is given by the unique induced functor between fibers
        \[ \ddel_\dC(F)_x\coloneqq F_x\colon \ddel_\dC(P)x=P^{-1}x\to \ddel_\dC(Q)x=Q^{-1}x \]
        \item component at a vertical morphism $u\colon x\bulletarrow x'$ is given by the natural transformation 
        \[ \ddel_\dC(F)_u\colon \ddel_\dC(P)u\Rightarrow\ddel_\dC(Q)u(\ddel_\dC(F)_x^{\op}\times \ddel_\dC(F)_{x'}) \]
        which is induced as in 
        \cref{cor: nat.trf. for TS square}
         by the following commutative square
        \[
        \begin{tikzcd}
            P^{-1}u \arrow[d] \arrow[r, "{F_u\coloneqq \dHom{\dV[1],F}_u}"]                   & Q^{-1}u \arrow[d]      \\
        P^{-1}x\times P^{-1}x' \arrow[r,swap, "F_x\times F_{x'}"] & Q^{-1}x\times Q^{-1}x'
        \end{tikzcd}
        \]
        where $\dHom{\dV[1],F}_u$ denotes the unique induced functor between fibers.
    \end{itemize}
\end{Construction}

\begin{Lemma}
    The construction $\ddel_\dC(F)\colon\ddel_\dC(P)\Rightarrow\ddel_\dC(Q)$ is a horizontal transformation.
\end{Lemma}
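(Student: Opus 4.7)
The plan is to verify the four conditions defining a horizontal transformation between lax double functors (\cref{def: hor transf}): naturality of the functor-components $F_x$ in $x$, naturality of the natural transformation-components $F_u$ in $u$, compatibility with composition comparison squares, and compatibility with vertical identities (which in the normal case reduces to $\ddel_\dC(F)_{e_x} = 1_{F_x}$). The common strategy is to translate all four conditions, via the naturality and composition-compatibility of the equivalence $\fib$ (\cref{thm: TSdisc vs Prof}, \cref{prop: naturality of fib}, \cref{prop: fib compatible with composition}), from statements about natural transformations between profunctors into statements about commutative squares of functors between fiber categories. In every case, the resulting diagram of fiber categories will commute by the unicity of lifts in the discrete double fibration $Q$, or by strictness of vertical composition together with the fact that $F$ is a double functor.

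Concretely, for the naturality of $F_x$ in a horizontal morphism $f\colon x\to y$ in $\dC$, I would verify the equality $F_x \circ f^* = f^* \circ F_y$ of functors $P^{-1}y\to Q^{-1}x$ by observing that both composites send $y_-\in P^{-1}y$ to the source of a $Q$-lift of $f$ with target $F_y(y_-)$, and such a lift is unique. For the naturality of $F_u$ along a square $\alpha\colon\edgesquare{u}{f}{f'}{v}$ in $\dC$, appealing to \cref{cor: nat.trf. for TS square} reduces the condition to commutativity of the square of fiber categories
\[
\begin{tikzcd}
    P^{-1}v \arrow[r,"F_v"]\arrow[d,"\alpha^*"'] & Q^{-1}v\arrow[d,"\alpha^*"]\\
    P^{-1}u \arrow[r,"F_u"'] & Q^{-1}u,
\end{tikzcd}
\]
which holds by unicity of lifts applied to $F\alpha$ in $\dF$.

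Compatibility with composition comparison squares becomes, by \cref{prop: fib compatible with composition}, commutativity of an analogous square relating the pushouts $P^{-1}u\times_{P^{-1}x'}P^{-1}u'_{/\sim}$ and $Q^{-1}u\times_{Q^{-1}x'}Q^{-1}u'_{/\sim}$ over $P^{-1}(u'\bullet u)$ and $Q^{-1}(u'\bullet u)$; this commutes because the vertical composition maps on both sides are induced by actual vertical composition in $\dE$, respectively $\dF$, which $F$ preserves strictly. Finally, compatibility with vertical identities follows from the canonical identification in \cref{Prop: fiber at vertical id} together with the observation that, under this identification, the induced map $\dHom{\dV[1],F}_{e_x}\colon P^{-1}e_x\to Q^{-1}e_x$ corresponds to $F_x^{[1]}\colon (P^{-1}x)^{[1]}\to (Q^{-1}x)^{[1]}$, which under $\fib$ recovers the identity natural transformation at $F_x$ (using \cref{rmk: image of identity}).

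The main obstacle is notational rather than conceptual: the components $\ddel_\dC(F)_u$ are themselves defined through a chain of universal properties (pullbacks of fibers, the equivalence $\fib$, and, for composition comparisons, pushouts modeling composition of two-sided discrete fibrations), so the bulk of the work lies in carefully unpacking the diagrams on each side of each condition and matching them to the correct lifts in $\dE$ and $\dF$. Once this translation is in place, each axiom becomes an immediate consequence of the defining property of a discrete double fibration together with the functoriality of $F$.
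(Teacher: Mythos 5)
Your proposal is correct and follows essentially the same route as the paper's proof: each of the four axioms is translated via the equivalence $\fib$ (and its naturality and compatibility with composition) into the commutativity of a square of functors between fiber categories, which is then verified by unicity of lifts or by strict preservation of vertical composition by $F$, with the vertical-identity case handled exactly as you describe via the identification of $P^{-1}e_x$ with $(P^{-1}x)^{[1]}$.
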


\begin{proof}
    We first show that the components $\ddel_\dC(F)_x$ are natural in $x$. Given a horizontal morphism $f\colon x\to y$ in $\dC$, we have to show that the following diagram of functors commutes.
    \[
    \begin{tikzcd}
        P^{-1}y \arrow[d, "f^*"'] \arrow[r, "F_y"] & Q^{-1}y \arrow[d, "f^*"] \\
        P^{-1}x \arrow[r, "F_x"']                  & Q^{-1}x                 
    \end{tikzcd}
    \]
    But this follows directly from the fact that every morphism of discrete double fibrations over $\dC$ preserves unique lifts.

    Next, we show that the components $\ddel_\dC(F)_u$ are natural in $u$. Given a square $\alpha\colon\edgesquare{u}{f}{f'}{v}\colon\nodesquare{x}{x'}{y}{y'}$ in $\dC$, we have to show that the following diagram in 
    $\Prof(P^{-1}y,P^{-1}y')$ commutes.
    \[
    \begin{tikzcd}
        \ddel_\dC(P)v \arrow[r, "\ddel_\dC(F)_v", Rightarrow] \arrow[d, "\ddel_\dC(P)\alpha"', Rightarrow]         
        &[40pt]
        \ddel_\dC(Q)v(F_y^{\op}\times F_{y'}) \arrow[d, "\ddel_\dC(Q)\alpha(F_y^{\op}\times F_{y'})", Rightarrow] \\
        \ddel_\dC(P)u({f^*}^{\op}\times f'^*) \arrow[r, "\ddel_\dC(F)_u({f^*}^{\op}\times f'^*)"' {yshift=-2pt}, Rightarrow] 
        &[40pt]
        \ddel_\dC(Q)u((F_x{f^*})^{\op}\times F_{x'}f')   
    \end{tikzcd}
    \]
    Under the natural equivalence $\fib$ from \cref{thm: TSdisc vs Prof,prop: naturality of fib}, this amounts to showing that the following diagram of functors commutes.
    \[
    \begin{tikzcd}
        P^{-1}v \arrow[d, "\alpha^*"'] \arrow[r, "F_v"] & Q^{-1}v \arrow[d, "\alpha^*"] \\
        P^{-1}u \arrow[r, "F_u"']                       & Q^{-1}u                      
    \end{tikzcd}
    \]
    But this follows directly from the fact that every morphism of discrete double fibrations over $\dHom{\dV[1],\dC}$ preserves unique lifts.
    
    We now show that the components $\ddel_\dC(F)_u$ are compatible with composition comparison natural transformations. Given composable vertical morphisms $x\overset{u}{\bulletarrow}x'\overset{u'}{\bulletarrow}x''$ in $\dC$, we have to show that the following diagram in 
    $\Prof(P^{-1}x,P^{-1}x'')$ commutes.
    \[
    \begin{tikzcd}
        \ddel_\dC(P)u'\bullet\ddel_\dC(P)u \arrow[r,"\ddel_\dC(F)_{u'}\bullet\ddel_\dC(F)_u", Rightarrow] \arrow[d,"{\mu_{u,u'}}"', Rightarrow] 
        &[40pt]
        (\ddel_\dC(Q)u'\bullet\ddel_\dC(Q)u)(F_x^{\op}\times F_{x''}) \arrow[d, "{\mu_{u,u'}(F_x^{\op}\times F_{x''})}", Rightarrow] \\
        \ddel_\dC(P)(u'\bullet u) \arrow[r, "\ddel_\dC(F)_{u'\bullet u}"', Rightarrow]    &[40pt]
        \ddel_\dC(Q)(u'\bullet u)(F_x^{\op}\times F_{x''})     
    \end{tikzcd}
    \]
    Under the equivalence $\fib$ from \cref{thm: TSdisc vs Prof}, this amounts to showing that the following diagram of functors commutes. 
    \[
    \begin{tikzcd}
        P^{-1}u\times_{P^{-1}x'} P^{-1}u'_{/\sim} \arrow[d, "{-\bullet-}"'] \arrow[r, "F_u\times F_{u'}"] & [10pt] Q^{-1}u\times_{Q^{-1}x'} Q^{-1}u'_{/\sim} \arrow[d, "{-\bullet-}"] \\
        P^{-1}(u'\bullet u) \arrow[r, "F_{u'\bullet u}"']  & Q^{-1}(u'\bullet u) 
    \end{tikzcd}
    \]
    But this follows directly from the fact that $F$ preserves vertical composition. 

    Finally, we show that the components $\ddel_\dC(F)_u$ are compatible with vertical identities. Given an object $x$ in $\dC$, we have to show that the natural transformation 
    $\ddel_\dC(F)_{e_x}$ is the identity at 
    ${\ddel_\dC(F)}_x=F_x$. By definition, we have that $\ddel_\dC(F)_{e_x}$ is the natural transformation induced as in \cref{cor: nat.trf. for TS square} by the below left commutative square. 
    \[
    \begin{tikzcd}
        P^{-1}e_x \arrow[d, "{P_{e_x}}"'] \arrow[r, "{\dHom{\dV[1],F}_{e_x}}"] & Q^{-1}e_x \arrow[d, "{Q_{e_x}}"] \\
        P^{-1}x\times P^{-1}x \arrow[r, swap,"{F_x\times F_x}"]                         & Q^{-1}x\times Q^{-1}x           
    \end{tikzcd}
    \quad \quad
    \begin{tikzcd}
        {(P^{-1}x)^{[1]}} \arrow[d, "{(s,t)}"'] \arrow[r, "{F_x^{[1]}}"] & {(Q^{-1}x)^{[1]}} \arrow[d, "{(s,t)}"] \\
        P^{-1}x\times P^{-1}x \arrow[r, "{F_x\times F_x}"']                     & Q^{-1}x\times Q^{-1}x                   
    \end{tikzcd}
    \]
    Since the above left commutative square is canonically isomorphic to the above right commutative square by \cref{Prop: fiber at vertical id}, we see that the desired natural transformation is the identity at $F_x$, concluding the proof.
\end{proof}

Finally, we turn to the assignment of $\ddel_\dC$ on $2$-morphisms. 

\begin{Construction}   
\label{constr: ddel on 2-morphisms}
    Given a vertical transformation of discrete double fibrations
    \[
    \begin{tikzcd}
        &{}\arrow[dd,shorten <=21pt, shorten >= 20pt,"\bullet" marking,Rightarrow,"A"]&
        \\
        \dE
        \arrow[rr,bend left =20,"F"]
        \arrow[rr,swap,bend right = 20,"F'"]
        \arrow[rd,swap,"P"] &&
        \dF\arrow[ld,"Q"]
        \\
        & \dC &
    \end{tikzcd}
    \]
    we construct a globular modification
    \[
    \begin{tikzcd}
        \ddel_\dC(P)
        \arrow[r,"\ddel_\dC(F)"]
        \arrow[d,swap,"\bullet" marking,equal]
        \arrow[rd,phantom,"\ddel_\dC(A)"]
        & [10pt]
        \ddel_\dC(Q)
        \arrow[d,"\bullet" marking,equal]
        \\
        \ddel_\dC(P)
        \arrow[r,swap,"\ddel_\dC(F')"]
        &
        \ddel_\dC(Q)
    \end{tikzcd}
    \]
    defined as follows. Its component at an object $x$ in $\dC$ is given by the natural transformation 
    \[ 
    \ddel_\dC(A)_x\colon \ddel_\dC(F)_x=F_x\Rightarrow \ddel_\dC(F')_x=F'_x\colon P^{-1}x\to Q^{-1}x
    \] 
    whose component at an object $x_-$ of $P^{-1}x$ is given by the morphism in $Q^{-1}x$ \[ A_{x_-}\colon Fx_-\bulletarrow F'x_-. \]
\end{Construction}

\begin{Lemma}
    The construction $\ddel_\dC(A)\colon \edgesquare{e_{\ddel_\dC(P)}}{\ddel_\dC(F)}{\ddel_\dC(F')}{e_{\ddel_\dC(Q)}}$ is a globular modification.
\end{Lemma}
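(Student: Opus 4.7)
The plan is to verify the two compatibility conditions from \cref{def: modification}. Since $\ddel_\dC(A)$ is globular, its vertical boundaries are the identity vertical transformations $e_{\ddel_\dC(P)}$ and $e_{\ddel_\dC(Q)}$, whose components at horizontal morphisms are vertical identity squares, and whose components at vertical morphisms are horizontal identity squares (using unitality of composition of profunctors in $\dCat$). Both compatibility equations therefore simplify considerably, and the task reduces to two manageable identities.

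For horizontal compatibility, given a horizontal morphism $f\colon y\to x$ in $\dC$ (equivalently $f^{\op}\colon x\to y$ in $\dC^{\op}$), I will translate the square equation via \cref{globular squares in Cat} into the equality of whiskered natural transformations $\ddel_\dC(A)_y\circ f^* = f^*\circ\ddel_\dC(A)_x$. Evaluating at an object $x_-\in P^{-1}x$ reduces the claim to the identity $A_{f^*x_-} = f^*(A_{x_-})$ in $\dF$. The right-hand side is, by \cref{def: f on fibers} applied to the discrete double fibration $\dHom{\dV[1],Q}$ of \cref{prop: PV1 double disc fib}, the source of the unique lift of $e_f$ with target $A_{x_-}$. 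The left-hand side coincides with it because the naturality square $A_{P^*f}\colon\edgesquare{A_{f^*x_-}}{F(P^*f)}{F'(P^*f)}{A_{x_-}}$ of $A$ at the horizontal morphism $P^*f\colon f^*x_-\to x_-$ in $\dE$ projects under $Q$ to $e_f$ (since $A$ is a vertical transformation over $\dC$), so it is itself such a lift; uniqueness forces the sources to agree.

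For vertical compatibility, given a vertical morphism $u\colon x\bulletarrow x'$ in $\dC$, horizontal composition by the horizontal identity squares $(e_{\ddel_\dC(P)})_u$ and $(e_{\ddel_\dC(Q)})_u$ leaves both sides unchanged, so the equation reduces to an equality of vertical composites of globular squares in $\dCat$, i.e., of natural transformations $\ddel_\dC(P)u\Rightarrow \ddel_\dC(Q)u\circ(F_x^{\op}\times F'_{x'})$. Evaluating at $(x_-,x'_-)\in P^{-1}x\times P^{-1}x'$ and at an element $u_-\in P^{-1}u$ over $(x_-,x'_-)$, the two sides can be traced using the coend formula for vertical composition in $\dCat$, the unital isomorphism from \cref{lem: comp of prof is unital}, and the equivalence between profunctors and two-sided discrete fibrations of \cref{thm: TSdisc vs Prof,prop: P_u is TSdiscFib}. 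This bookkeeping produces $F'(u_-)\bullet A_{x_-}$ on one side and $A_{x'_-}\bullet F(u_-)$ on the other, as vertical morphisms in $\dF$ from $F(x_-)$ to $F'(x'_-)$ projecting to $u$. These two agree because $A\colon F\Bulletarrow F'$ is a \emph{strict} vertical transformation, so its naturality comparison square at $u_-\colon x_-\bulletarrow x'_-$ in $\dE$ is a horizontal identity, forcing $F'(u_-)\bullet A_{x_-} = A_{x'_-}\bullet F(u_-)$.

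The main obstacle is the vertical-compatibility bookkeeping: the vertical composite of two globular squares in $\dCat$ is defined as a map between coends, so tracing a single element $u_-$ through the successive identifications (coend representatives, the unital iso of \cref{lem: comp of prof is unital}, and the translation through $\fib$) until it reaches the explicit vertical composites in $\dF$ requires some care. Once the two sides are expressed as honest vertical composites in $\dF$, the conclusion is immediate from the strict naturality of $A$.
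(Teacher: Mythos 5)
Your proposal is correct and follows essentially the same route as the paper: horizontal compatibility comes down to observing that the component $A_{P^*f}$ of the vertical transformation $A$ projects under $Q$ to $e_f$ and hence is the unique lift of $e_f$ with target $A_{y_-}$, and vertical compatibility reduces, after transporting across the $\fib$ equivalence, to the commutativity of the square in $\dF$ formed by $Fu_-$, $F'u_-$, $A_{x_-}$, $A_{x'_-}$, which holds by the strict naturality of $A$ together with $QA_{x_-}=e_x$. The only cosmetic difference is that the paper phrases the vertical-compatibility check as a commuting square of functors between the fibers $P^{-1}u$ and $Q^{-1}u$ rather than chasing coend representatives element by element, which avoids the bookkeeping you flag as the main obstacle.
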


\begin{proof}
    We first show horizontal compatibility of $\ddel_\dC(A)_x$. Given a horizontal morphism $f\colon x\to y$ in $\dC$, we have to show that the following pasting diagram of functors and natural transformations commutes.
    \[
    \begin{tikzcd}
        & P^{-1}y \arrow[rr, "f^*"] \arrow[dd, "F_y"', bend right = 49] \arrow[dd, "F'_y", bend left = 49] && P^{-1}x \arrow[dd, "F_x"' {yshift=-8pt}, bend right = 49] \arrow[dd, "F'_x" {yshift=-8pt}, bend left = 49] &    
        \\
        {} \arrow[rr,shorten <=30pt,shorten >=30pt, "\ddel_\dC(A)_y", Rightarrow, yshift=-9pt] & & {} \arrow[rr,shorten <=30pt,shorten >=30pt, Rightarrow, yshift=-9pt,"\ddel_\dC(A)_x"] &   & {} 
        \\
        & Q^{-1}y \arrow[rr, "f^*"']& & Q^{-1}x &   
        \end{tikzcd}
    \]
    To see this, we first note the following. Since $A\colon F\Bulletarrow F'$ is a vertical transformation over $\dC$, given an object $y_-$ in $P^{-1}y$, the component of $A$ at the horizontal morphism $P^*f\colon f^*y_-\to y_-$ is a square $A_{P^*f}\colon \edgesquare{A_{f^*y_-}}{FP^*f}{F'P^*f}{A_{y_-}}$ such that $QA_{P^*f}=e_f$. Hence it has to be the unique lift of $e_f$ along $Q$ with target $A_{y_-}$. When evaluating both whiskerings at an object $y_-$ in $P^{-1}y$, we therefore have:
    \begin{align*}
        e_f^*(\ddel_\dC(A)_y)_{y_-}
        &=
        e_f^* A_{y_-}
        &
        \text{Definition of }
        {\ddel_\dC(A)}_y
        \\
        &=
        A_{f^*y_-}
        &
        \text{Unicity of lifts}
        \\
        &=
        (\ddel_\dC(A)_x)_{f^*y_-}.
        &
        \text{Definition of }
        {\ddel_\dC(A)}_x
    \end{align*}
    This shows that the components of the two whiskerings agree, as desired.
    
    Next, we show vertical compatibility of $\ddel_\dC(A)_x$. Given a vertical morphism $u\colon x\bulletarrow x'$ in $\dC$, we have to show that the following diagram 
    in $\Prof(P^{-1},P^{-1}x')$ commutes.
    \[
    \begin{tikzcd}
        \ddel_\dC(P)u \arrow[r, "\ddel_\dC(F)_u", Rightarrow] \arrow[d, "\ddel_\dC(F')_u"', Rightarrow] 
        &[45pt]
        \ddel_\dC(Q)u(F_x^{\op}\times F_{x'}) \arrow[d, "\ddel_\dC(Q)u(F_x^{\op}\times A_{x'})", Rightarrow] \\
        \ddel_\dC(Q)u({F'_x}^{\op}\times F'_{x'}) \arrow[r, "\ddel_\dC(Q)u(A_x^{\op}\times F'_{x'})"' {yshift=-2pt}, Rightarrow] 
        &[45pt]
        \ddel_\dC(Q)u(F_x^{\op}\times F'_{x'})                                                           
    \end{tikzcd}
    \]
    Under the natural equivalence $\fib$ from \cref{thm: TSdisc vs Prof,prop: naturality of fib}, this amounts to showing that the following diagram of functors and natural transformations commutes.

    \begin{equation} \label{vert comp}
    \begin{tikzcd}
        & \ P^{-1}u\ \   \arrow[dd, "F_u"' {xshift=-3pt}, bend right=75] \arrow[dd, "F'_u" {xshift=3pt}, bend left=75] \arrow[rr]
        &[30pt]
        & P^{-1}x\times P^{-1}x' \arrow[dd, "{F_x\times F_{x'}}"' {yshift=-9pt}, bend right=70] \arrow[dd, "{F'_x\times F'_{x'}}" {yshift=-9pt}, bend left=70] &    
        \\
        {} \arrow[rr, "{(\ddel_\dC(A)_x,\ddel_\dC(A)_{x'})}" {xshift=-14pt},shorten <=25pt, shorten >=55pt, Rightarrow,yshift=-9pt] 
        &&[30pt]
        {} \arrow[rr, "{\ddel_\dC(A)_x\times\ddel_\dC(A)_{x'}}",shorten <=45pt, shorten >=45pt, Rightarrow,yshift=-9pt] &  & {} 
        \\
        & \ Q^{-1}u\ \   \arrow[rr]
        &[30pt]
        & Q^{-1}x\times Q^{-1}x' &   
    \end{tikzcd}
    \end{equation}
    Here, the left-hand transformation is defined as follows: using the description of the morphisms in $Q^{-1}u$ from \cref{descr: fiber at u}, there is a natural transformation $(\ddel_\dC(A)_x,\ddel_\dC(A)_{x'})\colon F_u\Rightarrow F'_u$ whose component at an object $u_-\colon x_-\bulletarrow x'_-$ in $P^{-1}u$ is given by the commutative diagram in $\dF$
    \[ \begin{tikzcd}
        Fx_-
        \arrow[d,"Fu_-"',"\bullet" marking]
        \arrow[r,"A_{x_-}","\bullet" marking]
        &
        F'x_-
        \arrow[d,"F'u_-","\bullet" marking]
        \\
        Fx'_-
        \arrow[r,"A_{x'_-}"',"\bullet" marking]
        &
        F'x'_-.
    \end{tikzcd} \]
    Note that this gives a well-defined morphism of $Q^{-1}u$ since the above diagram of vertical morphisms commutes by naturality of $A_{x_-}$ in $x_-$, and $QA_{x_-}=e_x$ and $QA_{x'_-}=e_{x'}$ by definition of the vertical transformation $A$ living over $\dC$. The fact that the diagram \eqref{vert comp} commutes is then straightforward from the fact that the horizontal arrows are picking source and target as described in \cref{prop: P_u is TSdiscFib}. 
\end{proof}

Putting everything together, we get the following. 

\begin{Construction}
\label{constr: ddel}
    Given a double category $\dC$, we define a $2$-functor 
    \[ \ddel_\dC\colon \dFib(\dC)\to \P{C} \]
    which sends
    \begin{itemize}[leftmargin=1cm]
        \item a discrete double fibration $P\colon \dE\to \dC$ to the lax double presheaf $\ddel_\dC(P)\colon \dC^{\op}\to \dCat$ from \cref{constr: ddel on objects}, 
        \item a double functor $F\colon \dE\to \dF$ over $\dC$ to the horizontal transformation $\ddel_\dC(F)\colon \ddel_\dC(P)\Rightarrow \ddel_\dC(Q)$ from \cref{constr: ddel on 1-morphisms}, 
        \item a vertical transformation $A\colon F\Bulletarrow F'$ over $\dC$ to the globular modification
        \[ \ddel_\dC(A)\colon \edgesquare{e_{\ddel_\dC(P)}}{\ddel_\dC(F)}{\ddel_\dC(F')}{e_{\ddel_\dC(Q)}} \]
        from \cref{constr: ddel on 2-morphisms}.
    \end{itemize}
    It is straightforward to check that this construction is $2$-functorial.
\end{Construction}

We are now ready to show that $\ddel_\dC$ is a weak inverse of $\dgro_\dC$. We start by constructing a $2$-natural isomorphism $\epsilon\colon \ddel_\dC\dgro_\dC\cong \id_{\P{C}}$. For this, we first describe the composite $\ddel_\dC\dgro_\dC$ on objects, morphisms, and $2$-morphisms explicitly.

\begin{Lemma}
\label{lem: ddel dgro(X) explicit}
    Given a lax double presheaf $X\colon\dC^{\op}\to\dCat$, the lax double presheaf 
    \[ \ddel_\dC(\pi_X)\colon \dC^{\op}\to \dCat \]
    associated with the Grothendieck construction $\pi_X\colon \dgro_\dC X\to \dC$ admits the following description.
    \begin{itemize}[leftmargin=1cm]
        \item Given an object $x$ in $\dC$, the category $\ddel_\dC(\pi_X)x=\pi_X^{-1}x$ is given by $\{x\}\times Xx$. 
        \item Given a horizontal morphism $f\colon x\to y$ in $\dC$, the functor $\ddel_\dC(\pi_X)f\colon \pi_X^{-1} y\to \pi_X^{-1} x$ is given by 
        \[ [y\mapsto x]\times Xf\colon \{y\}\times Xy\to \{x\}\times Xx. \]
        \item Given a vertical morphism 
        $u\colon x\bulletarrow x'$ in $\dC$, the profunctor $\ddel_\dC(\pi_X)u\colon \pi_X^{-1}x^{\op}\times\pi_X^{-1}x'\to \Set$ is given by 
        \[ \{u\}\times Xu\colon \{x\}\times Xx^{\op}\times \{x'\}\times Xx'\cong Xx^{\op}\times Xx'\to\Set. \]
        \item Given a square $\alpha\colon\edgesquare{u}{f}{f'}{v}\colon\nodesquare{x}{x'}{y}{y'}$ in $\dC$, the natural transformation 
        \[ \textstyle\ddel_\dC(\pi_X)\alpha \colon \ddel_\dC(\pi_X)v \Rightarrow \ddel_\dC(\pi_X)u(\ddel_\dC(\pi_X)f^{\op}\times \ddel_\dC(\pi_X)f') \]
        is given by
        \[ [v\mapsto u]\times X\alpha\colon\{v\}\times Xv\Rightarrow \{u\}\times Xu(Xf^{\op}\times Xf'). \]
        \item Given composable vertical morphisms $x\overset{u}{\bulletarrow} x'\overset{u'}{\bulletarrow} x''$ in $\dC$, the compositor comparison natural transformation $\ddel_\dC(\pi_X)u'\bullet \ddel_\dC(\pi_X)u\Rightarrow\ddel_\dC(\pi_X)(u'\bullet u)$ is given by 
        \[ [(u,u')\mapsto u'\bullet u]\times \mu_{u,u'}\colon (\{u\}\times Xu)\bullet (\{u'\}\times Xu')\cong \{(u,u')\}\times (Xu\bullet Xu')\to \{u'\bullet u\}\times X(u'\bullet u)\]
        where $\mu_{u,u'}$ denotes the composition comparison natural transformation of $X$.
    \end{itemize}
\end{Lemma}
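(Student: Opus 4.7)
The proof strategy is pure unpacking of definitions together with the equivalence $\fib$ between two-sided discrete fibrations and profunctors. The plan is to go through the five bullet points in order, at each stage using the explicit description of the Grothendieck construction from \cref{def: dgro}, the lift formulas from the proof of \cref{prop: pi_x is ddisc fib}, and the actions on fibers defined in \cref{def: f on fibers,def: alpha on fibers}.

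First, for the category $\ddel_\dC(\pi_X)x = \pi_X^{-1}x$, the computation is immediate: by \cref{constr: definition of P_u} together with the definition of $\dgro_\dC X$, an object in $\pi_X^{-1}x$ is a pair $(x,x_-)$ with $x_-\in Xx$, and a morphism is a vertical morphism $(e_x,s_-)$ of $\dgro_\dC X$ with $\pi_X(e_x,s_-)=e_x$; since $X$ is normal, such an $s_-$ is an element of $Xe_x(x_-,\hat{x}_-) = Xx(x_-,\hat{x}_-)$, which is exactly a morphism in $Xx$. For horizontal morphisms, I invoke the unique-lift formula from the proof of \cref{prop: pi_x is ddisc fib}(1): the unique $\pi_X$-lift of $f\colon x\to y$ with target $(y,y_-)$ is $f\colon(x,Xf(y_-))\to(y,y_-)$, so $f^*(y,y_-)=(x,Xf(y_-))$, and a similar argument on the square lift gives the action on morphisms.

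For the vertical morphism case, I would describe the fiber $\pi_X^{-1}u$ explicitly using \cref{descr: fiber at u}: objects are vertical morphisms $(u,u_-)\colon(x,x_-)\bulletarrow(x',x'_-)$ in $\dgro_\dC X$ with $\pi_X(u,u_-)=u$, i.e.\ pairs $(u,u_-)$ with $u_-\in Xu(x_-,x'_-)$; morphisms are pairs of vertical morphisms lying over $(e_x,e_{x'})$, which translate into pairs of elements in $Xx(-,-)$ and $Xx'(-,-)$. The induced functor $(\pi_X)_u\colon \pi_X^{-1}u\to\pi_X^{-1}x\times\pi_X^{-1}x'$ is clearly a product projection onto the first factor of $\{u\}\times Xu(x_-,x'_-)$, and then applying the description of $\fib$ from \cref{constr: functor fib} yields the profunctor $\{u\}\times Xu$. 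The square case follows by the same unique-lift principle applied to squares, using \cref{cor: nat.trf. for TS square} to identify the induced natural transformation with the component description $[v\mapsto u]\times X\alpha$.

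The main obstacle, and the only nontrivial step, is the composition comparison. Here I need to compute the image under $\fib$ of the vertical composition functor $\pi_X^{-1}u\times_{\pi_X^{-1}x'}\pi_X^{-1}u'_{/\sim}\to\pi_X^{-1}(u'\bullet u)$. By \cref{prop: fib compatible with composition} the source is canonically isomorphic (as a profunctor) to $\ddel_\dC(\pi_X)u'\bullet\ddel_\dC(\pi_X)u$, which in turn equals $(\{u\}\times Xu)\bullet(\{u'\}\times Xu')\cong\{(u,u')\}\times(Xu'\bullet Xu)$ by the obvious componentwise-coend identification (the set of objects $x'$ in the coend index is replaced, on the quotient, by $\{x'\}\times Xx'$, which factors the coend as a product). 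Under this identification, the induced map on fibers sends a representative $[u'_-,u_-]$ to $(u'\bullet u, (\mu_{u,u'})_{x_-,x''_-}([u'_-,u_-]))$ by the definition of vertical composition in the Grothendieck construction in \cref{def: dgro}. By \cref{cor: nat.trf. for TS square}, this is exactly the natural transformation $[(u,u')\mapsto u'\bullet u]\times\mu_{u,u'}$. All five statements then assemble into the claimed explicit description.
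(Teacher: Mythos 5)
Your proposal follows essentially the same route as the paper's proof: unpack \cref{def: dgro}, read off the fibers, use the explicit lifts from the proof of \cref{prop: pi_x is ddisc fib} for the horizontal-morphism and square cases, and pass through the equivalence $\fib$ for the vertical-morphism and compositor cases. The one place where you are lighter than you should be is the vertical-morphism case: identifying the \emph{objects} of the fiber of $(\pi_X)_u\colon \pi_X^{-1}u\to\pi_X^{-1}x\times\pi_X^{-1}x'$ over $(x_-,x'_-)$ with $\{u\}\times Xu(x_-,x'_-)$ is indeed immediate, but the claim that $\fib((\pi_X)_u)=\{u\}\times Xu$ also requires checking that the \emph{morphism} action agrees. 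By \cref{constr: functor fib} the action of a morphism $(s_-,s'_-)$ on the fiber is computed by the unique lifts in the two-sided discrete fibration, which by the proof of \cref{prop: P_u is TSdiscFib} amounts to forming the vertical composite $(e_{x'},s'_-)\bullet(u,u_-)\bullet(e_x,s_-)$ in $\dgro_\dC X$; one must then verify that $s'_-\bullet u_-\bullet s_-=Xu(s_-,s'_-)(u_-)$, which is a short computation using the normality conditions $\mu_{e_x,u}=1_{Xu}=\mu_{u,e_{x'}}$ together with functoriality of $Xu$. The paper carries out exactly this computation; without it the identification of profunctors is only established on objects. The remaining steps, including your treatment of the compositor via \cref{prop: fib compatible with composition} and the definition of vertical composition in \cref{def: dgro}, match the paper's argument.
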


\begin{proof}
    By unpacking \cref{def: dgro,constr: ddel on objects}, we observe the following. 

    Given an object $x$ in $\dC$, then $\pi_X^{-1}x$ is the category whose 
        \begin{itemize}[leftmargin=1cm]
            \item objects are pairs $(x,x_-)$ with $x_-$ an object in $Xx$, 
            \item morphisms $(x,x_-)\to (x,x'_-)$ are pairs $(e_x,s_-)$ with $s_-$ an element of $Xe_x(x_-,x_-')=Xx(x_-,x_-')$, i.e., a morphism $s_-\colon x_-\to x_-'$ in $Xx$.
        \end{itemize}
        Hence, we have that $\pi_X^{-1}x=\{x\}\times Xx$, as desired. 
     
     Next, given a horizontal morphism $f\colon x\to y$ in $\dC$, the functor $\ddel_{\dC}(\pi_X)f$ is given by assigning the source of the unique lift  of $f$ or $e_f$ along the discrete double fibration $\pi_X\colon \dgro_\dC X\to \dC$. Hence, by the proof of 
        \cref{prop: pi_x is ddisc fib}, we get the desired description, namely $\ddel_{\dC}(\pi_X)f=[y\mapsto x]\times Xf$.

    Now, given a vertical morphism $u\colon x\bulletarrow x'$ in $\dC$, the profunctor $\ddel_\dC(\pi_X)u$ is the image under the equivalence $\fib$ from
        \cref{thm: TSdisc vs Prof} of the two-sided discrete fibrations 
        \[ \pi_X^{-1}u\to \pi_X^{-1}x\times \pi_X^{-1}x'. \]
        Here $\pi_X^{-1}u$ is the category whose
        \begin{itemize}[leftmargin=1cm]
            \item objects are pairs $(u,u_-)$ with $u_-$ an element of $Xu(x_-,x_-')$, for some object $(x_-,x_-')$ in $Xx\times Xx'$,
            \item morphisms $(u,u_-)\to (u,\hat{u}_-)$ with $u_-\in Xu(x_-,x_-')$ and $\hat{u}_-\in Xu(\hat{x}_-,\hat{x}_-')$ are tuples $(u,s_-,s_-')$ with $(x_-\xrightarrow{s_-} \hat{x}_-, x_-'\xrightarrow{s_-'} \hat{x}_-')$ a morphism in $Xx\times Xx'$ making the following square in $\dgro_\dC X$ commutes. 
            \[ \begin{tikzcd}
        (x,x_-)
        \arrow[d,"{(u,s_-)}"',"\bullet" marking]
        \arrow[r,"{(e_x,s_-)}","\bullet" marking]
        &
        (x,\hat{x}_-)
        \arrow[d,"{(u,\hat{u}_-)}","\bullet" marking]
        \\
        (x',x'_-)
        \arrow[r,"{(e_{x'},s'_-)}"',"\bullet" marking]
        &
        (x',\hat{x}'_-).
    \end{tikzcd} \]
        \end{itemize} 
        Hence, by applying the functor $\fib$ from \cref{constr: functor fib}, we see that the profunctor $\ddel_\dC(\pi_X)u$ assigns to an object $(x_-,x'_-)$ in $\pi_X^{-1}x\times \pi_X^{-1}x'$ the set $\{u\}\times Xu(x_-,x_-')$. It remains to show that the action on morphisms of $\ddel_\dC(\pi_X)u$ and $\{u\}\times Xu$ agree. 
        
        By the proof of 
        \cref{prop: P_u is TSdiscFib}, the action of a morphism $(x_-\xrightarrow{s_-} \hat{x}_-, x'_-\xrightarrow{s_-'} \hat{x}_-')$ in $\pi_X^{-1}x\times \pi_X^{-1}x'$ on fibers is given by taking an element $(u,u_-)$ in $\{u\}\times Xu(\hat{x}_-,x'_-)$ to the composite $(e_x,s')\bullet (u,u_-) \bullet (e_{x'},s_-)=(u,s'\bullet u_-\bullet s_-)$ of $\dgro_\dC X$. However, we have: 
        \begin{align*}
            s_-'\bullet u_-\bullet s_- &= (\mu_{u, e_{x'}})_{\hat{x}_-,\hat{x}'_-} ([s_-',u_-\bullet s_-]) & \text{Definition of } \bullet \\
            & = (\mu_{u, e_{x'}})_{x_-,\hat{x}'_-} ([s_-',(\mu_{e_x,u})_{x_-,x'_-}([u_-,s_-])]) & \text{Definition of } \bullet \\
            & = (\mu_{u, e_{x'}})_{x_-,\hat{x}'_-} ([s_-',Xu(s_-,x'_-)(u_-)]) & \mu_{e_x,u}=1_{Xu} \\
            & = Xu(x_-,s_-')(Xu(s_-,x_-')(u_-)) & \mu_{u,e_{x'}}=1_{Xu} \\
            & = Xu(s_-,s_-')(u_-). & \text{Functorality of } Xu
        \end{align*}
        Hence, this shows that $\ddel_\dC(\pi_X)u=\{u\}\times Xu$, as desired.
        
        Next, given a square 
        $\alpha\colon\edgesquare{u}{f}{f'}{v}\colon\nodesquare{x}{x'}{y}{y'}$ in $\dC$, the natural transformation 
        $\ddel_\dC(\pi_X)\alpha$ is induced as in \cref{cor: nat.trf. for TS square} by the following commutative square
        \[
        \begin{tikzcd}
            \pi_X^{-1}v \arrow[r, "\alpha^*"] \arrow[d]                & \pi^{-1}_Xu \arrow[d]       \\
            \pi^{-1}_Xy\times\pi^{-1}_Xy' \arrow[r, "f^*\times f'^*"'] & \pi^{-1}_Xx\times\pi^{-1}x'
        \end{tikzcd}
        \]
        where $\alpha^*$ is given by assigning the source of the unique lift of $\alpha$ along the discrete double fibration $\pi_X\colon\dgro_\dC X\to \dC$. Hence, by the proof of 
        \cref{prop: pi_x is ddisc fib}, we get the desired description, namely $\ddel_{\dC}(\pi_X)\alpha=[u\mapsto v]\times X\alpha$.
    
        Finally, given composable vertical morphisms 
        $x\overset{u}{\bulletarrow}x'\overset{u'}{\bulletarrow}x''$ in $\dC$, the composition comparison 
        natural transformation of $\ddel_\dC(\pi_X)$ is the image under the equivalence $\fib$ from
        \cref{thm: TSdisc vs Prof} of the morphism of two-sided discrete fibrations
        \[
        \begin{tikzcd}
            \pi^{-1}_Xu\times_{\pi^{-1}_Xx'}\pi^{-1}_Xu'_{/\sim} \arrow[rd] \arrow[rr, "{-\bullet-}"] & & \pi^{-1}_X(u'\bullet u) \arrow[ld] \\
            & \pi^{-1}_Xx\times\pi^{-1}_Xx''. & 
        \end{tikzcd}
        \]
        induced by vertical composition in $\dgro_\dC X$. However, by definition, the composite of two vertical morphisms $(u,u_-)$ and $(u',u'_-)$ with $u_-\in Xu(x_-,x'_-)$ and $u'_-\in Xu(x'_-,x''_-)$ in $\dgro_\dC X$ is given by the pair $(u'\bullet u,u'_-\bullet u)$ with $u'_-\bullet u=(\mu_{u,u'})_{x_-,x''_-}([u'_-,u_-])$. Hence, we get the desired description, namely the composition comparison is given by $[(u,u')\mapsto u'\bullet u]\times \mu_{u,u'}$.
\end{proof}

\begin{Lemma}
\label{lem: ddel dgro(F) explicit}
Given a horizontal transformation $F\colon X\Rightarrow Y$ between lax double presheaves ${X,Y\colon \dC^{\op}\to \dCat}$, the horizontal transformation \[ \textstyle \ddel_\dC\dgro_\dC F\colon \ddel_\dC(\pi_X)\Rightarrow \ddel_\dC(\pi_Y) \]
admits the following description.
\begin{itemize}[leftmargin=1cm]
    \item Given an object $x$ in $\dC$, the functor $(\ddel_\dC\dgro_\dC F)_x\colon \ddel_\dC(\pi_X)x\to \ddel_\dC(\pi_Y)x$ is given by 
    \[ \{x\}\times F_x\colon \{x\}\times Xx\to \{x\}\times Yx. \]
    \item Given a vertical morphisms $u\colon x\bulletarrow x'$ in $\dC$, the natural transformation 
    \[ \textstyle(\ddel_\dC\dgro_\dC F)_u\colon \ddel_\dC(\pi_X)u\Rightarrow \ddel_\dC(\pi_Y)u((\ddel_\dC\dgro_\dC F)_x^{\op}\times (\ddel_\dC\dgro_\dC F)_{x'}) \]
    is given by 
    \[ \{u\}\times F_u\colon \{u\}\times Xu\Rightarrow \{u\}\times Yu(F_x^{\op}\times F_{x'}). \]
\end{itemize}
\end{Lemma}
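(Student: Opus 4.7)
The plan is to unpack the definitions of $\dgro_\dC F$ and $\ddel_\dC$ on morphisms, and then observe that---once we identify $\ddel_\dC(\pi_X)$ and $\ddel_\dC(\pi_Y)$ using \cref{lem: ddel dgro(X) explicit}---the induced components are precisely $\{x\}\times F_x$ and $\{u\}\times F_u$. So the proof is essentially a matter of careful bookkeeping, with no substantial obstacle.

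First, I would recall from \cref{def: dgro of horizontal transformations} that the double functor $\dgro_\dC F\colon \dgro_\dC X \to \dgro_\dC Y$ acts as the identity on the first component and as $F$ on the second: on objects it sends $(x,x_-)\mapsto (x, F_x x_-)$, on horizontal morphisms it is the identity $f\mapsto f$, on vertical morphisms it sends $(u,u_-) \mapsto (u,(F_u)_{x_-,x'_-}(u_-))$, and on squares $\alpha\mapsto \alpha$. In particular, $\dgro_\dC F$ is a morphism in $\dFib(\dC)$ from $\pi_X$ to $\pi_Y$. By \cref{constr: ddel on 1-morphisms}, the horizontal transformation $\ddel_\dC(\dgro_\dC F)$ has its component at an object $x$ given by the unique induced functor $(\dgro_\dC F)_x\colon \pi_X^{-1}x \to \pi_Y^{-1}x$ between fibers, and its component at a vertical morphism $u$ given by the natural transformation induced via \cref{cor: nat.trf. for TS square} from the unique induced functor $\dHom{\dV[1],\dgro_\dC F}_u\colon \pi_X^{-1}u \to \pi_Y^{-1}u$.

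For the component at $x$, invoking the identification $\pi_X^{-1}x = \{x\}\times Xx$ from \cref{lem: ddel dgro(X) explicit} and the formula $\dgro_\dC F(x,x_-) = (x,F_x x_-)$, we see immediately that the restriction of $\dgro_\dC F$ to fibers over $x$ is $\{x\}\times F_x$, as claimed. For the component at $u$, the identification in \cref{lem: ddel dgro(X) explicit} realizes the two-sided discrete fibration $\pi_X^{-1}u \to \pi_X^{-1}x \times \pi_X^{-1}x'$ as corresponding, under the equivalence $\fib$ of \cref{thm: TSdisc vs Prof}, to the profunctor $\{u\}\times Xu$; explicitly, the fiber of $\pi_X^{-1}u \to \pi_X^{-1}x \times \pi_X^{-1}x'$ over $(x_-,x'_-)$ is $\{u\}\times Xu(x_-,x'_-)$. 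Since $\dgro_\dC F$ acts on such a fiber by $(u,u_-)\mapsto (u,(F_u)_{x_-,x'_-}(u_-))$, the description of $\fib$ on morphisms given in \cref{constr: functor fib} (and used in \cref{cor: nat.trf. for TS square}) shows that the induced natural transformation has component at $(x_-,x'_-)$ exactly $\{u\}\times (F_u)_{x_-,x'_-}$. Assembled over all $(x_-,x'_-)$, this gives precisely $\{u\}\times F_u$, which is the desired description.

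The only mild subtlety will be to confirm that the identifications $\pi_X^{-1}x = \{x\}\times Xx$ and $\pi_X^{-1}u \cong \{u\}\times Xu$ from \cref{lem: ddel dgro(X) explicit} are compatible with the unique induced functor $\dHom{\dV[1],\dgro_\dC F}_u$; but this is immediate because $\dgro_\dC F$ is defined on vertical morphisms by the formula above and the identification for $\pi_X^{-1}u$ is exactly through picking out the second component. Hence no additional argument is needed, and the proof concludes by matching the descriptions of both components.
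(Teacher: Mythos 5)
Your proposal is correct and follows essentially the same route as the paper's proof: unpack \cref{def: dgro of horizontal transformations} and \cref{constr: ddel on 1-morphisms}, identify the fibers via \cref{lem: ddel dgro(X) explicit}, and read off that the induced functors between fibers are $\{x\}\times F_x$ and $\{u\}\times F_u$. The paper's proof is just a terser version of the same bookkeeping, so nothing further is needed.
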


\begin{proof}
    Using \cref{lem: ddel dgro(X) explicit} and unpacking \cref{def: dgro of horizontal transformations,constr: ddel on 1-morphisms}, we observe the following.

    Given an object $x$ in $\dC$, the functor 
        $(\ddel_\dC\dgro_\dC F)_x$ is the unique induced functor between fibers 
        \[ \textstyle(\dgro_\dC F)_x=\{x\}\times F_x\colon \pi_X^{-1}(x)=\{x\}\times Xx\to \pi_Y^{-1}x=\{x\}\times Yx. \]
        Hence, we get the desired description, namely $(\ddel_\dC\dgro_\dC F)_x=\{x\}\times F_x$.
        
        Given a vertical morphism $u\colon x\bulletarrow x'$ in $\dC$, the natural transformation $(\ddel_\dC\dgro_\dC F)_u$ is induced as in \cref{cor: nat.trf. for TS square} by the following commutative square
        \[
        \begin{tikzcd}
        \pi^{-1}_Xu \arrow[d,swap,"{(s_X,t_X)}"] \arrow[r, "(\dgro_\dC F)_u"]          & [10pt] \pi^{-1}_Yu \arrow[d,"{(s_Y,t_Y)}"]         \\
        \pi^{-1}_Xx\times\pi^{-1}_Xx' \arrow[r, "F_x\times F_{x'}"'] & \pi^{-1}_Yx\times\pi^{-1}_Yx',
        \end{tikzcd}
        \]
        where $(\dgro_\dC F)_u$ is the unique induced functor between fibers \[ \textstyle\dHom{\dV[1],\dgro_\dC F}_u=\{u\}\times F_u=\colon \pi_X^{-1}u=\{u\}\times Xu\to \pi_Y^{-1}u=\{u\}\times Yu. \]
        Hence, we get the desired description, namely $(\ddel_\dC\dgro_\dC F)_u=\{u\}\times F_u$.
\end{proof}

\begin{Lemma}
\label{lem: ddel dgro(A) explicit}
Given a globular modification $A\colon \edgesquare{e_X}{F}{F'}{e_Y}$ of lax double presheaves $\dC^{\op}\to \dCat$, the globular modification \[ \textstyle \ddel_\dC\dgro_\dC A\colon \edgesquare{e_{\ddel_\dC(\pi_X)}}{\ddel_\dC\dgro_\dC F}{\ddel_\dC\dgro_\dC F'}{e_{\ddel_\dC(\pi_Y)}} \]
admits the following description. Given an object $x$ in $\dC$, the natural transformation \[ \textstyle(\ddel_\dC\dgro_\dC A)_x\colon (\ddel_\dC\dgro_\dC F)_x\Rightarrow (\ddel_\dC\dgro_\dC F')_x \]
is given by 
    \[ \{x\}\times A_x\colon \{x\}\times Fx\Rightarrow \{x\}\times F'x. \]
\end{Lemma}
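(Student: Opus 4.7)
The plan is to unpack the definitions of $\dgro_\dC A$ and $\ddel_\dC$ applied to a vertical transformation, and then to recognize the resulting formula under the identifications already established in Lemmas \ref{lem: ddel dgro(X) explicit} and \ref{lem: ddel dgro(F) explicit}. By \cref{def: dgro for globular modifications}, the component of the vertical transformation $\dgro_\dC A$ at an object $(x,x_-)$ of $\dgro_\dC X$ is the vertical morphism $(e_x,(A_x)_{x_-})\colon (x,F_x x_-)\bulletarrow (x,F'_x x_-)$ in $\dgro_\dC Y$. Since $\pi_Y(e_x,(A_x)_{x_-})=e_x$, this morphism lies in the fiber $\pi_Y^{-1}x$.

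Next, by \cref{constr: ddel on 2-morphisms}, the component of $\ddel_\dC\dgro_\dC A$ at $x$ is the natural transformation whose component at an object $(x,x_-)$ of $\pi_X^{-1}x$ is precisely the morphism $(\dgro_\dC A)_{(x,x_-)}=(e_x,(A_x)_{x_-})$ in $\pi_Y^{-1}x$. Under the identification $\pi_X^{-1}x=\{x\}\times Xx$ from the proof of \cref{lem: ddel dgro(X) explicit}, the object $(x,x_-)$ corresponds to the pair $(x,x_-)\in \{x\}\times Xx$, and under the identification $\pi_Y^{-1}x=\{x\}\times Yx$, a morphism of the form $(e_x,s_-)$ in $\pi_Y^{-1}x$ corresponds to the pair $(x,s_-)$.

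Applying this to the present situation, the component of $(\ddel_\dC\dgro_\dC A)_x$ at $(x,x_-)$ corresponds to the morphism $(x,(A_x)_{x_-})$ in $\{x\}\times Yx$, which is exactly the component of the natural transformation $\{x\}\times A_x\colon \{x\}\times F_x\Rightarrow \{x\}\times F'_x$ at $(x,x_-)$. Since two natural transformations agreeing on every component are equal, we conclude $(\ddel_\dC\dgro_\dC A)_x=\{x\}\times A_x$, as claimed.

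No real obstacle is expected: the statement is a straightforward unpacking of the already-established descriptions, and the only subtlety is to keep track of the identifications $\pi_X^{-1}x=\{x\}\times Xx$ and $\pi_Y^{-1}x=\{x\}\times Yx$ on both objects and morphisms, which was already handled in \cref{lem: ddel dgro(X) explicit}.
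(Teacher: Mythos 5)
Your proof is correct and follows essentially the same route as the paper: both unpack \cref{def: dgro for globular modifications} to identify $(\dgro_\dC A)_{(x,x_-)}=(e_x,(A_x)_{x_-})$ and then apply \cref{constr: ddel on 2-morphisms} together with the identifications $\pi_X^{-1}x=\{x\}\times Xx$ and $\pi_Y^{-1}x=\{x\}\times Yx$ to read off $(\ddel_\dC\dgro_\dC A)_x=\{x\}\times A_x$. Your extra care in tracking the morphism-level identification $(e_x,s_-)\leftrightarrow (x,s_-)$ is a welcome but minor elaboration of what the paper leaves implicit.
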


\begin{proof}
    Using \cref{lem: ddel dgro(F) explicit} and unpacking \cref{def: dgro for globular modifications,constr: ddel on 2-morphisms}, we observe the following. 
    
     Given an object $x$ in $\dC$, the natural transformation 
    \[
    \textstyle(\ddel_\dC\dgro_\dC A)_x\colon(\ddel_\dC\dgro_\dC F)_x=\{x\}\times F_x\Rightarrow(\ddel_\dC\dgro_\dC F')=\{x\}\times F'_x \]
    is given at an object $(x,x_-)$ in $\pi_X^{-1}x=\{x\}\times Xx$ by the vertical morphism in $\dgro_\dC Y$
    \[ \textstyle(\dgro_\dC A)_{(x,x_-)}=(e_x,(A_x)_{x_-})\colon (x,F_xx_-)\bulletarrow (x,F'_xx_-). \]
    Hence, we get the desired description, namely $(\ddel_\dC\dgro_\dC A)_x=\{x\}\times A_x$. 
\end{proof}

Using the explicit description of the composite $\ddel_\dC\dgro_\dC$, we can now construct the desired $2$-natural isomorphism. 

\begin{Proposition}
\label{prop: epsilon: ddel dgro -> 1}
    There is a $2$-natural isomorphism 
    $\epsilon\colon\ddel_\dC\dgro_\dC\Rightarrow\id_{\P{C}}$.
\end{Proposition}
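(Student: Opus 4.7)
The plan is to read off the isomorphism directly from the explicit descriptions in \cref{lem: ddel dgro(X) explicit,lem: ddel dgro(F) explicit,lem: ddel dgro(A) explicit}, which show that $\ddel_\dC\dgro_\dC$ only differs from the identity by a spurious singleton factor (respectively $\{x\}$, $\{f\}$, $\{u\}$, $\{\alpha\}$) attached to every piece of data. Stripping off this factor yields the desired isomorphism.

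More precisely, for each lax double presheaf $X\colon\dC^{\op}\to\dCat$, I define the component $\epsilon_X\colon \ddel_\dC\dgro_\dC X\Rightarrow X$ to be the horizontal transformation whose component at an object $x$ of $\dC$ is the canonical projection isomorphism of categories
\[
(\epsilon_X)_x\colon \ddel_\dC(\pi_X)x=\{x\}\times Xx\xrightarrow{\cong} Xx,
\]
and whose component at a vertical morphism $u\colon x\bulletarrow x'$ is the identity natural transformation
\[
(\epsilon_X)_u\colon \ddel_\dC(\pi_X)u=\{u\}\times Xu\xRightarrow{=} Xu\cong Xu\circ((\epsilon_X)_x^{\op}\times(\epsilon_X)_{x'}).
\]
The four horizontal transformation axioms, namely naturality of $(\epsilon_X)_x$ in $x$, naturality of $(\epsilon_X)_u$ in $u$, compatibility with composition comparison squares, and compatibility with vertical identities, all follow immediately by inspecting \cref{lem: ddel dgro(X) explicit}, since every structural datum of $\ddel_\dC(\pi_X)$ is of the form $[\text{singleton rule}]\times (\text{datum of }X)$. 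In particular, invertibility of $\epsilon_X$ is automatic: the inverse is the obvious inclusion sending a piece of data $\xi$ of $X$ to the pair $(\text{boundary label},\xi)$.

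For $2$-naturality in $X$, I need to check that, given a horizontal transformation $F\colon X\Rightarrow Y$ in $\P{C}$, the square
\[
\begin{tikzcd}
\ddel_\dC\dgro_\dC X \arrow[r,Rightarrow,"\epsilon_X"] \arrow[d,Rightarrow,swap,"\ddel_\dC\dgro_\dC F"] & X \arrow[d,Rightarrow,"F"] \\
\ddel_\dC\dgro_\dC Y \arrow[r,Rightarrow,"\epsilon_Y"'] & Y
\end{tikzcd}
\]
commutes in the $2$-category $\P{C}$, and that an analogous naturality square for globular modifications $A\colon\edgesquare{e_X}{F}{F'}{e_Y}$ commutes. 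Both follow immediately from \cref{lem: ddel dgro(F) explicit,lem: ddel dgro(A) explicit}, which say that $\ddel_\dC\dgro_\dC F$ and $\ddel_\dC\dgro_\dC A$ are again just $F$ and $A$ decorated with the same spurious singletons that $\epsilon$ strips away. The only point requiring any real care, rather than pure bookkeeping, is checking that $(\epsilon_X)_u$ is indeed compatible with the composition comparison of $\ddel_\dC(\pi_X)$, since this comparison is defined through the equivalence $\fib$; but \cref{lem: ddel dgro(X) explicit} identifies it with $[(u,u')\mapsto u'\bullet u]\times \mu_{u,u'}$, so under the projection it reduces precisely to $\mu_{u,u'}$ for $X$, as required.
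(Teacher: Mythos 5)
Your proposal is correct and follows essentially the same route as the paper: both define $\epsilon_X$ via the canonical projections $\{x\}\times Xx\cong Xx$ and $\{u\}\times Xu\cong Xu$ and then verify the horizontal-transformation axioms and $2$-naturality by appealing to the explicit descriptions in \cref{lem: ddel dgro(X) explicit,lem: ddel dgro(F) explicit,lem: ddel dgro(A) explicit}. The paper leaves these verifications as "straightforward," whereas you spell out which axioms need checking and single out the compatibility with the composition comparison as the one step mediated by $\fib$, which is a reasonable elaboration rather than a different argument.
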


\begin{proof}
    Given a lax double presheaf $X\colon \dC^{\op}\to \dCat$, we define the component of $\epsilon\colon\ddel_\dC\dgro_\dC\Rightarrow\id_{\P{C}}$ at $X$ to be the invertible horizontal transformation
    \[
    \textstyle\epsilon_X\colon \ddel_\dC\dgro_\dC X\Rightarrow X
    \]
    whose components at an object $x$ in $\dC$ and at a vertical morphism $u\colon x\bulletarrow x'$ in $\dC$ are given by the canonical isomorphisms 
    \[ \{x\}\times Xx \cong Xx \quad \text{and} \quad \{u\}\times Xu\cong Xu. \]
    Using the description from 
    \cref{lem: ddel dgro(X) explicit}, it is straightforward to check that this gives a well-defined horizontal transformation. 

    Moreover, it follows easily from the descriptions in 
    \cref{lem: ddel dgro(F) explicit,lem: ddel dgro(A) explicit} that the components $\epsilon_X$ are $2$-natural in $X$. 
\end{proof}

Next, we want to construct a $2$-natural isomorphism $\eta\colon \id_{\dFib(\dC)}\cong \dgro_\dC\ddel_\dC$. As before, we first describe the composite $\dgro_\dC\ddel_\dC$ on objects, morphisms, and $2$-morphisms. 

\begin{Lemma}
\label{lem: dgro ddel(P)}
Given a discrete double fibration $P\colon \dE\to \dC$, the discrete double fibration \[ \textstyle\pi_{\ddel_\dC(P)}\colon \dgro_\dC\ddel_\dC(P)\to \dC \]
admits the following description. We have that $\dgro_\dC\ddel_\dC(P)$ is the double category whose
\begin{itemize}[leftmargin=1cm]
    \item objects are pairs $(x,x_-)$ of objects $x$ in $\dC$ and $x_-$ in $\dE$ such that $Px_-=x$, 
    \item horizontal morphisms $(x,x_-)\to (y,y_-)$ are horizontal morphisms $f\colon x\to y$ in~$\dC$ such that $f^*y_-=x_-$, i.e., $x_-$ is the source of the unique lift $P^*f\colon x_-\to y_-$ in $\dE$ of $f$, 
    \item vertical morphisms $(x,x_-)\bulletarrow (x',x'_-)$ are pairs $(u,u_-)$ of vertical morphisms $u\colon x\bulletarrow x'$ in~$\dC$ and $u_-\colon x_-\bulletarrow x'_-$ in $\dE$ such that $Pu_-=u$, 
    \item squares $\edgesquare{(u,u_-)}{f}{f'}{(v,v_-)}$ are squares $\alpha\colon\edgesquare{u}{f}{f'}{v}$ in $\dC$ such that $\alpha^*v_-=u_-$, i.e., $u_-$ is the source of the unique lift $P^*\alpha\colon\edgesquare{u_-}{f_-}{f'_-}{v_-}$ in $\dE$ of $\alpha$,  
\end{itemize}
and that $\pi_{\ddel_\dC(P)}\colon \dgro_\dC\ddel_\dC(P)\to \dC$ is given by projecting onto the first variable.
\end{Lemma}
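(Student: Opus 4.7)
The plan is to prove this by directly unpacking the two constructions in sequence: first the Grothendieck construction from \cref{def: dgro} applied to the lax double presheaf $\ddel_\dC(P)$, and then substituting the explicit description of $\ddel_\dC(P)$ given in \cref{constr: ddel on objects}. Since $\dgro_\dC$ is strict on objects (i.e.~preserves the underlying data literally) and $\ddel_\dC(P)$ has a very transparent fiber-by-fiber description, no coherence arguments are required; the proof is a four-step bookkeeping, one for each kind of cell in a double category.

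First, for objects: by \cref{def: dgro}, an object of $\dgro_\dC \ddel_\dC(P)$ is a pair $(x,x_-)$ with $x$ an object of $\dC$ and $x_-$ an object of the category $\ddel_\dC(P)x$; by \cref{constr: ddel on objects} this category is $P^{-1}x$, so $x_-$ is an object of $\dE$ with $Px_-=x$. Second, for horizontal morphisms: a horizontal morphism $(x,x_-)\to(y,y_-)$ is a horizontal morphism $f\colon x\to y$ in $\dC$ satisfying $\ddel_\dC(P)f(y_-)=x_-$; since $\ddel_\dC(P)f=f^*$ (see \cref{def: f on fibers}), this is exactly the condition $f^*y_-=x_-$.

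Third, for vertical morphisms: a vertical morphism $(x,x_-)\bulletarrow(x',x'_-)$ is a pair $(u,u_-)$ with $u\colon x\bulletarrow x'$ in $\dC$ and $u_-\in \ddel_\dC(P)u(x_-,x'_-)$. By \cref{constr: ddel on objects}, the profunctor $\ddel_\dC(P)u$ is $\fib(P_u)$ for the two-sided discrete fibration $P_u\colon P^{-1}u\to P^{-1}x\times P^{-1}x'$ of \cref{prop: P_u is TSdiscFib}, whose value at $(x_-,x'_-)$ is by definition the fiber $P_u^{-1}(x_-,x'_-)$, i.e., the set of vertical morphisms $u_-\colon x_-\bulletarrow x'_-$ in $\dE$ with $Pu_-=u$. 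Similarly, for squares, a square $\edgesquare{(u,u_-)}{f}{f'}{(v,v_-)}$ is a square $\alpha\colon\edgesquare{u}{f}{f'}{v}$ in $\dC$ satisfying $(\ddel_\dC(P)\alpha)_{y_-,y'_-}(v_-)=u_-$; by \cref{cor: nat.trf. for TS square} applied to the commutative square in \cref{constr: ddel on objects}, this component is exactly the map $\alpha^*\colon P^{-1}v\to P^{-1}u$ of \cref{def: alpha on fibers}, yielding the condition $\alpha^*v_-=u_-$.

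Finally, I check that the projection is as claimed: $\pi_{\ddel_\dC(P)}$ is by definition projection onto the first component from $\dgro_\dC\ddel_\dC(P)$, which under the identification above sends $(x,x_-)$ to $x$, $(u,u_-)$ to $u$, etc. No step is particularly delicate; the only minor care needed is to trace through the equivalence $\fib$ from \cref{thm: TSdisc vs Prof} on vertical cells, which is the sole place where one is not immediately reading off definitions.
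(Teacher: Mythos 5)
Your proposal is correct and follows essentially the same route as the paper's proof: both unpack \cref{def: dgro} applied to $\ddel_\dC(P)$ cell by cell, identifying $\ddel_\dC(P)x$ with $P^{-1}x$, $\ddel_\dC(P)f$ with $f^*$, the profunctor $\ddel_\dC(P)u$ with the fibers of $P_u$, and the components of $\ddel_\dC(P)\alpha$ with $\alpha^*$. Your explicit tracing of the equivalence $\fib$ on vertical cells and squares matches the paper's (slightly terser) treatment of the same point.
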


\begin{proof}
    By unpacking \cref{constr: ddel on objects,def: dgro}, we observe the following.
    
    We have that $\dgro_\dC\ddel_\dC(P)$ is the double category whose
    \begin{itemize}[leftmargin=1cm]
        \item objects are pairs 
        $(x,x_-)$ of objects $x$ in $\dC$ and $x_-$ in $\ddel_\dC(P)x=P^{-1}x$, i.e., $x_-$ is an object in $\dE$ such that 
        $Px_-=x$,
        \item horizontal morphisms 
        $(x,x_-)\to (y,y_-)$ are horizontal morphisms $f\colon x\to y$ in $\dC$ such that 
        $x_-=\ddel_\dC(P)f(y_-)=f^*y_-$,
        \item vertical morphisms
        $(x,x_-)\bulletarrow (x',x'_-)$ are pairs $(u,u_-)$ of a vertical morphism $u\colon x\bulletarrow x'$ in $\dC$ and an element
        $u_-$ in $\ddel_\dC(P)u(x_-,x'_-)$, i.e., $u_-$ is an element in the fiber of the two-sided discrete fibration $P^{-1}u\to P^{-1}x\times P^{-1}x'$ at $(x_-,x'_-)$, namely a vertical morphism 
        $u_-\colon x_-\bulletarrow x'_-$ in $\dE$ such that $Pu_-=u$,
        \item squares 
        $\edgesquare{(u,u_-)}{f}{f'}{(v,v_-)}\colon\nodesquare{(x,x_-)}{(x',x'_-)}{(y,y_-)}{(y',y'_-)}$ are squares 
        $\alpha\colon\edgesquare{u}{f}{f'}{v}\colon\nodesquare{x}{x'}{y}{y'}$ in $\dC$ such that
        $u_-=\ddel_\dC(P)\alpha_{y_-,y'_-}(v_-)=\alpha^*v_-$.
    \end{itemize}
    Moreover, the double functor $\pi_{\ddel_\dC(P)}\colon \dgro_\dC\ddel_\dC(P)\to \dC$ is given by projecting onto the first variable. This gives the desired description.
\end{proof}

\begin{Lemma}
\label{lem: dgro ddel(F)}
    Given a double functor $F\colon \dE\to \dF$ between discrete double fibration $P\colon \dE\to \dC$ and $Q\colon \dF\to \dC$, the double functor 
    \[ \textstyle\dgro_\dC\ddel_\dC(F)\colon \dgro_\dC\ddel_\dC(P)\to \dgro_\dC\ddel_\dC(Q) \]
    admits the following description. It sends 
    \begin{itemize}[leftmargin=1cm]
        \item an object $(x,x_-)$ in $\dgro_\dC\ddel_\dC(P)$ to the object $(x,Fx_-)$ in $\dgro_\dC\ddel_\dC(Q)$, 
        \item a horizontal morphism $f\colon (x,x_-)\to (y,y_-)$ in $\dgro_\dC\ddel_\dC(P)$ to the horizontal morphism in $\dgro_\dC\ddel_\dC(Q)$
        \[ f\colon (x,Fx_-)\to (y,Fy_-), \]
        \item a vertical morphism $(u,u_-)\colon (x,x_-)\bulletarrow (x',x'_-)$ in $\dgro_\dC\ddel_\dC(P)$ to the vertical morphism in $\dgro_\dC\ddel_\dC(Q)$ \[ (u,Fu_-)\colon (x,Fx_-)\bulletarrow (x',Fx'_-), \]
        \item a square $\alpha\colon \edgesquare{(u,u_-)}{f}{f'}{(v,v_-)}$ in $\dgro_\dC\ddel_\dC(P)$ to the square in $\dgro_\dC\ddel_\dC(Q)$
        \[ \alpha\colon \edgesquare{(u,Fu_-)}{f}{f'}{(v,Fv_-)}. \]
    \end{itemize}
\end{Lemma}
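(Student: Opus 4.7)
The plan is to verify the description by directly unpacking the definition of the $2$-functor $\dgro_\dC$ applied to the horizontal transformation $\ddel_\dC(F)\colon \ddel_\dC(P)\Rightarrow \ddel_\dC(Q)$, using the explicit presentation of the objects, horizontal morphisms, vertical morphisms, and squares of $\dgro_\dC\ddel_\dC(P)$ and $\dgro_\dC\ddel_\dC(Q)$ provided by \cref{lem: dgro ddel(P)}. Since both $\ddel_\dC$ and $\dgro_\dC$ have been defined by direct formulas, this is essentially a bookkeeping exercise.

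First I would recall from \cref{constr: ddel on 1-morphisms} that the component $\ddel_\dC(F)_x\colon P^{-1}x\to Q^{-1}x$ is the unique functor induced between the fibers of $P$ and $Q$ over $x$, so it sends an object $x_-$ to $Fx_-$. Applying \cref{def: dgro of horizontal transformations} to $\ddel_\dC(F)$ on objects and horizontal morphisms then immediately yields the claimed formulas $(x,x_-)\mapsto (x,Fx_-)$ and $f\mapsto f$, since both require only that the first component is preserved and that the second component is the image under $\ddel_\dC(F)_x$.

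The main computational step concerns vertical morphisms and squares, where one must identify the map on fibers
\[
(\ddel_\dC(F)_u)_{x_-,x'_-}\colon\ddel_\dC(P)u(x_-,x'_-)\longrightarrow \ddel_\dC(Q)u(Fx_-,Fx'_-)
\]
induced by the natural transformation $\ddel_\dC(F)_u$. By \cref{constr: ddel on 1-morphisms}, this natural transformation is the image under the equivalence $\fib$ of the commutative square of two-sided discrete fibrations with top map $\dHom{\dV[1],F}_u\colon P^{-1}u\to Q^{-1}u$, and by \cref{cor: nat.trf. for TS square} its component at $(x_-,x'_-)$ is precisely the induced map between fibers. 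Since this induced map sends a vertical morphism $u_-\colon x_-\bulletarrow x'_-$ with $Pu_-=u$ to $Fu_-\colon Fx_-\bulletarrow Fx'_-$ with $QFu_-=u$, substituting into the formula of \cref{def: dgro of horizontal transformations} gives $(u,u_-)\mapsto(u,Fu_-)$, as claimed. An entirely analogous unpacking, using that the functor $\dHom{\dV[1],F}_u$ acts on morphisms in $P^{-1}u$ by applying $F$ to the defining commutative squares (see \cref{descr: fiber at u}), yields the stated formula $\alpha\mapsto\alpha$ on squares.

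There is no genuine obstacle here: the only subtlety is bookkeeping the interpretation of the fibers $P^{-1}u$, $Q^{-1}u$ and of the map between them through the equivalence $\fib$ of \cref{thm: TSdisc vs Prof}, but this is already explicit in \cref{prop: P_u is TSdiscFib} and \cref{cor: nat.trf. for TS square}. Once these identifications are made, the formulas fall out of the definitions.
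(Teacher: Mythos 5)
Your proposal is correct and takes essentially the same route as the paper, which simply unpacks \cref{constr: ddel on 1-morphisms} and \cref{def: dgro of horizontal transformations} (the paper's proof is a one-line "straightforward from unpacking" statement). Your additional detail on identifying the component $(\ddel_\dC(F)_u)_{x_-,x'_-}$ via \cref{cor: nat.trf. for TS square} as the induced map between fibers of the two-sided discrete fibrations is exactly the bookkeeping the paper leaves implicit.
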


\begin{proof}
    This is straightforward from unpacking \cref{constr: ddel on 1-morphisms,def: dgro of horizontal transformations}.
\end{proof}

\begin{Lemma}
\label{lem: dgro ddel(A)}
Given a vertical transformation $A\colon F\Bulletarrow F'\colon \dE\to \dF$ between morphisms of discrete double fibrations 
$P\colon \dE\to \dC$ and $Q\colon \dF\to \dC$, the vertical transformation \[ \textstyle\dgro_\dC\ddel_\dC(A)\colon \dgro_\dC\ddel_\dC(F)\Bulletarrow\dgro_\dC\ddel_\dC(F') \]
admits the following description. 
\begin{itemize}[leftmargin=1cm]
    \item Its component at an object $(x,x_-)$ in $\dgro_\dC\ddel_\dC(P)$ is the vertical morphism in $\dgro_\dC\ddel_\dC(Q)$
    \[ (e_x, A_{x_-})\colon (x,Fx_-)\bulletarrow (x,F'x_-), \]
    \item Its component at a horizontal morphism $f\colon (x,x_-)\to (y,y_-)$ in $\dgro_\dC\ddel_\dC(P)$ is the square $e_f\colon\edgesquare{(e_x,A_{x_-})}{f}{f}{(e_y,A_{y_-})}$ in $\dgro_\dC\ddel_\dC(Q)$.
\end{itemize}
\end{Lemma}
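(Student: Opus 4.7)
The plan is to prove this lemma by a direct unpacking of definitions, mirroring the approach used for \cref{lem: dgro ddel(P),lem: dgro ddel(F)}. The key observation is that the composite $\dgro_\dC\ddel_\dC$ on a $2$-morphism is built in two stages, each of which has an essentially tautological description once the previous explicit descriptions are in hand. No deep verification is required; the content is purely combinatorial.

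First, I unpack the globular modification $\ddel_\dC(A)\colon\edgesquare{e_{\ddel_\dC(P)}}{\ddel_\dC(F)}{\ddel_\dC(F')}{e_{\ddel_\dC(Q)}}$ from \cref{constr: ddel on 2-morphisms}. Its component at an object $x$ in $\dC$ is the natural transformation $\ddel_\dC(A)_x\colon F_x\Rightarrow F'_x\colon P^{-1}x\to Q^{-1}x$, whose component at an object $x_-$ of $P^{-1}x$ is the morphism $A_{x_-}\colon Fx_-\bulletarrow F'x_-$ in $Q^{-1}x$ (viewing this vertical morphism in $\dF$ as a morphism in the fiber, using that $QA_{x_-}=e_x$).

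Next, I feed this globular modification into \cref{def: dgro for globular modifications}. By that construction, the vertical transformation $\dgro_\dC\ddel_\dC(A)$ has, at an object $(x,x_-)$ of $\dgro_\dC\ddel_\dC(P)$, the component $(e_x,(\ddel_\dC(A)_x)_{x_-})$ in $\dgro_\dC\ddel_\dC(Q)$; substituting the description of $\ddel_\dC(A)_x$ from the previous step gives $(e_x,A_{x_-})$, which is exactly the first claim. Similarly, the component at a horizontal morphism $f\colon(x,x_-)\to(y,y_-)$ is by construction the identity square $e_f\colon\edgesquare{(e_x,(\ddel_\dC(A)_x)_{x_-})}{f}{f}{(e_y,(\ddel_\dC(A)_y)_{y_-})}$, which after the same substitution becomes $e_f\colon\edgesquare{(e_x,A_{x_-})}{f}{f}{(e_y,A_{y_-})}$, matching the second claim.

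The only thing worth double-checking is that the output is well-defined as a vertical morphism in $\dgro_\dC\ddel_\dC(Q)$: one needs $A_{x_-}$ to lie in the fiber of $Q$ at $x$, which holds because $A$ is a vertical transformation over $\dC$, so $QA_{x_-}=e_x$. There is no genuine obstacle here; the lemma is a bookkeeping exercise whose only purpose is to render the composite $\dgro_\dC\ddel_\dC$ explicit for use in constructing the unit $\eta\colon\id_{\dFib(\dC)}\Rightarrow\dgro_\dC\ddel_\dC$ of the equivalence of \cref{thm: grothendieck equivalence}.
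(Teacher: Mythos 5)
Your proposal is correct and matches the paper's proof, which consists precisely of the remark that the statement follows from unpacking \cref{constr: ddel on 2-morphisms} and \cref{def: dgro for globular modifications}; you carry out that unpacking explicitly and correctly, including the well-definedness check that $QA_{x_-}=e_x$.
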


\begin{proof}
    This is straightforward from unpacking \cref{constr: ddel on 2-morphisms,def: dgro for globular modifications}.
\end{proof}

We are now ready to construct the desired $2$-natural isomorphism.

\begin{Construction}
\label{constr: eta: 1->dgro ddel}
    We construct a $2$-natural transformation $\eta\colon \id_{\dFib(\dC)}\Rightarrow \dgro_\dC \ddel_\dC$ whose component at a discrete double fibration $P\colon \dE\to \dC$ is given by the functor over $\dC$
    \[ \textstyle\eta_P\colon \dE\to \dgro_\dC \ddel_\dC(P) \]
    sending, using the description of $\dgro_\dC \ddel_\dC(P)$ from 
    \cref{lem: dgro ddel(P)},
    \begin{itemize}[leftmargin=1cm]
        \item an object $x_-$ in $\dE$ to the object $(Px_-,x_-)$ in $\dgro_\dC \ddel_\dC(P)$, 
        \item a horizontal morphism $f_-\colon x_-\to y_-$ in $\dE$ to the horizontal morphism in $\dgro_\dC \ddel_\dC(P)$
        \[ Pf_-\colon (Px_-,x_-)\to (Py_-,y_-), \]
        where $(Pf_-)^*(y_-)=x_-$ by unicity of the lift $f_-$ of $Pf_-$ with target~$y_-$,
        \item a vertical morphism $u_-\colon x_-\bulletarrow x'_-$ in $\dE$ to the vertical morphism in $\dgro_\dC \ddel_\dC(P)$ \[ (Pu_-,u_-)\colon (Px_-,x_-)\to (Px'_-,x'_-), \]
        \item a square $\alpha_-\colon \edgesquare{u_-}{f_-}{f'_-}{v_-}$ in $\dE$ to the square in  $\dgro_\dC \ddel_\dC(P)$
        \[ P\alpha_-\colon \edgesquare{(Pu_-,u_-)}{Pf_-}{Pf'_-}{(Pv_-,v_-)}, \]
        where $(P\alpha_-)^*(v_-)=u_-$ by unicity of the lift $\alpha_-$ of $P\alpha_-$ with target $v_-$. 
    \end{itemize}
\end{Construction}

\begin{Proposition}
\label{prop: eta is a 2-nat iso}
    The construction $\eta\colon\id_{\dFib}\Rightarrow\dgro_\dC\circ\ddel_\dC$ is a $2$-natural isomorphism.
\end{Proposition}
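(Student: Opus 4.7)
My plan is to verify the three claims in sequence: that each $\eta_P$ is a well-defined double functor over $\dC$, that each $\eta_P$ is invertible, and that the family is $2$-natural in $P$. The construction of $\ddel_\dC$ via $\fib$ and the pullback description of the fibers $P^{-1}u$ are designed precisely so that these checks reduce to unpacking definitions and using the uniqueness of lifts.

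First, I would check that $\eta_P$ preserves compositions and identities. Horizontal composition and horizontal identities are preserved since $P$ is a functor. For vertical composition, given $x_-\overset{u_-}{\bulletarrow} x'_-\overset{u'_-}{\bulletarrow} x''_-$ in $\dE$, I note that by \cref{lem: dgro ddel(P)} the composite in $\dgro_\dC\ddel_\dC(P)$ of the vertical morphisms $(Pu_-,u_-)$ and $(Pu'_-,u'_-)$ is computed via the composition comparison of $\ddel_\dC(P)$, which by \cref{constr: ddel on objects} is the image under the equivalence $\fib$ of the vertical composition functor $P^{-1}u\times_{P^{-1}x'}P^{-1}u'_{/\sim}\to P^{-1}(u'\bullet u)$ of morphisms in $\dE$. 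Consequently the composite in $\dgro_\dC\ddel_\dC(P)$ equals $(Pu'_-\bullet Pu_-,u'_-\bullet u_-)=\eta_P(u'_-\bullet u_-)$. The analogous statements for squares and for vertical identities are immediate from the construction. Since $\pi_{\ddel_\dC(P)}\circ\eta_P$ is visibly equal to $P$ on every cell, $\eta_P$ is a morphism in $\dFib(\dC)$.

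Second, I would show that each $\eta_P$ is an isomorphism. By \cref{lem: iso of ddcis fib can be checked on Ver}, it suffices to prove that $\Ver_0\eta_P\colon\Ver_0\dE\to\Ver_0\dgro_\dC\ddel_\dC(P)$ is an isomorphism. Its inverse is the projection sending an object $(x,x_-)$ to $x_-$ and a morphism $(u,u_-)$ to $u_-$; this is well-defined and functorial thanks to the description in \cref{lem: dgro ddel(P)}, and both composites are immediately identities.

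Finally, $2$-naturality decomposes into two checks. For a morphism $F\colon \dE\to\dF$ between discrete double fibrations $P$ and $Q$ over $\dC$, I would verify $\eta_Q\circ F=\dgro_\dC\ddel_\dC(F)\circ\eta_P$ cellwise using \cref{lem: dgro ddel(F)}; on an object $x_-$ both sides evaluate to $(Px_-,Fx_-)$ since $QF=P$, and horizontal morphisms, vertical morphisms, and squares are handled the same way. For a vertical transformation $A\colon F\Bulletarrow F'$ over $\dC$, I would compare the two whiskerings $\eta_Q\circ A$ and $\dgro_\dC\ddel_\dC(A)\circ\eta_P$ using \cref{lem: dgro ddel(A)}: at an object $x_-$ both components are $(e_{Px_-},A_{x_-})$, using $QA_{x_-}=e_{Px_-}$, and the components at horizontal morphisms agree by the analogous identity $QA_{f_-}=e_{Pf_-}$. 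I do not expect a substantive obstacle; the main bookkeeping is the vertical-composition check in the first step, where one must confirm that the coend-based composition in $\dgro_\dC\ddel_\dC(P)$ unwinds, via $\fib$, to ordinary vertical composition in $\dE$, after which everything else is a direct consequence of the uniqueness of lifts.
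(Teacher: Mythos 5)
Your proposal is correct and follows essentially the same route as the paper: the paper's proof simply exhibits the inverse double functor directly (projection onto the second component on objects and vertical morphisms, unique lifts on horizontal morphisms and squares) and cites the explicit descriptions of $\dgro_\dC\ddel_\dC(F)$ and $\dgro_\dC\ddel_\dC(A)$ for the $2$-naturality, while you reduce invertibility to $\Ver_0$ via \cref{lem: iso of ddcis fib can be checked on Ver} --- a harmless variant. The additional well-definedness checks you carry out (in particular, that the coend-based vertical composition in $\dgro_\dC\ddel_\dC(P)$ unwinds to composition in $\dE$) are subsumed in the paper's \cref{constr: eta: 1->dgro ddel}.
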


\begin{proof}
    First note that the double functor $\eta_P\colon \dE\to \dgro_\dC \ddel_\dC(P)$ admits as an inverse the double functor $\dgro_\dC \ddel_\dC(P)\to \dE$ given by projecting onto the second component on objects and vertical morphisms, and by picking the unique lift of horizontal morphisms and squares.

    Moreover, it follows easily from the descriptions in 
    \cref{lem: dgro ddel(F),lem: dgro ddel(A)} that the components $\eta_P$ are $2$-natural in $P$.
\end{proof}

\begin{proof}[Proof of \cref{thm: grothendieck equivalence}]
    The data $(\dgro_\dC,\ddel_\dC,\eta,\epsilon)$ from \cref{constr: dgroC,constr: ddel,prop: eta is a 2-nat iso,prop: epsilon: ddel dgro -> 1} provides the desired $2$-equivalence.
\end{proof}

As a consequence of \cref{lem: PC for 2-categories,thm: grothendieck equivalence}, when taking $\dC=\dH\CC$ with $\CC$ a $2$-category, we retrieve the Grothendieck $2$-equivalence from \cite[Theorem 5.1]{internal_GC} in the case of $\mathcal{V}=\Cat$.  

\begin{Corollary} \label{GC for 2-categories}
    Given a $2$-category $\CC$, the Grothendieck construction
    \[
    \textstyle\dgro_{\dH\CC}\colon [\CC^{\op},\Cat]\longrightarrow\dFib(\dH\CC)
    \]
    is a $2$-equivalence of $2$-categories, which is pseudo-natural in $\CC$.
\end{Corollary}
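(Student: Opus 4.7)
The proof is essentially a direct specialization of the main equivalence, combined with the identification of $\P{\dH\CC}$ with $[\CC^{\op},\Cat]$ already established. The plan is to instantiate \cref{thm: grothendieck equivalence} at the double category $\dC = \dH\CC$, obtaining the pseudo-natural $2$-equivalence
\[ \textstyle\dgro_{\dH\CC}\colon \P{\dH\CC}\xrightarrow{\simeq} \dFib(\dH\CC), \]
and then precompose with the inverse of the isomorphism $\P{\dH\CC}\cong [\CC^{\op},\Cat]$ provided by \cref{lem: PC for 2-categories}. The composite is a $2$-equivalence since both constituents are.

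For pseudo-naturality in $\CC$, I would observe that the isomorphism of \cref{lem: PC for 2-categories} is natural in $\CC$ (being induced by the $2$-adjunction $\dH\dashv \HH$ from \cref{2-adjunctions}), while the $2$-equivalence $\dgro_{(-)}$ is pseudo-natural in its argument by \cref{thm: grothendieck equivalence}. Restricting the latter pseudo-natural transformation along the $2$-functor $\dH\colon 2\Cat\to \DblCat_h$ and composing with the natural isomorphism from \cref{lem: PC for 2-categories} produces the required pseudo-natural structure for $\dgro_{\dH(-)}$ as a pseudo-natural transformation of pseudo-functors $2\Cat^{\coop}\to 2\Cat$.

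There is essentially no obstacle here: both ingredients have been proven, and the argument is a formal composition. The only care needed is to check that, under the isomorphism $\P{\dH\CC}\cong [\CC^{\op},\Cat]$ of \cref{lem: PC for 2-categories}, a $2$-presheaf $X\colon \CC^{\op}\to \Cat$ corresponds to the (automatically strict) normal lax double functor $\dH\CC^{\op}\to \dCat$ whose images on vertical morphisms are identity profunctors, so that $\dgro_{\dH\CC} X$ as defined in \cref{def: dgro} agrees with the double category one expects. This is immediate from the construction, since the vertical data of $\dgro_{\dH\CC} X$ collapses to the identities on objects of each fiber category $Xx$, yielding precisely the desired Grothendieck construction.
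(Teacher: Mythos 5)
Your proposal is correct and matches the paper's intended derivation exactly: the paper states this corollary as an immediate consequence of \cref{lem: PC for 2-categories} and \cref{thm: grothendieck equivalence}, obtained by instantiating the pseudo-natural $2$-equivalence at $\dC=\dH\CC$ and transporting along the isomorphism $\P{\dH\CC}\cong[\CC^{\op},\Cat]$. Your additional check that the vertical data of $\dgro_{\dH\CC}X$ collapses as expected is a sensible sanity check but not required by the paper's argument.
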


\section{Representation theorem for lax double presheaves}

In this last section, we turn to a representation theorem for double categories. While in the presheaf world a lax double functor is said to be represented by an object if it is isomorphic to a representable lax double functor at the same object, in the fibrational world we can reformulate this representation condition in terms of \emph{double terminal objects}. In \cref{subsec: double terminal}, we first introduce double terminal objects and show that a discrete double fibration is represented, i.e., it is isomorphic to a double slice, if and only if it has a double terminal object. Then, in \cref{subsec: representation}, we use this result to show that a lax double presheaf is represented by an object if and only if its Grothendieck construction has a double terminal object. This gives a nice criterion to test representability of a given lax double presheaf, and formulate universal properties for double categories.

\subsection{Double terminal objects}
\label{subsec: double terminal}

Let us fix a double category $\dC$. We start by recalling the definition of a double terminal object. 

\begin{Definition}
   An object $\hat{x}$ in $\dC$ is 
    \emph{double terminal} if the canonical projection $\dC/\hat{x}\to \dC$ is an isomorphism of double categories. 
\end{Definition}

\begin{Remark}
    \label{lem: double terminal objects}
    Unpacking the double isomorphism, an object~$\hat{x}$ in $\dC$ is double terminal if and only if the following conditions hold:
    \begin{enumerate}[leftmargin=1cm]
        \item for every object $x$ in $\dC$, there is a unique horizontal morphism $t_x\colon x\to \hat{x}$ in $\dC$,
        \item for every vertical morphism $u\colon x\bulletarrow x'$ in $\dC$, there is a unique square 
        $\tau_u\colon\edgesquare{u}{t_x}{t_{x'}}{e_{\hat{x}}}$ in $\dC$.
    \end{enumerate}
\end{Remark}

\begin{Notation}
     In what follows, for an object $\hat{x}$ in $\dC$ and a double functor $X\colon \dC^{\op}\to \dCat$, we abuse notation and denote the discrete double fibrations $\dC/\hat{x}\to \dC$ and $\pi_X\colon \dgro_\dC X\to \dC$ simply by their sources $\dC/\hat{x}$ and $\dgro_\dC X$.
\end{Notation}

We can translate the Yoneda lemma in the fibrational setting as follows. 

\begin{Theorem}
    Given a discrete double fibration 
    $P\colon\dE\to\dC$ and an object $\hat{x}$ in $\dC$, there is an isomorphism of categories
    \[ \Psi\colon \dFib(\dC)(\dC/{\hat{x}},P)\xrightarrow{\cong} P^{-1}\hat{x}
    \]
    which is $2$-natural in $\hat{x}$ in $\HH\dC$ and in $P$ in $\dFib(\dC)$.
\end{Theorem}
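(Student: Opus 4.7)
The plan is to deduce this statement by combining the Grothendieck $2$-equivalence of \cref{thm: grothendieck equivalence} with the categorical Yoneda lemma of \cref{thm:Yoneda}. More precisely, since $\dgro_\dC\colon \P{C} \xrightarrow{\simeq} \dFib(\dC)$ is a $2$-equivalence with weak inverse $\ddel_\dC$ and counit $\eta\colon\id_{\dFib(\dC)}\Rightarrow\dgro_\dC\ddel_\dC$, it induces for every two lax double presheaves $X,Y$ an isomorphism of categories
\[ \P{C}(X,Y) \xrightarrow{\cong} \dFib(\dC)(\textstyle\dgro_\dC X, \dgro_\dC Y). \]
Applying this to $X = \dC(-,\hat{x})$ and $Y = \ddel_\dC(P)$ and using \cref{prop: dgro of representable} together with the iso $\eta_P$, I get a chain of natural isomorphisms
\[ \dFib(\dC)(\dC/\hat{x}, P) \cong \dFib(\dC)(\textstyle\dgro_\dC\dC(-,\hat{x}), \dgro_\dC\ddel_\dC(P)) \cong \P{C}(\dC(-,\hat{x}), \ddel_\dC(P)). \]

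Then I would apply the Yoneda lemma \cref{thm:Yoneda} to the lax double presheaf $\ddel_\dC(P)$ at the object $\hat{x}$, producing an isomorphism
\[ \P{C}(\dC(-,\hat{x}), \ddel_\dC(P)) \cong \ddel_\dC(P)(\hat{x}) = P^{-1}\hat{x}, \]
where the last equality holds by construction of $\ddel_\dC$ (see \cref{constr: ddel on objects}). Composing everything defines the desired functor $\Psi$; unpacking, it sends a morphism of discrete double fibrations $F\colon\dC/\hat{x}\to\dE$ over $\dC$ to the object $F(\hat{x},1_{\hat{x}})$ of $P^{-1}\hat{x}$, and similarly on $2$-morphisms, matching the classical Yoneda recipe.

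For the $2$-naturality statement, $2$-naturality in $P$ in the variable of $\dFib(\dC)$ follows from the $2$-naturality of the Yoneda isomorphism $\Psi_{\hat{x},-}$ of \cref{thm:Yoneda} in its second variable, combined with the $2$-functoriality of $\ddel_\dC$ and $\eta$ being a $2$-natural isomorphism. For $2$-naturality in $\hat{x}$ in $\HH\dC$, I would use that $\hat{x}\mapsto \dC(-,\hat{x})$ assembles into the Yoneda $2$-functor $\mathcal{Y}_\dC\colon\HH\dC\to\P{C}$ (and therefore $\hat{x}\mapsto\dC/\hat{x}$ into its composite with $\dgro_\dC$), together with the $2$-naturality of $\Psi_{-,\ddel_\dC(P)}$ in its first variable.

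The main subtlety I expect is bookkeeping: making sure the isomorphisms $\dgro_\dC \dC(-,\hat{x}) = \dC/\hat{x}$ and $\eta_P\colon P \cong \dgro_\dC\ddel_\dC(P)$ interact coherently, so that the composite isomorphism $\Psi$ has the claimed explicit form (sending $F$ to $F(\hat{x},1_{\hat{x}})$), and that the $2$-naturality in $\hat{x}$ can genuinely be stated for the $2$-category $\HH\dC$ rather than only for horizontal morphisms. The latter amounts to checking that the functoriality of $\hat{x}\mapsto \dC/\hat{x}$ in globular squares, through the Yoneda embedding and $\dgro_\dC$, matches the canonical action induced by slicing; this is a direct consequence of \cref{prop: dgro of representable} being natural in $\hat{x}$.
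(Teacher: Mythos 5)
Your proposal is correct and follows essentially the same route as the paper: identify $\dC/\hat{x}$ with $\dgro_\dC\dC(-,\hat{x})$ via \cref{prop: dgro of representable}, transport the hom-category across the Grothendieck $2$-equivalence to land in $\P{C}(\dC(-,\hat{x}),\ddel_\dC(P))$, and conclude with the Yoneda lemma \cref{thm:Yoneda} applied to $\ddel_\dC(P)$. (The only quibble is terminological: with the convention $\dgro_\dC\dashv\ddel_\dC$, the $2$-natural isomorphism $\eta\colon\id_{\dFib(\dC)}\Rightarrow\dgro_\dC\ddel_\dC$ you invoke plays the role of an inverse counit rather than a counit, but this does not affect the argument.)
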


\begin{proof}
    We have the following $2$-natural isomorphisms 
    \begin{align*}
    \dFib(\dC)(\dC/{\hat{x}},P) &= \textstyle\dFib(\dC)(\dgro_\dC \dC(-,\hat{x}),P) & \text{\cref{prop: dgro of representable}} \\
    & \cong \P{C}(\dC(-,\hat{x}), \ddel_\dC(P)) & \textstyle \dgro_\dC\dashv \ddel_\dC \\
    & \cong \ddel_\dC(P)\hat{x}=P^{-1}\hat{x}, & \text{\cref{thm:Yoneda}}
    \end{align*}
    as desired.
\end{proof}

In particular, we are interested in the inverse of $\Psi$
\[ \Phi\colon P^{-1}\hat{x}\xrightarrow{\cong}\dFib(\dC)(\dC/{\hat{x}},P).
    \]
    We describe its action on objects explicitly. 

\begin{Lemma}
\label{prop: Phi(x) explicitly}
    Given a discrete double fibration 
    $P\colon\dE\to\dC$, an object $\hat{x}$ in $\dC$ and an object $\hat{x}_-$ in $P^{-1}\hat{x}$, then the double functor over $\dC$ 
    \[ \Phi(\hat{x}_-)\colon \dC/\hat{x}\to \dE \]
    can be described as follows. It sends 
    \begin{itemize}[leftmargin=1cm]
        \item an object $(x,g)$ in $\dC/\hat{x}$ to the object 
        $g^*\hat{x}_-$ in $\dE$, i.e., the source of the unique lift of the horizontal morphism $g\colon x\to \hat{x}$ with target $\hat{x}_-$,
        \item a horizontal morphism 
        $f\colon (x,g)\to (y,h)$ in $\dC/\hat{x}$ to the horizontal morphism in $\dE$ 
        \[ P^*f\colon g^*\hat{x}_-=f^*h^*\hat{x}_-\to h^*\hat{x}_-, \]
        i.e., the unique lift of the horizontal morphism $f\colon x\to y$ with target $h^*\hat{x}_-$,
        \item a vertical morphism
        $(u,\eta)\colon (x,g)\bulletarrow (x',g')$ in $\dC/\hat{x}$ to the vertical morphism in $\dE$ \[ \eta^*e_{\hat{x}_-}\colon g^*\hat{x}_-\bulletarrow g'^*\hat{x}_-, \]
        i.e., the source of the unique lift of the square $\eta\colon \edgesquare{u}{g}{g'}{e_{\hat{x}}}$ with target $e_{\hat{x}_-}$,  
        \item a square 
        $\alpha\colon\edgesquare{(u,\eta)}{f}{f'}{(v,\theta)}$ in $\dC/\hat{x}$ to the square in $\dE$
        \[ P^*\alpha\colon\edgesquare{ \eta^*e_{\hat{x}_-}=\alpha^*\theta^*e_{\hat{x}_-}}{P^*f}{P^*f'}{\theta^*e_{\hat{x}_-}},  \]
        i.e., the unique lift of the square $\alpha\colon\edgesquare{u}{f}{f'}{v}$ with target $\theta^*e_{\hat{x}_-}$.
    \end{itemize}
\end{Lemma}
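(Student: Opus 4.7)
My plan is to unwind the definition of $\Phi$ as the three-step composite
\[
P^{-1}\hat{x} = \ddel_\dC(P)\hat{x}
\xrightarrow{\Phi_{\hat{x},\ddel_\dC(P)}}
\P{C}(\dC(-,\hat{x}),\ddel_\dC(P))
\xrightarrow{\dgro_\dC}
\dFib(\dC)(\textstyle\dgro_\dC\dC(-,\hat{x}),\dgro_\dC\ddel_\dC(P))
\xrightarrow{(\eta_P^{-1})_*}
\dFib(\dC)(\dC/\hat{x},\dE),
\]
where the first arrow is the inverse Yoneda of \cref{constr:yonedainverse}, the second is the $2$-functor from \cref{constr: dgroC} (using the identification $\dgro_\dC\dC(-,\hat{x})=\dC/\hat{x}$ from \cref{prop: dgro of representable}), and the third is postcomposition with the isomorphism $\eta_P^{-1}$ provided by \cref{prop: eta is a 2-nat iso}. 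I would trace $\hat{x}_-$ through each step in turn and check that the resulting values on objects, horizontal morphisms, vertical morphisms, and squares match the description in the statement.

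For the first step, \cref{constr:onobjects} applied to $X=\ddel_\dC(P)$ and the description of $\ddel_\dC(P)$ in \cref{constr: ddel on objects} immediately give that $\Phi_{\hat{x},\ddel_\dC(P)}(\hat{x}_-)_x$ sends a horizontal morphism $g\colon x\to\hat{x}$ to $\ddel_\dC(P)g(\hat{x}_-)=g^*\hat{x}_-$ and a globular square $\eta$ to $(\ddel_\dC(P)\eta)_{\hat{x}_-}$, which by the formula in \cref{constr: ddel on objects} is the action of $e_f^*$ on $e_{\hat{x}_-}$. The key point is the component at a vertical morphism $u\colon x\bulletarrow x'$: it sends a square $\eta\colon\edgesquare{u}{g}{g'}{e_{\hat{x}}}$ to $(\ddel_\dC(P)\eta)_{\hat{x}_-,\hat{x}_-}(1_{\hat{x}_-})$. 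Using \cref{cor: nat.trf. for TS square}, this is the image of $1_{\hat{x}_-}\in\ddel_\dC(P)e_{\hat{x}}(\hat{x}_-,\hat{x}_-)$ under the functor $\eta^*\colon P^{-1}e_{\hat{x}}\to P^{-1}u$ from \cref{def: alpha on fibers}; invoking \cref{Prop: fiber at vertical id} to identify $1_{\hat{x}_-}$ with the vertical identity $e_{\hat{x}_-}$ then yields $\eta^*e_{\hat{x}_-}$.

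For the second step, I plug the formulas just obtained into \cref{def: dgro of horizontal transformations}: under $\dgro_\dC\varphi$, an object $(x,g)\in\dC/\hat{x}$ goes to $(x,g^*\hat{x}_-)$, a horizontal morphism $f$ goes to $f$ itself, a vertical morphism $(u,\eta)$ goes to $(u,\eta^*e_{\hat{x}_-})$, and a square $\alpha$ to $\alpha$ itself. Finally, \cref{constr: eta: 1->dgro ddel} together with the inverse $\eta_P^{-1}$ described in the proof of \cref{prop: eta is a 2-nat iso}—projection on the second coordinate for objects and vertical morphisms, unique $P$-lift for horizontal morphisms and squares—translates these data into exactly the formulas claimed: $(x,g)\mapsto g^*\hat{x}_-$, $f\mapsto P^*f$, $(u,\eta)\mapsto\eta^*e_{\hat{x}_-}$, and $\alpha\mapsto P^*\alpha$, with the identifications $g^*\hat{x}_-=f^*h^*\hat{x}_-$ and $\eta^*e_{\hat{x}_-}=\alpha^*\theta^*e_{\hat{x}_-}$ enforced by unicity of lifts.

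I expect the only subtle point to be the vertical morphism step of the Yoneda inverse, where one must correctly interpret the natural transformation $\ddel_\dC(P)\eta$ under the equivalence $\fib$ and identify the image of the distinguished element $1_{\hat{x}_-}$; everything else is a direct book-keeping application of the constructions of $\ddel_\dC$, $\dgro_\dC$, and $\eta$ already assembled in the paper.
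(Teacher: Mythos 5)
Your proposal is correct and follows essentially the same route as the paper's proof: factor $\Phi$ as $(\eta_P^{-1})_*\circ\dgro_\dC\circ\Phi_{\hat{x},\ddel_\dC(P)}$ and trace $\hat{x}_-$ through each stage, with the only delicate point being the vertical-morphism component of the Yoneda inverse, where $(\ddel_\dC(P)\eta)_{\hat{x}_-,\hat{x}_-}(1_{\hat{x}_-})$ is identified with $\eta^*e_{\hat{x}_-}$ via \cref{cor: nat.trf. for TS square} and \cref{Prop: fiber at vertical id}. (Only a small notational slip: for a globular square $\eta$ the morphism is the lift of $\eta$ itself acting on $e_{\hat{x}_-}$, not of $e_f$; this does not affect the argument.)
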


\begin{proof}
    By construction, the functor $\Phi$ is given by the composite
    \[
    \begin{tikzcd}
        P^{-1}\hat{x}
        \arrow[r,"{\Phi_{\hat{x},\ddel_\dC(P)}}"]
        &[20pt]
        \P{C}(\dC(-,\hat{x}),\ddel_\dC(P))
        \arrow[r,"\dgro_\dC"]
        &
        \dFib(\dC)(\dC/\hat{x},\dgro_\dC\ddel_\dC(P))
        \arrow[r,"{(\eta_P^{-1})_*}"]
        &
        \dFib(\dC)(\dC/\hat{x},P).
    \end{tikzcd}
    \]
    and therefore $\Phi(\hat{x}_-)=\eta_P^{-1}(\dgro_\dC(\Phi_{\hat{x},\ddel_\dC(P)}(\hat{x}_-)))$.
    By \cref{constr:onobjects}, the horizontal transformation 
    \[
    \Phi_{\hat{x},\ddel_\dC(P)}(\hat{x}_-)\colon\dC(-,\hat{x})\Rightarrow\ddel_\dC(P)
    \]
    is defined as follows. 
    \begin{itemize}[leftmargin=1cm]
        \item Its component at an object $x$ in $\dC$ is the functor $\dC(x,\hat{x})\to\ddel_\dC(P)x=P^{-1}x$ which sends
        \begin{itemize}[leftmargin=1cm]
            \item a horizontal morphism $g\colon x\to\hat{x}$ in $\dC$ to the object 
            $\ddel_\dC(P)g(\hat{x}_-)=g^*\hat{x}_-$ in $P^{-1}x$, i.e., the source of the unique lift of $g$ with target $\hat{x}_-$,
            \item a globular square 
            $\eta\colon\edgesquare{e_x}{g}{g'}{e_{\hat{x}}}$ to the vertical morphism 
            $(\ddel_\dC(P)\eta)_{\hat{x}_-}=
            \eta^*e_{\hat{x}_-}$ in $P^{-1}x$, i.e., the source of the unique lift of $\eta$ with target $e_{\hat{x}_-}$. 
        \end{itemize}
        \item Its component at a vertical morphism 
        $u\colon x\bulletarrow x'$ in $\dC$ is the natural transformation 
        \[
        \Phi_{\hat{x},\ddel_\dC(P)}(\hat{x}_-)_u\colon
        \dC(u,\hat{x})\Rightarrow
        \ddel_\dC(P)u\circ(\Phi_{\hat{x},\ddel_\dC(P)}(\hat{x}_-)_x^{\op}\times
        \Phi_{\hat{x},\ddel_\dC(P)}(\hat{x}_-)_{x'}),
        \]
        whose component at an object 
        $(x\xrightarrow{g} \hat{x},x'\xrightarrow{g}\hat{x})$ of $\dC(x,\hat{x})\times\dC(x',\hat{x})$ is given by the map
        \[
        \dC(u,\hat{x})(g,g')\to
        \ddel_\dC(P)u(g^*\hat{x}_-,g'^*\hat{x}_-)=\{g^*\hat{x}_-\overset{u_-}{\bulletarrow}g'^*\hat{x}_-\mid Pu_-=u\},
        \]
        sending a square 
        $\eta\colon\edgesquare{u}{g}{g'}{e_{\hat{x}}}$ in $\dC$ to the vertical morphism
        $(\ddel_\dC(P)\eta)_{\hat{x}_-,\hat{x}_-}(1_{\hat{x}_-})=\eta^*e_{\hat{x}_-}$, i.e., the source of the unique lift of $\eta$ with target $e_{\hat{x}_-}$.
    \end{itemize}
    Next, using the description from \cref{def: dgro of horizontal transformations}, the double functor 
    \[ \textstyle \dgro_\dC \Phi_{\hat{x},\ddel_\dC(P)}(\hat{x}_-)\colon \dgro_\dC\dC(-,\hat{x})=\dC/\hat{x}\to\dgro_\dC\ddel_\dC(P)\]
    is given by sending
    \begin{itemize}[leftmargin=1cm]
        \item an object $(x,g)$ of $\dC/\hat{x}$ to the object 
        $(x,g^*\hat{x}_-)$ in $\dgro_\dC\ddel_\dC(P)$,
        \item a horizontal morphism 
        $f$ in $\dC/\hat{x}$ to the corresponding horizontal morphism $f$ in 
        $\dgro_\dC\ddel_\dC(P)$, 
        \item a vertical morphism
        $(u,\eta)\colon (x,g)\bulletarrow (x',g')$ in $\dC/\hat{x}$ to the vertical morphism in 
        $\dgro_\dC\ddel_\dC(P)$
        \[ (u,\eta^*e_{\hat{x}_-})\colon (x,g^*\hat{x}_-)\bulletarrow (x',g'^*\hat{x}_-) , \]
        \item a square $\alpha$ in $\dC/\hat{x}$ to the corresponding square $\alpha$ in 
        $\dgro_\dC\ddel_\dC(P)$.
    \end{itemize}
    Finally, applying the inverse $\eta^{-1}_P\colon \dgro_\dC\ddel_\dC(P)\to \dE$ of the unit as described in \cref{prop: eta is a 2-nat iso}, we can observe that the double functor
    $\Phi(\hat{x}_-)=\eta_P^{-1}(\dgro_\dC \Phi_{\hat{x},\ddel_\dC(P)}(\hat{x}_-))\colon \dC/\hat{x}\to\dE$ is 
   as described in the statement. 
\end{proof}

Finally, we can prove that double terminal objects detect representability. For this, we first prove the following result. 

\begin{Lemma}
\label{lem: P induces iso on slices}
    Given a discrete double fibration $P\colon\dE\to\dC$ and an object $\hat{x}_-$ in $\dE$ with $\hat{x}\coloneqq P(\hat{x}_-)$, then the induced double functor between slices 
    \[ P_{/\hat{x}_-}\colon\dE/\hat{x}_-\to\dC/\hat{x} \]
    is an isomorphism of double categories. 
\end{Lemma}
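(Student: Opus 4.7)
The plan is to apply \cref{lem: iso of ddcis fib can be checked on Ver}, which reduces the claim to checking that the induced functor on underlying vertical categories
\[ \Ver_0 P_{/\hat{x}_-}\colon\Ver_0(\dE/\hat{x}_-)\to\Ver_0(\dC/\hat{x}) \]
is an isomorphism of categories. To invoke this lemma, I first need to exhibit both $\dE/\hat{x}_-$ and $\dC/\hat{x}$ as discrete double fibrations over a common base, which I take to be $\dC$: by \cref{ex: C/x->C is discrete double}, both canonical projections $\dE/\hat{x}_-\to\dE$ and $\dC/\hat{x}\to\dC$ are discrete double fibrations, and since discrete double fibrations are closed under composition (which follows by pasting the pullback squares from \cref{def: discrete double fibration} and using the cancellation of pullbacks), the composite $\dE/\hat{x}_-\to\dE\xrightarrow{P}\dC$ is also a discrete double fibration. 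Moreover, $P_{/\hat{x}_-}$ is visibly a morphism over $\dC$.

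Using the explicit description of the slice double categories from \cref{def: double slice}, the categories $\Ver_0(\dE/\hat{x}_-)$ and $\Ver_0(\dC/\hat{x})$ have objects of the form $(x_-, g_-\colon x_-\to\hat{x}_-)$ and $(x, g\colon x\to\hat{x})$, respectively, and morphisms of the form $(u_-, \eta_-)$ with $\eta_-\colon\edgesquare{u_-}{g_-}{g'_-}{e_{\hat{x}_-}}$ and $(u, \eta)$ with $\eta\colon\edgesquare{u}{g}{g'}{e_{\hat{x}}}$, respectively. The functor $\Ver_0 P_{/\hat{x}_-}$ sends $(x_-, g_-)$ to $(Px_-, Pg_-)$ and $(u_-, \eta_-)$ to $(Pu_-, P\eta_-)$. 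I then establish bijectivity by appealing to the unique lifting properties of the discrete double fibration $P$ recorded in \cref{remark: dbldiscfib lifts}: any object $(x, g)$ in $\Ver_0(\dC/\hat{x})$ has a unique preimage $(g^*\hat{x}_-, P^*g)$ using condition (1) applied to the target $\hat{x}_-$ and the morphism $g\colon x\to\hat{x}$, and any morphism $(u,\eta)\colon(x,g)\bulletarrow(x',g')$ has a unique preimage $(\eta^*e_{\hat{x}_-}, P^*\eta)$ using condition (2) applied to the target $e_{\hat{x}_-}$ and the square $\eta$, where the horizontal boundaries of $P^*\eta$ are forced to coincide with $P^*g$ and $P^*g'$ by uniqueness of horizontal lifts with target $\hat{x}_-$.

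The argument is essentially a routine diagram chase, with no serious obstacle beyond carefully matching sources, targets, and boundaries of the various unique lifts. The convenience of routing through \cref{lem: iso of ddcis fib can be checked on Ver} is that it halves the bookkeeping: rather than separately verifying bijectivity of $P_{/\hat{x}_-}$ on horizontal morphisms and on squares, those follow automatically once the vertical data are matched.
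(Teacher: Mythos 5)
Your proposal is correct and follows essentially the same route as the paper: reduce to $\Ver_0$ via \cref{lem: iso of ddcis fib can be checked on Ver} and then invert $\Ver_0 P_{/\hat{x}_-}$ explicitly using the unique lifts $(g^*\hat{x}_-,P^*g)$ and $(\eta^* e_{\hat{x}_-},P^*\eta)$, which is exactly the paper's inverse functor $L$. You are in fact slightly more careful than the paper in justifying that the composite $\dE/\hat{x}_-\to\dE\xrightarrow{P}\dC$ is a discrete double fibration (the relevant fact is the pasting law for pullbacks rather than cancellation, but the argument is the intended one).
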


\begin{proof}
    By definition, the double functor $P_{/\hat{x}_-}$ fits into a commutative square in $\DblCat$
    \[
    \begin{tikzcd}
        \dE/\hat{x}_- \arrow[r, "P_{/\hat{x}_-}"] \arrow[d] & \dC/\hat{x}   \arrow[d] \\
        \dE\arrow[r, "P"']                               & \dC               
    \end{tikzcd}
    \]
    where the vertical canonical projections are discrete double fibrations by \cref{prop: pi_x is ddisc fib}. Hence $P_{/\hat{x}_-}$ is a morphism of the discrete double fibrations 
   over $\dC$. Therefore, to show that it is an isomorphism, by \cref{lem: iso of ddcis fib can be checked on Ver}, it suffices to show that the induced functor \[ \Ver_0(P_{/\hat{x}_-})\colon\Ver_0(\dE/\hat{x}_-)\to\Ver_0(\dC/\hat{x}) \]
   is an isomorphism of categories. 

    We construct an inverse of $\Ver_0(P_{/\hat{x}_-})$
    \[
    L\colon\Ver_0(\dC/\hat{x})\to\Ver_0(\dE/\hat{x}_-)
    \]
    as follows. It is the functor sending
    \begin{itemize}[leftmargin=1cm]
        \item an object $(x,g)$ of $\dC/\hat{x}$ to the object 
        $(g^*\hat{x}_-,P^*g)$ of $\dE/\hat{x}_-$, where $P^*g\colon g^*\hat{x}_-\to \hat{x}_-$ denotes the unique lift of the horizontal morphism $g\colon x\to \hat{x}$ in $\dC$,  
        \item a vertical morphism 
        $(u,\eta)\colon (x,g)\bulletarrow (x',g')$ in $\dC/\hat{x}$ to the vertical morphism in 
        $\dE/\hat{x}_-$
        \[ (\eta^*e_{\hat{x}_-},P^*\eta)\colon 
        (g^*\hat{x}_-,P^*g)\bulletarrow (g'^*\hat{x}_-,P^*g'), \]
        where $P^*\eta\colon \edgesquare{\eta^*e_{\hat{x}_-}}{P^*g}{P^*g'}{e_{\hat{x}_-}}$ denotes the unique lift of the square $\eta\colon \edgesquare{u}{g}{g'}{e_{\hat{x}}}$.
    \end{itemize}
    Clearly, the functor $L$ defines an inverse of $\Ver_0(P_{/\hat{x}_-})$ by unicity of lifts.
\end{proof}

\begin{Theorem}
\label{thm: Phi detects terminal objects}
    Given a discrete double fibration 
    $P\colon\dE\to\dC$ and an object $\hat{x}_-$ in $\dE$ with $\hat{x}\coloneqq P(\hat{x}_-)$, then the object $\hat{x}_-$ is double terminal in $\dE$ if and only if the double functor 
    \[ \Phi(\hat{x}_-)\colon \dC/\hat{x}\to \dE \]
    is an isomorphism of double categories.
\end{Theorem}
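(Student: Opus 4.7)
The plan is to exploit the factorization of the canonical projection $\dE/\hat{x}_-\to\dE$ through $\dC/\hat{x}$. More precisely, I would first verify that there is a commutative triangle of double functors
\[
\begin{tikzcd}
\dE/\hat{x}_- \arrow[rr, "P_{/\hat{x}_-}"] \arrow[rd] && \dC/\hat{x} \arrow[ld, "\Phi(\hat{x}_-)"] \\
& \dE &
\end{tikzcd}
\]
where the diagonal is the canonical projection from the slice double category. This is the core observation and the main technical check; I would verify it by unpacking the explicit description of $\Phi(\hat{x}_-)$ from \cref{prop: Phi(x) explicitly}, and using unicity of lifts for the discrete double fibration $P$. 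For instance, on objects, given $(x_-,g_-\colon x_-\to\hat{x}_-)$ in $\dE/\hat{x}_-$ we have $P_{/\hat{x}_-}(x_-,g_-)=(Px_-,Pg_-)$, and then $\Phi(\hat{x}_-)(Px_-,Pg_-)=(Pg_-)^*\hat{x}_-=x_-$ since $g_-$ is the unique lift of $Pg_-$ with target $\hat{x}_-$. Similar uniqueness-of-lifts arguments handle horizontal morphisms, vertical morphisms, and squares.

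Once this triangle is established, the rest is essentially automatic. By \cref{lem: P induces iso on slices}, the top edge $P_{/\hat{x}_-}$ is always an isomorphism of double categories. Hence the canonical projection $\dE/\hat{x}_-\to\dE$ is an isomorphism if and only if $\Phi(\hat{x}_-)$ is. Since $\hat{x}_-$ being double terminal in $\dE$ is by definition equivalent to the canonical projection $\dE/\hat{x}_-\to\dE$ being an isomorphism, this yields both implications simultaneously.

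I expect the only nontrivial step to be the verification of the commutative triangle, which is entirely a matter of uniqueness of lifts; no coherence or higher-categorical subtleties enter. The argument is symmetric in the two implications and does not require constructing an explicit inverse for $\Phi(\hat{x}_-)$, since the inverse can be read off as the composite of $(P_{/\hat{x}_-})^{-1}$ (from \cref{lem: P induces iso on slices}) with the inverse of the projection $\dE/\hat{x}_-\to\dE$, when the latter exists.
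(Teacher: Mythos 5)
Your proposal is correct and follows essentially the same route as the paper: establish the commutative triangle $\Phi(\hat{x}_-)\circ P_{/\hat{x}_-} = (\dE/\hat{x}_-\to\dE)$ by unpacking \cref{prop: Phi(x) explicitly} and unicity of lifts, invoke \cref{lem: P induces iso on slices}, and conclude by two-out-of-three for isomorphisms. No gaps.
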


\begin{proof}
    We first show that the following triangle of double functors commutes.
    \[
    \begin{tikzcd}
        \dE/\hat{x}_- \arrow[r, ""] \arrow[d, "P_{/\hat{x}_-}"'] & \dE \\
        \dC/\hat{x} \arrow[ru, "\Phi(\hat{x}_-)"']                  &    
    \end{tikzcd}
    \]
    Unpacking the description of $\Phi(\hat{x}_-)$ from \cref{prop: Phi(x) explicitly} and using unicity of lifts, we note that the composition $\Phi(\hat{x}_-)\circ P_{/\hat{x}_-}$ sends
    \begin{itemize}[leftmargin=1cm]
        \item an object $(x_-,x_-\xrightarrow{g_-}\hat{x}_-)$ in 
        $\dE/\hat{x}_-$ to the object in $\dE$
        \[
        \Phi(\hat{x}_-)(P_{/\hat{x}_-}(x_-,g_-))
        =
        \Phi(\hat{x}_-)(Px_-,Pg_-)=(Pg_-)^*\hat{x}_-=x_-, \]
        \item a horizontal morphism 
        $f_-\colon (x_-,g_-)\to (y_-,h_-)$ in $\dE/\hat{x}_-$ to the horizontal morphism in $\dE$
        \[ \Phi(\hat{x}_-)(P_{/\hat{x}_-}(f_-))
            =
            \Phi(\hat{x}_-)(Pf_-)
            =
            P^*(Pf_-)=f_-, \]
        \item a vertical morphism 
        $(u_-,\eta_-)\colon (x_-,g_-)\bulletarrow (x'_-,g'_-)$ in $\dE/\hat{x}_-$ to the vertical morphism in $\dE$
        \[\Phi(\hat{x}_-)(P_{/\hat{x}_-}(u_-,\eta_-))
            =
            \Phi(\hat{x}_-)(Pu_-,P\eta_-)
            =
            (P\eta_-)^*e_{\hat{x}_-}
            =
            u_-,\]
        \item a square 
        $\alpha_-\colon\edgesquare{(u_-,\eta_-)}{f_-}{f'_-}{(v_-,\theta_-)}$ to the square in $\dE$
        \[ \Phi(\hat{x}_-)(P_{/\hat{x}_-}(\alpha_-))
            =
            \Phi(\hat{x}_-)(P\alpha_-)
            =
            P^*(P\alpha_-)
            =
            \alpha_-. \]
    \end{itemize}
    Hence the composite $\Phi(\hat{x}_-)\circ P_{/\hat{x}_-}$ coincides with the canonical projection $\dE/\hat{x}\to \dE$, as desired. 

    Now, by \cref{lem: P induces iso on slices}, the double functor $P_{/\hat{x}_-}$ is an isomorphism of double categories. Therefore, by $2$-out-of-$3$ for isomorphisms, the double functor $\Phi(\hat{x}_-)\colon \dC/\hat{x}\to \dE$ is an isomorphism if and only if the canonical projection $\dE/\hat{x}_-\to \dE$ is an isomorphism. However, by definition, this means that the object $\hat{x}_-$ is double terminal in $\dE$.
\end{proof}

\subsection{Representation theorem}
\label{subsec: representation}

Finally, we state our representation theorem. For this, we first introduce the notion of a represented lax double presheaf. 

\begin{Definition}
\label{def: repr. double presheaf}
    A lax double presheaf $X\colon\dC^{\op}\to\dCat$ is \emph{represented by an object $\hat{x}$ in $\dC$} if there is an object $\hat{x}_-$ in $X\hat{x}$ such that the induced horizontal transformation 
    \[ \Phi_{\hat{x},X}(\hat{x}_-)\colon\dC(-,\hat{x})\Rightarrow X \]
    from \cref{constr:onobjects} is invertible. In this case, we also say that $X$ is \emph{represented by $(\hat{x},\hat{x}_-)$}. 
\end{Definition}

\begin{Theorem}
\index{representability theorem}
\label{thm: representability theorem}
    Given a lax double presheaf $X\colon\dC^{\op}\to\dCat$ and objects $\hat{x}$ in $\dC$ and $\hat{x}_-$ in~$X\hat{x}$, then $X$ is represented by $(\hat{x},\hat{x}_-)$ if and only if the object $(\hat{x},\hat{x}_-)$ is double terminal in the Grothendieck construction $\dgro_\dC X$.
\end{Theorem}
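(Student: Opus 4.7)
The plan is to transport the invertibility condition in \cref{def: repr. double presheaf} through the Grothendieck $2$-equivalence from \cref{thm: grothendieck equivalence} and then invoke \cref{thm: Phi detects terminal objects}. By \cref{prop: dgro of representable}, we have $\dgro_\dC \dC(-,\hat{x}) = \dC/\hat{x}$, and since $\dgro_\dC\colon \P{C} \to \dFib(\dC)$ is a $2$-equivalence, the induced functor on hom-categories
\[
\dgro_\dC \colon \P{C}(\dC(-,\hat{x}), X) \xrightarrow{\simeq} \dFib(\dC)(\dC/\hat{x}, \dgro_\dC X)
\]
is an equivalence, and therefore preserves and reflects isomorphisms. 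Hence the horizontal transformation $\Phi_{\hat{x}, X}(\hat{x}_-)$ is invertible if and only if the double functor $\dgro_\dC(\Phi_{\hat{x}, X}(\hat{x}_-))\colon \dC/\hat{x} \to \dgro_\dC X$ is an isomorphism of double categories.

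Next, I would identify $\dgro_\dC(\Phi_{\hat{x}, X}(\hat{x}_-))$ with the double functor $\Phi((\hat{x}, \hat{x}_-))\colon \dC/\hat{x} \to \dgro_\dC X$ produced by the fibrational Yoneda of \cref{prop: Phi(x) explicitly}, applied to the discrete double fibration $\pi_X\colon \dgro_\dC X \to \dC$ and the object $(\hat{x}, \hat{x}_-) \in \pi_X^{-1}\hat{x}$. Unpacking \cref{def: dgro of horizontal transformations}, the former sends an object $(x, g) \in \dC/\hat{x} = \dgro_\dC \dC(-,\hat{x})$ to $(x, Xg(\hat{x}_-))$; and by \cref{prop: Phi(x) explicitly} combined with the description of unique lifts along $\pi_X$ from the proof of \cref{prop: pi_x is ddisc fib}, the latter sends $(x, g)$ to $g^*(\hat{x}, \hat{x}_-) = (x, Xg(\hat{x}_-))$. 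Analogous direct computations match the two double functors on horizontal morphisms, vertical morphisms, and squares.

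Once this identification is established, \cref{thm: Phi detects terminal objects} applied to $\pi_X\colon \dgro_\dC X \to \dC$ and the object $(\hat{x}, \hat{x}_-)$---for which $\pi_X(\hat{x}, \hat{x}_-) = \hat{x}$---asserts that $\Phi((\hat{x}, \hat{x}_-))$ is an isomorphism of double categories if and only if $(\hat{x}, \hat{x}_-)$ is double terminal in $\dgro_\dC X$. Combined with the equivalence from the first paragraph, this yields the statement. The main obstacle is the explicit identification of $\dgro_\dC(\Phi_{\hat{x}, X}(\hat{x}_-))$ with $\Phi((\hat{x}, \hat{x}_-))$ on all four layers of data: this is routine bookkeeping given the computations already performed, but a conceptually cleaner alternative would be to prove beforehand a general $2$-natural compatibility between the presheaf Yoneda of \cref{thm:Yoneda} and the fibrational Yoneda through the isomorphisms $\eta$ and $\epsilon$ of \cref{prop: eta is a 2-nat iso,prop: epsilon: ddel dgro -> 1}, and then deduce the identification as a formal consequence.
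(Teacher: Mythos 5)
Your proposal is correct and follows essentially the same route as the paper: transport invertibility of $\Phi_{\hat{x},X}(\hat{x}_-)$ through the $2$-equivalence $\dgro_\dC$, identify $\dgro_\dC\Phi_{\hat{x},X}(\hat{x}_-)$ with the fibrational Yoneda map $\Phi(\hat{x},\hat{x}_-)$ for $\pi_X$ via the explicit description in \cref{prop: Phi(x) explicitly}, and conclude with \cref{thm: Phi detects terminal objects}. The identification step you flag as the main obstacle is exactly the one the paper handles by appealing to \cref{prop: Phi(x) explicitly}.
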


\begin{proof}
    By definition, the lax double presheaf $X$ is represented by $(\hat{x},\hat{x}_-)$ if and only if the induced horizontal transformation 
    \[ \Phi_{\hat{x},X}(\hat{x}_-)\colon\dC(-,\hat{x})\Rightarrow X\]
    is invertible. By applying the $2$-equivalence $\dgro_\dC\colon \P{C}\to \dFib(\dC)$ from \cref{thm: grothendieck equivalence}, this is the case if and only if the induced morphism of discrete double fibrations over $\dC$
    \[ \textstyle\dgro_\dC\Phi_{\hat{x},X}(\hat{x}_-)\colon\dgro_\dC\dC(-,\hat{x})=\dC/\hat{x}\to\dgro_\dC X \]
    is an isomorphism. Noticing that $\Phi(\hat{x},\hat{x}_-)=\dgro_\dC\Phi_{\hat{x},X}(\hat{x}_-)$ using \cref{prop: Phi(x) explicitly}, it follows from \cref{thm: Phi detects terminal objects} that this is equivalent to the object $(\hat{x},\hat{x}_-)$ being double terminal in $\dgro_\dC X$, as desired.  
\end{proof}

As a consequence, we get that the double slice always has a double terminal object given by the identity.  

\begin{Corollary}
\label{Cor: dbl terminal object of C/x}
    The object $(\hat{x},1_{\hat{x}})$ is double terminal in $\dC/\hat{x}$. 
\end{Corollary}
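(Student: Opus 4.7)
The plan is to deduce this as a direct consequence of the representation theorem (\cref{thm: representability theorem}) combined with the Yoneda lemma (\cref{thm:Yoneda}) and the identification $\dgro_\dC \dC(-,\hat{x}) = \dC/\hat{x}$ from \cref{prop: dgro of representable}.

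First, I would recall that by \cref{prop: dgro of representable}, the Grothendieck construction of the representable lax double presheaf $\dC(-,\hat{x})$ coincides with the slice double category $\dC/\hat{x}$. Therefore, by \cref{thm: representability theorem} applied to $X = \dC(-,\hat{x})$ with the chosen object $\hat{x}_- = 1_{\hat{x}} \in \dC(\hat{x},\hat{x})$, the object $(\hat{x}, 1_{\hat{x}})$ is double terminal in $\dC/\hat{x}$ if and only if $\dC(-,\hat{x})$ is represented by $(\hat{x}, 1_{\hat{x}})$ in the sense of \cref{def: repr. double presheaf}.

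Hence the task reduces to showing that the horizontal transformation
\[ \Phi_{\hat{x}, \dC(-,\hat{x})}(1_{\hat{x}}) \colon \dC(-,\hat{x}) \Rightarrow \dC(-,\hat{x}) \]
is invertible. For this, I would invoke the Yoneda lemma: by \cref{thm:Yoneda}, the functor $\Psi_{\hat{x}, \dC(-,\hat{x})}$ is an isomorphism with inverse $\Phi_{\hat{x}, \dC(-,\hat{x})}$. Unpacking the definition of $\Psi_{\hat{x}, \dC(-,\hat{x})}$ from \cref{constr:yonedafunctor}, we compute $\Psi_{\hat{x},\dC(-,\hat{x})}(\id_{\dC(-,\hat{x})}) = (\id_{\dC(-,\hat{x})})_{\hat{x}}(1_{\hat{x}}) = 1_{\hat{x}}$. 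Applying the inverse $\Phi_{\hat{x}, \dC(-,\hat{x})}$ to both sides yields $\Phi_{\hat{x}, \dC(-,\hat{x})}(1_{\hat{x}}) = \id_{\dC(-,\hat{x})}$, which is clearly invertible.

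There is no genuine obstacle here; the argument is essentially a one-line chase through the three main results of the paper. The only subtlety is verifying that under the Yoneda bijection the identity horizontal transformation corresponds to the identity morphism $1_{\hat{x}}$, which is immediate from the definition of $\Psi_{\hat{x}, X}$.
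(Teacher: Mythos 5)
Your proposal is correct and follows essentially the same route as the paper: show that $\Phi_{\hat{x},\dC(-,\hat{x})}(1_{\hat{x}})$ is the identity (hence invertible), so $\dC(-,\hat{x})$ is represented by $(\hat{x},1_{\hat{x}})$, and then conclude via the representation theorem together with the identification $\dgro_\dC\dC(-,\hat{x})=\dC/\hat{x}$. Your justification that $\Phi(1_{\hat{x}})=\id$ via $\Psi(\id_{\dC(-,\hat{x})})=1_{\hat{x}}$ and the inverse relationship from the Yoneda lemma is a slightly more explicit version of the paper's appeal to the functoriality of $\Phi_{\hat{x},\dC(-,\hat{x})}$, but it is the same argument.
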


\begin{proof}
    As $\Phi_{\hat{x},\dC(-,\hat{x})}(1_{\hat{x}})$ is the identity horizontal transformation $\dC(-,\hat{x})\Rightarrow\dC(-,\hat{x})$ by functorality of $\Phi_{\hat{x},\dC(-,\hat{x})}$, the lax double presheaf $\dC(-,\hat{x})$ is represented by $(\hat{x},1_{\hat{x}})$.
    Then, by \cref{thm: representability theorem}, the object $(\hat{x},1_{\hat{x}})$ is double terminal in the double category
    $\dgro_\dC\dC(-,\hat{x})=\dC/\hat{x}$, where the last equality holds by \cref{prop: dgro of representable}.
\end{proof}

Finally, as a consequence of \cref{lem: PC for 2-categories,thm: representability theorem}, when taking $\dC=\dH\CC$ with $\CC$ a $2$-category, we retrieve the representation theorem for $2$-presheaves from 
\cite[Theorem 6.12]{internal_GC} in the case of $\mathcal{V}=\Cat$. This is a stricter version of \cite[Theorem 6.8]{clingmanmoser2022limits}.

\begin{Corollary} \label{Representation for 2-categories}
    Given a $2$-presheaf $X\colon\CC^{\op}\to\Cat$ and objects $\hat{x}$ in $\CC$ and $\hat{x}_-$ in~$X\hat{x}$, then $X$ is represented by $(\hat{x},\hat{x}_-)$ if and only if the object $(\hat{x},\hat{x}_-)$ is double terminal in the Grothendieck construction $\dgro_{\dH\CC} X$.
\end{Corollary}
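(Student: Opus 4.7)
The plan is to deduce this corollary directly from the main representation theorem (\cref{thm: representability theorem}) by transporting the statement along the isomorphism of $2$-categories $\P{\dH\CC}\cong [\CC^{\op},\Cat]$ established in \cref{lem: PC for 2-categories}. Under this isomorphism, the $2$-presheaf $X\colon \CC^{\op}\to \Cat$ corresponds to a normal lax double functor $\widetilde{X}\colon \dH\CC^{\op}\to \dCat$, which since $\dH\CC^{\op}$ has only trivial vertical morphisms is in fact a strict double functor. Similarly, for an object $\hat{x}$ in $\CC$, the representable $2$-presheaf $\CC(-,\hat{x})$ corresponds to the representable lax double presheaf $\dH\CC(-,\hat{x})$ on $\dH\CC$.

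First, I would check that the notion of representability matches under this correspondence. That is, I would verify that the Yoneda comparison functor $\Phi_{\hat{x},X}\colon X\hat{x}\to [\CC^{\op},\Cat](\CC(-,\hat{x}),X)$ used for $2$-presheaves agrees, under the isomorphism of \cref{lem: PC for 2-categories}, with the functor $\Phi_{\hat{x},\widetilde{X}}\colon \widetilde{X}\hat{x}\to \P{\dH\CC}(\dH\CC(-,\hat{x}),\widetilde{X})$ from \cref{constr:yonedainverse}. This is essentially automatic from the naturality of the Yoneda isomorphism established in \cref{thm:Yoneda} (together with its restatement \cref{Yoneda for 2-categories} for $2$-categories), but it is worth spelling out so that ``represented by $(\hat{x},\hat{x}_-)$'' in the sense of $2$-presheaves agrees with ``represented by $(\hat{x},\hat{x}_-)$'' in the sense of \cref{def: repr. double presheaf} for the double presheaf~$\widetilde{X}$.

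Next, I would identify the Grothendieck constructions. By construction, the double category $\dgro_{\dH\CC}X$ is precisely the Grothendieck construction of $\widetilde{X}$ in the sense of \cref{def: dgro}; indeed, since $\dH\CC$ has only trivial vertical morphisms, the vertical structure of $\dgro_{\dH\CC}X$ reduces to the usual Grothendieck construction of the $2$-presheaf and the coherences collapse. Consequently the notion of a double terminal object in $\dgro_{\dH\CC}X$ is intrinsic to the double category and matches on both sides of the isomorphism.

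With these two identifications in place, the corollary is immediate: applying \cref{thm: representability theorem} to the normal lax double presheaf $\widetilde{X}\colon \dH\CC^{\op}\to \dCat$, we obtain that $\widetilde{X}$ is represented by $(\hat{x},\hat{x}_-)$ if and only if $(\hat{x},\hat{x}_-)$ is double terminal in $\dgro_{\dH\CC}\widetilde{X}=\dgro_{\dH\CC}X$. By the first step, this is equivalent to $X$ being represented by $(\hat{x},\hat{x}_-)$ as a $2$-presheaf. The only subtle point in the argument is verifying the compatibility of the two Yoneda comparison functors under the isomorphism of \cref{lem: PC for 2-categories}, which is where the $2$-naturality assertions of \cref{thm:Yoneda} do the real work; everything else is essentially a bookkeeping exercise.
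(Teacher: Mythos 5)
Your proposal is correct and follows essentially the same route as the paper, which derives the corollary directly from \cref{thm: representability theorem} by specializing $\dC=\dH\CC$ and invoking the isomorphism $\P{\dH\CC}\cong [\CC^{\op},\Cat]$ of \cref{lem: PC for 2-categories}. The details you spell out --- that normal lax double functors out of $\dH\CC^{\op}$ are strict, that the Yoneda comparison functors and hence the notion of representability agree under this isomorphism, and that the Grothendieck constructions coincide --- are exactly the bookkeeping the paper leaves implicit.
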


\bibliographystyle{alpha}
\bibliography{literature}
\end{document}